\numberwithin{equation}{section}
\theoremstyle{definition} \newtheorem{Def}{Definition}[section]
\theoremstyle{plain}\newtheorem{Thm}[Def]{Theorem}
\theoremstyle{plain}\newtheorem{Prop}[Def]{Proposition}
\theoremstyle{definition}\newtheorem{Rem}[Def]{Remark}
\theoremstyle{definition}
\theoremstyle{plain}\newtheorem{Lem}[Def]{Lemma}
\theoremstyle{plain}\newtheorem{Cor}[Def]{Corollary}
\theoremstyle{remark} \newtheorem*{Claim*}{Claim}
\newcommand{\R}{\mathbb{R}}
\newcommand{\N}{\mathbb{N}}
\newcommand{\Z}{\mathbb{Z}}
\newcommand{\T}{\mathbb{T}^3}
\newcommand{\Td}{\mathbb{T}^d}
\newcommand{\tor}[1]{\mathbb{T}^{#1}}
\newcommand{\la}{\lambda}
\newcommand{\Prob}{\mathbb{P}}
\newcommand{\E}[1]{\mathbb{E}\left[#1\right]}
\newcommand{\V}[1]{\mathrm{Var}{\left[#1\right]}}
\newcommand{\Cov}[2]{\mathrm{Cov}{\left[#1,#2\right]}}
\newcommand{\eqLaw}{ \overset{\mathcal{L}}{=} }
\newcommand{\Law}{\xrightarrow[]{\mathcal{L}} }
\newcommand{\toP}{\xrightarrow[]{\Prob} }
\newcommand{\toLtwo}{\xrightarrow[]{L^2(\Prob)} }
\newcommand{\norm}[1]{\lVert#1\rVert}
\newcommand{\scal}[2]{\langle #1,#2\rangle}
\newcommand{\eps}{\varepsilon}
\newcommand{\ind}[1]{\mathbb{I}
\left\{#1\right\}}
\newcommand{\notcon}[3]{ {#1} \not\equiv {#2} \pmod {#3}  }
\newcommand{\Nn}{\mathcal{N}_n}
\DeclareMathOperator{\proj}{proj}
\DeclareMathOperator{\Jac}{Jac}
\DeclareMathOperator{\Leb}{Leb}
\DeclareMathOperator{\Id}{Id}
\DeclareMathOperator{\bX}{\mathbf{X}}
\DeclareMathOperator{\bT}{\mathbf{T}}
\begin{document}

\title{\normalsize{\textbf{\uppercase{Fluctuations of nodal sets on the $3$-torus and \\ general cancellation phenomena}}}}
\date{ }  
\author[*]{\textsc{Notarnicola} Massimo }
\affil[*]{\textit{Department of Mathematics, Universit\'e du Luxembourg}}
\affil[ ]{E-mail: \href{mailto:massimo.notarnicola@uni.lu}{massimo.notarnicola@uni.lu}}
\maketitle
\abstract{
In 2017, Benatar and Maffucci \cite{BM17} established an asymptotic law for the variance of the nodal surface of arithmetic random waves on the $3$-torus in the high-energy limit. In a subsequent work, Cammarota  \cite{Cam17} proved a universal non-Gaussian limit theorem for the nodal surface. In this paper, we study the nodal intersection length and the number of nodal intersection points associated, respectively, with two and three independent arithmetic random waves of same frequency on the $3$-torus. For these quantities, we compute their expected value, asymptotic variance as well as their limiting distribution. Our results are based on Wiener-It\^{o} expansions for the volume and naturally complement the findings of Cammarota \cite{Cam17}. At the core of our analysis lies an abstract cancellation phenomenon applicable to the study of level sets of arbitrary Gaussian random fields, that we believe has independent interest. \\

\noindent
\textbf{Keywords:} Arithmetic Random Waves, Limit theorems, Random fields, Nodal volumes, Gaussian analysis, Wiener-It\^{o} decomposition, Berry's Cancellation Phenomenon   \\
\textbf{AMS MSC 2010:} 60G60, 60B10, 60D05, 58J50, 35P20

\tableofcontents

\section{Introduction}
\subsection{Overview}
The present paper deals with the high-energy behaviour of the nodal set associated with arithmetic random waves (ARW) on the $3$-torus, $\T$. ARWs (first introduced in \cite{ORW08,RW08} for tori of arbitrary dimension) are Gaussian stationary eigenfunctions of the Laplace operator on the torus. In recent years, such a model has been intensively studied, in the framework of a more general program, focussing on the high-energy behaviour of local and non-local functionals of random Laplace eigenfunctions on generic manifolds (see e.g. \cite{CS14,WS19, R16, WY19,KKW13,RW08,GW17,MPRW16,DNPR16,Tod18,Tod19,PV19,DEL19}). 

Our specific aim is to extend the findings of \cite{BM17}, that first provided an exact asymptotic variance for the nodal surface area of the nodal set of ARW on $\T$, and \cite{Cam17}, that subsequently derived the limiting distribution of the normalised nodal surface area. 
More precisely, the goal of this paper is to study the high-energy behaviour of two further geometric quantities associated with vectors of ARWs, namely: (i) the nodal length of so-called \textit{dislocation lines} of ARWs (see e.g. \cite{Den01}), obtained when intersecting the zero sets of two independent ARWs with the same eigenvalue and (ii) 
the number of intersection points obtained when intersecting the zero sets of three independent ARWs with the same eigenvalue. For both quantities, we provide the exact expected value, precise variance asymptotics and second-order limit results. Our findings recover and extend the work of \cite{Cam17}. Such a contribution is the latest installment in a series of works exploiting Wiener chaos techniques for deriving limit results of geometric functionals associated with Gaussian fields (see e.g. \cite{CMW16,CMW16b,EL16,DNPR16,MPRW16,NPR17,Cam17,DEL19}). Our main source of arithmetic results, serving as building blocks for the nodal variance asymptotics, is \cite{BM17}.

An important contribution of our analysis is a detailed study of the Wiener-It\^{o} chaos expansion associated with non-linear geometric functionals of (possibly multi-dimensional) Gaussian fields admitting an integral representation in terms of generalised Jacobians (see Appendix \ref{AbsCanProof}). In particular, our findings of Section \ref{SubAbsCan} provide a full description of a general cancellation phenomenon that (i) explains all exact cancellations for the nodal length of Gaussian Laplace eigenfunctions on manifolds without boundary encountered so far (see e.g. \cite{DNPR16,MRW17,MPRW16,Cam17}); (ii) contains as special cases the projection formulae (see also Appendix \ref{Constants4}) for nodal length and number of phase singularities of Berry's Random Wave model (see \cite{NPR17}).

\paragraph{Notation.}
Throughout this paper, every random object is defined on a probability space $(\Omega,\mathcal{F},\Prob)$. We denote by
$\E{\cdot}$ and $\V{\cdot}$ the mathematical expectation and the variance with respect to $\Prob$, respectively. Also, 
$\gamma(x):=(2\pi)^{-1/2}e^{-x^2/2}$ denotes the standard Gaussian probability density on the real line. 

For sequences $\{A_n:n\geq1\}, \{B_n:n\geq1\}$, we will use the notation $A_n \ll B_n$ or $A_n=O(B_n)$ to indicate that $A_n \leq C B_n$ for some absolute constant $C$. We write $A_n=o(B_n)$ whenever $A_n/B_n \to 0$ as $n \to \infty$. Also, we write $A_n \sim B_n$ whenever $A_n/B_n \to 1$ as $n \to \infty$. For random variables, the symbols $\eqLaw$ and $\Law$ denote equality and convergence in distribution, respectively. 

For an integer $n\geq1$, we write $[n]:=\{1,\ldots,n\}$. For $n \geq 0$, we denote by $\Id_n$ the $n$-dimensional identity matrix with the convention that $\Id_0:=0 \in \R$.  For $A \in \mathrm{Mat}_{p,q}(\R)$ and $B \in \mathrm{Mat}_{p',q'}(\R)$, we write 
\begin{equation*}
A \oplus B := \begin{pmatrix}
A & 0 \\
0 & B 
\end{pmatrix} \in \mathrm{Mat}_{p+p',q+q'}(\R) 
\end{equation*} 
for the direct sum of $A$ and $B$ with the convention $A \oplus \Id_0:=A$ for every $A \in \mathrm{Mat}_{p,q}(\R)$. Finally, we denote by $\ind{\cdot}$ the indicator function.

\subsection{Models of ARW and relevant existing results}
Let $(M,g)$ be a smooth compact Riemannian manifold and let $\Delta$ be the associated Beltrami-Laplace operator. The spectrum of $\Delta$ is purely discrete, that is: (i) there exists a non-decreasing sequence 
$\{\lambda_j:j\geq0\}$ of non-negative eigenvalues of $-\Delta$, customarily called the \textit{energy levels of $M$}, and (ii) the associated eigenfunctions $\{f_j:j\geq0\}$, satisfying 
\begin{eqnarray}\label{Helm}
\Delta f_j + \lambda_j f_j  = 0 \ , \quad j\geq 0\ ,
\end{eqnarray}
form an $L^2(M)$-orthonormal system. The \textit{nodal set} of $f_j$ is its zero set $f_j^{-1}(\{0\})$. 
In \cite{Ch76} it is shown that, except on a closed set of lower dimension, $f_j^{-1}(\{0\})\subset M$ is a submanifold of codimension one. Of particular interest are quantities associated with the nodal set of $f_j$, such as the \textit{nodal volume}, in the \textit{high-energy} regime, that is, as $\lambda_j \to \infty$. 
Yau's conjecture \cite{Yau82,Yau93} asserts that 
there exist constants $c_M,C_M>0$, uniquely  depending on $M$, such that 
\begin{eqnarray*}
c_M \sqrt{\lambda_j} \leq \mathrm{vol}(f_j^{-1}(\{0\})) \leq C_M \sqrt{\lambda_j} \ ,
\end{eqnarray*}
with $\mathrm{vol}(\cdot)$ denoting the volume measure on $M$. 
This conjecture was proven for real-analytic manifolds $M$ in \cite{DF88}, whereas the lower bound is a result by \cite{Log18} in the more general case where $M$ is smooth.

\paragraph{Arithmetic random waves on $\mathbb{T}^d$.}
Let us specialize the above framework to the setting of the $d$-dimensional torus. 
Let $d\geq 1$ be an integer, let $M=\mathbb{T}^d=\R^d/\Z^d=[0,1]^d/_{\sim}$ denote the $d$-dimensional flat torus, and let $\Delta$ be the Laplace-Beltrami operator on it. 
One is interested in quantities associated with the nodal sets of real-valued random eigenfunctions of $\Delta$, i.e. random solutions $f: \mathbb{T}^d \to \R$ of \eqref{Helm} for some appropriate $\lambda_j$. It is a known fact that the eigenvalues of $-\Delta$ are positive real numbers of the form $E=E_n=4\pi^2 n$, where $n\in S_d$, with
\begin{eqnarray*}
S_d:= \left\{m \geq 1: \exists (m_1,\ldots,m_d) \in \Z^d, m= m_1^2+\ldots+m_d^2 \right\} \ , 
\end{eqnarray*}
that is, $n$ is an integer expressible as a sum of $d$ integer squares. For $n \in S_d$, we introduce the set of \textit{frequencies}
\begin{eqnarray*}
\Lambda_n := \left\{ \lambda = (\lambda_1,\ldots,\lambda_d) \in \Z^d: \lambda_1^2+\ldots+\lambda_d^2 = n \right\} \ , 
\end{eqnarray*} 
and write $\mathrm{card}(\Lambda_n)=:\Nn$ ($\mathrm{card}$ denoting the cardinality; note that we do not mark the dependency on $d$) to indicate the number of ways in which $n$ can be represented as a sum of $d$ integer squares. 
An $L^2(\mathbb{T}^d)$-orthonormal system for the eigenspace $\mathscr{E}(E_n)$ associated with $E_n$ is given by the complex exponentials 
\begin{eqnarray*}
\left\{e_{\lambda}(\cdot ): = \exp(2\pi i \scal{\lambda}{\cdot}): \lambda \in \Lambda_n\right\}  \ , 
\end{eqnarray*}
so that $\dim\mathscr{E}(E_n)=\mathrm{card}(\Lambda_n)=\Nn$.
For $n \in S_d$, the \textit{arithmetic random wave of order $n$}, denoted by $T_n$, is defined as the following random linear combination of complex exponentials
\begin{eqnarray*}
T_n(x) = \frac{1}{\sqrt{\Nn}} \sum_{\lambda\in \Lambda_n}
a_{\lambda}e_{\lambda}(x) \ , \quad x \in \T \ ,
\end{eqnarray*}
where the coefficients $\{ a_{\lambda}: \lambda \in \Lambda_n\}$ are complex $\mathcal{N}(0,1)$-distributed\footnote{We say that a random variable $X$ has the complex $\mathcal{N}(0,1)$ distribution, if  $X=Y+iZ$ where $Y,Z$ are independent real $\mathcal{N}(0,1/2)$ random variables.} and independent except for the relation $a_{\lambda} = \overline{a_{-\lambda}}$, which makes $T_n$ real-valued. 
It is easily seen that the law of $T_n$ is uniquely characterized by the property of being a centred Gaussian field on $\tor{d}$ with covariance function
\begin{eqnarray}\label{covARW}
r_n(x,y)  := \E{T_n(x)\cdot T_n(y)} = \frac{1}{\Nn} \sum_{\lambda \in \Lambda_n} e_{\lambda}(x-y) =: r_n(x-y) \ .
\end{eqnarray}
The function $r_n$ depends only on the difference of the arguments, meaning that the field $\{T_n(x):x\in \mathbb{T}^d\}$ is stationary. Note that the normalisation factor $\Nn^{-1/2}$ in the definition of $T_n(x)$ does not change the zero set of $T_n$, and appears purely for computational reasons; indeed, it implies that $r_n(0)=1$, that is: for every $x \in \T$, the variance of $T_n(x)$ is equal to $1$. 

\paragraph{Equidistribution of lattice points on $S^{d-1}$.}
The set of frequencies $\Lambda_n$ induces a probability measure on the unit sphere $S^{d-1}\subset \R^d$, given by  
\begin{eqnarray*}
\mu_{n,d} := \frac{1}{\Nn} \sum_{\lambda\in \Lambda_n}
\delta_{\lambda/\sqrt{n} } \ ,
\end{eqnarray*}
where $\delta_{\lambda/\sqrt{n}}$ denotes the Dirac mass at $\lambda/\sqrt{n}$. Since the measure $\mu_{n,d}$ is compactly supported,
it is determined by its Fourier coefficients 
\begin{eqnarray*}
\widehat{\mu_{n,d}}(k) := \int_{S^{d-1}} z^{-k} \mu_{n,d}(dz) \ , \quad k \in \Z \ .
\end{eqnarray*}
Up to rescaling its argument, the measure $\mu_{n,d}$ is the spectral measure of the Gaussian field $\{T_n(x):x\in \mathbb{T}^d\}$, as can be seen by rewriting \eqref{covARW} as 
\begin{eqnarray*}\label{spec}
r_n(x-y) = \int_{S^{d-1}} \exp\big(2\pi i \scal{\sqrt{n}\xi}{x-y}\big) \mu_{n,d}(d\xi) \ .
\end{eqnarray*}
The problem of angular distribution of the lattice points in  dimension $d$ has been investigated by Linnik \cite{Lin67}. A notable difference arises when comparing dimensions $d=2$ and $d=3$: indeed, it is known that there exists a density 1 subsequence $\{n_j:j\geq1\} \subset S_2$ such that $\mu_{n_j,2}$ converges weakly to the uniform distribution on the unit circle as $\mathcal{N}_{n_j} \to \infty$ \cite{EH99}, but there are infinitely many other weak limits of $\{\mu_{n,2}: n \in S_2\}$; such limits are referred to as \textit{attainable measures} \cite{KW17}. Instead, when $d=3$, subject to the condition $n \to \infty, \notcon{n}{0,4,7}{8}$, the probability measures $\{\mu_{n, 3}: n \in S_3\}$ converge weakly to the uniform probability measure on $S^2$  \cite{D88}, implying asymptotic equidistribution
\cite{DS90}.  In this context, the arithmetic condition $\notcon{n}{0,4,7}{8}$ arises naturally from the result by Gauss and Legendre asserting that $n \in S_3$ if and only if $n$ is not of the form $4^a(8b+7)$ (see e.g. \cite{G85}).

\paragraph{Previous work on this model.}
ARWs on the $d$-dimensional torus have been introduced in \cite{ORW08}, where the authors consider the \textit{Leray measure} of the nodal set of ARWs and study its asymptotic variance. A quantitative Central Limit Theorem for the Leray measure on the two-dimensional torus (in the high-frequency limit) is provided in \cite{PR16}.
In \cite{RW08}, the authors
take interest in the $(d-1)$-dimensional nodal volume of ARWs. Denoting by $Z_n$ the zero set of $T_n$ and $\mathcal{V}_n:=\mathcal{H}_{d-1}(Z_n)$ its $(d-1)$-dimensional Hausdorff measure, the expected nodal volume is shown to be a constant multiple of the square root of the energy level, that is, $\E{\mathcal{V}_n} = C_d \sqrt{E_n}$, where $C_d$ is an explicit constant depending only on the dimension, which is in particular consistent with Yau's conjecture. Concerning the variance of the nodal volume, the authors derive the asymptotic upper bound
\begin{eqnarray*}
\V{\mathcal{V}_{n}} \ll \frac{E_n}{\sqrt{\Nn}} \ , \quad \Nn \to \infty 
\end{eqnarray*}
and conjecture the stronger bound $\ll E_n/\Nn$ to hold.

Recent developments on the two and three-dimensional torus concerning exact asymptotic laws for variances and subsequent second-order results for fluctuations of quantities associated with the nodal set of Laplacian eigenfunctions have gained great attention in the literature. We will now briefly discuss these works.

\medskip\noindent\textit{\underline{Work on the two-dimensional torus.}}  
In \cite{KKW13}, for any probability measure $\mu$ on the circle, the authors define  
\begin{eqnarray*}
c(\mu):= \frac{1+\hat{\mu}(4)^2}{512} 
\end{eqnarray*} 
and derive a precise asymptotic law for the variance of the nodal length $\mathcal{L}_n$ of ARW, namely 
\begin{eqnarray}\label{Varc2}
\V{\mathcal{L}_n} \sim c(\mu_{n,2}) \cdot \frac{E_n}{\Nn^2}   \ , \quad \Nn \to \infty \ .
\end{eqnarray} 
This suggests that, if $\{n_j:j\geq1\} \subset S_2$ is a subsequence such that $\mu_{n_j,2}$ converges weakly to some symmetric probability measure $\mu$ on $S^1$, then $c(\mu_{n_j,2}) \to c(\mu)$ as $\mathcal{N}_{n_j} \to \infty$ and hence
\begin{eqnarray}\label{VL2}
\V{\mathcal{L}_{n_j}} \sim c(\mu) \cdot \frac{E_{n_j}}{\mathcal{N}_{n_j}^2}   \ , \quad \mathcal{N}_{n_j} \to \infty \ ,
\end{eqnarray}  
yielding an asymptotic variance estimate with non-fluctuating order of magnitude. 
In particular, the order of magnitude of the variance is $E_n/\Nn^2$, which significantly improves the previously conjectured bound $E_n/\Nn$ in \cite{RW08}. Such a lower order of magnitude is known as \textit{Berry's arithmetic cancellation phenomenon}, which follows from the exact vanishing of the second-order projection of the Wiener-It\^{o} expansion of the nodal length, as pointed out in \cite{MPRW16};  
such a cancellation phenomenon is not observed when dealing with non-zero level sets, in which case the variance would be commensurate to $E_n/\Nn$.

The asymptotic estimate in \eqref{VL2} depends on the angular distribution of the lattice points, and is therefore referred to as a \textit{non-universal} result.
Second-order results of the normalised nodal length were addressed in \cite{MPRW16}, where the authors show that for a subsequence 
$\{n_j:j\geq1\} \subset S_2$ such that $|\widehat{\mu_{n_j,2}}(4)| \to \eta$, for some $\eta \in [0,1]$ and $\mathcal{N}_{n_j} \to \infty$, 
\begin{eqnarray*}
\frac{\mathcal{L}_{n_j}- \E{\mathcal{L}_{n_j}}}{\sqrt{\V{\mathcal{L}_{n_j}}}} \Law 
\frac{1}{2\sqrt{1+\eta^2}} 
\left( 2-(1+\eta) X_1^2 + (1-\eta) X_2^2 \right) \ ,  
\end{eqnarray*}
where $(X_1,X_2)$ is a standard Gaussian vector in dimension two. In particular, this shows that the limiting probability distribution of the normalised nodal length is parametrised by $\eta \in [0,1]$, which depends on the high-energy behaviour of the spectral measures $\mu_{n,2}$ via the fourth Fourier coefficient. This fact emphasizes that, similarly to the asymptotic law for the variance, the limiting distribution of the normalised length is also non-universal. It is easily checked that the above limiting distributions are different for distinct values of $\eta$ and non-Gaussian. A quantitative version of this limit theorem is proven in \cite{PR16}. 

Phase singularities of complex ARWs on the $2$-torus have been investigated in \cite{DNPR16}; there, the authors consider the number of intersection points of the nodal sets of two independent ARWs of same energy level. More precisely, if $T_n$ and $T_n'$ denote two independent ARWs associated with eigenvalue $E_n$ and $I_n:=\mathrm{card}(T_n^{-1}(\{0\}) \cap T_n'^{-1}(\{0\}))$, the authors establish the following non-universal asymptotic law for the variance: as $\Nn\to \infty$, 
\begin{eqnarray*}
\V{I_n} \sim c(\mu_{n,2}) \cdot \frac{E_n^2}{\Nn^2} \ , \quad 
c(\mu_{n,2}):= \frac{3\widehat{\mu_{n,2}}(4)^2+5}{128\pi^2} \ . 
\end{eqnarray*}
Similar to the asymptotic variance of the nodal length, the variance of $I_n$ fluctuates due to the fact that lattice points are not necessarily asymptotically equidistributed. The following distributional limit result is also provided: for $\{n_j:j\geq1\} \subset S_2$ such that $|\widehat{\mu_{n_j,2}}(4)| \to \eta$, for some $\eta \in [0,1]$ and $\mathcal{N}_{n_j} \to \infty$, 
\begin{eqnarray*}
\frac{I_{n_j}-\E{I_{n_j}}}{\sqrt{\V{I_{n_j}}}} \Law 
\frac{1}{2\sqrt{10+6\eta^2}} \left(
\frac{1+\eta}{2}A + \frac{1-\eta}{2}B- 2(C-2)
\right) \ ,
\end{eqnarray*}
where $A,B,C$ are independent random variables such that $A \eqLaw B \eqLaw 2X_1^2+2X_2^2-4X_3^2$ and $C\eqLaw X_1^2+X_2^2$, and $(X_1,X_2,X_3)$ is a standard Gaussian vector in dimension three.

Related work on the two-dimensional torus include the study of the volume of the nodal set intersected with a fixed reference curve \cite{RW18}, or line segment \cite{Maf17a}.
In \cite{BMW17} the authors restrict the nodal length of ARWs to shrinking balls and prove that the restricted nodal length is asymptotically fully correlated with the total nodal length. In \cite{GW17}, Granville and Wigman study the small scale distribution of the $L^2$-mass of Laplacian eigenfunctions.

\medskip\noindent\textit{\underline{Work on the three-dimensional torus.}}
Statements on the three-dimensional torus include the arithmetic relation $\notcon{n}{0,4,7}{8}$ and, unlike the two-dimensional case, they do not rely on the spectral measures $\{\mu_{n,3}:n \in S_3\}$ due to equidistribution of lattice points on the unit two-sphere. 
The existing literature in $d=3$ considers the nodal set $Z_n$ of $T_n$ and its two-dimensional Hausdorff measure $\mathcal{A}_n:=\mathcal{H}_2(Z_n)$, that is the nodal surface of $Z_n$. In \cite{BM17}, an exact asymptotic law for the variance is provided, namely as $n\to \infty, \notcon{n}{0,4,7}{8}$, 
\begin{eqnarray} \label{VarMaf}
\V{\mathcal{A}_n} = \frac{n}{\Nn^2} \left( \frac{32}{375} + O\left(n^{-1/28+o(1)}\right)\right) \ .
\end{eqnarray}
Similarly to the two-dimensional case, the order of magnitude of the variance is commensurate to $E_n/\Nn^2$, which originates from the cancellation of the second chaotic projection in the Wiener chaos expansion of the nodal surface. As a consequence of the asymptotic equidistribution of lattice points on $S^2$, the leading coefficient in front of $n/\Nn^2$ in \eqref{VarMaf} does not fluctuate. The limiting distribution of the normalised nodal surface was investigated in \cite{Cam17}, where the following non-Gaussian, \textit{universal} result was derived: as $n\to \infty,\notcon{n}{0,4,7}{8}$, 
\begin{eqnarray*}
\frac{\mathcal{A}_n-\E{\mathcal{A}_n}}{\sqrt{\V{\mathcal{A}_n}}}
\Law \frac{1}{\sqrt{10}} \cdot \big(5-\chi^2(5)\big) \ ,
\end{eqnarray*}
where $\chi^2(5)$ denotes a chi-squared random variable with $5$ degrees of freedom. 
This distributional limit result is analogous to the case $d=2$ in the sense that the limiting distribution is a linear combination of independent chi-squared random variables, but does not involve any non-universality phenomenon.  

Results on the intersection of nodal sets against a surface can be found in \cite{RWY15,RW16}, see also \cite{M19} for a study of the intersection length obtained when intersecting nodal sets of ARWs with planes.

\subsection{Our main results}
Let $T_n$ be an arithmetic random wave on $\T$ and  
$T_n^{(1)},T_n^{(2)},T_n^{(3)}$ be i.i.d. copies of $T_n$. 
Fix $\ell \in [3]$ and consider the centred $\ell$-dimensional Gaussian field
\begin{equation}\label{Tnl}
\bT_n^{(\ell)} := \left\{ \bT_n^{(\ell)}(x) := (T_n^{(1)}(x),\ldots, T_n^{(\ell)}(x)) : x \in \T\right\} \ ,
\end{equation}
to which we associate the quantity 
\begin{equation}\label{defL}
L_n^{(\ell)} := \mathcal{H}_{3-\ell}\bigg(\bigcap_{i=1}^{\ell} 
\big(T_n^{(i)}\big)^{-1}(\{0\})  \bigg) \ ,
\end{equation}
where, for a $k$-dimensional measurable domain $A\subset \T$, $\mathcal{H}_k(A)$ denotes the $k$-dimensional Haussdorff measure of $A$, that is $(\mathcal{H}_2,\mathcal{H}_1,\mathcal{H}_0) = (\mathrm{area}, \mathrm{length}, \mathrm{card})$.
We denote the normalised nodal volume by
\begin{equation*}
\widetilde{L_n^{(\ell)}} := \frac{L_n^{(\ell)}-\E{L_n^{(\ell)}}}{\V{L_n^{(\ell)}}^{1/2} }  \ .
\end{equation*} 
Since $T_n^{(1)},T_n^{(2)}$ and $T_n^{(3)}$ are i.i.d. copies of $T_n$, we have 
\begin{equation*}
r^{(i)}_n(x-y) := \E{T_n^{(i)}(x)\cdot T_n^{(i)}(y)} = r_n(x-y) \ , \quad i \in [\ell]  , 
\end{equation*}
where $r_n$ is as in \eqref{covARW}.

Our main result, stated in Theorem \ref{MainThm2} below, provides exact second order results for the three quantities $L_n^{(1)},L_n^{(2)},L_n^{(3)}$, and thus contains the findings of \cite{Cam17} in the special case $\ell=1$. The statement is divided into three parts: (i) gives the precise expected nodal volume, (ii) is an asymptotic law for the nodal variance and (iii) concerns the second-order fluctuations of the normalised version of the nodal volume. 

\begin{Thm}\label{MainThm2}
Let the above notation prevail. Then the following holds:
\begin{enumerate}[label=\rm{(\roman*)}]
\item (Expected nodal volume)
For every $n \in S_3$,  
\begin{equation*}
\E{L_n^{(\ell)}} = \left\{
\begin{array}{lr}
\displaystyle{\frac{2 \sqrt{E_n}}{\sqrt{3}\pi}} \ ,  &  \ell=1 \\[0.3cm]
\displaystyle{\frac{E_n}{3\pi}} \ ,  &   \ell=2 \\[0.3cm]
\displaystyle{\frac{E_n^{3/2}}{3\sqrt{3}\pi^2}} \ , &   \ell=3
\end{array} 
\right.
\end{equation*}
\item (Universal asymptotic nodal variance)
As $n\to \infty, \notcon{n}{0,4,7}{8}$, 
\begin{equation*}
\V{L_n^{(\ell)}} \sim \left\{
\begin{array}{lr}
\displaystyle{\frac{E_n}{\Nn^2} \cdot \frac{8}{375\pi^2}} \ , & \ell=1 \\[0.3cm]
\displaystyle{\frac{E_n^2}{\Nn^2}\cdot \frac{316}{3375\pi^2}} \ ,  & \ell=2 \\[0.3cm]
\displaystyle{\frac{E_n^3}{\Nn^2}\cdot \frac{62}{675\pi^4}} \ , & \ell=3
\end{array} 
\right.
\end{equation*}
\item (Universal asymptotic distribution of the nodal volume) 
As $n\to \infty, \notcon{n}{0,4,7}{8}$,
\begin{equation*}
\widetilde{L_n^{(\ell)}} \Law \left\{
\begin{array}{lr}
\displaystyle{-\frac{1}{\sqrt{10}} \hat{\xi}_1(5)} \ , & \ell=1 \\[0.3cm]
\displaystyle{5\sqrt{\frac{15}{79}} \cdot \bigg(
-\frac{1}{50}\hat{\xi}_1(10) 
- \frac{1}{25}\hat{\xi}_2(5)
+ \frac{1}{25}\hat{\xi}_3(5) 
+ \frac{1}{50} \hat{\xi}_4(5)
- \frac{1}{6}\hat{\xi}_5(3)
\bigg) } \ ,  & \ell=2 \\[0.3cm]
\displaystyle{5\sqrt{\frac{2}{31}} \cdot \bigg(
-\frac{1}{50}\hat{\xi}_1(15) 
- \frac{1}{25}\hat{\xi}_2(15)
+ \frac{1}{25}\hat{\xi}_3(15) 
+ \frac{1}{50} \hat{\xi}_4(15)
- \frac{1}{6}\hat{\xi}_5(9)
\bigg)} \ , & \ell=3
\end{array} 
\right.
\end{equation*}
where, in each line, the symbols $\hat{\xi}_{i}(k_i)$ denote independent centred chi-squared random variables with $k_i$ degrees of freedom. 
\end{enumerate}
\end{Thm}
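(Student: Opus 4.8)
The plan is to compute the Wiener--It\^o chaos expansion of each $L_n^{(\ell)}$ and show that, after normalisation, the dominant contributions come from a single chaotic order whose arithmetic variance converges to an explicit constant (thanks to equidistribution of lattice points on $S^2$), and whose limit law is the announced linear combination of independent $\chi^2$'s. Concretely, writing the $(3-\ell)$-dimensional Hausdorff measure via the coarea/Kac--Rice formalism as an integral over $\T$ of a delta at the origin of the $\R^\ell$-valued field $\bT_n^{(\ell)}$ against the corresponding generalised Jacobian $\Jac$ of $\nabla \bT_n^{(\ell)}$, one obtains $L_n^{(\ell)} = \sum_{q\geq 0} L_n^{(\ell)}[q]$ where $L_n^{(\ell)}[q]$ is the projection onto the $q$-th Wiener chaos. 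Part~(i) then follows by taking expectations: using stationarity and the explicit two-point structure of $T_n$ (in particular $r_n(0)=1$ and the normalisation $\E{\partial_j T_n(x)^2} = E_n/3$ coming from $\int_{S^2}\xi_j^2\,d\sigma = 1/3$), the Kac--Rice density is constant over $\T$ and the stated closed forms for $\ell=1,2,3$ drop out of a finite-dimensional Gaussian integral.

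For parts~(ii) and~(iii) the key structural input is the abstract cancellation phenomenon of Section~\ref{SubAbsCan} (proved in Appendix~\ref{AbsCanProof}): it identifies exactly which low-order chaotic projections vanish for nodal volumes admitting a Jacobian integral representation. Applying it here kills the odd chaoses and — crucially — the second chaos, so that the expansion effectively starts at $q=4$: $L_n^{(\ell)} - \E{L_n^{(\ell)}} = L_n^{(\ell)}[4] + \sum_{q\geq 6} L_n^{(\ell)}[q]$. I would then (a) show $\V{L_n^{(\ell)}[4]}$ has the claimed order $E_n^\ell/\Nn^2$ with the stated constant, by expanding the fourth-chaos kernel into products of $r_n$ and its derivatives and invoking the arithmetic estimates of \cite{BM17} — these control sums like $\sum r_n(x)^a (\partial r_n)(x)^b$ over $\T$, whose leading behaviour, after equidistribution, is governed by moments of a Gaussian vector built from $(\xi,\xi\otimes\xi)$, $\xi$ uniform on $S^2$; and (b) show the higher chaoses are negligible, i.e. $\sum_{q\geq 6}\V{L_n^{(\ell)}[q]} = o(E_n^\ell/\Nn^2)$, by the same type of arithmetic bounds on higher powers of $r_n$. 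This reduces the distributional statement to the single term $L_n^{(\ell)}[4]/\V{L_n^{(\ell)}[4]}^{1/2}$, which I would diagonalise: the fourth-chaos random variable is a fixed quadratic-type form in a finite Gaussian vector (values and gradients of the $T_n^{(i)}$ at a point, in the equidistributed limit), and diagonalising that form produces independent $\chi^2(k_i)$ summands with the multiplicities $(5)$ for $\ell=1$, $(10,5,5,5,3)$ for $\ell=2$, and $(15,15,15,15,9)$ for $\ell=3$; matching coefficients and the overall normalisation gives exactly the formulae in~(iii). Convergence in law then follows from a standard fourth-moment/cumulant argument on a fixed chaos together with the arithmetic convergence of the relevant covariance sums.

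The main obstacle I anticipate is twofold. First, verifying the exact vanishing of the second chaos for all three values of $\ell$ simultaneously: this is precisely where the general cancellation result must be applied with care, since the relevant Jacobian is $\sqrt{\det(\nabla \bT_n^{(\ell)}(\nabla\bT_n^{(\ell)})^\top)}$ and one must check the hypotheses of the abstract theorem (nondegeneracy of the Gaussian field and its gradient, and the Jacobian integrability) hold uniformly. Second, pinning down the precise numerical constants in~(ii) and the exact chaos decomposition in~(iii): this requires evaluating fourth-chaos Hermite-coefficient integrals against the uniform measure on $S^2$ — a finite but delicate bookkeeping of Gaussian moments of the matrix $\nabla\bT_n^{(\ell)}$ — and then matching them against \cite{BM17}-type asymptotics; any error in a single moment propagates to the wrong constant. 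I would organise this by first handling $\ell=1$ (recovering \cite{Cam17}, which serves as a consistency check), then $\ell=2$ and $\ell=3$ by the same template with the larger Gaussian vectors.
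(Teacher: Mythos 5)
Your overall architecture is the one the paper actually follows: integral representation of $L_n^{(\ell)}$ via a Dirac mass against the generalised Jacobian, Wiener--It\^o expansion, the abstract cancellation theorem killing the odd and second chaoses (the key point being $D^{(i)}=0$ by Green's identity, since the $T_n^{(i)}$ are Laplace eigenfunctions), dominance of the fourth chaos with constants coming from equidistribution and the $4$-correlation bound of \cite{BM17}, and finally a diagonalisation producing exactly the multiplicities $(5)$, $(10,5,5,5,3)$, $(15,15,15,15,9)$. Part (i) and the reduction to $\proj_4$ are as you describe. One remark on bookkeeping: the paper does not extract the variance of $\proj_4(L_n^{(\ell)})$ by integrating products of $r_n$ and its derivatives; instead it writes $\proj_4$ explicitly as a polynomial in the arithmetic quadratic forms $W^{(i)}_{jk}(n)$, $M^{(i_1,i_2)}_j(n)$, $M^{(i_1,i_2)}_{jk}(n)$ (plus $R,S,X$ terms that vanish or converge by the law of large numbers and the bound $\mathrm{card}(\mathcal{X}_n(4))=O(\Nn^{7/4+o(1)})$), proves a multivariate CLT for these via the Fourth Moment Theorem, and then obtains the asymptotic variance from the limit law by hypercontractivity/uniform integrability; your moment-integral route is viable but is a different computation.

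The genuine gap is in your step (b), the negligibility of the chaoses of order $\geq 6$. Asserting this ``by the same type of arithmetic bounds on higher powers of $r_n$'' does not close the argument: the termwise variance of each chaotic projection involves integrals of products of the normalised covariances $\tilde r_{a,b}$ over all of $\T\times\T$, and near the diagonal these are close to $1$, so the series over $q$ cannot be summed with uniform control from moment bounds alone. The paper has to partition $\T^2$ into singular and non-singular pairs of cubes of side $\asymp E_n^{-1/2}$: on singular pairs one abandons the chaos expansion and bounds $\V{\proj_{6+}(L_n^{(\ell)}(Q_0))}$ by $\E{L_n^{(\ell)}(Q_0)^2}$ via Kac--Rice together with a Taylor expansion of the two-point correlation function at the origin (Appendix C), while on non-singular pairs one uses the diagram formula and the uniform bound $|\tilde r_{a,b}|\leq\eta$ to extract a convergent geometric factor, with the remaining six covariance factors controlled by $\mathcal{R}_n(6)=O(\Nn^{-7/3+o(1)})$ from the $6$-correlation estimate of \cite{BM17}. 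Without this splitting your step (b) would fail. A secondary omission of the same nature: for $\ell=2,3$ the very fact that $L_n^{(\ell)}$ is in $L^2(\Prob)$ (so that its chaos expansion exists and the exchange of limits in the $\eps$-approximation is licit) is not automatic --- the a.s.\ bound on the nodal length available for $\ell=1$ has no analogue --- and the paper proves it by the same singular/non-singular machinery together with a deterministic $C^1$-continuity theorem for nodal volumes; your sketch takes this for granted.
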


\paragraph{Some remarks.} 
\textbf{(a)}  We point out that the results stated separately in Theorem \ref{MainThm2} can be written in a compact form. For integers $1\leq \ell \leq  k $, we set  
\begin{equation}\label{defalk}
\alpha(\ell,k):= \frac{(k)_{\ell}\kappa_k}{(2\pi)^{\ell/2}\kappa_{k-\ell}}   \ , 
\end{equation}
where $(k)_{\ell}:=k!/(k-\ell)!$ and $\kappa_k:=\frac{\pi^{k/2}}{\Gamma(1+k/2)}$ stands for the volume of the unit ball in $\R^k$. Then, the content of Theorem \ref{MainThm2} can be restated as follows: for every $\ell \in [3]$, one has that
\begin{enumerate}[label=(\roman*)]
\item For every $n \in S_3$, 
\begin{equation}\label{meanL} 
\E{L_n^{(\ell)}} = \bigg(\frac{E_n}{3}\bigg)^{\ell/2}\frac{\alpha(\ell,3)}{(2\pi)^{\ell/2}} \ .
\end{equation}
\item  As $n\to \infty, \notcon{n}{0,4,7}{8}$, 
\begin{equation}\label{AsyVar}
\V{L_n^{(\ell)}} \sim \big(c_n^{(\ell)}\big)^2  
\bigg( \ell \cdot \frac{1}{250} + \frac{\ell(\ell-1)}{2} \cdot \frac{76}{375}\bigg) \ ,
\end{equation}
where 
\begin{equation*}
c_n^{(\ell)} = \bigg(\frac{E_n}{3}\bigg)^{\ell/2}
\frac{2}{(2\pi)^{\ell/2}} \frac{\alpha(\ell,3)}{\Nn} \ . 
\end{equation*}
\item As $n\to \infty, \notcon{n}{0,4,7}{8}$,
\begin{equation}\label{AsyDist}
\widetilde{L_n^{(\ell)}}  \Law 
\bigg(\ell\cdot \frac{1}{250}+ \frac{\ell(\ell-1)}{2}\cdot \frac{76}{375}\bigg)^{-1/2} Y^{(\ell)} M^{(\ell)} (Y^{(\ell)})^T \ ,
\end{equation}
where $Y^{(\ell)} \sim \mathcal{N}_{\ell(9\ell-4)}(0,\Id_{\ell(9\ell-4)})$ is a $\ell(9\ell-4)$-dimensional standard Gaussian vector and $M^{(\ell)} \in \mathrm{Mat}_{\ell(9\ell-4),\ell(9\ell-4)}(\R)$ is the deterministic matrix given by 
\begin{equation*}
M^{(\ell)} = \frac{-1}{50} \Id_{5\ell} 
\oplus \frac{-1}{25}\Id_{\frac{5\ell(\ell-1)}{2}}
\oplus \frac{1}{25}\Id_{\frac{5\ell(\ell-1)}{2}}
\oplus \frac{1}{50}\Id_{\frac{5\ell(\ell-1)}{2}}
\oplus \frac{-1}{6} \Id_{\frac{3\ell(\ell-1)}{2}} 
\ . 
\end{equation*}
\end{enumerate}
For the point (iii) above, we observe that  $Y^{(\ell)} M^{(\ell)} (Y^{(\ell)})^T $ in \eqref{AsyDist} is a diagonal quadratic form that has the same probability distribution as 
\begin{gather*}
-\frac{1}{50}\hat{\xi}_1(5\ell) 
- \frac{1}{25}\hat{\xi}_2\bigg(\frac{5\ell(\ell-1)}{2}\bigg)
+ \frac{1}{25}\hat{\xi}_3\bigg(\frac{5\ell(\ell-1)}{2}\bigg) 
+ \frac{1}{50} \hat{\xi}_4\bigg(\frac{5\ell(\ell-1)}{2}\bigg)
- \frac{1}{6}\hat{\xi}_5\bigg(\frac{3\ell(\ell-1)}{2}\bigg)  
\end{gather*}
where $\{\hat{\xi}_i(k_i):i =1,\ldots, 5\}$ denote  independent centred chi-squared random variables with $k_i\geq0$ degrees of freedom
with the convention $\hat{\xi}_i(0) \equiv 0$. In particular, this shows that for every $\ell\in [3]$, in the high-energy regime, the normalised nodal volume exhibits \textit{universal} and \textit{non-Gaussian} second-order fluctuations. 

\textbf{(b)} As already discussed, for $\ell=1$, Theorem \ref{MainThm2} coincides with known results on the nodal surface area on the three-dimensional torus: Indeed part (ii) gives 
\begin{equation*}
\V{L_n^{(1)}} \sim \frac{E_n}{\Nn^2}\cdot \frac{8}{375\pi^2} = 
\frac{n}{\Nn^2} \cdot \frac{32}{375} \ ,  \qquad n\to \infty, \notcon{n}{0,4,7}{8} \ ,
\end{equation*}
thus recovering the same order of magnitude as in Theorem 1.2 of \cite{BM17}, whereas our limit result (iii) is Theorem 1 of \cite{Cam17}: as $n \to \infty, \notcon{n}{0,4,7}{8}$,
\begin{equation*}
\widetilde{L_n^{(1)}} \Law \sqrt{250} \cdot Y^{(1)}M^{(1)}(Y^{(1)})^T \eqLaw 
\frac{1}{\sqrt{10}}\big(5-\xi(5)\big) \ ,
\end{equation*}
where $\xi(5) \eqLaw Y_1^2+ \ldots+ Y_5^2$.

\textbf{(c)} Note that the order of magnitude $E_n^{\ell}/\Nn^2$ of the asymptotic variance in each of the cases $\ell=1,2,3$ is consistent with what is observed in other models. As we will prove, this fact emerges from the vanishing of the second Wiener chaotic component of $L_n^{(\ell)}$. An abstract cancellation phenomenon for functionals of Gaussian fields, applicable to the setting of level sets of Laplacian eigenfunctions, is stated in Theorem \ref{MainThm1} (ii).

\medskip
We also point out that the statements (i) and (ii) of Theorem \ref{MainThm2} are sufficient to derive a \textit{universal weak law of large numbers}; it tells that the distribution of the normalised random variable $L_n^{(\ell)}/E_n^{\ell/2}$ is asymptotically concentrated around its mean: 
\begin{Cor}
For every $\delta>0$, as $n \to \infty, \notcon{n}{0,4,7}{8}$, we have 
\begin{eqnarray*}
\Prob\bigg[ \bigg|\frac{L_n^{(\ell)}}{E_n^{\ell/2}} -  \frac{\alpha(\ell,3)}{3^{\ell/2}(2\pi)^{\ell/2}}\bigg| > \delta\bigg] = o(1) \ .
\end{eqnarray*}
\end{Cor}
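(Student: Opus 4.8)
The plan is to derive the corollary directly from parts (i) and (ii) of Theorem~\ref{MainThm2} by a one-line Chebyshev argument. First I would note that, by part (i), the deterministic quantity subtracted inside the probability is exactly the normalised expectation: setting $X_n := L_n^{(\ell)}/E_n^{\ell/2}$, part (i) gives $\E{X_n} = E_n^{-\ell/2}\,\E{L_n^{(\ell)}} = \alpha(\ell,3)/\big(3^{\ell/2}(2\pi)^{\ell/2}\big)$ for every $n\in S_3$ (equivalently, this is \eqref{meanL} divided by $E_n^{\ell/2}$). Hence the event whose probability we must bound is precisely $\{\,|X_n-\E{X_n}|>\delta\,\}$.

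Next, I would apply Chebyshev's inequality,
\[
\Prob\big[\,|X_n-\E{X_n}|>\delta\,\big]\ \leq\ \frac{\V{X_n}}{\delta^2}\ =\ \frac{\V{L_n^{(\ell)}}}{\delta^2\,E_n^{\ell}}\ ,
\]
and invoke part (ii): by \eqref{AsyVar} one has $\V{L_n^{(\ell)}}\sim \big(c_n^{(\ell)}\big)^2\big(\ell/250+\tfrac{\ell(\ell-1)}{2}\cdot 76/375\big)$, where $c_n^{(\ell)}$ is a fixed constant multiple of $E_n^{\ell/2}/\Nn$; therefore $E_n^{-\ell}\,\V{L_n^{(\ell)}} = O(\Nn^{-2})$ as $n\to\infty$, $\notcon{n}{0,4,7}{8}$.

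Finally, I would recall that along the relevant family of energies --- $n\in S_3$ with $\notcon{n}{0,4,7}{8}$ --- one has $\Nn\to\infty$ as $n\to\infty$; this is the same regime in which equidistribution of lattice points on $S^2$ holds and which already underlies the statements of Theorem~\ref{MainThm2}(ii)--(iii), and it follows from classical lower bounds on the number of representations of $n$ as a sum of three integer squares. Consequently $E_n^{-\ell}\,\V{L_n^{(\ell)}}\to 0$, so for each fixed $\delta>0$ the Chebyshev bound above is $o(1)$, which is the assertion.

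I expect no genuine obstacle in this argument; the only point meriting a word of justification is that $\Nn\to\infty$ along the admissible values of $n$, which is standard and is in any case implicit in the way Theorem~\ref{MainThm2} is phrased. (One could alternatively phrase the same conclusion as an immediate consequence of the distributional limit in part (iii) together with the mean and variance asymptotics, but the Chebyshev route is more economical and self-contained.)
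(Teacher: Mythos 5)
Your argument is correct and coincides with the paper's own proof: both identify the centering constant with $\E{L_n^{(\ell)}}/E_n^{\ell/2}$ via part (i), apply Chebyshev's inequality, and use the variance asymptotics of part (ii) to see that $\V{L_n^{(\ell)}/E_n^{\ell/2}} = O(\Nn^{-2}) \to 0$ along $n\to\infty$, $\notcon{n}{0,4,7}{8}$. Your additional remark that $\Nn\to\infty$ in this regime is a reasonable (and standard) point of care, implicit in the paper's formulation.
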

This immediately follows from Chebyshev's Inequality: as $n\to \infty,\notcon{n}{0,4,7}{8}$, 
\begin{eqnarray*} 
\Prob\bigg[ \bigg|\frac{L_n^{(\ell)}}{E_n^{\ell/2}} -  \frac{\alpha(\ell,3)}{3^{\ell/2}(2\pi)^{\ell/2}}\bigg| > \delta\bigg] 
\leq \frac{1}{\delta^2} \cdot \V{\frac{L_n^{(\ell)}}{E_n^{\ell/2}}} = \frac{c_{\ell}}{\delta^2 \Nn^2}(1+o(1)) = o(1) \ ,
\end{eqnarray*}
where $c_{\ell}$ is a constant only depending on $\ell$.
}

\begin{Rem} 
In the following two points listed below, we highlight further technical novelties appearing in the proof of Theorem \ref{MainThm2}.
\begin{enumerate}[label=\textbf{(\alph*)}] 
\item The chaos expansions of $L_n^{(\ell)}$ is obtained from the Area/Co-Area formula by an approximation argument similar to those used in \cite{KL}, where the authors discuss Gaussian limit theorems for general level functionals associated with stationary Gaussian fields with integrable covariance function. Our arguments for proving existence in $L^2(\Prob)$ rely on the use of an adequate partition of the torus into singular and non-singular regions, see for instance \cite{ORW08,KKW13}. To the best of our expertise, although such a route has already been effectively exploited for obtaining variance estimates for higher-order chaotic projections of nodal quantities (see \cite{PR16,DNPR16,NPR17}), 
this approach for proving existence results in $L^2(\Prob)$ for geometric functionals associated with multi-dimensional Gaussian fields appears for the first time in the literature. We also stress that the argument based on almost surely bounding the nodal length $L_n^{(1)}$ associated with a single ARW (see \cite{RW08} and \cite{Cam17}) does not apply in the case of more than one ARW, and therefore requires a different approach.
\item In order to derive the explicit expression of the fourth-order chaotic projection of $L_n^{(\ell)}$, we compute the Hermite projection coefficients associated with the mapping $\bX\mapsto  \det (\bX \bX^T)^{1/2}$, where $\bX$ is a  $\ell \times 3$ matrix. In order to do this, we tackle the more general task of computing these projection coefficients in the case where $\bX$ is a generic $\ell \times k$ matrix. Our techniques build on standard properties of the Gaussian distribution as well as Gramian determinants, and in particular recover the known results obtained in Lemma 3.3,  \cite{DNPR16}. \end{enumerate}
\end{Rem}

\subsection{Further connection with literature }
\underline{\textit{Berry's Random Wave Model.}}
In \cite{Ber77}, Berry introduced the so-called \textit{Berry Random Wave model} (BRW), that is, the unique translation-invariant centred Gaussian field $B_j=\{B_j(x):x\in \R^2\}$ on the plane with covariance function
\begin{eqnarray}\label{J0}
r_j(x,y)= \E{B_j(x) \cdot B_j(y) } = J_0(\sqrt{\lambda_j} \cdot \norm{x-y}) =:r_j(x-y) \ , \quad  (x,y) \in \R^2 \times \R^2 \ ,
\end{eqnarray}
with $J_0$ denoting the Bessel function of order $0$ of the first kind and $\norm{\cdot}$ the Euclidean norm in $\R^2$. Berry conjectured that local aspects of the geometry of zero sets of generic high-energy Laplace eigenfunctions on a two-dimensional manifold can be modelled by the BRW. More precisely, his observation proposes that  eigenfunctions of chaotic systems locally 'behave' like a random superposition of plane waves with fixed energy.  
Since Berry's publication \cite{Ber02}, the study of local and non-local features associated with the geometry of nodal and (non-zero) level sets of high-energy Gaussian Laplace eigenfunctions has gained substantial consideration and different models have been studied in recent years, the case of \textit{random spherical harmonics} on the $2$-sphere (see e.g. \cite{MRW17,Ros16,Wig10,MP11}) and \textit{arithmetic random waves} on the torus (see e.g. \cite{ORW08,RW08,KKW13,Cam17,DNPR16,MPRW16}) being of particular importance. 
The study of BRW on $\R^3$ has been initiated in \cite{DEL19}. Therein, the authors consider the nodal length restricted to growing cubes of the complex BRW and distinguish between isotropic and anisotropic covariance functions. In the isotropic case, they show that the limiting distribution of the nodal length is Gaussian whenever the underlying covariance function of the model is square-integrable. The proof of such a Central Limit Theorem, based on the Wiener chaos expansion of the nodal length, reveals in particular that, in this framework, \textit{all} the chaoses except the second contribute to the limit. As we will see, such an observation should be contrasted with our results, based on the dominance of the fourth Wiener chaos alone.
In \cite{CH16,Zel09}, the authors study  \textit{monochromatic random waves} on a general smooth compact manifold, that is, Gaussian linear combinations of eigenfunctions associated with eigenvalues ranging in a short interval. 

\medskip
\noindent\underline{\textit{Berry's Cancellation Phenomenon.}}
Berry's cancellation phenomenon was first observed in \cite{Ber02} for nodal sets of BRW. Using the notation introduced in \eqref{J0}, Berry considered the length $L_j(D)$ of the nodal lines of $B_j$ (Berry random wave for eigenvalue $\lambda_j$) and the number of nodal points $N_j(D)$ of the complex version of the BRW, i.e. the random field $\{B_j(x)+iB_j'(x):x\in \R^2\}$, with $B_j'$ denoting an independent copy of $B_j$, when both statistics are restricted to a compact domain $D$. For these observables, denoting $\mathcal{A}_D$ the area of $D$, Berry obtained 
\begin{eqnarray*} 
\E{L_j(D)} = \frac{\mathcal{A}_D}{2\sqrt{2}} \sqrt{\lambda_j} \ , \quad 
\E{N_j(D)} = \frac{\mathcal{A}_D}{4\pi} \lambda_j \ ; 
\end{eqnarray*}
as well as variance asymptotics, as $j \to \infty$
\begin{eqnarray}\label{Brw}
\V{L_j(D)} \sim \frac{\mathcal{A}_D}{256\pi} \log(\sqrt{\lambda_j}   \sqrt{\mathcal{A}_D}) \ , \quad 
\V{N_j(D)} = \frac{11\mathcal{A}_D}{64\pi^3} 
\lambda_j\log(\sqrt{\lambda_j}   \sqrt{\mathcal{A}_D})  \ .
\end{eqnarray}
In \cite{NPR17}, the authors recover these results and show that the properly scaled versions of $L_j(D)$ and $N_j(D)$ satisfy a central limit theorem in the high-energy regime. 
Berry's cancellation phenomenon essentially concerns the order of magnitude of the asymptotic variance in \eqref{Brw}: indeed, its \textit{logarithmic} order is unexpectedly smaller than a natural prediction. Loosely speaking, such a lower order of magnitude originates from the exact cancellation of the leading term in the  \textit{Kac-Rice formula} for the variance.  
A general explanation of such a cancellation, based on the use of Wiener-chaos expansions of the nodal volumes, distilling the main ideas introduced in \cite{MPRW16,DNPR16,NPR17} into a general principle, will be developed in the forthcoming sections. 

\subsection{Plan of the paper}
In Section \ref{AbsCan}, we provide a general result (see Theorem \ref{MainThm1}) leading to cancellation phenomena in the setting of geometric functionals associated with nodal sets of multiple independent Gaussian fields. The proof is deferred to Appendix \ref{AbsCanProof}.
The proof of Theorem \ref{MainThm2} on nodal sets of arithmetic random waves on the three-torus is the content of Section \ref{ARWProof}. 
Appendices \ref{Constants4}-\ref{AppSing} contain proofs of technical results needed for the proof of Theorem \ref{MainThm2}.

\subsection*{Acknowledgement}
The author thanks Professor  Giovanni Peccati for his guidance throughout this work and acknowledges support of the Luxembourg National Research Fund PRIDE15/10949314/GSM.

\section{Wiener Chaos and abstract cancellation phenomena}\label{AbsCan}
In this section, we present some general results about non-linear functionals of Gaussian fields that admit an integral representation in terms of Dirac masses and Jacobians. As discussed in Section \ref{Exs}, this contains as special cases exact and partial cancellations discovered in \cite{DNPR16,NPR17,MPRW16,MRW17}.

\subsection{Preliminaries on Wiener Chaos}\label{WCPrel}
We briefly recall standard facts from Gaussian analysis. For further details, the reader is referred to the monographs \cite{NP12,N06}.

\medskip
Let $\{H_k:k\geq0\}$ denote the family of Hermite polynomials on the real line given recursively by 
\begin{eqnarray*}
H_0(x)=1, \ H_k(x) = xH_{k-1}(x)- H'_{k-1}(x) \ , \quad 
k \geq 1 \ .
\end{eqnarray*}
The first few are then given by 
\begin{eqnarray*}
H_0(x)=1, \ H_1(x) = x, \ H_2(x)=x^2-1, \ H_3(x) = x^3-3x, \ 
H_4(x) = x^4-6x^2+3 \ . 
\end{eqnarray*}
Moreover, the following symmetry relation holds for every $k\geq 0$, and every $x\in \R$,
\begin{eqnarray}\label{Hsym}
H_k(-x)=(-1)^kH_k(x) \ . 
\end{eqnarray}
It is well-known that $\mathbb{H}:=\{H_k/\sqrt{k!}:k\geq 0\}$ forms a complete orthonormal system of $L^2(\gamma)=:L^2(\R,\mathscr{B}(\R),\gamma(x)dx)$, where $\gamma(x)$ denotes the standard Gaussian probability density function.

Let $G=\{G(u):u\in \mathscr{U}\}$ denote a centred Gaussian field on a generic set $\mathscr{U}$ and let $\mathbb{G}$ be the real Gaussian Hilbert space obtained as the $L^2(\Prob)$-closure of the vector space 
of all finite real linear combinations of elements of $G$. For an integer $q\geq 0$, we then denote by $\mathbf{C}_q^{\mathbb{G}}$ the $q$-th Wiener chaos associated with $\mathbb{G}$, that is, the $L^2(\Prob)$-closure of the vector space of all finite real linear combinations of elements of the form 
\begin{eqnarray*}
\prod_{j=1}^m H_{q_j}(X_j) \ , \quad m \geq 1 \ ,
\end{eqnarray*}
such that $q_1+\ldots+q_m = q $ and $(X_1,\ldots,X_m)$ is a standard $m$-dimensional Gaussian vector extracted from $\mathbb{G}$. 
In particular, $\mathbf{C}_0^{\mathbb{G}}= \R$ consists of all constant random variables.  
Since $\mathbb{H}$ is an orthonormal system of $L^2(\gamma)$, it follows that whenever $q\neq q'$, the spaces $\mathbf{C}_q^{\mathbb{G}}$ and $\mathbf{C}_{q'}^{\mathbb{G}}$ are orthogonal with respect to the inner product of $L^2(\Prob)$, and one has the following decomposition 
\begin{eqnarray*}
L^2(\Omega, \sigma(\mathbb{G}), \Prob) 
= \bigoplus_{q \geq 0} \mathbf{C}_q^{\mathbb{G}} \ , 
\end{eqnarray*}
that is, every $\sigma(\mathbb{G})$-measurable random variable $F$ can be uniquely written as series (converging in the $L^2(\Prob)$-sense)
\begin{eqnarray}\label{WCF}
F = \sum_{q\geq 0} \proj_{q}(F) \ , 
\end{eqnarray}
where for $q\geq 0, \proj_{q}(F) \in \mathbf{C}_q^{\mathbb{G}}$ denotes the projection of $F$ onto $\mathbf{C}_q^{\mathbb{G}}$. Moreover, since 
$\mathbf{C}_0^{\mathbb{G}}=\R$, it follows that $\proj_0(F) = \E{F}$.

\subsection{An abstract cancellation phenomenon}\label{SubAbsCan}
We consider a finite measurable space $(Z,\mathscr{Z},\mu)$ such that 
$\mu(Z)=1$. Let $G=\{G(z):z\in Z\}$ be a real-valued centred Gaussian field indexed by $Z$. For an integer $\ell \geq 1$, let $G^{(1)},\ldots, G^{(\ell)}$ be i.i.d. copies of $G$ and write $\mathbf{G}=\{\mathbf{G}(z)=(G^{(1)}(z),\ldots,G^{(\ell)}(z)):z\in Z\}$ to indicate the associated  $\ell$-dimensional Gaussian field. Additionally, let $W=\{W(z):z \in Z\}$ be a (not necessarily Gaussian) random field indexed by $Z$. We denote by $\delta_u$ the Dirac mass at $u \in \R$. We introduce the following definition. 
\begin{Def}\label{J}
For every $u^{(\ell)}:=(u_1,\ldots,u_{\ell}) \in \R^{\ell}$, we define the random variable 
\begin{eqnarray}\label{DefJ}
&&J(\mathbf{G},W;u^{(\ell)}) := \int_{Z} \prod_{i=1}^{\ell} \delta_{u_i}(G^{(i)}(z)) \cdot W(z) \ \mu(dz) \notag \\
&& := \lim_{\eps \to 0}  \int_Z 
(2\eps)^{-\ell } \prod_{i=1}^{\ell} \ind{[-\eps,\eps]}(G^{(i)}(z)-u_i) \cdot W(z) \ \mu(dz) 
\end{eqnarray}
whenever the limit exists $\Prob$-almost surely. In the case where the limit exists in $L^p(\Prob)$ for $p \geq 1$, we say that $J(\mathbf{G},W;u^{(\ell)})$ is \textit{well-defined} in $L^p(\Prob)$. 
\end{Def}
Our aim is to study the Wiener-It\^{o} chaos expansion of $J(\mathbf{G},W;u^{(\ell)})$. As we will prove later (see Lemma \ref{LemASL2}), the nodal volumes $L_n^{(\ell)}, \ell \in [3]$ defined in \eqref{defL} are obtained $\Prob$-a.s. and in $L^2(\Prob)$ as $L_{n}^{(\ell)}=J(\mathbf{G},W,(0,\ldots,0))$,
where $\mathbf{G}=\bT_n^{(\ell)}$ is as in \eqref{Tnl} and $W(z)$ is the square root of the Gramian determinant of the Jacobian matrix of $\bT_n^{(\ell)}$ computed at $z$.

\medskip
For integers $1\leq \ell \leq k$, we use the notation $
\bX = \big\{X^{(i)}_j: (i,j) \in [\ell] \times [k]\big\} $
to indicate a generic element of the class $\mathrm{Mat}_{\ell,k}(\R)$ of $\ell\times k$ matrices. The following definition generalizes the notion of Gramian determinants.
\begin{Def}\label{DefPhi}
We say that a map $\Phi_{\ell,k}: \mathrm{Mat}_{\ell,k}(\R) \to \R_+$ satisfies \textit{Assumption A} if it satisfies the following four requirements for every $\bX \in \mathrm{Mat}_{\ell,k}(\R) $:
\begin{enumerate}
\item[(A1)] $\Phi_{\ell,k}$ is invariant under permutations of columns and rows of $\bX$, that is, 
\begin{equation*}
\Phi_{\ell,k}(\bX) =\Phi_{\ell,k}\big(\{X^{(i)}_{\sigma(j)}: (i,j) \in [\ell]\times[k]\big\})
= \Phi_{\ell,k}\big(\{X^{(\pi(i))}_{j}: (i,j) \in [\ell]\times[k]\big\})
\end{equation*}
for every permutation $\sigma$ of $[k]$ and $\pi$ of $[\ell]$.
\item[(A2)]  $\Phi_{\ell,k}$ is positively homogeneous as a function of the rows of $\bX$, that is, for every $c \in \R$ and every $i\in [\ell]$, $|c| \Phi_{\ell,k}(\bX) = \Phi_{\ell,k}(\bX^*)$,
where $\bX^*$ denotes the matrix obtained from $\bX$ by multiplying the $i$-th row by $c$. 
\item[(A3)] $\Phi_{\ell,k}$ is invariant under sign changes in the columns of $\bX$, that is, for every $j\in [k]$, $\Phi_{\ell,k}(\bX) = \Phi_{\ell,k}(\bX^*) $,
where $\bX^*$ denotes the matrix obtained from $\bX$ by multiplying the $j$-th column by $-1$. 
\item[(A4)] If $\ell \geq 2, \Phi_{\ell,k}$ is invariant under row addition, that is, $\Phi_{\ell,k}(\bX) = \Phi_{\ell,k}(\bX^*)$, 
where $\bX^*$ denotes the matrix obtained from $\bX$ by replacing its $i_1$-th row by the sum of its $i_1$-th and $i_2$-th row for $i_1\neq i_2 \in [\ell]$.
\end{enumerate}
\end{Def}
A prototype example of a function satisfying \textit{Assumption A} above is given by the Gramian determinant $
\Phi_{\ell,k}^*(\bX):= \det (\bX \bX^T)^{1/2} $    
as proved in Lemma \ref{CauchyBinet} of Appendix \ref{Constants4}. 
We stress that, although in the proof of Theorem \ref{MainThm2} on ARW, we use the particular function $\Phi_{\ell,k}^*(\bX):= \det (\bX \bX^T)^{1/2}$ in order to derive the Wiener-It\^{o} chaos expansion of the nodal volumes, our main result about cancellation phenomena stated in Theorem \ref{MainThm1} here below holds for any function $\Phi$ verifying  \textit{Assumption A}.

\medskip
To state our  result, we introduce the following objects:
\begin{itemize}
\item For every $i\in [\ell]$, let 
\begin{equation*}
\bX^{(i)} = \left\{ \bX^{(i)}(z) :=(X_0^{(i)}(z),X_1^{(i)}(z),\ldots,X_k^{(i)}(z)):z \in Z\right\}
\end{equation*}
be a $(k+1)$-dimensional standard Gaussian field, i.e. $\bX^{(i)}$ is a Gaussian family and for every $z\in Z$, the vector $\bX^{(i)}(z)$ is a standard $(k+1)$-dimensional Gaussian vector, that is, its  coordinates $X^{(i)}_j(z), j=0,\ldots, k$ are independent standard Gaussian random random variables.
For $z\in Z$, we let $\bX_{\star}^{(i)}(z):=(X_1^{(i)}(z),\ldots,X_k^{(i)}(z))$ and write 
\begin{eqnarray}\label{mat}
\bX_{\star}(z):=\left\{ X_j^{(i)}(z):(i,j) \in [\ell]\times[k]\right\}
\end{eqnarray}
for the $\ell \times k$ matrix whose $i$-th row is given by $\bX_{\star}^{(i)}(z)$.  If $\ell \geq 2$, for every $i_1 \neq i_2 \in [\ell]$, we assume that the random fields 
$\bX^{(i_1)}$ and $\bX^{(i_2)}$ are stochastically independent.
\item For every $i \in [\ell]$, we define the quantities
\begin{eqnarray}
D^{(i)} &:=& \frac{1}{k}\sum_{j=1}^k \int_Z X_j^{(i)}(z)^2 \ \mu(dz) - \int_Z X_0^{(i)}(z)^2 \ \mu(dz) \ , \label{Di} \\
m^{(i)} &:=& \int_Z X_0^{(i)}(z) \ \mu(dz) \ . \label{mi}  
\end{eqnarray}
\item Consider a map $\Phi_{\ell,k}:\R^{\ell \times k} \to \R_+$ that satisfies \textit{Assumption A} of Definition \ref{DefPhi} and  such that for every $z\in Z$,
\begin{equation*}
\E{\Phi_{\ell,k}(\bX_{\star}(z))^2}<\infty \ ,
\end{equation*}
and set 
\begin{eqnarray}\label{notalk}
\E{\Phi_{\ell,k}(\bX_{\star}(z))} =: \alpha_{\ell,k} \ .
\end{eqnarray}
\end{itemize}

\sloppy
Our next result provides the chaotic projections 
onto the $q$-th Wiener chaos associated with $\{\bX^{(1)},\ldots,\bX^{(\ell)}\}$ of the random variable $J(\mathbf{G},W;u^{(\ell)})$ defined in Definition \ref{J} in the case where 
\begin{eqnarray}\label{GW}
\mathbf{G}=\left\{(X_0^{(1)}(z),\ldots,X_0^{(\ell)}(z)):z\in Z\right\} \ , \quad 
W=\left\{\Phi_{\ell,k}(\bX_{\star}(z)):z\in Z\right\} \ .
\end{eqnarray}
Note that, for every $z \in Z$, $W(z)$ as defined in \eqref{GW} is $\sigma(\mathbf{G})$-measurable and stochastically independent of $\mathbf{G}(z)$. Part (ii) contains a general version of the chaos cancellation phenomenon observed e.g. in \cite{Wig10,DR19,KKW13,DNPR16,MPRW16,NPR17,Cam17}. Its proof is deferred to Appendix \ref{AbsCanProof}.

\begin{Thm}\label{MainThm1}
Assume the above setting. Then, we have:
\begin{enumerate}[label=\rm{(\roman*)}]
\item (General projection formulae) Fix $u^{(\ell)}:=(u_1,\ldots, u_{\ell}) \in \R^{\ell}$ and assume that $J(\mathbf{G},W;u^{(\ell)})$ with $(\mathbf{G},W)$ as in \eqref{GW} is well-defined in $L^2(\Prob)$ in the sense of Definition \ref{J}. Writing $J=J(\mathbf{G},W;u^{(\ell)})$, we have, for every $q\geq0$, 
\begin{gather}\label{Pq}
\proj_q(J) = \sum_{\substack{j_1,\ldots,j_{\ell},r\geq0 \\ j_1+\ldots+j_{\ell}+r=q}} \frac{\beta^{(u_1)}_{j_1}\cdots \beta^{(u_{\ell})}_{j_{\ell}}}{j_1!\ldots j_{\ell}!} \int_Z \prod_{i=1}^{\ell} H_{j_i}(G^{(i)}(z)) \cdot \proj_r(W(z)) \mu(dz) , 
\end{gather}
where $\{\beta^{(u_i)}_{j}:j\geq0\}$ denote the coefficients associated with the formal Hermite expansion of the Dirac mass $\delta_{u_i}$, given by 
\begin{eqnarray*}
\beta^{(u)}_j = \int_{\R} \delta_u(y)H_j(y) \gamma(y)  dy = H_j(u) \gamma(u) \ .
\end{eqnarray*}
In particular, 
\begin{eqnarray}
&&\proj_0(J) = \E{J} = \alpha_{\ell,k} \cdot \prod_{i=1}^{\ell}
\gamma(u_i) \ , \label{P0i} \\
&&\proj_1(J) = \alpha_{\ell,k} \cdot \prod_{i=1}^{\ell} \gamma(u_i)  \cdot
\sum_{i=1}^{\ell}m^{(i)}u_i \ , \label{P1i} \\
&&\proj_2(J)= \frac{\alpha_{\ell,k}}{2} \cdot  \prod_{i=1}^{\ell} \gamma(u_i)  \cdot 
\sum_{i=1}^{\ell} \bigg( u_i^2 \int_Z (X_0^{(i)}(z)^2-1)\ \mu(dz) + D^{(i)} \bigg) \ . \label{P2i} 
\end{eqnarray}
\item (Abstract cancellation) If $u_i=D^{(i)}=0$ for every $i\in[\ell]$, then (using \eqref{notalk})
\begin{eqnarray}
&&\proj_0(J) = \E{J} = \frac{\alpha_{\ell,k}}{(2\pi)^{\ell/2}} \ , \label{P0ii} \\
&&\proj_{2q+1}(J) = \proj_2(J)= 0  \ , \quad  q \geq 0 \ . \label{Can} 
\end{eqnarray}
\end{enumerate}
\end{Thm}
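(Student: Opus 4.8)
The plan is to derive everything from the general projection formula \eqref{Pq} together with the structural Assumptions (A1)--(A4) on $\Phi_{\ell,k}$, which control the Hermite expansion of $W(z)=\Phi_{\ell,k}(\bX_\star(z))$. First I would set $u_i=0$ in \eqref{P0i} and \eqref{P1i}: since $\gamma(0)=(2\pi)^{-1/2}$ this immediately yields $\proj_0(J)=\alpha_{\ell,k}(2\pi)^{-\ell/2}$, which is \eqref{P0ii}, and makes the $\ell=1,\dots$ sum in \eqref{P1i} vanish, so $\proj_1(J)=0$. For $\proj_2(J)$ I would plug $u_i=0$ into \eqref{P2i}: the term $u_i^2\int_Z(X_0^{(i)}(z)^2-1)\,\mu(dz)$ drops out, leaving $\proj_2(J)=\tfrac{\alpha_{\ell,k}}{2}(2\pi)^{-\ell/2}\sum_{i=1}^\ell D^{(i)}$, and the hypothesis $D^{(i)}=0$ for every $i$ kills this. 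So the even case $q=2$ and the first odd case follow directly once \eqref{Pq}--\eqref{P2i} are in hand.

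The substantive part is the vanishing of \emph{all} odd chaoses, $\proj_{2q+1}(J)=0$ for $q\geq0$. Here I would use the parity/symmetry structure. Each summand in \eqref{Pq} (with all $u_i=0$) is
\[
\frac{\beta^{(0)}_{j_1}\cdots\beta^{(0)}_{j_\ell}}{j_1!\cdots j_\ell!}\int_Z \prod_{i=1}^\ell H_{j_i}(G^{(i)}(z))\cdot \proj_r(W(z))\,\mu(dz),
\]
with $j_1+\dots+j_\ell+r=2q+1$. First, $\beta^{(0)}_{j}=H_j(0)\gamma(0)=0$ whenever $j$ is odd (by \eqref{Hsym}, $H_j(0)=(-1)^jH_j(0)$), so only terms with every $j_i$ even survive; consequently $r$ must be odd. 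It therefore suffices to show that the odd-chaos projections of $W(z)$ contribute nothing, i.e. that for odd $r$ the relevant integrals vanish. The key claim is that $\proj_r(W(z))\equiv 0$ for every odd $r$, because $W(z)=\Phi_{\ell,k}(\bX_\star(z))$ is an \emph{even} function of the Gaussian vector $\bX_\star(z)$: by (A3) $\Phi_{\ell,k}$ is invariant under sign change of any single column, and iterating over all $k$ columns gives $\Phi_{\ell,k}(-\bX_\star)=\Phi_{\ell,k}(\bX_\star)$; an even function of a standard Gaussian vector has all odd Hermite coefficients equal to zero. Hence every term in $\proj_{2q+1}(J)$ is zero, proving \eqref{Can}.

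I expect the main obstacle to be purely bookkeeping rather than conceptual: one must be careful that the formal manipulations in \eqref{Pq} are licit, i.e. that the Wiener--It\^o expansion of $J$ really is obtained term-by-term from the Hermite expansions of the Dirac masses and of $W(z)$ — but this is exactly what part (i) of the theorem (whose proof is deferred to Appendix \ref{AbsCanProof}) grants us, since we are assuming $J$ is well-defined in $L^2(\Prob)$. The only genuine input needed beyond (i) is the elementary observation that (A3) forces $W(z)$ to be an even functional, hence supported on even chaoses; combined with $\beta^{(0)}_{\text{odd}}=0$ this gives the parity obstruction that makes all odd chaoses and the second chaos disappear. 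Assumptions (A1), (A2), (A4) are not needed for this particular statement (they enter the finer computations of $\proj_4$ elsewhere), so I would simply not invoke them here.
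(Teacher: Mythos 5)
Your argument for part (ii) is fine and in fact coincides with the paper's: once \eqref{Pq}--\eqref{P2i} are available, setting $u_i=D^{(i)}=0$ gives \eqref{P0ii} and $\proj_2(J)=0$, and the vanishing of the odd chaoses follows because $\beta^{(0)}_{j}=0$ for odd $j$ forces $r$ to be odd in every surviving term, while $\Phi_{\ell,k}(-\bX_\star)=\Phi_{\ell,k}(\bX_\star)$ (iterating (A3) over the $k$ columns, i.e. $\bX\eqLaw-\bX$ plus \eqref{Hsym}) kills all odd-order Hermite coefficients of $W(z)$. This is exactly the paper's route via \eqref{E3}.

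The genuine gap is that the statement you were asked to prove is the \emph{whole} of Theorem \ref{MainThm1}, and you have proved only part (ii) while explicitly assuming part (i). Part (i) is the substantive half: it requires (a) justifying that the chaos expansion of $J$ can be computed term-by-term from the Hermite expansions of the approximating indicators $(2\eps)^{-\ell}\prod_i\ind{[-\eps,\eps]}(G^{(i)}(z)-u_i)$ and of $W(z)$, using the $L^2(\Prob)$ well-definedness and the limits $\beta^{(u)}_j(\eps)\to\beta^{(u)}_j$ (this is the content of Proposition \ref{WCJProp} / Lemma \ref{WCDirac}, not something one may simply cite, since it is part of the claim); and (b) computing the low-order projections of $W(z)=\Phi_{\ell,k}(\bX_\star(z))$, in particular
\begin{equation*}
\E{\Phi_{\ell,k}(\bX_\star(z))\,H_2\big(X^{(i)}_j(z)\big)}=\frac{\alpha_{\ell,k}}{k},
\end{equation*}
which is what produces the specific combination $\frac{1}{k}\sum_j\int X_j^{(i)2}-\int X_0^{(i)2}$, i.e. the quantity $D^{(i)}$ in \eqref{P2i}. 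That computation is not a parity argument: it uses (A2) (positive homogeneity in the rows) together with the independence of $\norm{X^{(i)}}_k$ and $X^{(i)}/\norm{X^{(i)}}_k$ and the chi-moment identity $\E{\norm{X}_k^3}/\E{\norm{X}_k}=k+1$ (Lemmas \ref{LemRU}, \ref{LemNorms}, \ref{LemWC}(iii)); the vanishing of the mixed second-order terms $\E{\Phi_{\ell,k}(\bX)X^{(i_1)}_{j_1}X^{(i_2)}_{j_2}}$ likewise needs (A2) when $i_1\neq i_2$. So your closing remark that (A2) is not needed for this statement is incorrect: without it you cannot obtain \eqref{P2i}, and hence the cancellation $\proj_2(J)=0$ under $D^{(i)}=0$ has no basis. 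To complete the proof you must supply part (i) along the lines of Appendix \ref{AbsCanProof}: establish the termwise expansion \eqref{WCJ}, compute $\proj_0,\proj_2$ and the odd projections of $\Phi_{\ell,k}(\bX_\star(z))$ via Lemma \ref{LemWC}/Proposition \ref{WCPhi}, and then read off \eqref{P0i}--\eqref{P2i} before specialising as you do.
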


As anticipated, we will apply Theorem \ref{MainThm1} to the study of nodal sets of Gaussian Laplace eigenfunctions. The following section deals with two such examples.

\subsection{Applications to nodal sets of Gaussian Laplace eigenfunctions} \label{Exs}
We provide two examples of applications of Theorem \ref{MainThm1} dealing with nodal volumes associated with (possibly multi-dimensional) stationary Gaussian random fields that are Laplace eigenfunctions. Example (i) deals with ARWs on the $d$-dimensional torus and is effectively used in the proof of Theorem \ref{MainThm2}, whereas (ii) is Berry's random wave model in $\R^d$.

\paragraph{(i) ARW on $\tor{d}$.}
Let $d \geq 2$ and $(Z,\mathscr{Z},\mu) = (\tor{d}, \mathcal{B}(\tor{d}), dx)$ with $dx$ denoting the Lebesgue measure on $\R^d$. For integers $1 \leq \ell \leq d$, consider independent ARWs $T_n^{(1)},\ldots,T_n^{(\ell)}$ on $\tor{d}$. By a straightforward computation, we have that, for every $i \in [\ell]$ and $j \in [d]$, the partial derivatives $\partial_{j} T_n^{(i)}(x)$ are centred Gaussian random variables with variance
\begin{gather}\label{VarT}
\V{\partial_{j} T_n^{(i)}(x)} = \frac{E_n}{d} \ , \quad n \in S_d , \quad x \in \tor{d} , 
\end{gather}
where $\partial_{j}:=\partial/\partial x_j$. 
Let $\mathbf{G} = \{ (T_n^{(1)}(x),\ldots, T_n^{(\ell)}(x)) :x \in \tor{d} \}$ and  write 
$\tilde{\partial_j}:=(E_n/d)^{-1/2}\partial_j$ for the normalised derivatives. Denote by $\mathbf{G}_{\star}(x) $  the normalised Jacobian $\ell \times d$ matrix of $\mathbf{G}$ computed at $x \in \tor{d}$ and
consider the random field $W= \{ \Phi_{\ell,d}^*(\mathbf{G}_{\star}(x) ):x\in \tor{d}\}$ 
where $\Phi_{\ell,d}^*(A) = \det(AA^T)^{1/2}$ for $A \in \mathrm{Mat}_{\ell,d}(\R)$.
Then, using the Area/Co-Area formula (see e.g. Propositions 6.1 and 6.13 in \cite{AW09}), the random variable 
\begin{gather*}
L_{n}^{(\ell)}(d):=\bigg(\frac{E_n}{d}\bigg)^{\ell/2}J(\mathbf{G},W,(0,\ldots,0))
\end{gather*}
represents the $(d-\ell)$-dimensional volume of the zero set of $\mathbf{G}$, where $J$ is defined according to Definition \ref{J}. Note that $L_n^{(\ell)}(3)=L_n^{(\ell)}, \ell=1,2,3$ as defined in \eqref{defL}. The continuity result in Theorem \ref{ThmCont} shows that the nodal volume is defined $\Prob$-a.s.  The fact that the random variable $L_n^{(1)}(d)$ is well-defined in $L^2(\Prob)$ for $d\geq2$ is proved in \cite{RW08}, whereas the case $(\ell,d)=(2,2)$ is proved in \cite{DNPR16}. The remaining cases on the three-dimensional torus corresponding to $(\ell,d)=(2,3),(3,3)$ will be proved in Lemma \ref{LemASL2}, the existence in $L^2(\Prob)$ of the nodal volume for arbitrary $\ell$ and $d$ can be proved by similar arguments, for which we omit the details.
Now, for every $i\in [\ell]$, the quantity $D^{(i)}$ in \eqref{Di} satisfies
\begin{eqnarray*}
D^{(i)} &=& \frac{1}{d}\sum_{j=1}^d \int_{\tor{d}} \tilde{\partial_j} T_{n}^{(i)}(x)^2 \ dx - \int_{\tor{d}} T_n^{(i)}(x)^2 \ dx   \\
&=& \frac{1}{d}\int_{\tor{d}} \norm{ \tilde{\nabla} T^{(i)}_n(x)}^2 \ dx 
- \int _{\tor{d}} T_n^{(i)}(x)^2 \ dx \\
&=& \frac{1}{d}\int_{\tor{d}} \scal{\tilde{\nabla}T_n^{(i)}(x)}{\tilde{\nabla}T_n^{(i)}(x)} \ dx - \int _{\tor{d}} T_n^{(i)}(x)^2 \ dx \\ 
&=& \frac{1}{E_n}  \int_{\tor{d}} \scal{\nabla T_n^{(i)}(x)}{\nabla T_n^{(i)}(x)} \ dx - \int _{\tor{d}} T_n^{(i)}(x)^2 \ dx \ .
\end{eqnarray*}
Using Green's first identity (see e.g. \cite{L97}, p.44) and the fact that $\Delta T_n^{(i)}(x) = - E_n T_n^{(i)}(x)$, gives
\begin{equation*}
D^{(i)} = -\frac{1}{E_n} \int_{\T} T_n^{(i)}(x) \Delta T_n^{(i)}(x) \ dx 
- \int _{\T} T_n^{(i)}(x)^2 \ dx
= 0 \ .
\end{equation*} 
In particular, we conclude from \eqref{Can} that the second chaotic projection of the nodal volume $L_n^{(\ell)}$ is identically zero.

\paragraph{(ii) BRW on $\R^d$.} Let $1 \leq \ell \leq d$ be as above. Consider a compact convex set $D \subset \R^d$ with $C^1$ boundary $\partial D$. Let $(Z,\mathscr{Z},\mu) = (D, \mathcal{B}(D), dx)$. Write $\{B_E(x):x \in D\}$ to indicate Berry's random wave with parameter $E>1$ restricted to $D$, that is, $B_E$ is the stationary centred Gaussian Laplace eigenfunction on $\R^d$ with covariance function (see e.g. Theorem 5.7.2 \cite{AT09})
\begin{gather*}
\E{B_E(x)\cdot B_E(y)} = \frac{J_{(d-2)/2}(2\pi\sqrt{E}\norm{x-y})}{(2\pi\sqrt{E}\norm{x-y})^{(d-2)/2}}, \quad x,y \in D,
\end{gather*}
with $J_m$ denoting the Bessel function of order $m$ of the first kind, and energy $4\pi^2E$. Consider $B_E^{(1)},\ldots, B_E^{(\ell)}$ i.i.d. copies of $B_E$ and $\mathbf{G}= \{(B_E^{(1)}(x),\ldots,B_E^{(\ell)}(x)) : x \in D  \}$.
One can show by a direct computation, that for every $i \in [\ell]$ and $j\in [d]$,
\begin{gather*}\label{VarB}
\V{\partial_jB_E^{(i)}(x)} = \frac{4\pi^2 E}{d},  \quad x \in D.
\end{gather*}
As in Example (i), we write $\tilde{\partial_j}:=(4\pi^2E/d)^{-1/2}\partial_j$ for the normalised derivatives and consider the 
random field $W= \{ \Phi_{\ell,d}^*(\mathbf{G}_{\star}(x) ):x\in D\}$ with $\Phi_{\ell,d}^*(A) = \det(AA^T)^{1/2}$ for $A \in \mathrm{Mat}_{\ell,d}(\R)$. Then, the random variable 
\begin{gather*}
L_E^{(\ell)}(d):=\bigg(\frac{4\pi^2E}{d}\bigg)^{\ell/2}J(\mathbf{G},W,(0,\ldots,0))
\end{gather*}
is the $(d-\ell)$-dimensional nodal volume of $\mathbf{G}$, where as previously, $J$ is as in Definition \ref{DefJ}. Again, an application of Theorem \ref{ThmCont}, shows that $L_E^{(\ell)}(d)$  is well-defined $\Prob$-a.s. The existence in $L^2(\Prob)$ is proved in the cases $(\ell,d)=(1,2),(2,2)$ in \cite{NPR17} and the arguments therein can be extended to the case of arbitrary integers $\ell$ and $d$.
Arguing as in the previous example, using Green's identity, the quantity $D^{(i)}$ in \eqref{Di} is equal to 
\begin{gather*}
D^{(i)} = \frac{1}{4\pi^2 E}   \int_{\partial D}  B_E^{(i)}(x) \scal{\nabla B_E^{(i)}(x)}{n(x)}  dx,
\end{gather*}
where $n(x)$ denotes the outward unit normal vector to $\partial D$ at $x$.
In particular, $D^{(i)}$ and hence the second chaotic component of $L_E^{(\ell)}(d)$ reduce to an integration over the boundary of $D$, thus recovering the exact expression of the second Wiener chaos of $L_E^{(1)}$(2) obtained in Lemma 4.1 \cite{NPR17} for $d=2$. As already pointed out, in \cite{DEL19}, the authors study among others the nodal length restricted to growing cubes of the complex BRW on $\R^3$ corresponding to the case $(\ell,d)=(2,3)$. In particular, applying Green's formula to the expression of the second chaotic component (see Lemma 8, \cite{DEL19}), one can proceed similarly as above to show that it reduces to a boundary integration.

\begin{Rem}
An analogous analysis as in example (i) for ARWs on $\tor{d}$ can be carried out for the related model of spherical harmonics on the $d$-sphere, see \cite{MRW17} for the case of the $2$-sphere. 
\end{Rem}

\section{Proof of Theorem \ref{MainThm2}}\label{ARWProof}
Section \ref{SecProof} contains the proof of Theorem \ref{MainThm2}: such a proof is based on a number of technical results, whose proofs and discussion are provided in Appendix \ref{AbsCanProof}-\ref{AppSing}. The only exception to this strategy of presentation is given by Proposition \ref{Var4} and \ref{Limit4}: indeed, since these results follow from direct probabilistic arguments, their full proofs will be immediately provided in the forthcoming Section \ref{Study4}. 
 
\subsection{The proof}\label{SecProof} 
\subsubsection{An integral representation of $L_n^{(\ell)}$} 
The proof of Theorem \ref{MainThm2} is based on the Wiener chaos expansion of the quantities $L_n^{(\ell)}$ defined in \eqref{defL}. 
In order to derive this expansion, we will rigorously prove that the nodal volume $L_n^{(\ell)}$ is formally obtained $\Prob$-almost surely and in $L^2(\Prob)$ 
as
\begin{gather*}
L_n^{(\ell)} = \int_{\T} \prod_{i=1}^{\ell} \delta_0(T_n^{(i)}(x)) \cdot \Phi^*_{\ell,3}(\Jac_{\bT_n^{(\ell)}}(x)) \ dx, 
\end{gather*}
where $\Phi_{\ell,3}^*(A) =  \det(AA^T)^{1/2}$ for $A \in \mathrm{Mat}_{\ell,3}(\R)$, and $\Jac_{\bT_n^{(\ell)}}(x)$ stands for the Jacobian matrix of $\bT_n^{(\ell)}$ evaluated at $x$.
More precisely, for $\eps >0$, we consider the $\eps$-approximations $L_{n,\eps}^{(\ell)}$ of 
$L_n^{(\ell)}$ given by (compare with Definition \ref{J})
\begin{equation*}
L_{n,\eps}^{(\ell)} := \int_{\T} (2\eps)^{-\ell}\prod_{i=1}^{\ell} \ind{[-\eps,\eps]}(T_n^{(i)}(x)) \cdot
\Phi_{\ell,3}^*(\Jac_{\bT_n^{(\ell)}}(x)) \ dx \ , \quad \eps>0 \ 
\end{equation*} 
and prove the following  statement. 
\begin{Lem}\label{LemASL2}
For $\ell\in[3]$ and $n \in S_3$, the random variable $L_{n,\eps}^{(\ell)}$ converges to $L_n^{(\ell)}$ $\Prob$-a.s and in $L^2(\Prob)$ as $\eps \to 0$.
\end{Lem}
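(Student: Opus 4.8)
The strategy is the standard two-step approximation scheme: first establish $L^2(\Prob)$-convergence of $L_{n,\eps}^{(\ell)}$ along a sequence $\eps\to0$, then upgrade to almost sure convergence of the whole family, possibly by a monotonicity-type or continuity argument. The key analytic difficulty, and the place where the multi-dimensional setting genuinely departs from the single-ARW case treated in \cite{RW08,Cam17}, is that one can no longer invoke a deterministic a.s.\ bound on the nodal volume. Instead, following the route of \cite{ORW08,KKW13} (and as announced in the Remark after Theorem \ref{MainThm2}), I would split the torus into a \emph{singular} region, where the Jacobian matrix $\Jac_{\bT_n^{(\ell)}}(x)$ is close to being rank-deficient (equivalently, where the smallest singular value of the normalised Jacobian is small), and its complement, the \emph{non-singular} region. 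On the non-singular region the integrand is, after conditioning, a smooth bounded functional of a non-degenerate Gaussian vector, so the Kac--Rice machinery applies: one shows $\E{(L_{n,\eps}^{(\ell)})^2}$ restricted there is uniformly bounded in $\eps$ and that the family is Cauchy in $L^2$, using the explicit Gaussian density of $(\bT_n^{(\ell)}(x),\bT_n^{(\ell)}(y))$ together with the fact that for $x\neq y$ the pair is non-degenerate for $n$ fixed. The singular region must be shown to contribute negligibly; here one uses that the probability that $\det(\Jac\,\Jac^T)(x)$ is small is controlled by the explicit (real-analytic) covariance structure of the ARW and its derivatives at fixed $n$, so its Lebesgue measure (in expectation) is small and one gets a uniform-in-$\eps$ bound that vanishes as the singular set shrinks.

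Concretely, the steps I would carry out are: (1) Fix $n$ and record the joint law of $\big(\bT_n^{(\ell)}(x), \Jac_{\bT_n^{(\ell)}}(x)\big)$, noting the covariance identities \eqref{VarT} and the independence of $\bT_n^{(i)}(x)$ from its own gradient at a single point; observe that the coordinates $T_n^{(i)}$ are independent across $i\in[\ell]$, so the $\ell$-fold product structure factorises nicely. (2) Introduce, for a parameter $\rho>0$, the singular set $B_\rho := \{x\in\T : \sigma_{\min}(\widetilde{\Jac}_{\bT_n^{(\ell)}}(x))<\rho\}$ where $\sigma_{\min}$ is the least singular value of the normalised Jacobian, and split $L_{n,\eps}^{(\ell)}=L_{n,\eps}^{(\ell)}|_{B_\rho}+L_{n,\eps}^{(\ell)}|_{B_\rho^c}$. (3) On $B_\rho^c$, prove an $L^2$-Cauchy estimate for $\{L_{n,\eps}^{(\ell)}|_{B_\rho^c}\}_{\eps>0}$: writing the second moment as a double integral over $\T\times\T$ of $(2\eps)^{-2\ell}\Prob\big[\bT_n^{(i)}(x),\bT_n^{(i)}(y)\in[-\eps,\eps]^{\ell}\big]\cdot\E{\cdots}$, use that for $x\neq y$ (fixed $n$) the Gaussian pair is non-degenerate, so this converges, as $\eps\to0$, to the Kac--Rice density $\int_{\T\times\T}\mathcal{K}(x,y)\,dx\,dy$ with $\mathcal{K}$ locally integrable away from the diagonal, and integrable near the diagonal because $3-\ell\geq0$ and the field is smooth; this identifies the $L^2$-limit on $B_\rho^c$. (4) On $B_\rho$, bound $\E{L_{n,\eps}^{(\ell)}|_{B_\rho}}$ and its second moment uniformly in $\eps$ by $C\,\E{\Leb(B_\rho)}^{1/2}$-type quantities using Cauchy--Schwarz in $\E{\Phi^*_{\ell,3}(\widetilde{\Jac})\ind{B_\rho}}$ together with the fact that $\Phi^*_{\ell,3}$ has finite Gaussian moments of all orders, and show $\E{\Leb(B_\rho)}\to0$ as $\rho\to0$ from the non-degeneracy of $\widetilde{\Jac}_{\bT_n^{(\ell)}}(x)$ at every fixed $x$; letting $\rho\to0$ after $\eps\to0$ yields $L^2$-convergence of $L_{n,\eps}^{(\ell)}$ to the claimed limit, which by the Area/Co-Area formula (Propositions 6.1 and 6.13 of \cite{AW09}) equals $L_n^{(\ell)}$. (5) For the a.s.\ statement, invoke the $\Prob$-a.s.\ continuity result Theorem \ref{ThmCont} applied to $\bT_n^{(\ell)}$ (whose realisations are a.s.\ smooth with $0$ a regular value of the map $\bT_n^{(\ell)}$, by Bulinskaya-type arguments), which gives that the $\eps$-tube volume converges a.s.\ to $\mathcal{H}_{3-\ell}$ of the nodal intersection; matching the two limits identifies them.

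The main obstacle I anticipate is step (4), i.e.\ the singular-region estimate: one needs a \emph{quantitative, $\eps$-uniform} control of the contribution of $\{x : \widetilde{\Jac}_{\bT_n^{(\ell)}}(x) \text{ nearly rank-deficient}\}$, because the Jacobian of the product field $\bT_n^{(\ell)}$ degenerates on a set of positive codimension but one has to rule out that the indicator normalisation $(2\eps)^{-\ell}$ blows the integral up there. The delicate point is that for $x\in B_\rho$ the Gaussian vector $\bT_n^{(\ell)}(x)$ is still non-degenerate (its density is uniformly bounded, being standard Gaussian in each coordinate), so the only real danger is the \emph{diagonal} $x=y$ in the second-moment computation combined with small $\sigma_{\min}$; a careful use of the divided-difference form of $T_n^{(i)}(y)-T_n^{(i)}(x)$ and a Taylor expansion of $\bT_n^{(\ell)}$ around the diagonal, exactly as in \cite{ORW08,KKW13,DNPR16}, should resolve this, but making the constants uniform in $\eps$ and then sending $\rho\to0$ requires some care. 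Everything else — the factorised product structure, the finiteness of moments of $\Phi^*_{\ell,3}$ on Gaussian input (which follows from $\Phi^*_{\ell,3}(\bX)\leq \prod_i\|\bX^{(i)}_\star\|$), and the identification of the limit with $L_n^{(\ell)}$ via the coarea formula — is routine once the singular region is handled.
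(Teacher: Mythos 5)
Your almost-sure argument is essentially the paper's: Sard's theorem and the Co-Area formula express $L_{n,\eps}^{(\ell)}$ as the average of the level volumes $L_n^{(\ell)}(\T;u^{(\ell)})$ over $u^{(\ell)}\in[-\eps,\eps]^{\ell}$, and Theorem \ref{ThmCont} applied to $\bT_n^{(\ell)}-u^{(\ell)}\to\bT_n^{(\ell)}$ in the $C^1$ topology gives continuity at $u^{(\ell)}=(0,\ldots,0)$. The $L^2(\Prob)$ part of your plan, however, contains genuine gaps. The premise of your step (3) --- that for fixed $n$ the vector $(\bT_n^{(\ell)}(x),\bT_n^{(\ell)}(y))$ is non-degenerate whenever $x\neq y$ --- is false on the torus: $r_n$ is a finite trigonometric sum and attains $\pm1$ away from the diagonal (for odd $n$ every $\lambda\in\Lambda_n$ has $\lambda_1+\lambda_2+\lambda_3$ odd, so $r_n((1/2,1/2,1/2))=-1$). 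Thus the Kac--Rice density does not exist globally off the diagonal, and pairs $(x,y)$ \emph{far} from the diagonal where $|r_n|$ or its derivatives are large are precisely the obstruction. This is why the paper's ``singular region'' lives in $\T\times\T$ and is \emph{deterministic}, defined through thresholds on $r_n$, $\nabla r_n$ and $\mathrm{Hess}(r_n)$ (Definition \ref{DefSing}): Kac--Rice is invoked only inside the small cube $Q_0$, where Lemma \ref{LemKR} guarantees $r_n\neq\pm1$; singular pairs of cubes are then absorbed by Cauchy--Schwarz, stationarity and $\Leb(\mathcal{B}_Q)=O(\mathcal{R}_n(6))$; and non-singular pairs are handled through the chaos expansion and the diagram formula, using $|\tilde r_{a,b}|\leq\eta$ to sum a geometric series. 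Your proposal has no mechanism for the near-degenerate off-diagonal pairs.

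Your singular set $B_\rho=\{x:\sigma_{\min}(\widetilde{\Jac}_{\bT_n^{(\ell)}}(x))<\rho\}$ also misidentifies the source of difficulty: near rank-deficiency of the Jacobian makes $\Phi^*_{\ell,3}$ \emph{small}, not the integral large, so nothing needs to be excised there, and as a random subset of $\T$ it is of no help with the two-point degeneracies above. Moreover the claimed uniform-in-$\eps$ bound of the second moment of $L_{n,\eps}^{(\ell)}$ restricted to $B_\rho$ by a quantity of order $\E{\Leb(B_\rho)}^{1/2}$ does not follow from Cauchy--Schwarz: the second moment carries the factor $(2\eps)^{-2\ell}\prod_{i}\ind{|T_n^{(i)}(x)|\leq\eps}\ind{|T_n^{(i)}(y)|\leq\eps}$, whose expectation is only controlled through the joint density of $(\bT_n^{(\ell)}(x),\bT_n^{(\ell)}(y))$, i.e.\ through exactly the Kac--Rice-type analysis you were trying to avoid on the bad set. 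Finally, for $\ell=3$ the assertion ``integrable near the diagonal because $3-\ell\geq0$'' is not a proof: the naive bound on the two-point function is of order $\norm{z}^{-3}$, which is not integrable in dimension three; one needs both the cancellation of its leading coefficient (the factor $1-\ell/3$ in Lemma \ref{Taylorq}) and the factorial-moment form of the Kac--Rice formula, which produces the additional term $\E{L_n^{(3)}(Q_0)}$ in the second-moment computation. These points would have to be supplied before your scheme closes, and supplying them essentially forces the covariance-based singular/non-singular decomposition used in the paper.
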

The proof of Lemma \ref{LemASL2} is presented in Section \ref{SecProofASL2} of Appendix \ref{AppSing}.
Note that the case $\ell=1$ has been investigated in \cite{RW08} for arbitrary dimensions. To deal with the case $\ell=3$, one can directly adapt the proof of points (i)-(v) of Lemma 3.1 in \cite{NPR17} for the two-dimensional torus, based on universal bounds for the number of solutions of a system of trigonometric polynomials (see e.g. \cite{K91}). 

The proof of the almost sure convergence relies on a deterministic continuity result for nodal volumes restricted to compact sets on the torus associated with sequences of functions converging to a non-degenerate limit in the $C^1$-topology (see Appendix \ref{AppCont}).
Our proof of the $L^2(\Prob)$ convergence takes advantage of 
similar techniques as those that will be exposed in the forthcoming Section \ref{SectionCHO}, based on partitioning the torus into singular and non-singular subregions. We refer the reader to this part for an overview of our strategy. 

\subsubsection{Wiener-It\^{o} chaos decomposition of $L_n^{(\ell)}$.}
The statement of Lemma \ref{LemASL2} together with the fact that,
for every fixed $x \in \T$, the random variables $\bT_n^{(\ell)}(x)$ and $\Jac_{\bT_n^{(\ell)}}(x)$ are stochastically independent, justify the use of the general framework of Theorem \ref{MainThm1} to this precise setting, yielding in particular an explicit expression for the chaotic decomposition of $L_n^{(\ell)}$. 
In view of Example (i) of Section \ref{Exs} in the case $d=3$, the quantity $D^{(i)}$ in \eqref{Di} is zero for every $i\in [\ell]$. This together with the fact that we study nodal sets, implies that (in view of Theorem \ref{MainThm1} (ii)) the second-order as well as the odd-order chaoses identically vanish, yielding 
\begin{equation}\label{WCL}
L_n^{(\ell)} = \E{L_n^{(\ell)}} + \sum_{q \geq 2} \proj_{2q}(L_n^{(\ell)}) \ , \quad \ell \in [3] \ ,
\end{equation}
where we adopted the notation \eqref{WCF}.\\
\medskip
\underline{\textit{Normalised gradients.}}
Writing $T_n^{(i_1)}(x)= \Nn^{-1/2}\sum_{\lambda\in \Lambda_n} a_{i_1,\lambda}e_{\lambda}(x)$ for $i_1 \in [\ell]$, in view of \eqref{VarT}, we introduce the scaled partial derivatives having variance $1$,
\begin{eqnarray}\label{normalization}
T_{n,j}^{(i_1)}(x) := \tilde{\partial_j} T_n^{(i_1)}(x) := \sqrt{\frac{3}{E_n}} \partial_j T_n^{(i_1)}(x) 
= i\sqrt{\frac{3}{n\Nn}} \sum_{\lambda \in \Lambda_n } \lambda_{j} a_{i_1,\lambda} e_{\lambda}(x) \ ,\quad j\in[3]
\end{eqnarray}
and adopt the same notation as in \eqref{mat}, that is
\begin{equation*}
\bT_{n\star}^{(\ell)}(x)  
:= \left\{ T_{n,j}^{(i)}(x): (i,j) \in [\ell]\times[3] \right\}\in \mathrm{Mat}_{\ell,3}(\R)
\ .
\end{equation*}
Using the homogeneity property (A2) in Definition \ref{DefPhi} of the map $\Phi_{\ell,3}^*$, it follows that
\begin{equation}\label{intnorm}
L_{n,\eps}^{(\ell)} =  \bigg(\frac{E_n}{3}\bigg)^{\ell/2}
\int_{\T} (2\eps)^{-\ell} \prod_{i=1}^{\ell}\ind{[-\eps,\eps]}(T_n^{(i)}(x)) \cdot
\Phi_{\ell,3}^*(\bT_{n\star}^{(\ell)}(x)) \ dx \ , \quad \eps>0.
\end{equation}
Therefore, by virtue of the almost sure convergence stated in Lemma \ref{LemASL2}, 
we can write the nodal volume as (recall Definition \ref{J})
\begin{equation*}
L_n^{(\ell)} =  \bigg(\frac{E_n}{3}\bigg)^{\ell/2}J(\mathbf{G},W;u^{(\ell)}),
\end{equation*}
where 
\begin{eqnarray*}\label{GWu}
\mathbf{G} = \bT_n^{(\ell)} \ , \quad 
W = \{\Phi_{\ell,3}^*(\bT_{n\star}^{(\ell)}(x)):x\in \T\} \ , \quad
u^{(\ell)} = (0,\ldots,0) \in \R^{\ell} \ .
\end{eqnarray*}
The following proposition gives the Wiener-It\^{o} chaos expansion of $L_n^{(\ell)}$ and is a direct consequence of Theorem \ref{MainThm1}. 
\begin{Prop}[Wiener Chaos expansion of $L_n^{(\ell)}$]\label{WC23}
Fix $\ell \in [3]$. For $n \in S_3$, the chaotic projections of $L_n^{(\ell)}$ are given by 
\begin{equation}\label{P2odd}
\proj_2(L_n^{(\ell)})  = \proj_{2q+1}(L_n^{(\ell)})= 0 \ , \quad q\geq 0 \ , 
\end{equation}
while for $q = 0$ and $q\geq 2$, 
\begin{eqnarray}\label{Peven}
&&\proj_{2q}(L_n^{(\ell)})  
= \bigg(\frac{E_n}{3}\bigg)^{\ell/2}
\mathop{
\sum_{p^{(1)}_0,\ldots,p^{(1)}_3\geq0}
\ldots \sum_{p^{(\ell)}_0,\ldots,p^{(\ell)}_3\geq0}}_{p^{(1)}_0+\ldots+p^{(1)}_3+\ldots+p^{(\ell)}_0+\ldots+p^{(\ell)}_3=2q}
\frac{\beta_{p_0^{(1)}}\ldots \beta_{p_0^{(\ell)}}}{p_0^{(1)}! \ldots p_0^{(\ell)}!}
\alpha^{(\ell)}_3\left\{p^{(i)}_j: (i,j) \in [\ell]\times[3] \right\} \notag\\
&& \hspace{3cm}
\times \int_{\T}   \prod_{i=1}^{\ell} H_{p_{0}^{(i)}}(T_{n}^{(i)}(x)) \prod_{j=0}^3  H_{p_{j}^{(i)}}(T_{n,j}^{(i)}(x)) \ dx  \ ,\notag
\end{eqnarray} 
where $\{\beta_j:j\geq0\}$ and $\alpha^{(\ell)}_3\{\cdot\}$ are the Wiener chaos projection coefficients of $\delta_0$ and $\Phi_{\ell,3}^*$, that is 
\begin{equation*}
\beta_{2j+1} = 0 \ , \quad \beta_{2j} = \frac{H_{2j}(0)}{\sqrt{2\pi}} \ , \quad j \geq 0 \ , 
\end{equation*}
and 
\begin{equation*}\label{alpha}
\alpha^{(\ell)}_k\left\{p^{(i)}_j: (i,j) \in [\ell]\times [k] \right\} := 
\frac{1}{\prod_{i=1}^{\ell}\prod_{j=1}^{k}(p^{(i)}_j)! } 
\cdot \E{\Phi_{\ell,k}^*(\bX) \cdot \prod_{i=1}^{\ell} \prod_{j=1}^k  H_{p^{(i)}_j}(X^{(i)}_j)} \ , \quad k\geq \ell
\end{equation*}
respectively.
In particular, 
\begin{equation}\label{P0}
\proj_0(L_n^{(\ell)})  = \E{L_n^{(\ell)}} = \bigg(\frac{E_n}{3}\bigg)^{\ell/2}\frac{\alpha(\ell,3)}{(2\pi)^{\ell/2}} \ ,
\end{equation}
where 
\begin{equation*}  
\alpha(\ell,k)= \frac{(k)_{\ell}\kappa_k}{(2\pi)^{\ell/2}\kappa_{k-\ell}}   \ , 
\end{equation*}
is as in \eqref{defalk}.
\end{Prop}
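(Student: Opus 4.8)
The plan is to obtain Proposition \ref{WC23} as a direct specialisation of the abstract result Theorem \ref{MainThm1}, the only genuine external inputs being the $L^2(\Prob)$-convergence of Lemma \ref{LemASL2} and the vanishing $D^{(i)}=0$ already recorded in Example (i) of Section \ref{Exs} for $d=3$. First I would set up the dictionary between the ARW model and the framework of Section \ref{SubAbsCan}, taking $(Z,\mathscr{Z},\mu)=(\T,\mathcal{B}(\T),dx)$ and $k=d=3$. For $i\in[\ell]$ put $X_0^{(i)}(x):=T_n^{(i)}(x)$ and $X_j^{(i)}(x):=T_{n,j}^{(i)}(x)$, $j\in[3]$, with $T_{n,j}^{(i)}$ the unit-variance normalised partial derivatives of \eqref{normalization}. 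A short computation, using the invariance of $\Lambda_n$ under coordinate permutations and sign changes exactly as for \eqref{VarT}, shows that for each fixed $x$ the vector $(X_0^{(i)}(x),\ldots,X_3^{(i)}(x))$ is a standard Gaussian vector, while the fields $\bX^{(i_1)},\bX^{(i_2)}$ are independent for $i_1\neq i_2$ because $T_n^{(1)},\ldots,T_n^{(\ell)}$ are. By Lemma \ref{CauchyBinet} the Gramian map $\Phi^*_{\ell,3}(\bX)=\det(\bX\bX^T)^{1/2}$ satisfies Assumption A of Definition \ref{DefPhi}, and $\E{\Phi^*_{\ell,3}(\bX_\star(x))^2}<\infty$ since $\det(\bX\bX^T)$ is a polynomial in the entries; moreover $\mathbf{G}(x)=(X_0^{(1)}(x),\ldots,X_0^{(\ell)}(x))=\bT_n^{(\ell)}(x)$ and $W(x)=\Phi^*_{\ell,3}(\bT_{n\star}^{(\ell)}(x))$ is $\sigma(\mathbf{G})$-measurable and independent of $\mathbf{G}(x)$, so the pair $(\mathbf{G},W)$ has the form \eqref{GW}. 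By Lemma \ref{LemASL2} together with \eqref{intnorm} one gets $L_n^{(\ell)}=(E_n/3)^{\ell/2}J(\mathbf{G},W;u^{(\ell)})$ with $u^{(\ell)}=(0,\ldots,0)$ and $J$ well-defined in $L^2(\Prob)$, so Theorem \ref{MainThm1} applies verbatim.

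Next I would read off the two parts of Theorem \ref{MainThm1}. Since $u_i=0$ (we are looking at the nodal set) and $D^{(i)}=0$ for every $i$ (Example (i) with $d=3$, via Green's first identity and $\Delta T_n^{(i)}=-E_nT_n^{(i)}$), part (ii) gives $\proj_2(J)=\proj_{2q+1}(J)=0$ for $q\geq0$ and $\proj_0(J)=\alpha_{\ell,3}/(2\pi)^{\ell/2}$; multiplying by $(E_n/3)^{\ell/2}$ yields \eqref{P2odd} and, after identifying the Gramian moment $\alpha_{\ell,3}=\E{\Phi^*_{\ell,3}(\bX_\star(x))}$ with $\alpha(\ell,3)$ — the explicit evaluation carried out in Appendix \ref{Constants4} — formula \eqref{P0}. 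For the even-order projections I would start from the general projection formula \eqref{Pq} with $u^{(\ell)}=0$, so that the Dirac coefficients collapse to $\beta^{(0)}_j=H_j(0)/\sqrt{2\pi}=:\beta_j$, with $\beta_{2j+1}=0$ by the parity relation \eqref{Hsym}. Into \eqref{Pq} I would then feed the Hermite expansion of the weight: since $\Phi^*_{\ell,3}$ is square-integrable against the standard Gaussian law on $\R^{3\ell}$, one has the $L^2$-convergent expansion $W(x)=\sum_{\{p^{(i)}_j\}}\alpha^{(\ell)}_3\{p^{(i)}_j\}\prod_{i=1}^{\ell}\prod_{j=1}^{3}H_{p^{(i)}_j}(T^{(i)}_{n,j}(x))$, so that $\proj_r(W(x))$ is exactly the degree-$r$ part of this series. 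Substituting, relabelling $j_i=p_0^{(i)}$, and rewriting the total-degree constraint as $\sum_i p_0^{(i)}+\sum_{i,j}p_j^{(i)}=2q$ reproduces \eqref{Peven} summand by summand, with $\proj_0(L_n^{(\ell)})$ as its $q=0$ instance, consistent with \eqref{P0}.

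The points requiring care are structural rather than analytic, since the real work is carried by Theorem \ref{MainThm1} and Lemma \ref{LemASL2}. One must (i) check that the ARW normalisations genuinely place $(T_n^{(i)}(x),T_{n,1}^{(i)}(x),T_{n,2}^{(i)}(x),T_{n,3}^{(i)}(x))$ in the exact standard/independent configuration demanded by Theorem \ref{MainThm1} — this is where the arithmetic symmetries of $\Lambda_n$ enter — and (ii) justify substituting the $L^2$-convergent Hermite series of $W(x)$ inside the $L^2$-convergent series \eqref{Pq} and still isolating $\proj_r(W(x))$ term by term; this is legitimate because projection onto a fixed Wiener chaos is a bounded operator and the Hermite expansion of $W(x)$ already decomposes it along chaoses. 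The only non-trivial computation, the identification $\alpha_{\ell,3}=\alpha(\ell,3)$ of the expected Gramian determinant, I would isolate in Appendix \ref{Constants4}; everything else is bookkeeping.
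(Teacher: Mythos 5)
Your proposal is correct and follows essentially the same route as the paper: the paper likewise obtains Proposition \ref{WC23} as a direct consequence of Theorem \ref{MainThm1}, using Lemma \ref{LemASL2} for the $L^2(\Prob)$ well-definedness, Example (i) of Section \ref{Exs} (Green's identity) for $D^{(i)}=0$, and the Hermite expansion of $\Phi^*_{\ell,3}$ (Proposition \ref{WCPhi}, with $\alpha_{\ell,3}=\alpha(\ell,3)$ from Appendix \ref{Constants4}) to read off the even-order projections. Your added checks of the standard/independent configuration of $(T_n^{(i)}(x),T_{n,1}^{(i)}(x),T_{n,2}^{(i)}(x),T_{n,3}^{(i)}(x))$ and of the legitimacy of substituting the Hermite series of $W(x)$ into \eqref{Pq} are exactly the bookkeeping the paper leaves implicit.
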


\subsubsection{Analysis of the fourth chaotic projection}
Our main findings on the high-energy behaviour of the fourth-order chaotic projections $\proj_4(L_n^{(\ell)}), \ell \in [3]$ are contained in the next two propositions, whose proofs are presented in Section \ref{VarDist}:
\begin{Prop}\label{Var4} 
For $\ell\in [3]$, as $n \to \infty, \notcon{n}{0,4,7}{8}$,  
\begin{equation*}
\V{\proj_4(L_n^{(\ell)})} \sim \big(c_n^{(\ell)}\big)^2 \bigg(
\ell \cdot \frac{1}{250}+ \frac{\ell(\ell-1)}{2} \cdot \frac{76}{375}\bigg) \ , 
\end{equation*}
where the constant $c_n^{(\ell)}$ is given by
\begin{gather*}
c_n^{(\ell)} := \bigg(\frac{E_n}{3}\bigg)^{\ell/2} \frac{2}{ (2\pi)^{\ell/2}}\frac{\alpha(\ell,3)}{\Nn}  \ .
\end{gather*}
\end{Prop}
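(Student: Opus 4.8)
\textbf{Proof plan for Proposition \ref{Var4}.}

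The plan is to start from the explicit Wiener--It\^o chaos expansion of $\proj_4(L_n^{(\ell)})$ provided by Proposition \ref{WC23}. After normalisation, $\proj_4(L_n^{(\ell)})$ is a finite linear combination of integrals over $\T$ of products of the form $\prod_{i=1}^\ell H_{p_0^{(i)}}(T_n^{(i)}(x))\prod_{j=1}^3 H_{p_j^{(i)}}(T_{n,j}^{(i)}(x))$, where the multi-indices are constrained to sum to $4$ and, since $\beta_{2j+1}=0$, each $p_0^{(i)}$ is even. First I would enumerate the admissible index configurations: because the fields $T_n^{(i_1)}, T_n^{(i_2)}$ are independent for $i_1\neq i_2$, the total chaos order $4$ must be distributed among the $\ell$ ``blocks'' (each block consisting of a field and its three partial derivatives), and within each block the degrees of the Hermite polynomials in the field and its gradient must sum to an even number (the field degree is even, so the gradient degree is also even). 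This produces two families of terms: ``diagonal'' terms where all order-$4$ weight sits in a single block (there are $\ell$ such blocks, each contributing an identical amount by exchangeability), and ``mixed'' terms where the weight is split as $2+2$ between two distinct blocks (there are $\binom{\ell}{2}$ such pairs). This dichotomy is exactly what produces the $\ell\cdot\frac{1}{250}+\frac{\ell(\ell-1)}{2}\cdot\frac{76}{375}$ structure.

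Next I would compute the relevant Hermite projection coefficients $\alpha_3^{(\ell)}\{p_j^{(i)}\}$ of $\Phi_{\ell,3}^*(\mathbf{X})=\det(\mathbf{X}\mathbf{X}^T)^{1/2}$ for the finitely many admissible configurations of total degree $2$ and $4$ in the entries of the Jacobian block(s); this is the task flagged in Remark (b) of the excerpt, where the coefficients for a generic $\ell\times k$ matrix are obtained via properties of the Gaussian distribution and Gramian determinants, recovering Lemma 3.3 of \cite{DNPR16} as a special case. Having these constants, the variance of $\proj_4(L_n^{(\ell)})$ becomes a double integral over $\T\times\T$ of a polynomial in the covariances $r_n(x-y)$ and their (normalised) first and second derivatives, evaluated at lag $x-y$; by stationarity this reduces to a single integral over $\T$ of such a polynomial. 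Then I would invoke the arithmetic results of \cite{BM17}: namely that, under $n\to\infty,\notcon{n}{0,4,7}{8}$, the normalised moments $\int_\T r_n(x)^a \prod (\text{derivative factors})\,dx$ are asymptotically governed by the ``diagonal'' contribution of order $\Nn^{-1}$ — equivalently, that the rescaled lattice point measure equidistributes on $S^2$ and the relevant length-$4$ correlation sums are $o(\Nn^2)$ except for the main diagonal term which is $\sim \Nn^3$. Matching powers of $E_n$ and $\Nn$ yields the prefactor $(c_n^{(\ell)})^2$, and the numerical constants $\frac{1}{250}$ and $\frac{76}{375}$ come out of combining the Hermite coefficients with the Gaussian integrals $\E{H_{p}(N)^2}$ and the leading-order values of the integrals of products of $r_n$ and its derivatives (these last are the equidistribution moments $\int_{S^2}\langle\xi,e\rangle^{2a}\cdots$, i.e. constants like $\frac13,\frac15,\ldots$).

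The main obstacle I expect is twofold. First, the bookkeeping: correctly listing all order-$4$ multi-index configurations within and across blocks, and for each one correctly pairing the Hermite polynomials in $T_n^{(i)}$ with those in $T_{n,j}^{(i)}$ when computing the $L^2(\T\times\T)$ inner products (the cross-covariances between a field and its own derivatives vanish at zero lag but not at general lag, so the diagram/contraction combinatorics must be handled carefully). Second, and more delicate, is justifying that in the high-energy limit only the leading ``equidistribution'' term survives in each covariance integral, i.e. that all the arithmetic error terms — sums over $4$-tuples of frequencies $\lambda^{(1)}+\lambda^{(2)}+\lambda^{(3)}+\lambda^{(4)}=0$ with correlated angular parts — are of strictly smaller order than $\Nn^3$; this is precisely where one imports the quantitative estimates from \cite{BM17} (their bounds on length-$4$ and length-$6$ correlation sums of $\Lambda_n$ and the power-saving $O(n^{-1/28+o(1)})$ error). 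A minor additional point is verifying that $\Phi_{3,3}^*(\mathbf{X})=|\det\mathbf{X}|$ when $\ell=k=3$, so that the $\ell=3$ case is handled by the same formulas with the determinant in place of the Gramian square root; this I would note but not dwell on.
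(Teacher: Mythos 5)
Your plan is workable, but it follows a genuinely different route from the paper. The paper never computes $\V{\proj_4(L_n^{(\ell)})}$ by a direct diagram/second-moment calculation: it (a) rewrites $\proj_4(L_n^{(\ell)})$ \emph{exactly}, via the summation rule \eqref{sumC} over $\mathcal{C}_n(4)$, as a quadratic polynomial in the second-chaos quantities $W^{(i)}_{jk}(n)$, $M^{(i_1,i_2)}_{\cdot}(n)$ plus remainder terms $R,S,X$ (Lemmas \ref{ExpA} and \ref{ExpB}); (b) shows $R,S$ converge in probability to constants and $X\to0$ in $L^2(\Prob)$ using \eqref{EstX4}; (c) proves a multivariate CLT for the $W$'s and $M$'s via the Fourth Moment Theorem (Proposition \ref{PropJointLaw3}), giving $(c_n^{(\ell)})^{-1}\proj_4(L_n^{(\ell)})\Law L^{(\ell)}$; and (d) upgrades this distributional convergence to convergence of second moments by hypercontractivity of the fourth chaos, uniform integrability and Skorokhod representation, so the asymptotic variance is simply $\V{L^{(\ell)}}$, evaluated by diagonalising a Gaussian quadratic form (Proposition \ref{PropL}). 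Your route — expanding the variance as an integral over $\T$ of products of four covariances $\tilde r_{a,b}$ and keeping the leading part of the length-$4$ correlation sums together with the equidistribution moments — would give the same constants $1/250$ and $76/375$ and avoids the uniform-integrability step, but at the cost of much heavier contraction bookkeeping (field/derivative cross-covariances at nonzero lag), whereas the paper's route yields Proposition \ref{Limit4} simultaneously, which is needed anyway for Theorem \ref{MainThm2}(iii). Your predicted split into $\ell$ diagonal blocks and $\ell(\ell-1)/2$ mixed pairs is correct and matches $\V{L^{(\ell)}}=\ell\,\V{f(\mathbb{G}^{(1)})}+\frac{\ell(\ell-1)}{2}\V{g(\mathbf{G}_{1,2})}$; note, though, that the within-block parity of the gradient degrees comes from the sign-invariance properties of $\Phi^*_{\ell,3}$ (Lemma \ref{LemWC}, Proposition \ref{ChaosPhi4}), not from independence of the waves alone.

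One quantitative input in your plan must be corrected before your ``matching powers'' step works: you assert that the length-$4$ correlation sums have a diagonal main term $\sim\Nn^{3}$, so that the fourth-moment integrals of $r_n$ and its derivatives are of order $\Nn^{-1}$. In fact $\mathrm{card}(\mathcal{C}_n(4))=3\Nn^{2}-3\Nn+\mathrm{card}(\mathcal{X}_n(4))\sim 3\Nn^{2}$, hence $\int_{\T}r_n(z)^4\,dz=\mathrm{card}(\mathcal{C}_n(4))/\Nn^{4}\sim 3\Nn^{-2}$, and similarly for the derivative-weighted integrals; with your stated orders the variance would come out commensurate to $E_n^{\ell}/\Nn$ instead of $E_n^{\ell}/\Nn^{2}\asymp(c_n^{(\ell)})^{2}$. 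This is a slip rather than a structural flaw — the correct count is exactly \eqref{sumC}, and the non-degenerate part is negligible by \eqref{EstX4} — but it is the key arithmetic ingredient, so the prefactor only comes out right once it is fixed. Your side remark that $\Phi_{3,3}^*(\bX)=|\det\bX|$ for $\ell=k=3$ is correct.
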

\begin{Prop}\label{Limit4}
For $\ell \in [3]$, we define the normalized  fourth-order chaotic component
\begin{equation*}
\left\{\widetilde{\proj_4(L_n^{(\ell)})} : n\in S_3\right\}
:=\left\{ \big(v^{(\ell)}_{n;4}\big)^{-1/2} \proj_4(L_n^{(\ell)}): n\in S_3\right\},
\end{equation*}
where $v^{(\ell)}_{n;4}:=\V{\proj_4(L_n^{(\ell)})}$.
As $n \to \infty, \notcon{n}{0,4,7}{8}$,
\begin{equation*}
\widetilde{\proj_4(L_n^{(\ell)})} \Law 
\bigg(\ell \cdot \frac{1}{250} + \frac{\ell(\ell-1)}{2} \cdot \frac{76}{375}\bigg)^{-1/2} 
Y^{(\ell)} M^{(\ell)} (Y^{(\ell)})^T \ ,
\end{equation*}
where $Y^{(\ell)} \sim \mathcal{N}_{\ell(9\ell-4)}(0,\Id_{\ell(9\ell-4)})$ and $M^{(\ell)} \in \mathrm{Mat}_{\ell(9\ell-4),\ell(9\ell-4)}(\R)$ is the deterministic matrix given by 
\begin{equation*}
M^{(\ell)} = \frac{-1}{50} \Id_{5\ell} 
\oplus \frac{-1}{25}\Id_{\frac{5\ell(\ell-1)}{2}}
\oplus \frac{1}{25}\Id_{\frac{5\ell(\ell-1)}{2}}
\oplus \frac{1}{50}\Id_{\frac{5\ell(\ell-1)}{2}}
\oplus \frac{-1}{6} \Id_{\frac{3\ell(\ell-1)}{2}} 
\ . 
\end{equation*}
\end{Prop}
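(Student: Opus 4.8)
The plan is to derive Proposition \ref{Limit4} from the explicit chaotic expansion of $\proj_4(L_n^{(\ell)})$ supplied by Proposition \ref{WC23} together with the arithmetic input of \cite{BM17} that governs the high-energy behaviour of the moments of the spectral sums. First I would write $\proj_4(L_n^{(\ell)})$ as a finite linear combination of integrals of the form $\int_{\T} \prod_{i,j} H_{p_j^{(i)}}(T_{n,j}^{(i)}(x))\,dx$ with $\sum p_j^{(i)} = 4$, multiplied by the projection coefficients $\beta_{p_0^{(i)}}$ and $\alpha_3^{(\ell)}\{p_j^{(i)}\}$. The key point is that these integrals are, up to normalisation, quadratic forms in the Gaussian coefficients $\{a_{i,\lambda}\}$: expanding each Hermite polynomial via \eqref{normalization} and integrating over $\T$ forces the frequencies to cancel pairwise (the $e_\lambda$ integrate to $\delta$ on the lattice), so each surviving term is a sum over pairs $\lambda,\mu \in \Lambda_n$ with $\lambda = \pm\mu$, i.e. a quadratic form in $(a_{i,\lambda})$. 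After centring (subtracting the expectation, which is the projection onto lower chaos and is already zero here beyond the constant term), what remains is a finite sum of diagonal quadratic forms in asymptotically i.i.d. standard Gaussians.

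Second, I would compute the limiting covariance structure of the relevant quadratic forms. This is where the arithmetic of \cite{BM17} enters: one needs that as $n\to\infty$ with $\notcon{n}{0,4,7}{8}$, the normalised lattice-point sums $\Nn^{-1}\sum_{\lambda\in\Lambda_n}(\lambda/\sqrt n)^{\otimes 4}$ and lower even moments converge to the corresponding moments of the uniform measure on $S^2$ (equidistribution), with the relevant error terms controlled by the $O(n^{-1/28+o(1)})$-type bounds. Using these, the Gram matrix of the collection of quadratic forms appearing in $\proj_4(L_n^{(\ell)})$ converges, and one identifies the limit with a finite-dimensional quadratic form $Y^{(\ell)} M^{(\ell)} (Y^{(\ell)})^T$ in a standard Gaussian vector $Y^{(\ell)}$; the diagonalisation of $M^{(\ell)}$ into the five blocks (eigenvalues $-1/50, -1/25, 1/25, 1/50, -1/6$) reflects the decomposition of the space of fourth-order Hermite monomials into isotypic components under the symmetry group acting on the $\ell$ fields and the $3$ coordinates — this is essentially a representation-theoretic bookkeeping of which Hermite patterns $\{p_j^{(i)}\}$ pair up, and the multiplicities $5\ell$, $5\ell(\ell-1)/2$, $3\ell(\ell-1)/2$ are read off from counting diagonal versus cross terms in $i$. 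The total dimension $\ell(9\ell-4)$ is the sum of these.

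Third, to upgrade the convergence of Gram matrices to convergence in distribution, I would invoke a multivariate fourth-moment / quantitative CLT argument for vectors of quadratic forms: the dominant quadratic forms are elements of the second Wiener chaos with respect to the Gaussian coefficients, their joint law is determined by the covariance matrix, and standard continuity results (e.g. the fact that convergence of covariances of fixed-order chaotic vectors plus tightness yields convergence in law, together with the continuous mapping theorem) give the result. One must also verify that the sub-leading contributions to $\proj_4(L_n^{(\ell)})$ — terms where fewer than four of the Hermite degrees are concentrated on the gradient variables, or where the frequency sums are of smaller order — vanish in $L^2(\Prob)$ after normalisation by $(v^{(\ell)}_{n;4})^{1/2}$; this is exactly the content of the variance computation in Proposition \ref{Var4}, which pins down which family of quadratic forms dominates. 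Finally I would divide by the limiting normalising constant $\big(\ell/250 + \tfrac{\ell(\ell-1)}{2}\cdot\tfrac{76}{375}\big)^{1/2}$, consistent with Proposition \ref{Var4}, to obtain the stated limit.

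The main obstacle I anticipate is the combinatorial/arithmetic bookkeeping: correctly enumerating all admissible Hermite patterns $\{p_0^{(i)},\dots,p_3^{(i)}\}$ summing to $4$, computing the relevant coefficients $\alpha_3^{(\ell)}\{\cdot\}$ (which requires the Hermite-projection formula for $\bX\mapsto\det(\bX\bX^T)^{1/2}$ alluded to in Remark (b), generalising Lemma 3.3 of \cite{DNPR16}), and then showing that the $n\to\infty$ limit of the resulting quadratic-form Gram matrix is block-diagonalisable exactly into the five pieces with the asserted eigenvalues and multiplicities. The distributional convergence itself is comparatively routine once the algebra is in place, since everything lives in a fixed-order Wiener chaos and reduces to convergence of a finite covariance matrix; but keeping track of the cross-field ($i_1\neq i_2$) versus same-field contributions — which is what produces the $\ell(\ell-1)/2$ multiplicities and the $-1/6$ eigenvalue absent when $\ell=1$ — will be the delicate step.
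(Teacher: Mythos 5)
Your overall architecture does match the paper's: explicit fourth-chaos expansion via the Hermite coefficients of $\Phi_{\ell,3}^*$, reduction to spectral sums in the coefficients $a_{i,\lambda}$, a multivariate CLT for those sums, continuous mapping, and a final diagonalisation producing $M^{(\ell)}$. However, there is a genuine gap in your reduction step. Integrating a product of four exponentials over $\T$ does not force the frequencies to cancel pairwise; it imposes only the $4$-correlation condition $\lambda^{(1)}+\lambda^{(2)}+\lambda^{(3)}+\lambda^{(4)}=0$, i.e.\ summation over $\mathcal{C}_n(4)$. The summation rule \eqref{sumC} splits this into (i) the three pairwise pairings, which give \emph{products of two} quadratic forms in the $a_{i,\lambda}$ (the $W$- and $M$-type variables of Definition \ref{RV1}), (ii) diagonal degenerate terms, which give the quartic averages $R^{(i_1,i_2)}(n)$, $S^{(i_1,i_2)}(n)$, and (iii) sums over the non-degenerate set $\mathcal{X}_n(4)$. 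The terms in (iii) are not quadratic forms and do not vanish identically; the paper kills them in $L^2(\Prob)$ using the arithmetic bound $\mathrm{card}(\mathcal{X}_n(4))=O(\Nn^{7/4+o(1)})$ of \cite{BM17} (estimate \eqref{EstX4}), which is a different input from the equidistribution error $O(n^{-1/28+o(1)})$ you invoke. The terms in (ii) converge in probability, by the Law of Large Numbers (Lemma \ref{LLN}), to the constants $1/10$ and $2/5$ in \eqref{mueta}, and these are precisely what centre the limiting quadratic form $Y^{(\ell)}M^{(\ell)}(Y^{(\ell)})^T$; dropping them would yield a non-centred, hence wrong, limit. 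So your claim that ``each surviving term is a sum over pairs $\lambda=\pm\mu$'' has to be replaced by the full bookkeeping over $\mathcal{C}_n(4)$, with the $\mathcal{X}_n(4)$-bound and the LLN step as necessary extra ingredients.

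A second, more minor point: for a vector whose components live in the second Wiener chaos, the joint law is \emph{not} determined by the covariance matrix, so ``convergence of covariances of fixed-order chaotic vectors plus tightness'' is not a valid principle. What is needed, and what the paper uses in Proposition \ref{PropJointLaw3}, is the combination of Theorem 6.2.3 and the Fourth Moment Theorem (Theorem 5.2.7) of \cite{NP12}: convergence of the covariance matrices together with componentwise fourth-cumulant control $\kappa_4\to0$ for each $W$- and $M$-type sum. You do name the fourth-moment route, so this is repairable, but the shortcut stated in its place is false as a general statement. Once these two points are fixed, the remainder of your plan (continuous mapping applied to the quadratic functions $f,g$ of the limiting Gaussian vector, normalisation by the variance asymptotics of Proposition \ref{Var4}, and diagonalisation of the limit into the five blocks with eigenvalues $-1/50,-1/25,1/25,1/50,-1/6$) coincides with the paper's argument.
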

Such results are proved as follows:
In Section \ref{subExplicit}, we provide an exact expression of the fourth-order chaotic projection of $L_n^{(\ell)}$. In order to achieve this, we compute the Fourier-Hermite coefficients of the function $\Phi_{\ell,3}^*$ on the fourth Wiener chaos (see Proposition \ref{ChaosPhi4}). We then use the orthogonality relation for complex exponentials on the torus
\begin{eqnarray}\label{ort}
\int_{\T} e_{\lambda}(x) \ dx = \ind{\lambda=0} \ ,
\end{eqnarray}
and rewrite each integral of multivariate Hermite polynomials evaluated at the arithmetic random waves and its gradient components by means of a useful summation rule over $4$-correlations $\mathcal{C}_n(4)$ and non-degenerate $4$-correlations $\mathcal{X}_n(4)$ (see  \eqref{Cm} and \eqref{Xm} for precise definitions). 

A subsequent asymptotic analysis of $\proj_4(L_n^{(\ell)})$ is presented in Section \ref{subAsy}. 
This analysis is based on a multivariate Central Limit Theorem (see Proposition \ref{PropJointLaw3}) for the summands composing the explicit expression of $\proj_4(L_n^{(\ell)})$. 
Such a Central Limit Theorem, already appearing in \cite{MPRW16,DNPR16} for the two-dimensional torus and \cite{Cam17} for the nodal surface on the three-dimensional torus, is obtained by verifying a suitable condition characterising normal convergence of the so-called Fourth Moment Theorem (see Theorem 5.2.7 \cite{NP12}). 
Among others, we use the following asymptotic estimate bounding non-degenerate $4$-correlations on $\T$ (see Theorem 1.6 \cite{BM17}):
\begin{eqnarray}\label{EstX4}
\mathrm{card}(\mathcal{X}_n(4)) =  
O(\Nn^{7/4+o(1)}) \ , \quad n \to \infty \ .
\end{eqnarray}

\subsubsection{Contribution of higher-order chaotic projections}\label{SectionCHO}
We show that the projection on the fourth Wiener chaos of $L_n^{(\ell)}$ dominates the series in \eqref{WCL}, in the sense that
\begin{eqnarray*}
\widetilde{L_n^{(\ell)}} = \widetilde{\proj_4(L_n^{(\ell)})}+o_{\Prob}(1) \ ,
\end{eqnarray*}
where $o_{\Prob}(1)$ denotes a sequence of random variables converging to zero in probability as $n\to \infty,\notcon{n}{0,4,7}{8}$. This is done by proving the following statement (see Appendix \ref{AppSing}): 
\begin{Prop}\label{VarHigherOrder} 
For $\ell \in [3]$, as $n \to \infty, \notcon{n}{0,4,7}{8}$,  
\begin{equation}\label{oVar4}
\V{\sum_{q\geq 3} \proj_{2q}(L_n^{(\ell)})} = o\bigg(\V{\proj_4(L_n^{(\ell)})}\bigg) \ .
\end{equation}
\end{Prop}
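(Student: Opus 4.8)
The plan is to show that the variance of the tail $\sum_{q\geq 3}\proj_{2q}(L_n^{(\ell)})$ is negligible with respect to $\V{\proj_4(L_n^{(\ell)})}\asymp E_n^\ell/\Nn^2$ by obtaining the uniform chaotic bound $\V{\proj_{2q}(L_n^{(\ell)})}\ll E_n^\ell/\Nn^{2+\delta}$ for some $\delta>0$, together with a summability control that is uniform in $q$. Since $\V{\sum_{q\geq 3}\proj_{2q}(L_n^{(\ell)})} = \sum_{q\geq 3}\V{\proj_{2q}(L_n^{(\ell)})}$ by chaos orthogonality, and since the full sum $\sum_{q\geq 0}\V{\proj_{2q}(L_n^{(\ell)})} = \V{L_n^{(\ell)}}$ is finite by Lemma \ref{LemASL2}, the issue is purely to gain an extra power of $\Nn$ over the trivial order-$4$ estimate and to make the estimate summable in $q$.

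The first step is to split the torus into a non-singular region $\mathcal{G}_n$ (where the covariance $r_n$ and its derivatives are small, away from the diagonal) and a singular region $\mathcal{B}_n$ of small measure, following \cite{ORW08,KKW13,DNPR16}; one writes $\proj_{2q}(L_n^{(\ell)}) = \int_{\T}(\cdots)\,dx$ and estimates the second moment $\E{\proj_{2q}(L_n^{(\ell)})^2}$ as a double integral over $\T\times\T$ split into $\mathcal{G}_n\times\mathcal{G}_n$, $\mathcal{B}_n\times\T$ and its symmetric counterpart. On the non-singular part, expand the kernel of the second moment in the normalized covariance $u_n(x,y)$ and its mixed derivatives, using the diagram/Mehler-type formula for products of Hermite polynomials evaluated at the jointly Gaussian vector $(\bT_n^{(\ell)}(x),\bT_n^{(\ell)}(y))$ and its gradients; the key point is that each term in the $2q$-th chaos carries at least $2q$ factors of covariance-type quantities, and the relevant $L^2$-norms of $u_n$ and its derivatives over $\T$ are $O(\Nn^{-1})$ (this is exactly the arithmetic input behind $\V{\proj_4}\asymp\Nn^{-2}$, see \cite{BM17}), so already for $q\geq 3$ one gains at least $\Nn^{-1}$ relative to the $q=2$ term. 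The singular region contributes $o(E_n^\ell/\Nn^2)$ because its Lebesgue measure is $o(\Nn^{-2})$ after suitable choice of cutoff and because the integrand is controlled by the universal bounds on the number of nodal points (via the arguments referenced after Lemma \ref{LemASL2}, in the spirit of \cite{NPR17}).

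The second, more delicate, step is the uniformity in $q$: one needs $\sum_{q\geq 3}\V{\proj_{2q}(L_n^{(\ell)})}$ to be bounded by a \emph{single} quantity of size $o(E_n^\ell/\Nn^2)$, not merely each summand. The natural route is to bound the Hermite--Fourier coefficients $\alpha^{(\ell)}_3\{p^{(i)}_j\}$ of $\Phi^*_{\ell,3}$ (Proposition \ref{WC23}) and the coefficients $\beta_{2j}=H_{2j}(0)/\sqrt{2\pi}$ of $\delta_0$ so that, after resumming, the tail is dominated by $\E{\Phi^*_{\ell,3}(\bX_\star)^2}$ times the $L^2(\mathcal{G}_n\times\mathcal{G}_n)$-norm of a geometric-type series in the covariance and its derivatives, which is $O(\Nn^{-3+o(1)})$ by the arithmetic estimates on correlations of length up to $4$ (including \eqref{EstX4}) and the decay of higher correlations. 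Concretely, one absorbs the whole tail into the $L^2(\Prob)$-norm of $L_{n,\eps}^{(\ell)}$ restricted to pairs $(x,y)$ with $|u_n(x,y)|$ bounded away from $1$, and shows this restricted norm-squared minus the contributions of chaoses $0$ and $4$ is $o(\Nn^{-2})$.

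The main obstacle I expect is precisely this last resummation: controlling \emph{all} higher chaoses simultaneously on the singular region, where the Gaussian vector $(\bT_n^{(\ell)}(x),\bT_n^{(\ell)}(y))$ degenerates and the naive term-by-term Hermite expansion loses its summability. This is handled not by expanding but by the partition argument together with the deterministic almost-sure bound on nodal volumes over small balls (Theorem \ref{ThmCont} and the trigonometric-polynomial counting bounds of \cite{K91}), which gives an $L^2(\Prob)$ estimate on the singular contribution directly, bypassing chaos expansions there; the non-singular contribution is then the genuinely analytic part, where the gain of $\Nn^{-1}$ per extra pair of covariance factors makes the series converge to something of order $o(\Nn^{-2})$. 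Assembling the non-singular estimate of order $O(\Nn^{-3+o(1)})$ with the singular estimate of order $o(\Nn^{-2})$ yields \eqref{oVar4}.
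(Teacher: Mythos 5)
Your overall architecture (splitting into a singular region of small measure and a non-singular region where the normalised covariances are bounded by a small constant $\eta$, treating the bad region by a direct local second-moment bound and the good region by the diagram formula, and resumming uniformly in $q$ via the smallness of $\eta$) is the same as the paper's, but the step where you claim to gain decay beyond $\Nn^{-2}$ contains a genuine gap. On the non-singular region each extra pair of covariance factors only buys a factor $\eta$, \emph{not} a factor $\Nn^{-1}$: there you only know $|\tilde r_{a,b}(x-y)|\leq\eta$ with $\eta$ a fixed constant, so keeping four factors and estimating them by $L^4$-norms (i.e.\ by $\mathcal{R}_n(4)\sim 3\Nn^{-2}$, which is all that ``correlations of length up to $4$'' and \eqref{EstX4} can give) yields at best $\V{\proj_{2q}(L_n^{(\ell)})}\ll \eta^{2q-4}E_n^{\ell}\Nn^{-2}$, hence a tail bounded by a small \emph{constant} multiple of $\V{\proj_4(L_n^{(\ell)})}$ — which is not the required $o(\cdot)$. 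Your asserted $O(\Nn^{-3+o(1)})$ for the good region does not follow from the inputs you invoke. The decisive ingredient, which your proposal never uses, is the length-six correlation estimate of \cite{BM17} (Theorem 1.7), $\mathrm{card}(\mathcal{C}_n(6))=O(\Nn^{11/3+o(1)})$, equivalently $\mathcal{R}_n(6)=\int_{\T}r_n(z)^6dz=O(\Nn^{-7/3+o(1)})$ as in \eqref{Rn6}: in the paper's Lemma \ref{NonSingl} one keeps \emph{six} covariance factors, applies the generalised H\"older inequality to produce $\mathcal{R}_n(6)$, and bounds the remaining $2q-6$ factors by $\eta^{2q-6}$, which is what yields $E_n^{\ell}\mathcal{R}_n(6)=o(E_n^{\ell}/\Nn^2)$.

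The same missing input undermines your treatment of the singular region. Its measure is not $o(\Nn^{-2})$ ``after a suitable choice of cutoff'' for free: the Chebyshev-type bound behind $\mathrm{Leb}(\mathcal{B}_Q)=O(\mathcal{R}_n(6))$ again rests on the sixth moment of $r_n$ and its derivatives; using only the fourth moment gives $O(\Nn^{-2})$, and after multiplying by the local second moment the singular contribution would be of the \emph{same} order as $\V{\proj_4(L_n^{(\ell)})}$, not smaller. Also, handling the bad region via the deterministic a.s.\ bounds of Theorem \ref{ThmCont} and \cite{K91} is problematic for $\ell=2$ (there is no clean deterministic bound for the intersection length of two independent ARWs — the paper explicitly points this out); the paper instead justifies Kac--Rice in the small cube $Q_0$ (Lemma \ref{LemKR}) and uses the two-point correlation bound together with the Taylor expansion of Lemma \ref{Taylorq} to get $\E{L_n^{(\ell)}(Q_0)^2}\ll E_n^{-2}\ind{\ell=1}+E_n^{-1}\ind{\ell=2}+\ind{\ell=3}$, which combined with $\mathrm{card}(\mathcal{S})=O(E_n^{3}\mathcal{R}_n(6))$ gives Lemma \ref{Singl}. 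In short: your scheme is the right one, but without the six-correlation estimate both halves of your argument stall at $O(E_n^{\ell}/\Nn^{2})$, so the proposal as written does not prove \eqref{oVar4}.
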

The arguments for the proof of Proposition \ref{VarHigherOrder} are based on the use of a suitable partition $\mathcal{P}(M)$ (where $M=M(n)$ is proportional to 
$\sqrt{E_n}$) of the torus into singular and non-singular pairs of subregions (see Definition \ref{DefSing}), following the route of \cite{ORW08} and, later, \cite{PR16,DNPR16}. We denote by $L_n^{(\ell)}(Q)$ the nodal volume restricted to a cube $Q$ and by $\proj_{6+}(L_n^{(\ell)}):= \sum_{q \geq 3} \proj_{2q}(L_n^{(\ell)})$ the chaotic projection of $L_n^{(\ell)}$ on Wiener chaoses of order at least $6$. This allows us to write the variance of higher-order chaoses as 
\begin{equation}\label{Varsum}
\V{\proj_{6+}(L_n^{(\ell)})}    
= \sum_{(Q,Q') \in \mathcal{P}(M)^2} 
\Cov{ 
\proj_{6+}(L_n^{(\ell)}(Q))} 
{   \proj_{6+}(L_n^{(\ell)}(Q'))}     \ ,
\end{equation}
where the summation is over all pairs of cubes $(Q,Q')$ of side length $1/M$. Splitting this sum into the singular part $\mathcal{S}$ and the non-singular part $\mathcal{S}^c$, we  bound each of the contributions separately. For the singular part, we prove the following bound (see Section \ref{ProofsLemS} of Appendix \ref{AppSing}):
\begin{Lem}\label{Singl}
For $\ell\in[3]$, as $n \to \infty,  \notcon{n}{0,4,7}{8}$, we have 
\begin{equation*}
\left|S_{n,1}^{(\ell)}\right|:= \left| \sum_{(Q,Q') \in \mathcal{S}}  \Cov{\mathrm{proj}_{6+}(L_n^{(\ell)}(Q))}{\mathrm{proj}_{6+}(L_n^{(\ell)}(Q'))} \right| = O(E_n^{\ell} \mathcal{R}_n(6)) \ .
\end{equation*}
\end{Lem}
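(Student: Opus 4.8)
The plan is to estimate the singular contribution $S_{n,1}^{(\ell)}$ by the classical strategy of bounding each restricted chaotic projection $\proj_{6+}(L_n^{(\ell)}(Q))$ in $L^2(\Prob)$ and then using Cauchy--Schwarz on each covariance, paying a combinatorial price for the number of singular pairs. First I would recall the construction of the partition $\mathcal{P}(M)$ into cubes of side length $1/M$ with $M \asymp \sqrt{E_n}$, and the classification of pairs $(Q,Q')$ as singular or non-singular (Definition \ref{DefSing}), the point being that a pair is singular when the covariance $r_n$ and its derivatives fail to be suitably small on $Q \times Q'$; the key arithmetic input is that the number of singular pairs is controlled by a quantity $\mathcal{R}_n(\cdot)$ coming from the length-$6$ correlation sums, i.e. $\mathrm{card}(\mathcal{S}) = O(M^3 \cdot \mathcal{R}_n(6))$ up to the appropriate normalisation. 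Then, writing $\mathrm{Cov}$ of the restricted projections and applying Cauchy--Schwarz, I would reduce matters to a uniform bound of the form $\V{\proj_{6+}(L_n^{(\ell)}(Q))} = O\big((E_n/M^2)^{\ell} \cdot M^{-3}\big)$ for each cube $Q$, which combined with the count of singular pairs yields the claimed $O(E_n^{\ell}\mathcal{R}_n(6))$.

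The crucial estimate is therefore the per-cube second moment bound for the tail projection $\proj_{6+}(L_n^{(\ell)}(Q))$. To get it, I would use the Wiener chaos expansion from Proposition \ref{WC23}: since $\V{\proj_{6+}(L_n^{(\ell)}(Q))} \leq \V{L_n^{(\ell)}(Q)}$ (orthogonality of chaoses, and $\proj_0,\proj_2$ and the odd chaoses vanish), it suffices to bound the full variance of the nodal volume restricted to a single cube $Q$. Rescaling $Q$ to a cube of unit side turns this into a statement about the nodal volume of a Gaussian field on a fixed domain with covariance behaving like Berry's random wave at scale $1/M \asymp E_n^{-1/2}$; by the same Kac--Rice / Area--Coarea arguments used to establish $L^2$-existence (Lemma \ref{LemASL2}) together with the stationarity and the uniform nondegeneracy of the joint law of $(\bT_n^{(\ell)}(x),\Jac_{\bT_n^{(\ell)}}(x))$ (which holds since the spectral measure lives on a sphere), one obtains $\E{L_n^{(\ell)}(Q)^2} = O\big((\sqrt{E_n})^{2\ell}\,\mathrm{vol}(Q)^2 + (\sqrt{E_n})^{2\ell}\,\mathrm{vol}(Q)\big)$, and since $\mathrm{vol}(Q) = M^{-3}$ is small, the linear term $(\sqrt{E_n})^{2\ell} M^{-3} = (E_n/M^2)^{\ell} \cdot (E_n/M^2)^{-\ell} \cdot E_n^{\ell} M^{-3}$ dominates; after renormalising by the $(E_n/3)^{\ell/2}$ factor separated off in \eqref{intnorm} this is exactly the bound needed. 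Here the singular/non-singular partition does real work: on singular cubes we cannot hope for cancellation, so we accept the crude $L^2$ bound, and the whole point is that such cubes are rare, as quantified by $\mathcal{R}_n(6)$.

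The main obstacle I anticipate is making the per-cube variance bound genuinely uniform in $Q$ and in $n$ — that is, controlling the Kac--Rice densities and the $L^2$-integrability near the diagonal of $Q \times Q$ with constants independent of the cube, which requires care precisely because the field degenerates as the two arguments coincide; this is where the partition-into-singular-regions technology of \cite{ORW08,PR16,DNPR16} is indispensable and must be adapted to the multi-dimensional field $\bT_n^{(\ell)}$ (the argument via an almost-sure bound on $L_n^{(1)}$ used in \cite{RW08,Cam17} is unavailable here, as already noted in the Remark following Theorem \ref{MainThm2}). A secondary bookkeeping point is to verify that the definition of $\mathcal{R}_n(6)$ and of singular pairs indeed gives $\mathrm{card}(\mathcal{S}) M^{-3} \cdot (\sqrt{E_n})^{2\ell} = O(E_n^{\ell} \mathcal{R}_n(6))$ with the correct powers of $M$ and $E_n$, which is a routine but delicate matching of normalisations.
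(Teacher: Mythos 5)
Your architecture coincides with the paper's (Cauchy--Schwarz plus stationarity to reduce to $\mathrm{card}(\mathcal{S})\cdot\V{\proj_{6+}(L_n^{(\ell)}(Q_0))}$, a count of singular pairs via $\Leb(\mathcal{B}_Q)=O(\mathcal{R}_n(6))$, and a Kac--Rice second-moment bound on a reference cube), but the quantitative bookkeeping has a genuine gap: your two key estimates are each off by a factor $M^3\asymp E_n^{3/2}$, in opposite directions, and the stated conclusion only comes out because these errors cancel. First, the number of singular \emph{pairs} is $\mathrm{card}(\mathcal{S})=M^6\cdot\Leb(\mathcal{B}_Q)=O(E_n^{3}\mathcal{R}_n(6))$ (cf. \eqref{LebBQ}), not $O(M^{3}\mathcal{R}_n(6))$. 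Second, and more seriously, the per-cube bound you propose, $\E{L_n^{(\ell)}(Q)^2}=O\big(E_n^{\ell}\Leb(Q)^2+E_n^{\ell}\Leb(Q)\big)$ with the \emph{linear} term dominating, i.e. $O(E_n^{\ell-3/2})$, is too lossy: the correct order is the quadratic one, $\E{L_n^{(\ell)}(Q_0)^2}\ll E_n^{\ell}\Leb(Q_0)^2\asymp E_n^{\ell-3}$ (that is $E_n^{-2},E_n^{-1},1$ for $\ell=1,2,3$). Indeed, for $\ell=1,2$ there is no ``linear in volume'' term at all --- the near-diagonal integral of the two-point function over a cube of side $1/M\asymp E_n^{-1/2}$ contributes $\Leb(Q_0)\cdot E_n^{\ell/2}\int_0^{1/M}r^{2-\ell}\,dr\asymp E_n^{\ell-3}$ --- and for $\ell=3$ the Kac--Rice atom is $\E{L_n^{(3)}(Q_0)}\asymp E_n^{3/2}\Leb(Q_0)\asymp 1$, again of quadratic order, not $E_n^{3}\Leb(Q_0)$. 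With the correct pair count, your per-cube bound only yields $|S_{n,1}^{(\ell)}|=O(E_n^{\ell+3/2}\mathcal{R}_n(6))$, which is not the lemma and is also useless for Proposition \ref{VarHigherOrder}: since $\mathcal{R}_n(6)\gg\Nn^{-3}$ (diagonal $6$-correlations) while $\Nn\ll n^{1/2+o(1)}$, the quantity $E_n^{3/2}\mathcal{R}_n(6)\,\Nn^{2}$ diverges, so $E_n^{\ell+3/2}\mathcal{R}_n(6)$ is not $o(E_n^{\ell}\Nn^{-2})$. Note also that $L_n^{(\ell)}(Q)$ already carries the factor $(E_n/3)^{\ell/2}$ from \eqref{intnorm}, so no further ``renormalisation'' is available to repair the count.

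The missing step is precisely the near-diagonal analysis you gloss over when invoking uniform nondegeneracy: Kac--Rice on $Q_0$ is legitimate by Lemma \ref{LemKR}; then $K^{(\ell)}(z,0;0)$ is bounded by $q^{(\ell)}$ in \eqref{Kupperbound}, and the Taylor expansion of Lemma \ref{Taylorq} gives $K^{(\ell)}(z,0;0)\ll E_n^{\ell/2}\big(1-\tfrac{\ell}{3}\big)\norm{z}^{-\ell}+E_n^{\ell/2+1}\norm{z}^{2-\ell}$. Integrating over $2Q_0$ and multiplying by $\Leb(Q_0)$ (stationarity) yields $\E{L_n^{(\ell)}(Q_0)^2}\ll E_n^{\ell-3}$, the case $\ell=3$ using that the leading coefficient vanishes and adding the atom $\E{L_n^{(3)}(Q_0)}=O(1)$; combined with $\mathrm{card}(\mathcal{S})=O(E_n^{3}\mathcal{R}_n(6))$ this gives exactly $O(E_n^{\ell}\mathcal{R}_n(6))$. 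A minor point: your justification of $\V{\proj_{6+}(L_n^{(\ell)}(Q))}\le\V{L_n^{(\ell)}(Q)}$ ``because $\proj_2$ and the odd chaoses vanish'' is unnecessary and not quite correct --- for the volume restricted to a cube the second chaos does \emph{not} vanish, since the cancellation $D^{(i)}=0$ rests on Green's identity on the boundaryless torus; orthogonality of the chaoses alone gives the inequality, which is all that is needed.
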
 
Here, $\mathcal{R}_n(6)$ denotes the integral $6$-th moment of the covariance function $r_n$, see formula \eqref{Rn6} below.
We give a brief overview of the proof of Lemma \ref{Singl}. We use the Cauchy-Schwarz inequality and translation-invariance of the model to write 
\begin{eqnarray}\label{Est1a}
\left|S_{n,1}^{(\ell)}\right|\leq E_n^3\mathcal{R}_n(6) \cdot\V{\mathrm{proj}_{6+}(L_n^{(\ell)}(Q_0))}   , 
\end{eqnarray}
where we used that the number of singular pairs of cubes in the summation index is bounded by $E_n^3\mathcal{R}_n(6)$ and where $Q_0$ denotes a small cube of side length $1/M$ around the origin.
In Lemma \ref{LemKR}, we justify the use of Kac-Rice formula in $Q_0$, so that, writing 
\begin{gather*}
\V{\proj_{6+}(L_n^{(\ell)}(Q_0))}   \leq \E{L_n^{(\ell)}(Q_0)^2},
\end{gather*}
one can use Kac-Rice formulae for moments (Theorem 6.2, 6.3 \cite{AW09} for $\ell=3$ and Theorem 6.9 \cite{AW09} for $\ell=1,2$). Doing so, we exploit stationarity to obtain
\begin{eqnarray}\label{intQ0}
\E{L_n^{(\ell)}(Q_0)^2} = \int_{Q_0\times Q_0} 
K^{(\ell)}(x,y;(0,\ldots,0)) \ dxdy 
+ \E{L_n^{(3)}(Q_0)}\ind{\ell=3} \notag \\
\leq \Leb(Q_0) \int_{2Q_0} K^{(\ell)}(z,0;(0,\ldots,0)) \ dz 
+ \frac{E_n^{3/2}}{M^3}\ind{\ell=3}\ ,
\end{eqnarray}
where $K^{(\ell)}$ is the two-point correlation function defined in \eqref{K} of Appendix C. 
Appendix C contains a self-contained study of the two-point correlation function; in particular, in \eqref{Kupperbound}, we derive an upper bound of $K^{(\ell)}$ in terms of the covariance function $r_n$ and its gradient, and subsequently perform a precise Taylor-type expansion near the origin of this expression (see Lemma \ref{Taylorq}). Using these results then yields the estimate
\begin{gather*}
\E{L_n^{(\ell)}(Q_0)^2}  \ll E_n^{-2}\ind{\ell=1}+E_n^{-1} \ind{\ell=2} + \ind{\ell=3},
\end{gather*}
which combined with \eqref{Est1a} establishes Lemma \ref{Singl}.

Concerning the contribution to the variance of the non-singular pairs of cubes, we prove the following proposition (see Section \ref{ProofsLemS} of Appendix \ref{AppSing}):
\begin{Lem}\label{NonSingl}
For  $\ell\in [3]$, as $n \to \infty,  \notcon{n}{0,4,7}{8}$, we have 
\begin{equation*}
\left|S_{n,2}^{(\ell)}\right| := \left| \sum_{(Q,Q') \in \mathcal{S}^c} 
\Cov{\mathrm{proj}_{6+}(L_n^{(\ell)}(Q))}
{\mathrm{proj}_{6+}(L_n^{(\ell)}(Q'))} \right| = O(E_n^{\ell}\mathcal{R}_n(6)) \ .
\end{equation*}
\end{Lem}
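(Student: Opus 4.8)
The plan is to bound the left-hand side directly from the Wiener--It\^o expansion of Proposition \ref{WC23} (in its cube-restricted form), combined with the diagram formula, exploiting that on a non-singular pair the relevant cross-covariance is \emph{uniformly} small. For $x\in\T$ write $Y_n(x):=\big(T_n^{(i)}(x),\,T_{n,j}^{(i)}(x):(i,j)\in[\ell]\times[3]\big)$ for the $4\ell$-dimensional vector collecting the arithmetic random waves and their normalised partial derivatives; by \eqref{VarT}, the normalisation \eqref{normalization} and the sign symmetry of $\Lambda_n$, the entries of $Y_n(x)$ are i.i.d.\ standard Gaussian for each fixed $x$. Let $\Phi$ be the (formal) integrand $\big(\prod_{i}\delta_0(\cdot)\big)\,\Phi_{\ell,3}^*(\cdot)$; since $\delta_0$ and, by (A3), $\Phi_{\ell,3}^*$ are even, $\Phi$ is even, the odd chaoses vanish and $\proj_{6+}(L_n^{(\ell)}(Q))=\sum_{q\geq 3}\proj_{2q}(L_n^{(\ell)}(Q))$. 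Moreover, by (the localised version of) Lemma \ref{LemASL2} and the $L^2(\Prob)$-continuity of $\proj_{2q}$,
\begin{equation*}
\proj_{2q}(L_n^{(\ell)}(Q))=\Big(\tfrac{E_n}{3}\Big)^{\ell/2}\int_Q [\Phi]_{2q}(Y_n(x))\,dx,
\end{equation*}
where $[\Phi]_{2q}$ is the genuine degree-$2q$ Hermite component of $\Phi$ (a polynomial with the finite coefficients of Proposition \ref{WC23}). By orthogonality of Wiener chaoses of distinct orders, this gives
\begin{equation*}
\Cov{\proj_{6+}(L_n^{(\ell)}(Q))}{\proj_{6+}(L_n^{(\ell)}(Q'))}=\Big(\tfrac{E_n}{3}\Big)^{\ell}\sum_{q\geq 3}\int_{Q\times Q'}\E{[\Phi]_{2q}(Y_n(x))\,[\Phi]_{2q}(Y_n(y))}\,dxdy .
\end{equation*}

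Next I would estimate the integrand. For fixed $x,y$ the pair $(Y_n(x),Y_n(y))$ is jointly Gaussian, with identity within-point blocks and a cross-covariance block whose entries are, up to sign, among $r_n(x-y)$, $\sqrt{3/E_n}\,\partial_j r_n(x-y)$ and $(3/E_n)\,\partial_j\partial_k r_n(x-y)$ (and $0$ between distinct indices $i\neq i'$, by independence of the ARWs). Hence every entry is bounded in absolute value by
\begin{equation*}
\psi_n(z):=|r_n(z)|+\sqrt{3/E_n}\,|\nabla r_n(z)|+(3/E_n)\,\big\|\mathrm{Hess}\,r_n(z)\big\|,\qquad z=x-y,
\end{equation*}
a density comparable, up to an absolute constant, to the one defining $\mathcal{R}_n(6)$ in \eqref{Rn6}. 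The diagram formula together with the standard Hermite-product estimate (Arcones' inequality, see e.g.\ \cite{NP12}) and the fact that $[\Phi]_{2q}$ is a pure degree-$2q$ object then yield, for every $q$ and an absolute constant $c$,
\begin{equation*}
\big|\E{[\Phi]_{2q}(Y_n(x))[\Phi]_{2q}(Y_n(y))}\big|\leq \big\|[\Phi]_{2q}\big\|^2\,\big(c\,\psi_n(x-y)\big)^{2q},
\end{equation*}
where $\|\cdot\|$ is the $L^2$-norm with respect to the standard Gaussian measure.

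The key step is then the summation over $q$. On a non-singular pair $(Q,Q')$ the defining property of Definition \ref{DefSing} guarantees $c\,\psi_n(x-y)\leq\theta$ for all $x\in Q$, $y\in Q'$, with $\theta<1$ a fixed threshold; factoring out the sixth power,
\begin{equation*}
\sum_{q\geq 3}\big\|[\Phi]_{2q}\big\|^2\big(c\,\psi_n(x-y)\big)^{2q}\leq \big(c\,\psi_n(x-y)\big)^{6}\sum_{q\geq 3}\big\|[\Phi]_{2q}\big\|^2\,\theta^{2q-6}=:\Theta_{\ell}\,\big(c\,\psi_n(x-y)\big)^{6}.
\end{equation*}
Here $\Theta_{\ell}<\infty$ and independent of $n$: indeed $\sum_{q\geq 0}\|[\Phi]_{2q}\|^2 t^{2q}$ equals the Gaussian expectation of $\Phi(U)\Phi(V)$ for a pair $(U,V)$ with identity within-point blocks and cross-covariance $t\,\Id_{4\ell}$, which factorises as $\prod_{i=1}^{\ell}\frac{1}{2\pi\sqrt{1-t^2}}\cdot\E{\Phi_{\ell,3}^*(\widetilde U)\Phi_{\ell,3}^*(\widetilde V)}$ and is finite for $|t|<1$ by square-integrability of $\Phi_{\ell,3}^*$; hence the series has radius of convergence $1>\theta$. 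Combining the bounds above,
\begin{equation*}
\big|\Cov{\proj_{6+}(L_n^{(\ell)}(Q))}{\proj_{6+}(L_n^{(\ell)}(Q'))}\big|\ll E_n^{\ell}\int_{Q\times Q'}\psi_n(x-y)^6\,dxdy ,
\end{equation*}
and since the cubes of $\mathcal{P}(M)$ tile $\T$, summing over $(Q,Q')\in\mathcal{S}^c$ and using translation invariance,
\begin{equation*}
\big|S_{n,2}^{(\ell)}\big|\ll E_n^{\ell}\int_{\T\times\T}\psi_n(x-y)^6\,dxdy=E_n^{\ell}\int_{\T}\psi_n(z)^6\,dz=O\big(E_n^{\ell}\mathcal{R}_n(6)\big).
\end{equation*}

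The main obstacle is exactly this summation over $q$: one must control the $q\to\infty$ tail uniformly in $n$ \emph{while} retaining the full power $\psi_n^6$, i.e.\ establish $\Theta_{\ell}<\infty$ and, crucially, that the radius of convergence of $\sum_q\|[\Phi]_{2q}\|^2 t^{2q}$ strictly exceeds the non-singularity threshold. This hinges on the product structure of $\Phi$ (separating the $\ell$ Dirac factors, which carry the explosive factor $(1-t^2)^{-1/2}$, from $\Phi_{\ell,3}^*$) and on $\E{\Phi_{\ell,3}^*(\bX)^2}<\infty$ (equivalently, square-summability of the coefficients $\alpha_3^{(\ell)}\{\cdot\}$ of Proposition \ref{WC23}); it is precisely here that the non-singular hypothesis enters, since on a singular pair $\psi_n$ may approach $1$ (the cross-covariance block becoming near-degenerate) and the series ceases to be summable --- which is why Lemma \ref{Singl} must instead be handled through a separate Kac--Rice estimate. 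A more routine point is justifying that $\proj_{2q}(L_n^{(\ell)}(Q))$ is a legitimate $L^2$-chaos element and that $\proj_{2q}$ commutes with $\int_Q$, which follows from Lemma \ref{LemASL2} applied on $Q$ together with the Kac--Rice bound on $\E{L_n^{(\ell)}(Q)^2}$ already invoked in the proof of Lemma \ref{Singl}.
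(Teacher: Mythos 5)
Your argument is correct and, at the strategic level, it follows the same route as the paper: localize the chaos expansion to the cubes of $\mathcal{P}(M)$, use the defining property of non-singular pairs to make every cross-correlation entry uniformly small, peel off all but six correlation factors, and convert the remaining six into $\mathcal{R}_n(6)$ via the generalised H\"older inequality and Lemma \ref{rab}. Where you genuinely differ is in the two technical engines. The paper expands each covariance of products of Hermite polynomials through the diagram formula (Proposition \ref{Diagram}), which produces an explicit factor $(2q)!$ that is then tamed by hand: the $\sqrt{S(\mathbf{p})!}\sqrt{S(\mathbf{q})!}$ splitting, a Cauchy--Schwarz step in $(\mathbf{p},\mathbf{q})$ with weight $\sqrt{\eta}^{\,S(\mathbf{p})+S(\mathbf{q})}$, a multinomial bound $(4\ell)^{S(\mathbf{p})}$, and the numerical condition $4\ell\sqrt{\eta}<1$. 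You instead invoke Arcones' covariance inequality for a fixed chaos order, which packages all of that combinatorics into $\norm{[\Phi]_{2q}}^2\,(c\,\psi_n)^{2q}$, and you control $\sum_{q}\norm{[\Phi]_{2q}}^2\theta^{2q}$ through the Mehler/Ornstein--Uhlenbeck generating-function identity, exploiting the product structure of the formal integrand: the $\ell$ Dirac factors contribute a factor proportional to $(1-t^2)^{-\ell/2}$, finite for $|t|<1$, while the Gramian factor is handled by $\E{\Phi_{\ell,3}^*(\bX)^2}<\infty$, so the coefficient series has radius of convergence $1>\theta$. This buys a cleaner treatment of the $q$-summation (only $\theta<1$ is needed, rather than the paper's $4\ell\sqrt{\eta}<1$), at the price of two points you should state more carefully: (i) Arcones' inequality requires the cross-dependence parameter (the maximal absolute row/column sum of the $4\times 4$ cross-covariance blocks; blocks between distinct indices $i\neq i'$ vanish by independence of the ARWs) to be at most $1$, so your bound $\norm{[\Phi]_{2q}}^2(c\,\psi_n)^{2q}$ is legitimate only on non-singular pairs, where $c\,\psi_n\leq 4\eta<1$ --- which is exactly where you use it, but it should not be asserted for arbitrary $(x,y)$; (ii) since $\Phi$ contains Dirac masses, the identity $\sum_q\norm{[\Phi]_{2q}}^2t^{2q}=\E{\Phi(U)\Phi(V)}$ is only formal, but the factorisation of the non-negative coefficient series into the Dirac part times the $\Phi_{\ell,3}^*$ part is rigorous and is all you actually need. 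Both approaches terminate in the same estimate $\ll E_n^{\ell}\int_{\T}\psi_n(z)^6\,dz\ll E_n^{\ell}\mathcal{R}_n(6)$.
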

In order to prove Lemma \ref{NonSingl}, we take advantage of (i) the Wiener-It\^{o} chaos expansion of $L_n^{(\ell)}$ and (ii) a particular version of diagram formula for Hermite polynomials (see Proposition \ref{Diagram}) allowing us to handle covariances of products of Hermite polynomials. The desired bound is then obtained by exploiting the fact that the summation is over non-singular pairs of cubes.

Combining the decomposition of the variance in \eqref{Varsum} with Lemma \ref{Singl} and Lemma
\ref{NonSingl}, the proof of Proposition \ref{VarHigherOrder} is then concluded once we derive a bound for the integral $6$-th moment of $r_n$. 
In order to achieve this, we can again use the orthogonality relation for complex exponentials on the torus \eqref{ort}
in order to link moments of the covariance function $r_n$ to $m$-correlations, for $m\geq 1$, 
\begin{gather*}
\mathcal{R}_n(m):= \int_{\T} r_n(z)^m dz =
\frac{1}{\Nn^m}\sum_{(\lambda^{(1)},\ldots,\lambda^{(m)}) \in \Lambda_n^m}\int_{\T}
e_{\lambda^{(1)}+\ldots+\lambda^{(m)}}(z) dz 
= \frac{\mathrm{card}(\mathcal{C}_n(m))}{\Nn^m}  \ . 
\end{gather*}
Using this formula for $m=6$ together with the estimate bounding the number of $6$-correlations on $\T$
(Theorem 1.7 \cite{BM17}) 
\begin{equation*} 
\mathrm{card}(\mathcal{C}_n(6)) = O(\Nn^{11/3+o(1)}) \ , \qquad n \to \infty \ , 
\end{equation*}
yields 
\begin{equation}\label{Rn6}
\mathcal{R}_n(6)= \int_{\T} r_n(z)^6 \ dz = \frac{\mathrm{card}(\mathcal{C}_n(6))}{\Nn^6} 
= O(\Nn^{-7/3+o(1)}) \ , \qquad n \to \infty \ .
\end{equation}
Combining this with the content of Proposition \ref{Var4}, we conclude that $E_n^{\ell}\mathcal{R}_n(6)=o\left(\V{\proj_4(L_n^{(\ell)})}\right)$.

\subsubsection{Finishing the proof of Theorem \ref{MainThm2}}
The proof of Theorem \ref{MainThm2} is concluded as follows: Relation \eqref{meanL} follows from \eqref{P0}
and the distributional identity stated in formula \eqref{mean}.
The asymptotic variance in Proposition \ref{Var4} together with Proposition \ref{VarHigherOrder} prove \eqref{AsyVar}. Finally, \eqref{AsyDist} follows from the limiting distribution established in Proposition \ref{Limit4} combined with Proposition \ref{VarHigherOrder}. 

\subsection{Complete study of the fourth chaotic component of $L_n^{(\ell)}$}
\label{Study4}
In this section, we provide the exact expression of the fourth-order chaotic component of $L_n^{(\ell)}$. A subsequent asymptotic analysis of this expression serves as preparation to deriving the limiting distribution of the normalised version of $L_n^{(\ell)}$.
\subsubsection{Explicit form of $\proj_4(L_n^{(\ell)})$}\label{subExplicit}
In order to write the explicit expression of the fourth-order chaotic component of $L_n^{(\ell)}$, we introduce some auxiliary random variables. Fix $\ell \in [3]$. 
\begin{Def}\label{RV1}
For $i_1,i_2 \in [\ell], j,k \in [3]$ and $n\in S_3$, we define:
\begin{eqnarray*}
W^{(i_1)}(n) &:=& \frac{1}{\sqrt{\Nn}} \sum_{\lambda \in \Lambda_n} (|a_{i_1,\lambda}|^2-1) \ , \qquad
W^{(i_1)}_{jk}(n) := \frac{1}{n\sqrt{\Nn}} \sum_{\lambda \in \Lambda_n} 
\lambda_j \lambda_k (|a_{i_1,\lambda}|^2-1) \ , \\
M^{(i_1,i_2)}(n) &:=& \frac{1}{\sqrt{\Nn}} \sum_{\lambda \in \Lambda_n} a_{i_1,\lambda} \overline{a_{i_2,\lambda}} \ , \qquad i_1<i_2, \ \ell \in \{2,3\} \ , \\
M^{(i_1,i_2)}_{j}(n) &:=& \frac{i}{\sqrt{n\Nn}} \sum_{\lambda \in \Lambda_n} \lambda_j a_{i_1,\lambda} \overline{a_{i_2,\lambda}} \ , \qquad i_1<i_2, \ \ell \in \{2,3\}  \ , \\
M^{(i_1,i_2)}_{jk}(n) &:=& \frac{1}{n\sqrt{\Nn}} \sum_{\lambda \in \Lambda_n} \lambda_j \lambda_k a_{i_1,\lambda} \overline{a_{i_2,\lambda}} \  , \qquad i_1<i_2, \ \ell \in \{2,3\} \ , \\
R^{(i_1,i_2)}(n) &:=& \frac{1}{\Nn} \sum_{\lambda \in \Lambda_n}
|a_{i_1,\lambda}|^2|a_{i_2,\lambda}|^2 \ , \qquad
R^{(i_1,i_2)}_{jk}(n) := \frac{1}{n^2\Nn} \sum_{\lambda \in \Lambda_n} \lambda_j^2 \lambda_k^2
|a_{i_1,\lambda}|^2|a_{i_2,\lambda}|^2 \ , \\
S^{(i_1,i_2)}(n) &:=& \frac{1}{\Nn} \sum_{\lambda \in \Lambda_n}
a_{i_1,\lambda}^2 \overline{a_{i_2,\lambda}}^2 \ , \qquad
S^{(i_1,i_2)}_{jk}(n) := \frac{1}{n^2\Nn} \sum_{\lambda \in \Lambda_n}
\lambda_j^2 \lambda_k^2 a_{i_1,\lambda}^2 \overline{a_{i_2,\lambda}}^2 \ , \\
X^{(i_1,i_2)}(n) &:=& \frac{1}{\Nn} \sum_{(\lambda,\lambda',\lambda'',\lambda''') \in \mathcal{X}_n(4)} a_{i_1,\lambda}a_{i_1,\lambda'}a_{i_2,\lambda''}a_{i_2,\lambda'''} \ , \\
X^{(i_1,i_2)}_{kk}(n) &:=& \frac{1}{n\Nn} \sum_{(\lambda,\lambda',\lambda'',\lambda''') \in \mathcal{X}_n(4)} \lambda_k\lambda'_k a_{i_1,\lambda}a_{i_1,\lambda'}a_{i_2,\lambda''}a_{i_2,\lambda'''} \ , \\
X^{(i_1,i_2)}_{kkjj}(n) &:=& \frac{1}{n^2\Nn} \sum_{(\lambda,\lambda',\lambda'',\lambda''') \in \mathcal{X}_n(4)} 
\lambda_k\lambda'_k  \lambda''_j\lambda'''_j  
a_{i_1,\lambda}a_{i_1,\lambda'}a_{i_2,\lambda''}a_{i_2,\lambda'''} \ . 
\end{eqnarray*}
\end{Def}
Note that $\lambda_1^2+\lambda_2^2 + \lambda_3^2=n$ implies the relations 
\begin{equation*}
R^{(i_1,i_2)}(n) = \sum_{k,j=1}^3 R_{jk}^{(i_1,i_2)}(n) \ , \quad 
S^{(i_1,i_2)}(n) = \sum_{k,j=1}^3 S_{jk}^{(i_1,i_2)}(n) \ .
\end{equation*} 

\begin{Def}\label{RV2}
For $i_1 \in [\ell]$, and $n\in S_3$, we set 
\begin{eqnarray*}
a_1^{(i_1)}(n):= \int_{\T} H_4(T_n^{(i_1)}(x)) \ dx \ ,    && a_2^{(i_1)}(n) := \sum_{k=1}^3 \int_{\T}{H_2(T_n^{(i_1)}(x)) H_2(T_{n,k}^{(i_1)}(x)) \ dx} \ , \\
a_3^{(i_1)}(n):= \sum_{k=1}^3 \int_{\T}{H_4(T_{n,k}^{(i_1)}(x)) \ dx} \ ,   &&
a_4^{(i_1)}(n):= \sum_{k < j} \int_{\T}{H_2(T_{n,k}^{(i_1)}(x)) 
H_2(T_{n,j}^{(i_1)}(x))\ dx} \ , 
\end{eqnarray*}
and for $\ell \in \{2,3\}$ and $i_1<i_2 \in [\ell], n\in S_3$,
\begin{eqnarray*}
b_1^{(i_1,i_2)}(n)&:=& \int_{\T}{H_2(T_n^{(i_1)}(x)) H_2(T_n^{(i_2)}(x))\ dx} \ , \\
b_2^{(i_1,i_2)}(n)&:=& \sum_{k=1}^3 \int_{\T}{H_2(T_n^{(i_1)}(x)) 
H_2(T_{n,k}^{(i_2)}(x)) \ dx} \ , \\
{b_2'}^{(i_1,i_2)}(n)&:=& \sum_{k=1}^3 \int_{\T}{H_2(T_{n,k}^{(i_1)}(x)) 
H_2(T_n^{(i_2)}(x)) \ dx} \ , \\ 
b_3^{(i_1,i_2)}(n)&:=& \sum_{k\neq j=1}^3 \int_{\T}{H_2(T_{n,k}^{(i_1)}(x)) 
H_2(T_{n,j}^{(i_2)}(x)) \ dx} \ , \\
b_4^{(i_1,i_2)}(n)&:=& \sum_{k=1}^3 \int_{\T}{H_2(T_{n,k}^{(i_1)}(x)) 
H_2(T_{n,k}^{(i_2)}(x)) \ dx} \ , \\
b_5^{(i_1,i_2)}(n)&:=& \sum_{k< j}\int_{\T}{T_{n,k}^{(i_1)}(x) T_{n,j}^{(i_1)}(x)
T_{n,k}^{(i_2)}(x) T_{n,j}^{(i_2)}(x)\ dx} \ .
\end{eqnarray*}
\end{Def}

\noindent\underline{\textit{Spectral correlations on $\T$.}}
For $n \in S_3$ and an integer $m \geq 1$, we introduce the set of $m$-correlations on the torus, 
\begin{equation}\label{Cm}
\mathcal{C}_n(m) := \left\{ (\lambda^{(1)},\ldots,\lambda^{(m)}) \in \Lambda_n^m : \lambda^{(1)}+\ldots+\lambda^{(m)}=0\right\} 
\end{equation} 
and the set of non-degenerate $m$-correlations 
\begin{equation}\label{Xm}
\mathcal{X}_n(m) := \left\{ (\lambda^{(1)},\ldots,\lambda^{(m)}) \in \mathcal{C}_n(m) : \forall I \subsetneq [m], \sum_{i \in I} \lambda^{(i)} \neq 0\right\} \subsetneq \mathcal{C}_n(m) \ .
\end{equation}
Recall that $\mathrm{card}(\mathcal{C}_n(4)) = 3 \Nn^2 - 3 \Nn + \mathrm{card}(\mathcal{X}_n(4))$, which is in accordance with the following summation rule (see (3.6) in \cite{Cam17})
\begin{eqnarray}\label{sumC}
&&\sum_{(\lambda,\lambda',\lambda'',\lambda''') \in \mathcal{C}_n(4) } =
\sum_{\substack{\lambda=-\lambda' \\ \lambda''=-\lambda'''}}
+ \sum_{\substack{\lambda=-\lambda'' \\ \lambda'=-\lambda'''}}
+ \sum_{\substack{\lambda=-\lambda''' \\ \lambda'=-\lambda''}} \notag\\ 
&& - \sum_{\lambda=-\lambda'=\lambda''=-\lambda'''} 
- \sum_{\lambda=\lambda'=-\lambda''=-\lambda'''} 
- \sum_{\lambda=-\lambda'=-\lambda''=\lambda'''} 
+ \sum_{(\lambda,\lambda',\lambda'',\lambda''') \in \mathcal{X}_n(4)} \ .
\end{eqnarray}
In the sequel, we will write $(\lambda,\lambda',\lambda'',\lambda''') = (\lambda^{(1)},\lambda^{(2)},\lambda^{(3)},\lambda^{(4)})$ for elements in $\mathcal{C}_n(4)$ and $\mathcal{X}_n(4)$ and use the following abbreviations 
\begin{gather*}
\sum_{\lambda}:= \sum_{\lambda \in \Lambda_n}, \quad
\sum_{\mathcal{C}_{n}(4)} := \sum_{(\lambda,\lambda',\lambda'',\lambda''')\in\mathcal{C}_{n}(4)}, \quad
\sum_{\mathcal{X}_{n}(4)} := \sum_{(\lambda,\lambda',\lambda'',\lambda''')\in\mathcal{X}_{n}(4)}.
\end{gather*}

The following lemma is a generalization of Lemma 4.5 in \cite{Cam17} (obtained for $\ell=1$) applying to the setting of multiple independent arithmetic random waves. These formulae follow by carefully applying the summation rule \eqref{sumC}.
\begin{Lem}\label{LemSums}
Fix $\ell \in [3]$. For every $i_1,i_2 \in [\ell]$ and every $j,k \in [3]$, the following formulae hold:
\begin{eqnarray}\label{ForA}
\sum_{\mathcal{C}_n(4)} a_{i_1,\lambda}a_{i_1,\lambda'}a_{i_2,\lambda''}a_{i_2,\lambda'''} =
\sum_{\lambda} |a_{i_1,\lambda}|^2 \sum_{\lambda}|a_{i_2,\lambda}|^2 + 
2 \bigg(\sum_{\lambda}a_{i_1,\lambda} \overline{a_{i_2,\lambda}} \bigg)^2 \notag \\
- 2 \sum_{\lambda} |a_{i_1,\lambda}|^2 |a_{i_2,\lambda}|^2  
- \sum_{\lambda} a_{i_1,\lambda}^2 \overline{a_{i_2,\lambda}}^2 + \sum_{\mathcal{X}_n(4)}a_{i_1,\lambda}a_{i_1,\lambda'}a_{i_2,\lambda''}a_{i_2,\lambda'''} \ , 
\end{eqnarray}
\begin{eqnarray}\label{ForB}
\sum_{\mathcal{C}_n(4)} \lambda''_k \lambda'''_k a_{i_1,\lambda}a_{i_1,\lambda'}a_{i_2,\lambda''}a_{i_2,\lambda'''} 
= -\sum_{\lambda}  |a_{i_1,\lambda}|^2 \sum_{\lambda} \lambda^2_k 
|a_{i_2,\lambda}|^2 
+ 2 \bigg(\sum_{\lambda} \lambda_k  a_{i_1,\lambda}\overline{a_{i_2,\lambda}}\bigg)^2 \notag \\
+ 2 \sum_{\lambda} \lambda^2_k |a_{i_1,\lambda}|^2 |a_{i_2,\lambda}|^2  
- \sum_{\lambda} \lambda^2_k a_{i_1,\lambda}^2 \overline{a_{i_2,\lambda}}^2  
+ \sum_{\mathcal{X}_n(4)} \lambda''_k \lambda'''_k a_{i_1,\lambda}a_{i_1,\lambda'}a_{i_2,\lambda''}a_{i_2,\lambda'''} \ , 
\end{eqnarray}
\begin{eqnarray}\label{ForC}
\sum_{\mathcal{C}_n(4)} \lambda_k \lambda'_k\lambda''_j \lambda'''_j a_{i_1,\lambda}a_{i_1,\lambda'}a_{i_2,\lambda''}a_{i_2,\lambda'''} 
= \sum_{\lambda} \lambda^2_k |a_{i_1,\lambda}|^2 
\sum_{\lambda} \lambda^2_j |a_{i_2,\lambda}|^2 
+ 2 \bigg(\sum_{\lambda} \lambda_k \lambda_j  a_{i_1,\lambda}\overline{a_{i_2,\lambda}} \bigg)^2  \notag\\
- 2 \sum_{\lambda} \lambda^2_k \lambda_j^2 |a_{i_1,\lambda}|^2 |a_{i_2,\lambda}|^2  
- \sum_{\lambda} \lambda^2_k\lambda^2_j a_{i_1,\lambda}^2 \overline{a_{i_2,\lambda}}^2  
+ \sum_{\mathcal{X}_n(4)} 
\lambda_k \lambda'_k\lambda''_j \lambda'''_j
a_{i_1,\lambda}a_{i_1,\lambda'}a_{i_2,\lambda''}a_{i_2,\lambda'''} \ ,
\end{eqnarray}
\begin{eqnarray}\label{ForD}
\sum_{\mathcal{C}_n(4)} \lambda_k \lambda'_j\lambda''_k \lambda'''_j a_{i_1,\lambda}a_{i_1,\lambda'}a_{i_2,\lambda''}a_{i_2,\lambda'''} 
= \sum_{\lambda} \lambda_k \lambda_j  |a_{i_1,\lambda}|^2
\sum_{\lambda} \lambda_k \lambda_j  |a_{i_2,\lambda}|^2\notag\\
+  \sum_{\lambda} \lambda^2_k   a_{i_1,\lambda}\overline{a_{i_2,\lambda}} 
\sum_{\lambda} \lambda^2_j   a_{i_1,\lambda}\overline{a_{i_2,\lambda}}
+ \bigg(\sum_{\lambda} \lambda_k \lambda_j   a_{i_1,\lambda} \overline{a_{i_2,\lambda}}  \bigg)^2 \notag \\
- 2 \sum_{\lambda} \lambda^2_k \lambda^2_j |a_{i_1,\lambda}|^2 |a_{i_2,\lambda}|^2  
- \sum_{\lambda} \lambda^2_k \lambda^2_j a_{i_1,\lambda}^2 \overline{a_{i_2,\lambda}}^2 \notag\\ 
+ \sum_{\mathcal{X}_n(4)} 
\lambda_k \lambda'_j\lambda''_k \lambda'''_j
a_{i_1,\lambda}a_{i_1,\lambda'}a_{i_2,\lambda''}a_{i_2,\lambda'''} \ .
\end{eqnarray}
\end{Lem}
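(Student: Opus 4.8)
The plan is to derive all four identities by one and the same mechanism. Substitute the decomposition \eqref{sumC} of the sum over $\mathcal{C}_n(4)$ into the three ``fully paired'' index patterns, minus the three ``fully diagonal'' patterns, plus the sum over $\mathcal{X}_n(4)$; then evaluate each of the six collapsing sums using the only two structural features of the model that are needed, namely the reality constraint $a_{i,-\lambda}=\overline{a_{i,\lambda}}$ on the Gaussian coefficients and the antisymmetry $(-\lambda)_k=-\lambda_k$ of the frequency coordinates. The sum over $\mathcal{X}_n(4)$ is carried along untouched, producing the last term on each right-hand side. As a preliminary sanity check, feeding the trivial integrand into \eqref{sumC} already reproduces $\mathrm{card}(\mathcal{C}_n(4))=3\Nn^2-3\Nn+\mathrm{card}(\mathcal{X}_n(4))$, since each paired block contains $\Nn^2$ tuples and each diagonal block contains $\Nn$.

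For \eqref{ForA} I would treat the three paired blocks first. On $\{\lambda=-\lambda',\ \lambda''=-\lambda'''\}$ the coefficient product collapses to $|a_{i_1,\lambda}|^2|a_{i_2,\lambda''}|^2$, so the block contributes $\sum_{\lambda}|a_{i_1,\lambda}|^2\sum_{\lambda}|a_{i_2,\lambda}|^2$; on each of $\{\lambda=-\lambda'',\ \lambda'=-\lambda'''\}$ and $\{\lambda=-\lambda''',\ \lambda'=-\lambda''\}$ it collapses to $a_{i_1,\lambda}\overline{a_{i_2,\lambda}}\,a_{i_1,\lambda'}\overline{a_{i_2,\lambda'}}$ and contributes $\bigl(\sum_{\lambda}a_{i_1,\lambda}\overline{a_{i_2,\lambda}}\bigr)^2$, which explains the coefficient $2$. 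The three diagonal blocks, evaluated the same way, yield $\sum_{\lambda}|a_{i_1,\lambda}|^2|a_{i_2,\lambda}|^2$ twice and $\sum_{\lambda}a_{i_1,\lambda}^2\overline{a_{i_2,\lambda}}^2$ once; since they enter \eqref{sumC} with a minus sign, this reproduces \eqref{ForA}. The identities \eqref{ForB} and \eqref{ForC} follow by the same computation, now carrying the monomial factors through the index identifications: on a paired block where the two ``$k$-slots'' are identified directly with opposite sign the factor $\lambda''_k\lambda'''_k$ picks up an extra minus sign and becomes $-(\lambda''_k)^2$, which accounts for the leading minus sign in \eqref{ForB}, whereas on blocks where one $k$-slot lands on $\lambda$ and another on $\lambda'$ the two $(-)$ signs cancel and one recovers a square of a linear (resp.\ quadratic) statistic; on the diagonal blocks every coordinate equals $\pm\lambda_k$, so the monomial is always $\lambda_k^2$ for \eqref{ForB} (resp.\ $\lambda_k^2\lambda_j^2$ for \eqref{ForC}) times one of the three coefficient patterns already seen.

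The one place that requires genuine care is \eqref{ForD}, whose monomial $\lambda_k\lambda'_j\lambda''_k\lambda'''_j$ is \emph{not} symmetric under $k\leftrightarrow j$, so the three paired blocks now produce three inequivalent outputs. On $\{\lambda=-\lambda',\ \lambda''=-\lambda'''\}$ one gets the product $\sum_{\lambda}\lambda_k\lambda_j|a_{i_1,\lambda}|^2\,\sum_{\lambda}\lambda_k\lambda_j|a_{i_2,\lambda}|^2$; on $\{\lambda=-\lambda'',\ \lambda'=-\lambda'''\}$ the two $k$-slots both sit on the frequency $\lambda$ while the two $j$-slots both sit on $\lambda'$, which forces the \emph{product of two distinct sums} $\sum_{\lambda}\lambda_k^2 a_{i_1,\lambda}\overline{a_{i_2,\lambda}}\cdot\sum_{\lambda}\lambda_j^2 a_{i_1,\lambda}\overline{a_{i_2,\lambda}}$ rather than a perfect square; and only on $\{\lambda=-\lambda''',\ \lambda'=-\lambda''\}$ does one pair a $k$-slot with a $j$-slot on each frequency, giving the square $\bigl(\sum_{\lambda}\lambda_k\lambda_j a_{i_1,\lambda}\overline{a_{i_2,\lambda}}\bigr)^2$. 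Matching these against the first three terms on the right-hand side of \eqref{ForD}, together with the three diagonal corrections (each of which contributes $\lambda_k^2\lambda_j^2$ times one of the same coefficient patterns), finishes the argument. I expect this last bit of bookkeeping — keeping straight which slot carries which index after each sign flip in \eqref{ForD} — to be the only real obstacle; everything else is the mechanical sixfold application of $a_{i,-\lambda}=\overline{a_{i,\lambda}}$ and $(-\lambda)_k=-\lambda_k$.
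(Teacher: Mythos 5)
Your overall strategy is precisely the paper's: the paper gives no separate argument beyond ``apply the summation rule \eqref{sumC}'' (generalising Lemma 4.5 of \cite{Cam17}), and your mechanism — collapse the three paired blocks and the three diagonal blocks using $a_{i,-\lambda}=\overline{a_{i,\lambda}}$ and $(-\lambda)_k=-\lambda_k$, carry the $\mathcal{X}_n(4)$-sum along untouched — is exactly that. Your treatment of \eqref{ForA}, of the paired blocks in \eqref{ForB} and \eqref{ForC}, and in particular your careful slot-by-slot bookkeeping for the asymmetric monomial in \eqref{ForD} (the three inequivalent paired-block outputs, including the product of two distinct sums on the block $\lambda=-\lambda'',\ \lambda'=-\lambda'''$) all check out.

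One sign in your description of the diagonal blocks is wrong and, taken literally, would spoil \eqref{ForB}. On the blocks $\lambda=-\lambda'=\lambda''=-\lambda'''$ and $\lambda=-\lambda'=-\lambda''=\lambda'''$ one has $\lambda''=\pm\lambda$ and $\lambda'''=\mp\lambda$, so the monomial $\lambda''_k\lambda'''_k$ equals $-\lambda_k^2$, not $\lambda_k^2$; only on $\lambda=\lambda'=-\lambda''=-\lambda'''$ is it $+\lambda_k^2$. Since the diagonal blocks enter \eqref{sumC} with a minus sign, this extra $-1$ is exactly what produces the coefficient $+2$ in front of $\sum_{\lambda}\lambda_k^2|a_{i_1,\lambda}|^2|a_{i_2,\lambda}|^2$ in \eqref{ForB}, in contrast with the $-2$ appearing in \eqref{ForA}, \eqref{ForC} and \eqref{ForD}. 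With your statement that ``the monomial is always $\lambda_k^2$ for \eqref{ForB}'' the diagonal correction would read $-2\sum_{\lambda}\lambda_k^2|a_{i_1,\lambda}|^2|a_{i_2,\lambda}|^2-\sum_{\lambda}\lambda_k^2 a_{i_1,\lambda}^2\overline{a_{i_2,\lambda}}^2$, contradicting the identity you are proving. For \eqref{ForC} and \eqref{ForD} the two sign flips per diagonal block cancel in pairs, so there your claim is correct as written; only the \eqref{ForB} case needs this fix, after which the proof is complete and coincides with the paper's intended computation.
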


The next two lemmas express the random variables introduced in Definition \ref{RV2} in terms of the quantities defined in Definition \ref{RV1}. 
The following expansions have been proved in Lemma 4.4 of \cite{Cam17}.  
\begin{Lem}\label{ExpA}
Fix $\ell \in [3]$. For every $i_1  \in [\ell]$, we have 
\begin{enumerate}[label=\rm{(\roman*)}]
\item $a_1^{(i_1)}(n) = \frac{3}{\Nn}\big(W^{(i_1)}(n)^2-R^{(i_1,i_1)}(n) + \frac{1}{3}X^{(i_1,i_1)}(n)  \big) $
\item $a_2^{(i_1)}(n) =  \frac{3}{\Nn}\big(W^{(i_1)}(n)^2 - R^{(i_1,i_1)}(n) - \sum_{k=1}^3 X_{kk}^{(i_1,i_1)}(n)\big)$
\item $a_3^{(i_1)}(n) = \frac{27}{\Nn}\sum_{k=1}^3 \big(  W^{(i_1)}_{kk}(n)^2 -  R_{kk}^{(i_1,i_1)}(n) +   \frac{1}{3}X_{kkkk}^{(i_1,i_1)}(n)\big)$
\item $a_4^{(i_1)}(n) = \frac{9}{\Nn}\sum_{k<j}\big( W^{(i_1)}_{kk}(n)W^{(i_1)}_{jj}(n) + 2  W^{(i_1)}_{kj}(n)^2 - 3R_{kj}^{(i_1,i_1)}(n) + X_{kkjj}^{(i_1,i_1)}(n)\big)$ 
\end{enumerate}
\end{Lem}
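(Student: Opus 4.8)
The plan is to reduce Lemma \ref{ExpA} to a purely one-dimensional computation: each of $a_1^{(i_1)}(n),\ldots,a_4^{(i_1)}(n)$ depends only on the single arithmetic random wave $T_n^{(i_1)}$ and its normalised partial derivatives $T_{n,k}^{(i_1)}$, so it suffices to run the argument of Lemma 4.4 of \cite{Cam17} with $T_n$ replaced by $T_n^{(i_1)}$; formally this amounts to specialising Lemma \ref{LemSums} to $i_2=i_1$, for which $a_{i_1,\lambda}\overline{a_{i_1,\lambda}}=|a_{i_1,\lambda}|^2$ and $a_{i_1,\lambda}^2\overline{a_{i_1,\lambda}}^2=|a_{i_1,\lambda}|^4$. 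Throughout I abbreviate $a_\lambda:=a_{i_1,\lambda}$.

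First I would pass from the geometric integrals to spectral sums. Plugging the Fourier representations \eqref{normalization}, namely $T_n^{(i_1)}=\Nn^{-1/2}\sum_\lambda a_\lambda e_\lambda$ and $T_{n,k}^{(i_1)}=i\sqrt{3/(n\Nn)}\sum_\lambda\lambda_k a_\lambda e_\lambda$, into $H_2(t)=t^2-1$ and $H_4(t)=t^4-6t^2+3$, expanding all powers, exchanging sum and integral, and using the orthogonality relation \eqref{ort}, every degree-$4$ monomial collapses onto a sum over the $4$-correlations $\mathcal{C}_n(4)$ of \eqref{Cm}, every degree-$2$ monomial onto a diagonal sum (with $a_\lambda a_{-\lambda}=|a_\lambda|^2$ and $\lambda_k(-\lambda_k)a_\lambda a_{-\lambda}=-\lambda_k^2|a_\lambda|^2$), and the constant terms remain. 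For $a_1^{(i_1)}(n)$ this yields
\begin{equation*}
a_1^{(i_1)}(n)=\frac{1}{\Nn^2}\sum_{(\lambda,\lambda',\lambda'',\lambda''')\in\mathcal{C}_n(4)}a_\lambda a_{\lambda'}a_{\lambda''}a_{\lambda'''}-\frac{6}{\Nn}\sum_\lambda|a_\lambda|^2+3,
\end{equation*}
and analogous (longer) identities for $a_2^{(i_1)},a_3^{(i_1)},a_4^{(i_1)}$, involving respectively $\sum_{\mathcal{C}_n(4)}\lambda''_k\lambda'''_k a_\lambda a_{\lambda'}a_{\lambda''}a_{\lambda'''}$ summed over $k$, $\sum_{\mathcal{C}_n(4)}\lambda_k\lambda'_k\lambda''_k\lambda'''_k a_\lambda a_{\lambda'}a_{\lambda''}a_{\lambda'''}$ summed over $k$, and $\sum_{\mathcal{C}_n(4)}\lambda_k\lambda'_k\lambda''_j\lambda'''_j a_\lambda a_{\lambda'}a_{\lambda''}a_{\lambda'''}$ summed over $k<j$, together with the lower-order corrections $\int_\T (T_n^{(i_1)})^2\,dx=1+\Nn^{-1/2}W^{(i_1)}(n)$ and $\int_\T (T_{n,k}^{(i_1)})^2\,dx=1+3\Nn^{-1/2}W_{kk}^{(i_1)}(n)$.

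Next I would evaluate the $\mathcal{C}_n(4)$-sums: inserting \eqref{ForA}, \eqref{ForB}, \eqref{ForC} of Lemma \ref{LemSums} with $i_2=i_1$ rewrites each of them as a polynomial in diagonal sums plus one $\mathcal{X}_n(4)$-sum. I would then substitute the deterministic evaluations
\begin{equation*}
\sum_\lambda 1=\Nn,\qquad \sum_\lambda\lambda_k^2=\frac{n\Nn}{3},\qquad \sum_\lambda\lambda_k\lambda_j=0\ \ (k\neq j),\qquad \sum_\lambda\lambda_k|a_\lambda|^2=0,
\end{equation*}
where the second follows from the permutation invariance of $\Lambda_n$ and $\lambda_1^2+\lambda_2^2+\lambda_3^2=n$, the third from a single-coordinate sign flip on $\Lambda_n$, and the fourth from the pairing $\lambda\leftrightarrow-\lambda$ together with $|a_{-\lambda}|^2=|a_\lambda|^2$. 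This lets me replace $\sum_\lambda|a_\lambda|^2$ by $\Nn+\sqrt{\Nn}\,W^{(i_1)}(n)$, $\sum_\lambda\lambda_k\lambda_j|a_\lambda|^2$ by $\tfrac{n\Nn}{3}\ind{k=j}+n\sqrt{\Nn}\,W_{kj}^{(i_1)}(n)$, the quartic diagonal sums by the appropriate multiple of $n,\Nn$ times the matching $R$-quantity (e.g. $\sum_\lambda\lambda_k^2\lambda_j^2|a_\lambda|^4=n^2\Nn\,R_{kj}^{(i_1,i_1)}(n)$, and $\sum_k\sum_\lambda\lambda_k^2|a_\lambda|^4=n\Nn\,R^{(i_1,i_1)}(n)$ via $\sum_k\lambda_k^2=n$), and each $\mathcal{X}_n(4)$-sum by the matching $X^{(i_1,i_1)}$-quantity of Definition \ref{RV1} (with $X_{kkkk}^{(i_1,i_1)}$ read as $X_{kkjj}^{(i_1,i_1)}$ at $j=k$) — the only reindexing needed being, for $a_2^{(i_1)}$, the identity $\sum_{\mathcal{X}_n(4)}\lambda''_k\lambda'''_k a_\lambda a_{\lambda'}a_{\lambda''}a_{\lambda'''}=\sum_{\mathcal{X}_n(4)}\lambda_k\lambda'_k a_\lambda a_{\lambda'}a_{\lambda''}a_{\lambda'''}$, which holds by the permutation invariance of $\mathcal{X}_n(4)$ (and of $a_\lambda a_{\lambda'}a_{\lambda''}a_{\lambda'''}$ when $i_1=i_2$).

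Finally, each $a_m^{(i_1)}(n)$ is now an explicit polynomial in the quantities of Definition \ref{RV1} whose terms of order $\Nn^0$ and $\Nn^{-1/2}$ I would verify cancel identically against the constant terms and the lower-order Hermite corrections $-6\int_\T(T_n^{(i_1)})^2$, respectively $-\int_\T(T_{n,k}^{(i_1)})^2-\int_\T(T_{n,j}^{(i_1)})^2+1$ — a cancellation forced a priori by the fact that $a_m^{(i_1)}(n)$ lives in the fourth Wiener chaos. What survives is exactly the $\Nn^{-1}$-order expression asserted in (i)--(iv). I expect the only real difficulty to be bookkeeping, most notably in (iv): there the off-diagonal $W_{kj}^{(i_1)}(n)$ and $R_{kj}^{(i_1,i_1)}(n)$ enter, and multiplying out $\big(\tfrac{n\Nn}{3}+n\sqrt{\Nn}\,W_{kk}^{(i_1)}\big)\big(\tfrac{n\Nn}{3}+n\sqrt{\Nn}\,W_{jj}^{(i_1)}\big)$ produces the $\Nn^{-1/2}$-order cross term proportional to $W_{kk}^{(i_1)}+W_{jj}^{(i_1)}$ that must be matched against $-\int_\T(T_{n,k}^{(i_1)})^2-\int_\T(T_{n,j}^{(i_1)})^2$; and in (ii), where the summation over $k$ of the sum in \eqref{ForB} must be carried out carefully via $\sum_k\lambda_k^2=n$.
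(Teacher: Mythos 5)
Your argument is correct and is exactly the route the paper takes for the companion Lemma \ref{ExpB} (Fourier expansion plus orthogonality \eqref{ort}, the summation rule of Lemma \ref{LemSums}, then identification with the quantities of Definition \ref{RV1}), specialised to $i_1=i_2$, where the $M$-type terms drop out since $\sum_{\lambda}\lambda_k|a_{i_1,\lambda}|^2=0$ by the pairing $\lambda\leftrightarrow-\lambda$. The paper itself does not reprove Lemma \ref{ExpA} but simply cites Lemma 4.4 of \cite{Cam17}, whose proof proceeds along these same lines, so your proposal matches the intended argument.
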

The next lemma deals with mixed expressions containing indices $i_1<i_2$. 
\begin{Lem}\label{ExpB} 
Fix $\ell \in \{2,3\}$. For every $i_1 < i_2 \in [\ell]$, we have 
\begin{enumerate}[label=\rm{(\roman*)}]
\item $b_1^{(i_1,i_2)}(n) = \frac{1}{\Nn}\big( W^{(i_1)}(n) W^{(i_2)}(n) 
+ 2 M^{(i_1,i_2)}(n)^2 - 2R^{(i_1,i_2)}(n)-S^{(i_1,i_2)}(n) + X^{(i_1,i_2)}(n)\big)$ 
\item $b_2^{(i_1,i_2)}(n)  = {b_2'}^{(i_2,i_1)}(n)  
=  \frac{3}{\Nn} \big(W^{(i_1)}(n) W^{(i_2)}(n) +
2\sum_{k=1}^3 M_k^{(i_1,i_2)}(n)^2 
- 2R^{(i_1,i_2)}(n) + S^{(i_1,i_2)}(n)
- \sum_{k=1}^3 X^{(i_1,i_2)}_{kk}(n) \big)$
\item $b_3^{(i_1,i_2)}(n) =\frac{9}{\Nn}\sum_{k \neq j=1}^3
\big(W^{(i_1)}_{kk}(n) W^{(i_2)}_{jj}(n) 
+ 2 M^{(i_1,i_2)}_{kj}(n)^2 
-2 R^{(i_1,i_2)}_{kj}(n) 
- S^{(i_1,i_2)}_{kj}(n)
+  X^{(i_1,i_2)}_{kkjj}(n)\big) $
\item $b_4^{(i_1,i_2)}(n) =\frac{9}{\Nn}\sum_{k=1}^3
\big(W^{(i_1)}_{kk}(n) W^{(i_2)}_{kk}(n) 
+ 2 M^{(i_1,i_2)}_{kk}(n)^2 
-2 R^{(i_1,i_2)}_{kk}(n) 
- S^{(i_1,i_2)}_{kk}(n)
+  X^{(i_1,i_2)}_{kkkk}(n) \big) $
\item $b_5^{(i_1,i_2)}(n) =\frac{9}{\Nn}\sum_{k< j}\big(W^{(i_1)}_{kj}(n) W^{(i_2)}_{kj}(n) 
+M^{(i_1,i_2)}_{kk}(n) M^{(i_1,i_2)}_{jj}(n) 
+M^{(i_1,i_2)}_{kj}(n)^2 
-2 R^{(i_1,i_2)}_{kj}(n) -S^{(i_1,i_2)}_{kj}(n)
+X^{(i_1,i_2)}_{kkjj}(n)\big)$
\end{enumerate}
\end{Lem}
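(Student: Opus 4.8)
The plan is to reduce each $b_m^{(i_1,i_2)}(n)$ to a spectral sum over the set of $4$-correlations $\mathcal{C}_n(4)$, split that sum by means of the rule \eqref{sumC} into its diagonal pieces and a non-degenerate remainder — exactly as recorded in the four formulae \eqref{ForA}--\eqref{ForD} of Lemma \ref{LemSums} — and then read off the claimed expressions by substituting the definitions from Definition \ref{RV1}. Recall $H_2(t)=t^2-1$ and, by \eqref{normalization}, $T_n^{(i)}(x)=\Nn^{-1/2}\sum_{\lambda}a_{i,\lambda}e_{\lambda}(x)$, while $T_{n,k}^{(i)}(x)$ equals $\mathrm{i}\sqrt{3/(n\Nn)}\cdot\sum_{\lambda}\lambda_k a_{i,\lambda}e_{\lambda}(x)$. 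Thus each factor $H_2(T_n^{(i)}(x))$, $H_2(T_{n,k}^{(i)}(x))$ or $T_{n,k}^{(i)}(x)$ is a quadratic or linear combination of the exponentials $e_{\lambda}(x)$; expanding the product defining $b_m^{(i_1,i_2)}(n)$ and integrating over $\T$ via the orthogonality relation \eqref{ort} forces the four participating frequencies to sum to zero, leaving a $\mathcal{C}_n(4)$-sum together with lower-order terms coming from the constants $-1$ in the $H_2$'s and from the $\lambda'=-\lambda$ part of each quadratic factor (the quartic term $b_5$, having no $H_2$, produces no such correction). The four resulting sum-types are precisely the left-hand sides of \eqref{ForA}--\eqref{ForD}: $b_1$ has no derivatives and uses \eqref{ForA}; $b_2$ and ${b_2'}$ carry a single derivative label $k$ on one of the two waves and use \eqref{ForB} (with the roles of $i_1,i_2$ interchanged for ${b_2'}$, which gives the equality $b_2^{(i_1,i_2)}={b_2'}^{(i_2,i_1)}$ directly); $b_3$ (labels $k\neq j$) and $b_4$ (label $k=j$) carry one derivative on each wave, with the two $i_1$-frequencies sharing the label $k$ and the two $i_2$-frequencies the label $j$, hence use \eqref{ForC}; and the quartic term $b_5$ distributes the labels $\{k,j\}$ one per frequency within each pair and uses \eqref{ForD}.

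I would then substitute Definition \ref{RV1}: $\sum_{\lambda}|a_{i,\lambda}|^2=\sqrt{\Nn}\,W^{(i)}(n)+\Nn$, and, using the symmetry $\sum_{\lambda}\lambda_j\lambda_k=\ind{j=k}\,n\Nn/3$ of $\Lambda_n$, $\sum_{\lambda}\lambda_j\lambda_k|a_{i,\lambda}|^2=n\sqrt{\Nn}\,W_{jk}^{(i)}(n)+\ind{j=k}\,n\Nn/3$; likewise $\sum_{\lambda}a_{i_1,\lambda}\overline{a_{i_2,\lambda}}=\sqrt{\Nn}\,M^{(i_1,i_2)}(n)$ and its $\lambda_j$- and $\lambda_j\lambda_k$-weighted analogues for $M_j^{(i_1,i_2)},M_{jk}^{(i_1,i_2)}$, and $\Nn R^{(i_1,i_2)}(n)=\sum_{\lambda}|a_{i_1,\lambda}|^2|a_{i_2,\lambda}|^2$, $\Nn S^{(i_1,i_2)}(n)=\sum_{\lambda}a_{i_1,\lambda}^2\overline{a_{i_2,\lambda}}^2$ together with their index-resolved versions. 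After multiplying by the appropriate scalar prefactor and summing over the derivative labels, each right-hand side of \eqref{ForA}--\eqref{ForD} turns into a $\Nn^{-1}$ multiple of the displayed combination of $W$'s, $M$'s, $R$'s and $S$'s; the identities $R^{(i_1,i_2)}=\sum_{j,k}R_{jk}^{(i_1,i_2)}$ and $S^{(i_1,i_2)}=\sum_{j,k}S_{jk}^{(i_1,i_2)}$ (equivalently $\sum_k\lambda_k^2=n$) are used to pass between the total quantities in $b_1,b_2,{b_2'}$ and the component quantities in $b_3,b_4,b_5$. A short check shows the lower-order corrections cancel exactly against the constant parts of the $H_2$'s, yielding the five identities. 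The diagonal case $i_1=i_2$ of this computation is Lemma 4.4 of \cite{Cam17} (our Lemma \ref{ExpA}), so the genuinely new content is the off-diagonal part, in which $M^{(i_1,i_2)},M_j^{(i_1,i_2)},M_{jk}^{(i_1,i_2)},S^{(i_1,i_2)},S_{jk}^{(i_1,i_2)}$ no longer vanish because $\{a_{i_1,\lambda}\}$ and $\{a_{i_2,\lambda}\}$ are independent and distinct.

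The hard part is purely the bookkeeping in this reduction, of three sorts. First, the imaginary unit in \eqref{normalization} always occurs an even number of times and collapses to a sign absorbed into the very definitions of $W_{jk}^{(i)},M_j^{(i_1,i_2)},M_{jk}^{(i_1,i_2)},R_{jk}^{(i_1,i_2)},S_{jk}^{(i_1,i_2)}$ (and of the $X$-quantities); one must track these signs together with the prefactors $\pm 3/(n\Nn)$ so that, for instance, the $+2(\sum_{\lambda}\lambda_k a_{i_1,\lambda}\overline{a_{i_2,\lambda}})^2$ term of \eqref{ForB} indeed yields $+2\sum_k M_k^{(i_1,i_2)}(n)^2$ in $b_2$. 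Second, the six degenerate correction sums in \eqref{sumC} must be shown to recombine into exactly the coefficients $2,-2,-1$ in front of the $M^2$-, $R$- and $S$-type terms (respectively $1,1$ in front of the two quadratic pieces of $b_5$). Third, and most delicate, the derivative labels produced by the expansion sit on specific entries of the summed quadruple (on $\lambda'',\lambda'''$ in $b_2$, split as $\lambda_k\lambda'_j\lambda''_k\lambda'''_j$ in $b_5$, etc.), a placement that a priori differs from the one fixed in Definition \ref{RV1} for $X_{kk}^{(i_1,i_2)}(n)$ and $X_{kkjj}^{(i_1,i_2)}(n)$; matching the two requires the invariance of $\mathcal{X}_n(4)$ (and of the diagonal quadratic sums) under permutations of the four entries and under simultaneous sign reversal, combined with the symmetry of $a_{i_1,\lambda}a_{i_1,\lambda'}$ and of $a_{i_2,\lambda''}a_{i_2,\lambda'''}$ under exchanging the two entries of each pair. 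Since the $\mathcal{X}_n(4)$-terms are shown later to be asymptotically negligible via \eqref{EstX4}, the only substantive step is the exact identification of the \emph{leading} (diagonal) terms, which follows the pattern of \cite{Cam17} enlarged by the mixed variables.
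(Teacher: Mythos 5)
Your plan is correct and follows essentially the same route as the paper: expand the $H_2$-products, integrate via the orthogonality relation \eqref{ort} to obtain $\mathcal{C}_n(4)$-sums, apply the appropriate formula \eqref{ForA}--\eqref{ForD} of Lemma \ref{LemSums} (with exactly the assignment $b_1\to$\eqref{ForA}, $b_2,b_2'\to$\eqref{ForB}, $b_3,b_4\to$\eqref{ForC}, $b_5\to$\eqref{ForD}), and recombine the constant terms of the $H_2$'s with the diagonal contributions (via identities such as \eqref{rel} and \eqref{sumkj}) into the centred quantities of Definition \ref{RV1}. The sign/prefactor bookkeeping you flag (the $i^2=-1$ from \eqref{normalization}, $\sum_k\lambda_k^2=n$, and the entry-placement symmetry in the $\mathcal{X}_n(4)$-terms) is precisely what the paper's computation carries out.
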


\begin{proof}  
Let $\ell \in \{2,3\}$ be fixed. For (i), by \eqref{ForA}, we have
\begin{eqnarray*}
&& b_1^{(i_1,i_2)}(n) = \int_{\T}{H_2(T_n^{(i_1)}(x)) H_2(T_n^{(i_2)}(x))\ dx} \\
&=& \int_{\T}{\big( T_n^{(i_1)}(x)^2 T_n^{(i_2)}(x)^2 - T_n^{(i_1)}(x)^2 - T_n^{(i_2)}(x)^2 + 1\big) \ dx} \\
&=& \frac{1}{\Nn^2} \sum_{ \mathcal{C}_n(4)} a_{i_1,\la} a_{i_1,\la'} a_{i_2, \la''} a_{i_2,\la'''}
- \frac{1}{\Nn}\sum_{\la}|a_{i_1,\la}|^2
-\frac{1}{\Nn}\sum_{\la}|a_{i_2,\la}|^2 +1\\
&=& \frac{1}{\Nn^2}\sum_{\lambda} |a_{i_1,\lambda}|^2 \sum_{\lambda}|a_{i_2,\lambda}|^2 + 
\frac{2}{\Nn^2} \bigg(\sum_{\lambda}a_{i_1,\lambda} \overline{a_{i_2,\lambda}} \bigg)^2
- \frac{2}{\Nn^2} \sum_{\lambda} |a_{i_1,\lambda}|^2 |a_{i_2,\lambda}|^2 \\
&& \qquad - \frac{1}{\Nn^2} \sum_{\lambda} a_{i_1,\lambda}^2 \overline{a_{i_2,\lambda}}^2 
+ \frac{1}{\Nn^2}\sum_{\mathcal{X}_n(4)}a_{i_1,\lambda}a_{i_1,\lambda'}a_{i_2,\lambda''}a_{i_2,\lambda'''} \\ 
&&\qquad- \frac{1}{\Nn}\sum_{\la}|a_{i_1,\la}|^2  -\frac{1}{\Nn}\sum_{\la}|a_{i_2,\la}|^2 +1 \ .
\end{eqnarray*}
Now using the relation
\begin{eqnarray}\label{rel}
&&\frac{1}{\Nn^2}\sum_{\la}(|a_{i_1,\la}|^2-1)
\sum_{\la}(|a_{i_2,\la}|^2-1) \notag \\
&=&  \frac{1}{\Nn^2} \sum_{\la}|a_{i_1,\la}|^2\sum_{\la}|a_{i_2,\la'}|^2 - \frac{1}{\Nn}\sum_{\la}|a_{i_1,\la}|^2
-\frac{1}{\Nn}\sum_{\la}|a_{i_2,\la}|^2 +1  \ ,
\end{eqnarray}
we can rewrite $b_1^{(i_1,i_2)}(n)$ as
\begin{eqnarray*}
\frac{1}{\Nn^2}\sum_{\la}(|a_{i_1,\la}|^2-1)\sum_{\la}(|a_{i_2,\la}|^2-1) +
\frac{2}{\Nn^2} \bigg(\sum_{\lambda}a_{i_1,\lambda} \overline{a_{i_2,\lambda}} \bigg)^2 \\
\qquad - \frac{2}{\Nn^2} \sum_{\lambda} |a_{i_1,\lambda}|^2 |a_{i_2,\lambda}|^2 
- \frac{1}{\Nn^2} \sum_{\lambda} a_{i_1,\lambda}^2 \overline{a_{i_2,\lambda}}^2 
+ \frac{1}{\Nn^2}\sum_{\mathcal{X}_n(4)}a_{i_1,\lambda}a_{i_1,\lambda'}a_{i_2,\lambda''}a_{i_2,\lambda'''} \\ 
= \frac{1}{\Nn}\big(W^{(i_1)}(n)W^{(i_2)}(n)
+ 2 M^{(i_1,i_2)}(n)^2 - 2 R^{(i_1,i_2)}(n) - S^{(i_1,i_2)}(n) + X^{(i_1,i_2)}(n)\big) \ . 
\end{eqnarray*}
Let us now prove (ii). We start by computing $\int_{\T}{H_2(T_n^{(i_1)}(x)) H_2(T_{n,k}^{(i_2)}(x)) dx}$ for fixed $k \in [3]$. Bearing in mind that 
\begin{equation*}
T_{n,k}^{(i_2)}(x) = i\sqrt{\frac{3 }{ n \Nn}}
\sum_{\la} \la_k a_{i_2,\la} e_{\la}(x)
\end{equation*}
and using \eqref{ForB}, we have
\begin{eqnarray*}
&&\int_{\T}{H_2(T_n^{(i_1)}(x)) H_2(T_{n,k}^{(i_2)}(x)) \ dx}   
=  \int_{\T}{\big( T_n^{(i_1)}(x)^2 T_{n,k}^{(i_2)}(x)^2 - T_n^{(i_1)}(x)^2 - T_{n,k}^{(i_2)}(x)^2 + 1\big) \ dx} \\
&&= \frac{3}{n\Nn^2}\sum_{\lambda} |a_{i_1,\lambda}|^2 \sum_{\lambda} \lambda_k^2|a_{i_2,\lambda'}|^2 
- \frac{6}{n\Nn^2} \bigg(\sum_{\lambda} \lambda_k a_{i_1,\lambda}\overline{a_{i_2,\lambda}}\bigg)^2 \\
&&\qquad -\frac{6}{n\Nn^2} \sum_{\lambda} \lambda^2_k |a_{i_1,\lambda}|^2 |a_{i_2,\lambda}|^2  + \frac{3}{n\Nn^2}\sum_{\lambda} \lambda^2_k a_{i_1,\lambda}^2 \overline{a_{i_2,\lambda}}^2 \\
&&\qquad -\frac{3}{n\Nn^2}\sum_{\mathcal{X}_n(4)} \lambda''_k \lambda'''_k a_{i_1,\lambda}a_{i_1,\lambda'}a_{i_2,\lambda''}a_{i_2,\lambda'''} \\
&& \qquad - \frac{1}{\Nn}\sum_{\la} |a_{i_1,\la}|^2 
- \frac{3}{n\Nn}\sum_{\la} |a_{i_2,\la}|^2 \lambda_k^2 + 1  \ .
\end{eqnarray*}
Hence, summing over $k$ and using the fact that $\la_1^2+\la_2^2+\la_3^2=n$ for $\la=(\la_1,\la_2,\la_3) \in \Lambda_n$ yields
\begin{eqnarray*}
b_2^{(i_1,i_2)}(n) &=& \frac{3}{\Nn^2}\sum_{\lambda} 
|a_{i_1,\lambda}|^2 \sum_{\lambda} |a_{i_2,\lambda}|^2 
-\frac{6}{n\Nn^2} \sum_{k=1}^3\bigg(\sum_{\lambda} \lambda_k  a_{i_1,\lambda}\overline{a_{i_2,\lambda}} \bigg)^2 
-\frac{6}{\Nn^2} \sum_{\lambda}  |a_{i_1,\lambda}|^2 |a_{i_2,\lambda}|^2 \\
&& \quad + \frac{3}{\Nn^2}\sum_{\lambda} a_{i_1,\lambda}^2 \overline{a_{i_2,\lambda}}^2 
-\frac{3}{n\Nn^2}\sum_{k=1}^3\sum_{\mathcal{X}_n(4)} \lambda''_k \lambda'''_k a_{i_1,\lambda}a_{i_1,\lambda'}a_{i_2,\lambda''}a_{i_2,\lambda'''} \\
&& \quad - \frac{3}{\Nn}\sum_{\la}|a_{i_1,\la}|^2 
- \frac{3}{\Nn}\sum_{\lambda} |a_{i_2,\lambda}|^2  + 3 \ .
\end{eqnarray*}
Note that we can rewrite the second term as 
\begin{eqnarray*}
-\frac{6}{n\Nn^2} \sum_{k=1}^3\bigg(\sum_{\lambda} \lambda_k  a_{i_1,\lambda}\overline{a_{i_2,\lambda}} \bigg)^2  
= \frac{6}{\Nn}\sum_{k=1}^3\bigg(\frac{i}{\sqrt{n\Nn}}\sum_{\la} \la_k a_{i_1,\la} \overline{a_{i_2,\la}}\bigg)^2
= \frac{6}{\Nn}\sum_{k=1}^3 M_k^{(i_1,i_2)}(n)^2 \ .
\end{eqnarray*}
Substituting \eqref{rel} in the computation above shows that $b_2^{(i_1,i_2)}(n)$ is equal to
\begin{eqnarray*} 
\frac{3}{\Nn} \left(W^{(i_1)}(n) W^{(i_2)}(n) +
2\sum_{k=1}^3 M_k^{(i_1,i_2)}(n)^2 
- 2R^{(i_1,i_2)}(n) + S^{(i_1,i_2)}(n)
- \sum_{k=1}^3 X^{(i_1,i_2)}_{kk}(n) \right) , 
\end{eqnarray*}
which is the desired equality.  Let us now prove (iii). First, by \eqref{ForC}, we have for $k\neq j$, 
\begin{eqnarray*}
&&\int_{\T}{H_2(T_{n,k}^{(i_1)}(x)) H_2(T_{n,j}^{(i_2)}(x))\ dx}    
= \int_{\T} \big( T_{n,k}^{(i_1)}(x)^2 T_{n,j}^{(i_2)}(x)^2 - 
T_{n,k}^{(i_1)}(x)^2 - T_{n,j}^{(i_2)}(x)^2 + 1 \big) \ dx \notag 
\\
&& = \frac{9}{n^2\Nn^2}\sum_{\lambda} \lambda^2_k   
|a_{i_1,\lambda}|^2 \sum_{\lambda} \lambda_j^2 |a_{i_2,\lambda}|^2 
+ \frac{18}{n^2\Nn^2} \bigg(\sum_{\lambda} \lambda_k \lambda_j    a_{i_1,\lambda} \overline{a_{i_2,\lambda}} \bigg)^2 \notag \\
&&\qquad - \frac{18}{n^2\Nn^2} \sum_{\lambda} \lambda^2_k \lambda^2_j |a_{i_1,\lambda}|^2 |a_{i_2,\lambda}|^2 
- \frac{9}{n^2\Nn^2}\sum_{\lambda} \lambda^2_k \lambda^2_j a_{i_1,\lambda}^2 \overline{a_{i_2,\lambda}}^2 
\notag \\
&&\qquad + \frac{9}{n^2\Nn^2}\sum_{\mathcal{X}_n(4)} 
\lambda_k \lambda'_k\lambda''_j \lambda'''_j
a_{i_1,\lambda}a_{i_1,\lambda'}a_{i_2,\lambda''}a_{i_2,\lambda'''} 
\notag \\
&&\qquad - \frac{3}{n\Nn} \sum_{\la} \la_k^2 |a_{i_1,\la}|^2 - \frac{3}{n\Nn} \sum_{\la} \la_j^2 |a_{i_2,\la}|^2 +1 \notag \\
&&= \frac{9}{\Nn} \left(W^{(i_1)}_{kk}(n) W^{(i_2)}_{jj}(n) 
+ 2 M^{(i_1,i_2)}_{kj}(n)^2 
-2 R^{(i_1,i_2)}_{kj}(n) 
- S^{(i_1,i_2)}_{kj}(n)
+  X^{(i_1,i_2)}_{kkjj}(n)\right) \ , \notag
\end{eqnarray*}
where in the last line we used the relation
\begin{eqnarray*}
&&\frac{9}{n^2\Nn^2}\sum_{\lambda} \lambda_k^2 (|a_{i_1,\lambda}|^2-1) 
\sum_{\lambda} \lambda_j^2 (|a_{i_2,\lambda}|^2-1) \\
&&=\frac{9}{n^2\Nn^2}\sum_{\lambda} \lambda^2_k   
|a_{i_1,\lambda}|^2 \sum_{\lambda} \lambda_j^2 |a_{i_2,\lambda}|^2
- \frac{3}{n\Nn} \sum_{\la} \la_k^2 |a_{i_1,\la}|^2 - \frac{3}{n\Nn} \sum_{\la} \la_j^2 |a_{i_2,\la}|^2 +1 \ ,
\end{eqnarray*}
in view of \eqref{sumkj}.  The formula in (iii) then follows 
summing over all $j,k$ such that $j\neq k$.   Relations (iv) and (v) are proved similarly.  
\end{proof} 

\noindent\underline{\textit{Explicit expression of $\proj_4(L_n^{(\ell)})$.}}
We are now in position to provide the precise expression of the fourth-order chaotic component of $L_n^{(\ell)}$.   
Concerning the coefficients 
$\alpha\{p^{(i)}_j: (i,j) \in [\ell]\times[3]\} $ appearing in the Wiener chaos expansion of $L_n^{(\ell)}$ in \eqref{Peven}, we introduce the following notation: 
We write $\mathbf{0}_{\ell} \in \mathrm{Mat}_{\ell,3}(\R)$ for the zero-matrix; for an integer $m \geq 1$, we consider the mapping $s_m^{(\ell)}:([\ell]\times [3])^m \to \mathrm{Mat}_{\ell,3}(\R)$ defined by 
\begin{equation*}
s_m^{(\ell)}((i_1,j_1),\ldots,(i_m,j_m)) := 
\left\{\ind{(i,j) \in \{(i_1,j_1),\ldots, (i_m,j_m)\}} : (i,j) \in [\ell]\times[3]\right\} \ ,
\end{equation*}
that is, $s_m^{(\ell)}((i_1,j_1),\ldots,(i_m,j_m))$ is the $\ell\times3$ matrix whose entry is $1$ at positions $(i_1,j_1),\ldots,(i_m,j_m)$ and $0$ elsewhere. 
From Proposition \ref{ChaosPhi4} applied with $\bX=\bT_{n\star}^{(\ell)}(z), z\in\T$, we compute the constants in the three cases $(\ell,k)\in \{(1,3),(2,3),(3,3)\}$; these are entirely given once we compute (see \eqref{defalk})
\begin{equation*}
\alpha(1,3) =\frac{4}{\sqrt{2\pi}} \ , \quad \alpha(2,3) = 2 \ , \quad \alpha(3,3) =\frac{4}{\sqrt{2\pi}} \ .
\end{equation*} 
\begin{Prop}\label{coeffs}
For every $\ell \in [3]$ and every collection 
$I = \{(i_1,j_1)\neq(i_2,j_2)\neq(i_3,j_3)\neq(i_4,j_4) \in [\ell]\times [3]\} $, we have  
\begin{eqnarray*}  
&&\alpha_3^{(\ell)}\{ \mathbf{0}_{\ell} \} = \alpha(\ell,3)  \ , \\
&&\alpha_3^{(\ell)}\{2s^{(\ell)}_1((i_1,j_1))\} = 
\frac{1}{2!}\frac{1}{3}\alpha(\ell,3) = \frac{1}{6}\alpha(\ell,3) \ , \\
&& \alpha_3^{(\ell)}\{ 4 s^{(\ell)}_1((i_1,j_1))\} =  
-\frac{1}{4!}\frac{1}{5}\alpha(\ell,3) = -\frac{1}{120}\alpha(\ell,3) \ , \\
&&\alpha_3^{(\ell)}\{2s_2^{(\ell)}((i_1,j_1),(i_2,j_2))\} = 
-\frac{1}{60}\alpha(\ell,3) \ind{i_1=i_2} \\
&&\hspace{4.5cm} - \frac{1}{60}\alpha(\ell,3) \ind{i_1\neq i_2,j_1=j_2}\ind{\ell \in \{2,3\}} \\
&&\hspace{4.5cm} +\frac{1}{20}\alpha(\ell,3) \ind{i_1\neq i_2,j_1\neq j_2}\ind{\ell \in \{2,3\}}  \ , \\
&& \alpha_3^{(\ell)}\{ s^{(\ell)}_4((i_1,j_1), (i_2,j_2), (i_3,j_3), (i_4,j_4)) \} = -\frac{2}{15}\alpha(\ell,k)  
\ind{I \in S}\ind{\ell \in \{2,3\}} \ ,
\end{eqnarray*}
where $
S=\{ \{(i_1,j_1),(i_1,j_2),(i_2,j_1),(i_2,j_2)\}: i_1 \neq i_2, j_1 \neq j_2\}$. 
\end{Prop}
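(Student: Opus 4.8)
Each coefficient $\alpha_3^{(\ell)}\{p^{(i)}_j\}$ is, by definition, the normalised Gaussian expectation $\frac{1}{\prod_{i,j}(p^{(i)}_j)!}\,\E{\Phi^*_{\ell,3}(\bX)\prod_{i,j}H_{p^{(i)}_j}(X^{(i)}_j)}$, where $\bX\in\mathrm{Mat}_{\ell,3}(\R)$ has i.i.d.\ standard Gaussian entries and $\Phi^*_{\ell,3}(\bX)=\det(\bX\bX^T)^{1/2}$. The plan is: (i) invoke Proposition \ref{ChaosPhi4}, which supplies closed forms for the fourth-chaos Fourier--Hermite coefficients of $\Phi^*_{\ell,k}$ for general $1\le\ell\le k$; (ii) specialise to $k=3$ and $\ell\in\{1,2,3\}$, using the three values $\alpha(1,3)=\alpha(3,3)=4/\sqrt{2\pi}$ and $\alpha(2,3)=2$, which follow from \eqref{defalk} together with $\kappa_3=4\pi/3$, $\kappa_2=\pi$, $\kappa_1=2$, $\kappa_0=1$; (iii) use the invariance properties of $\Phi^*_{\ell,3}$ to cut down the bookkeeping and to pin down the vanishing coefficients. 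The degree-$0$ value $\alpha_3^{(\ell)}\{\mathbf{0}_\ell\}=\E{\Phi^*_{\ell,3}(\bX)}=\alpha(\ell,3)$ is \eqref{notalk} applied to the Gramian map (Lemma \ref{CauchyBinet}), and is the same constant appearing in the mean formula \eqref{P0}.

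\textbf{Which shapes survive.} Since $\Phi^*_{\ell,3}$ is invariant under negating an arbitrary row (property (A2) of Definition \ref{DefPhi} taken with $c=-1$) and under negating an arbitrary column (property (A3)), replacing in $\E{\Phi^*_{\ell,3}(\bX)\prod H_{p^{(i)}_j}(X^{(i)}_j)}$ the $i$-th (resp.\ $j$-th) standard Gaussian coordinate by its negative multiplies the integrand by $(-1)^{\sum_j p^{(i)}_j}$ (resp.\ $(-1)^{\sum_i p^{(i)}_j}$) while leaving the underlying law unchanged; hence $\alpha_3^{(\ell)}\{p^{(i)}_j\}=0$ unless every row-sum and every column-sum of the integer matrix $(p^{(i)}_j)$ is even. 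Using also permutation invariance (A1), a short case analysis of integer matrices of entry-sum $4$ with all row- and column-sums even (with at most $\ell\le 3$ rows and $k=3$ columns) shows that the only shapes that can occur are $\mathbf{0}_\ell$, $2\,s^{(\ell)}_1$, $4\,s^{(\ell)}_1$, $2\,s^{(\ell)}_2$ in its three column/row configurations, and — when the degree is carried by four \emph{distinct} cells — only the $2\times2$ combinatorial rectangles $s^{(\ell)}_4$ on the set $S$, because then each column total and each row total must equal $2$. In particular the rectangular shape, and hence a nonzero $\alpha_3^{(\ell)}\{s^{(\ell)}_4(\cdots)\}$, requires two distinct rows, i.e.\ $\ell\in\{2,3\}$, as stated; likewise the two ``distinct-row'' sub-cases of $2\,s^{(\ell)}_2$ require $\ell\in\{2,3\}$.

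\textbf{Evaluating the survivors.} For each surviving shape the value is read off from Proposition \ref{ChaosPhi4} with $k=3$. The single-cell coefficients are $\alpha_3^{(\ell)}\{2\,s^{(\ell)}_1((i_1,j_1))\}=\frac1{2!}\,\E{\Phi^*_{\ell,3}(\bX)H_2(X^{(i_1)}_{j_1})}$ and $\alpha_3^{(\ell)}\{4\,s^{(\ell)}_1((i_1,j_1))\}=\frac1{4!}\,\E{\Phi^*_{\ell,3}(\bX)H_4(X^{(i_1)}_{j_1})}$, and conditioning on $X^{(i_1)}_{j_1}$ reduces each of these to a one-dimensional (even) Gaussian integral, equal to $\frac13\alpha(\ell,3)$ and $-\frac15\alpha(\ell,3)$ respectively, hence $\frac16\alpha(\ell,3)$ and $-\frac1{120}\alpha(\ell,3)$. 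The three configurations of $2\,s^{(\ell)}_2$ (same row; distinct rows, same column; distinct rows, distinct columns) and the rectangular $s^{(\ell)}_4$ are the remaining entries of Proposition \ref{ChaosPhi4}, producing the multipliers $-\frac1{60}$, $-\frac1{60}$, $+\frac1{20}$ and $-\frac2{15}$ of $\alpha(\ell,3)$; for $(\ell,k)=(2,2)$ these match Lemma 3.3 of \cite{DNPR16}, which serves as a cross-check. Substituting $\alpha(1,3)=\alpha(3,3)=4/\sqrt{2\pi}$ and $\alpha(2,3)=2$ completes the proof.

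\textbf{Main obstacle.} The genuine work sits entirely inside Proposition \ref{ChaosPhi4}: obtaining closed forms for $\E{\det(\bX\bX^T)^{1/2}\prod_{i,j}H_{p^{(i)}_j}(X^{(i)}_j)}$ in general. The route I would take there is to expand $\det(\bX\bX^T)=\sum_{|I|=\ell}\det(\bX_I)^2$ via the Cauchy--Binet identity (Lemma \ref{CauchyBinet}), rewrite each $\det(\bX_I)^2$ as a Gramian, and then evaluate the resulting Gaussian integrals either through the integral representation of $\det(AA^T)^{1/2}$ over an auxiliary parameter space or by iterated Gaussian integration by parts against the Hermite factors, so that the explicit rationals $\tfrac13,-\tfrac15,-\tfrac1{60},\tfrac1{20},-\tfrac2{15}$ emerge from moments of $\chi$ distributions and Beta functions. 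The fiddly point is the combinatorics of which $\det(\bX_I)^2$ terms contribute to a prescribed Hermite pattern; the sign and permutation reductions of the second paragraph are exactly what keep that case analysis — and hence the present proposition — short.
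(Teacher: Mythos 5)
Your proposal is correct and follows essentially the same route as the paper: Proposition \ref{coeffs} is obtained there precisely by specialising Proposition \ref{ChaosPhi4} to $k=3$, dividing by the factorials in \eqref{alpha}, and inserting $\alpha(1,3)=\alpha(3,3)=4/\sqrt{2\pi}$, $\alpha(2,3)=2$ from \eqref{defalk}, with the vanishing shapes handled by the sign/permutation invariances (A1)--(A3) exactly as you argue. Your closing sketch of how one would prove Proposition \ref{ChaosPhi4} itself (Cauchy--Binet plus integration by parts) differs in flavour from the paper's Appendix~B, which uses the Gram--Schmidt factorisation $\Phi^*_{\ell,k}(\bX)=\prod_s d(k-s)$, chi-moments and the $\bX_{\pm}$ symmetrisation trick, but that lemma is taken as given for the statement under review, so this does not affect the correctness of your argument.
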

In particular, from Proposition \ref{ChaosPhi4}, it becomes clear that the fourth-order chaotic component of $L_n^{(\ell)}$ does not involve (i) any non-linear interaction of the three ARWs simultaneously (for $\ell=3$), and (ii) any product of odd Hermite polynomials except expressions of the form $H_1(\cdot)H_1(\cdot)H_1(\cdot)H_1(\cdot)$. 

\medskip
Recalling the random variables introduced in Definition \ref{RV2}, we define the following two quantities: for $\ell \in [3]$ and $i_1 \in [\ell]$,  
\begin{eqnarray}\label{A}
A^{(i_1)}_{n,\ell}&:=&   
\frac{\beta_4 \beta_0^{\ell-1}}{4!} \alpha^{(\ell)}_3\{\mathbf{0}_{\ell}\} \cdot a_1^{(i_1)}(n) 
+  \frac{\beta_0^{\ell-1}\beta_2}{2!} 
\alpha^{(\ell)}_3\{2s^{(\ell)}_1((1,1))\} \cdot a_2^{(i_1)}(n) \notag \\
&& \quad
+  \quad \beta_0^\ell \alpha^{(\ell)}_3\{4s_1^{(\ell)}((1,1))\} \cdot a_3^{(i_1)}(n)  \notag \\
&& \quad
+   \quad \beta_0^\ell \alpha^{(\ell)}_3\{2s_2^{(\ell)}((1,1),(1,2))\} \cdot a_4^{(i_1)}(n) \ ;  
\end{eqnarray}
and for $\ell \in \{2,3\}$ and $i_1<i_2\in [\ell]$, 
\begin{eqnarray}\label{B}
B^{(i_1,i_2)}_{n,\ell} &:=& 
\left(\frac{\beta_2}{2!}\right)^2 \beta_0^{\ell-2}  \alpha^{(\ell)}_3\{\mathbf{0}_{\ell}\}  \cdot b_1^{(i_1,i_2)}(n)
+  \frac{\beta_2 \beta_0^{\ell-1}}{2!} 
\alpha^{(\ell)}_3\{2s_1^{(\ell)}((1,1))\} \cdot b_2^{(i_1,i_2)}(n) \notag\\
&&  \quad
+ \quad \frac{\beta_0^{\ell-1} \beta_2}{2!} \alpha^{(\ell)}_3\{2s_1^{(\ell)}((1,1))\} \cdot {b_2'}^{(i_1,i_2)}(n)  \notag \\
&&  \quad
+ \quad \beta_0^{\ell}\alpha^{(\ell)}_3\{2s^{(\ell)}_2((1,1),(2,2))\} \cdot b_3^{(i_1,i_2)}(n) \notag \\
&& \quad
+ \quad \beta_0^{\ell} \alpha^{(\ell)}_3\{2s_2^{(\ell)}((1,1),(2,1)) \} \cdot b_4^{(i_1,i_2)}(n) \notag \\
&& \quad 
+ \quad    \beta_0^{\ell} \alpha^{(\ell)}_3\{s^{(\ell)}_4((1,1),(1,2),(2,1),(2,2))\} \cdot b_5^{(i_1,i_2)}(n) \ . 
\end{eqnarray}
Then, the fourth-order chaotic component of $L_n^{(\ell)}$ is given by (recall \eqref{intnorm})
\begin{equation}\label{Ln4S}
\proj_4(L_n^{(\ell)})  = \left(\frac{E_n}{3}\right)^{\ell/2}   
\left( \sum_{i_1\in [\ell]} A_{n,\ell}^{(i_1)} + 
\sum_{i_1<i_2 \in [\ell]} B_{n,\ell}^{(i_1,i_2)}\right)  =: 
\left(\frac{E_n}{3}\right)^{\ell/2} \cdot   S_n^{(\ell)} \ ,
\end{equation}
with the convention that $\sum_{i_1<i_2 \in [\ell]} = 0$ if $\ell=1$.
Using \eqref{beta024} and Proposition \ref{coeffs}, the expressions in \eqref{A} and \eqref{B} simplify to
\begin{equation*}
A_{n,\ell}^{(i_1)} = \frac{2}{ (2\pi)^{\ell/2}} \alpha(\ell,3) \left( 
\frac{1}{16} a_1^{(i_1)}(n) - \frac{1}{24} a_2^{(i_1)}(n) - \frac{1}{240} a_3^{(i_1)}(n) - \frac{1}{120}a_4^{(i_1)}(n)\right) 
\end{equation*} 
and 
\begin{eqnarray*}
&&B_{n,\ell}^{(i_1,i_2)} = \frac{2}{ (2\pi)^{\ell/2}} \alpha(\ell,3) \bigg( 
\frac{1}{8} b_1^{(i_1,i_2)}(n) - \frac{1}{24} b_2^{(i_1,i_2)}(n) - \frac{1}{24} {b_2'}^{(i_1,i_2)}(n)+ \frac{1}{40}b_3^{(i_1,i_2)}(n) \\
&&\hspace{3cm} \qquad -  \frac{1}{120}b_4^{(i_1,i_2)}(n)-  \frac{1}{15}b_5^{(i_1,i_2)}(n) \bigg) \ .
\end{eqnarray*} 
Using the expansions in Lemma \ref{ExpA} and the fact that $W^{(i_1)}(n)=\sum_{k=1}^3W^{(i_1)}_{kk}(n)$, we compute
\begin{gather}\label{Amu}
A_{n,\ell}^{(i_1)} = 
\frac{2}{ (2\pi)^{\ell/2}} \frac{\alpha(\ell,3)}{\Nn}
\bigg(  
-\frac{1}{40} \sum_{k<j} \big( W^{(i_1)}_{kk}(n)-W^{(i_1)}_{jj}\big)^2
-\frac{3}{20}\sum_{k<j} W^{(i_1)}_{kj}(n)^2
+ \mu^{(i_1)}(n) \bigg)   
\end{gather}
where $\mu^{(i_1)}(n)$ is given by 
\begin{eqnarray}\label{mun}
\mu^{(i_1)}(n) = \frac{1}{20}R^{(i_1,i_1)}(n) 
+ \frac{1}{16} X^{(i_1,i_1)}(n) 
+ \frac{1}{8}\sum_{k=1}^3 X^{(i_1,i_1)}_{kk}(n)
- \frac{3}{80} \sum_{k,j=1}^3 X^{(i_1,i_1)}_{kkjj}(n) \ .
\end{eqnarray}
Similarly, if $\ell \in \{2,3\}$, using Lemma \ref{ExpB} together with the fact that $M^{(i_1,i_2)}(n) = \sum_{k=1}^3M_{kk}^{(i_1,i_2)}(n)$, yields
\begin{eqnarray}\label{Beta}
B_{n,\ell}^{(i_1,i_2)} &=& \frac{2}{ (2\pi)^{\ell/2}} \frac{\alpha(\ell,3)}{\Nn}
\bigg( 
-\frac{1}{10}
\sum_{k<j}\big(W^{(i_1)}_{kk}(n)-W^{(i_1)}_{jj}(n)\big)\big(W^{(i_2)}_{kk}(n)-W^{(i_2)}_{jj}(n)\big) \notag \\
&&  
-\frac{3}{5}\sum_{k<j} W^{(i_1)}_{kj}(n)W^{(i_2)}_{kj}(n) 
+ \frac{1}{10}\sum_{k=1}^3 M^{(i_1,i_2)}_{kk}(n)^2 
-\frac{1}{20}\sum_{k\neq j}M^{(i_1,i_2)}_{kk}(n)M^{(i_1,i_2)}_{jj}(n) \notag \\
&&   
-\frac{1}{2} \sum_{k=1}^3 M^{(i_1,i_2)}_k(n)^2 
+\frac{3}{10}\sum_{k<j} M^{(i_1,i_2)}_{kj}(n)^2 
+ \eta^{(i_1,i_2)}(n) \bigg) 
\end{eqnarray}
where $\eta^{(i_1,i_2)}(n)$ is given by 
\begin{eqnarray}\label{etan}
\eta^{(i_1,i_2)}(n) = \frac{2}{5}R^{(i_1,i_2)}(n) 
- \frac{3}{10} S^{(i_1,i_2)}(n) 
+ \frac{1}{8}  X^{(i_1,i_2)}(n)
+ \frac{1}{4} \sum_{k=1}^3 X_{kk}^{(i_1,i_2)}(n)   
- \frac{3}{40} \sum_{k,j=1}^3 X^{(i_1,i_2)}_{kkjj}(n) .
\end{eqnarray}

\subsubsection{Asymptotic simplification of $\proj_4(L_n^{(\ell)})$}\label{subAsy}
We will now lead an asymptotic study of the fourth chaotic component of 
$\proj_4(L_n^{(\ell)})$ obtained in \eqref{Ln4S}. This analysis is based on a multivariate Central Limit Theorem for the summands composing the expressions of $A_{n,\ell}^{(i_1)}$ and $B_{n,\ell}^{(i_1,i_2)}$.

\medskip
We start by recalling the following formulae (see Lemma 3.3 and Appendix C in \cite{Cam17}), which are a consequence of the asymptotic equidistribution of lattice points projected to the unit two-sphere. 
\begin{Lem}\label{LemSumsLambda}
For every $j,k,l,m \in [3]$, we have
\begin{gather}
\frac{1}{n\Nn}\sum_{\lambda \in \Lambda_n} \lambda_k \lambda_j =  \frac{1}{3}\ind{k=j} \ ,\label{sumkj}\\
\frac{1}{n^2 \Nn} \sum_{\lambda \in \Lambda_n} \lambda_k \lambda_l \lambda_j \lambda_m
= \frac{1}{5} \ind{k=l=j=m} + 
\frac{1}{15}\bigg(\ind{\substack{k=l\\j=m\\k\neq j}}
+ \ind{\substack{k=j\\l=m\\k\neq l}} 
+ \ind{\substack{k=m\\l=j\\k\neq l}}\bigg) + \eps_n \ ,\label{sumkjs} 
\end{gather}
where $\eps_n=O(n^{-1/28+o(1)})$, as $n \to \infty, \notcon{n}{0,4,7}{8}$. 
\end{Lem}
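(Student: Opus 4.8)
The plan is to derive both identities from the exact symmetries of the frequency set $\Lambda_n$ — its invariance under permutations of the three coordinates and under a sign flip in any single coordinate — supplemented, for the quartic identity, by the quantitative equidistribution of $\{\lambda/\sqrt{n}:\lambda\in\Lambda_n\}$ on $S^2$.

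First I would treat \eqref{sumkj}. If $k\neq j$, flipping the sign of the $k$-th coordinate is a bijection of $\Lambda_n$ sending $\lambda_k\lambda_j$ to $-\lambda_k\lambda_j$, whence $\sum_{\lambda\in\Lambda_n}\lambda_k\lambda_j=0$. If $k=j$, permutation invariance gives $\sum_{\lambda}\lambda_1^2=\sum_{\lambda}\lambda_2^2=\sum_{\lambda}\lambda_3^2$, and since $\lambda_1^2+\lambda_2^2+\lambda_3^2=n$ for every $\lambda\in\Lambda_n$, summing over $\Lambda_n$ yields $3\sum_{\lambda}\lambda_k^2=n\Nn$; dividing by $n\Nn$ gives the claim. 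Note this identity is exact, with no error term.

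For \eqref{sumkjs} I would again use the single-coordinate sign flip to kill every monomial $\lambda_k\lambda_l\lambda_j\lambda_m$ in which some coordinate index appears an odd number of times among $\{k,l,j,m\}$; the only surviving index patterns are the ``diagonal'' one $k=l=j=m$, contributing $\sum_\lambda\lambda_k^4$, and the three ``pair'' patterns in which $\{k,l,j,m\}$ splits into two coincident pairs with distinct values $a\neq b$, each contributing $\sum_\lambda\lambda_a^2\lambda_b^2$ — and these three pairings are precisely the three indicator terms appearing in the statement. By permutation invariance all the sums $\sum_\lambda\lambda_a^2\lambda_b^2$ with $a\neq b$ (resp. all $\sum_\lambda\lambda_a^4$) coincide, so it remains to evaluate $\frac{1}{n^2\Nn}\sum_\lambda\lambda_1^4$ and $\frac{1}{n^2\Nn}\sum_\lambda\lambda_1^2\lambda_2^2$ asymptotically. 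Rewriting these as $\int_{S^2}\xi_1^4\,\mu_{n,3}(d\xi)$ and $\int_{S^2}\xi_1^2\xi_2^2\,\mu_{n,3}(d\xi)$, the equidistribution of $\mu_{n,3}$ towards the normalised surface measure $\sigma$ on $S^2$ (valid under $\notcon{n}{0,4,7}{8}$) produces the limits $\int_{S^2}\xi_1^4\,d\sigma=\tfrac15$ and $\int_{S^2}\xi_1^2\xi_2^2\,d\sigma=\tfrac1{15}$, these being the standard fourth moments of a uniform point on $S^2$ (for a uniform point on $S^{d-1}$ one has $\mathbb{E}[\xi_1^4]=\tfrac{3}{d(d+2)}$ and $\mathbb{E}[\xi_1^2\xi_2^2]=\tfrac1{d(d+2)}$, here $d=3$).

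Finally, the explicit rate $\eps_n=O(n^{-1/28+o(1)})$ is not something I would reprove: it is exactly the effective equidistribution estimate on $S^2$ underlying the variance asymptotics of \cite{BM17}, and it is recorded in the precise form needed here in Lemma~3.3 and Appendix~C of \cite{Cam17}, so I would simply invoke it. The only mildly delicate point — and the one I would double-check — is the bookkeeping: matching the combinatorial enumeration of index patterns of $\lambda_k\lambda_l\lambda_j\lambda_m$ to the constants $\tfrac15$ and $\tfrac1{15}$ and to the three indicator functions as written; the genuine analytic content (the power saving) is entirely imported.
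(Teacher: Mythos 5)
Your proposal is correct, and it is essentially the argument behind the citation: the paper itself gives no proof of this lemma, merely recalling the formulae from Lemma 3.3 and Appendix C of \cite{Cam17}, where the derivation is exactly your combination of the sign-flip and permutation symmetries of $\Lambda_n$ (which make \eqref{sumkj} exact and reduce \eqref{sumkjs} to the diagonal and two-pair patterns) with the fourth moments $\int_{S^2}\xi_1^4\,d\sigma=\tfrac15$, $\int_{S^2}\xi_1^2\xi_2^2\,d\sigma=\tfrac1{15}$ of the uniform measure on $S^2$. Importing the effective rate $\eps_n=O(n^{-1/28+o(1)})$ from \cite{BM17}/\cite{Cam17} rather than reproving it is exactly what the paper does as well, so there is no gap.
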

For the random variables in Definition \ref{RV1}, we prove the following asymptotic relations.  
\begin{Lem}\label{LLN}
Fix $\ell \in [3]$. For every $i_1,i_2 \in [\ell]$, the following  holds as $n \to \infty, \notcon{n}{0,4,7}{8}$: 
\begin{gather}
R^{(i_1,i_2)}(n) \toP 2 \ind{i_1 = i_2} + \ind{i_1 \neq i_2} \ , \label{R}    \\
S^{(i_1,i_2)}(n) \toP 2 \ind{i_1=i_2} \ , \label{S}\\
X^{(i_1,i_2)}(n),X^{(i_1,i_2)}_{kk}(n),X^{(i_1,i_2)}_{kkjj}(n) \toLtwo 0 \ . \label{X} 
\end{gather}
\end{Lem}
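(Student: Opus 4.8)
The plan is to derive all three convergences from elementary first‑ and second‑moment estimates for the random variables of Definition \ref{RV1}, using the i.i.d.\ structure of the Gaussian coefficients (modulo the reality relation $a_{i,\lambda}=\overline{a_{i,-\lambda}}$) and, only for the $X$‑quantities, the arithmetic bound \eqref{EstX4} on $\mathrm{card}(\mathcal{X}_n(4))$. The recurring inputs are the standard identities for a complex $\mathcal{N}(0,1)$ variable $a$ — namely $\E{a}=\E{a^2}=\E{a^4}=0$, $\E{|a|^2}=1$, $\E{|a|^4}=2$ (equivalently $|a|^2\sim\mathrm{Exp}(1)$) — together with $\E{a_{i,\lambda}\overline{a_{i,\lambda'}}}=\ind{\lambda=\lambda'}$ and $\E{a_{i,\lambda}a_{i,\lambda'}}=\ind{\lambda=-\lambda'}$ for $\lambda,\lambda'\in\Lambda_n$, and the mutual independence of the families $\{a_{i_1,\lambda}\}$ and $\{a_{i_2,\lambda}\}$ when $i_1\neq i_2$. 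I would also record that, since $0\notin\Lambda_n$, the set $\Lambda_n$ splits into $\Nn/2$ conjugate pairs $\{\lambda,-\lambda\}$, so picking one representative per pair yields an i.i.d.\ family of complex $\mathcal{N}(0,1)$ coefficients.

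For \eqref{R} and \eqref{S}: when $i_1=i_2=i$ one has $R^{(i,i)}(n)=S^{(i,i)}(n)=\Nn^{-1}\sum_\lambda|a_{i,\lambda}|^4$, which upon grouping $\lambda$ with $-\lambda$ is a normalized sum of $\Nn/2$ i.i.d.\ terms of mean $2$ and finite variance; hence $\E{R^{(i,i)}(n)}=2$ and $\V{R^{(i,i)}(n)}=O(\Nn^{-1})$, so $R^{(i,i)}(n)\toLtwo2$ and in particular $\toP2$. When $i_1\neq i_2$, independence gives $\E{R^{(i_1,i_2)}(n)}=1$ and the same pairing argument gives $\V{R^{(i_1,i_2)}(n)}=O(\Nn^{-1})$, so $R^{(i_1,i_2)}(n)\toP1$; meanwhile $\E{a_{i_1,\lambda}^2}=0$ forces $\E{S^{(i_1,i_2)}(n)}=0$, and a short expansion of $\E{|S^{(i_1,i_2)}(n)|^2}$ — in which, because $\E{a^4}=0$ and $a_{i,-\lambda}=\overline{a_{i,\lambda}}$, only the diagonal terms $\lambda'=\lambda$ survive — gives $\E{|S^{(i_1,i_2)}(n)|^2}=O(\Nn^{-1})$, hence $S^{(i_1,i_2)}(n)\toLtwo0$. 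This yields \eqref{R} and \eqref{S}.

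For \eqref{X} I would treat $X^{(i_1,i_2)}(n)$, $X^{(i_1,i_2)}_{kk}(n)$ and $X^{(i_1,i_2)}_{kkjj}(n)$ uniformly, since they differ only by the deterministic weights $\lambda_k\lambda'_k$ and $\lambda_k\lambda'_k\lambda''_j\lambda'''_j$, which are bounded by $n$ and $n^2$ (as $|\lambda_j|\le\sqrt n$) and are exactly compensated by the extra normalizing powers of $n$. First, each variable is centred: by Wick's formula every pairing of $(a_{i_1,\lambda},a_{i_1,\lambda'},a_{i_2,\lambda''},a_{i_2,\lambda'''})$ contains a contraction $\E{a_\mu a_\nu}=\ind{\mu=-\nu}$ between two distinct coordinates of an element of $\mathcal{X}_n(4)$, which vanishes by non‑degeneracy. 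For the second moment, I would expand $\E{|X^{(i_1,i_2)}(n)|^2}$ as a double sum over $(\lambda,\lambda',\lambda'',\lambda''')$ and $(\mu,\mu',\mu'',\mu''')$ in $\mathcal{X}_n(4)$ and apply Wick to the resulting mixed eighth moment; non‑degeneracy again kills every pairing that contracts two unconjugated slots (two $\lambda$'s) or two conjugated slots (two $\mu$'s), so only the diagonal pairings survive, which force $(\mu,\mu',\mu'',\mu''')$ to be a permutation of $(\lambda,\lambda',\lambda'',\lambda''')$. Hence the number of non‑vanishing terms is $O(\mathrm{card}(\mathcal{X}_n(4)))$, each of size $O(1)$ after the weights cancel against the normalization, so that
\begin{equation*}
\E{\big|X^{(i_1,i_2)}(n)\big|^2}=O\!\bigg(\frac{\mathrm{card}(\mathcal{X}_n(4))}{\Nn^2}\bigg)=O\big(\Nn^{-1/4+o(1)}\big)=o(1)
\end{equation*}
by \eqref{EstX4}, and the identical bound holds for $X^{(i_1,i_2)}_{kk}(n)$ and $X^{(i_1,i_2)}_{kkjj}(n)$. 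This proves \eqref{X} and completes the lemma.

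The step I expect to be the main obstacle is the combinatorial bookkeeping in the Wick expansion of the second moments of the $X$‑quantities — specifically, making rigorous that non‑degeneracy of the elements of $\mathcal{X}_n(4)$ eliminates every internal contraction, so that the number of surviving terms is genuinely $O(\mathrm{card}(\mathcal{X}_n(4)))$ and not a larger power of $\Nn$. Once that point is settled, the arithmetic input \eqref{EstX4} finishes the argument, while the estimates for $R$ and $S$ are routine moment computations.
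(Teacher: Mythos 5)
Your proof is correct and follows essentially the same route as the paper: for \eqref{R} and \eqref{S} you pair $\lambda$ with $-\lambda$ to obtain $\Nn/2$ i.i.d.\ terms (the paper invokes the law of large numbers where you use second moments/Chebyshev, which is if anything slightly stronger), and for \eqref{X} you bound $\E{|X^{(i_1,i_2)}(n)|^2}$ by a double sum over $\mathcal{X}_n(4)\times\mathcal{X}_n(4)$ in which non-degeneracy forces the second tuple to be a permutation of the first, so that the count is $O(\mathrm{card}(\mathcal{X}_n(4)))$ and \eqref{EstX4} gives $o(1)$, exactly as in the paper. The only presentational difference is that you organize the eighth-moment computation via the Wick/Isserlis pairing formula, treating $i_1=i_2$ and $i_1\neq i_2$ uniformly, whereas the paper factors by independence and runs an explicit case analysis on coincidences (citing \cite{Cam17} for the diagonal case).
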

\begin{proof}
We introduce the equivalence relation $\sim$ on $\Lambda_n$ defined by $\lambda \sim \lambda' $ if and only if $\lambda = - \lambda'$ and write $\Lambda_n/_{\sim}$ for the set of representatives of the equivalence classes under $\sim$. Then, it follows that $\mathrm{card}(\Lambda_n/_{\sim})=\Nn/2$ and the collections
$\{|a_{i_1,\lambda}|^2|a_{i_2,\lambda}|^2: \lambda \in \Lambda_n/_{\sim}\}$ resp. $\{a_{i_1,\lambda}^2\overline{a_{i_2,\lambda}}^2: \lambda \in \Lambda_n/_{\sim}\}$ are families of i.i.d. random variables with respective means
\begin{gather*}
\E{|a_{i_1,\lambda}|^2|a_{i_2,\lambda}|^2} = 2\ind{i_1=i_2}+ \ind{i_1\neq i_2} \ , \quad
\E{a_{i_1,\lambda}^2\overline{a_{i_2,\lambda}}^2} = 2\ind{i_1=i_2} \ . 
\end{gather*}
Thus, relations \eqref{R} and \eqref{S} follow from the Law of Large Numbers:  as $n \to \infty, \notcon{n}{0,4,7}{8}$, we have 
\begin{equation*}
R^{(i_1,i_2)}(n) = \frac{1}{\Nn/2}\sum_{\lambda \in \Lambda_n/_{\sim}} |a_{i_1,\lambda}|^2 |a_{i_2,\lambda}|^2 \toP 2\ind{i_1=i_2}+\ind{i_1\neq i_2} \ ,
\end{equation*}
and 
\begin{equation*}
S^{(i_1,i_2)}(n) = \frac{1}{\Nn/2}\sum_{\lambda \in \Lambda_n/_{\sim}} a_{i_1,\lambda}^2 \overline{a_{i_2,\lambda}}^2 \toP
2\ind{i_1=i_2} \ .
\end{equation*}
The convergences in \eqref{X} have been proved in \cite{Cam17} in the case $i_1=i_2$. Using independence and the fact that $a_{i_1,\lambda}=\overline{a_{i_1,-\lambda}}$ for every $i_1\in [\ell]$ and $\lambda \in \Lambda_n$ yields
\begin{eqnarray*}
&& \E{|X^{(i_1,i_2)}(n)|^2}  = 
\E{X^{(i_1,i_2)}(n) \overline{X^{(i_1,i_2)}(n)}} \\
&=& 
\frac{1}{\Nn^2}\sum_{  \mathcal{X}_n(4) } \sum_{  \mathcal{X}_n(4) }
\E{a_{i_1,\lambda} a_{i_1,\lambda'} 
a_{i_1,-\mu}  a_{i_1,-\mu'} }
\E{ a_{i_2,\lambda''} a_{i_2,\lambda'''} 
a_{i_2,-\mu''} a_{i_2,-\mu'''} } \\
&=:& \frac{1}{\Nn^2} \sum_{  \mathcal{X}_n(4) }
\sum_{  \mathcal{X}_n(4) }
\E{z^{(i_1)}_{\lambda,\lambda',\mu,\mu'}}
\E{z^{(i_2)}_{\lambda'',\lambda''',\mu'',\mu'''}} \ .
\end{eqnarray*} 
Let us consider the random variable $z^{(i_1)}_{\lambda,\lambda',-\mu,-\mu'}$.
Denote by $N$ the number of pairs of vectors that are equal in absolute value among $\{\lambda,\lambda',\mu,\mu'\}$. 
Since we consider vectors of $\mathcal{X}_n(4)$, we have that $\lambda+\lambda'\neq 0$ and $\mu+\mu'\neq 0$. Conditional to this observation, we claim that the only non-zero contributions of $\E{z^{(i_1)}_{\lambda,\lambda',-\mu,-\mu'}}$ arise when $N=2$ or $N=4$. 
Indeed, if $N=0$, all the vectors are distinct, so that by independence, $\E{z^{(i_1)}_{\lambda,\lambda',-\mu,-\mu'}}=0$. If $N=1$, then 
$\E{z^{(i_1)}_{\lambda,\lambda',-\mu,-\mu'}}$ takes one of the forms 
\begin{gather*}
\E{|a_{i_1,s}|^2}\E{a_{i_1,t}}\E{a_{i_1,t'}}=0 \ , \quad 
\E{a_{i_1,s}^2}\E{a_{i_1,t}}\E{a_{i_1,t'}}=0 \ , \quad   s \neq \pm t \neq \pm t' \ . 
\end{gather*}
If $N=2$, $\E{z^{(i_1)}_{\lambda,\lambda',-\mu,-\mu'}}$ is of the form
\begin{gather*}
\E{|a_{i_1,s}|^2}\E{a_{i_1,t}^2} = 0  \ , \quad 
\E{|a_{i_1,s}|^2}\E{|a_{i_1,t}|^2} = 1  \ , \quad 
\E{a_{i_1,s}^2}\E{a_{i_1,t}^2} = 0 \ , \quad  s \neq \pm t \ .
\end{gather*}
If $N=3$, then $\E{z^{(i_1)}_{\lambda,\lambda',-\mu,-\mu'}}$ is of the form
\begin{gather*}
\E{a_{i_1,s}^3}\E{a_{i_1,t}} = 0 \ , \quad \E{|a_{i_1,s}|^2\overline{a_{i_1,s}}}\E{a_{i_1,t}} = 0 \ , \quad s \neq \pm t \ .
\end{gather*}
Finally, if $N=4$, the elements $\lambda,\lambda',\mu,\mu'$ are all the same in absolute value, so that 
$\E{z^{(i_1)}_{\lambda,\lambda',-\mu,-\mu'}}$ is of the form 
$\E{|a_{i_1,s}|^4}=2$ or $ \E{a_{i_1,s}^4} = 0$. 
The same arguments hold for $\E{z^{(i_2)}_{\lambda'',\lambda''',\mu'',\mu'''}}$. Therefore, in every non-zero contributions, the vector $(\lambda, \lambda',\lambda'',\lambda''')$ determines the choices of $(\mu,\mu',\mu'',\mu''')$, so that 
\begin{eqnarray*}
\E{|X^{(i_1,i_2)}(n)|^2}  \ll \frac{\mathrm{card}(\mathcal{X}_n(4))}{\Nn^2} \ll \frac{\Nn^{7/4+o(1)}}{\Nn^2} = o(1) \ ,
\end{eqnarray*}
as $n\to \infty, \notcon{n}{0,4,7}{8}$ in view of \eqref{EstX4}.
\end{proof}

\medskip
\noindent\underline{\textit{A multivariate Central Limit Theorem.}} 
Recalling the random variables defined in Definition \ref{RV1}, we define the following two random vectors for $n\in S_3$: for every $\ell \in [3]$ and $i_1 \in [\ell]$, 
\begin{gather*}
\mathbb{W}^{(i_1)}(n):=\big(W^{(i_1)}_{11}(n),W^{(i_1)}_{12}(n),
W^{(i_1)}_{13}(n), W^{(i_1)}_{22}(n), W^{(i_1)}_{23}(n), W^{(i_1)}_{33}(n)\big) \in \R^6 \ ,  
\end{gather*}
and, for every $\ell\in \{2,3\}$ and $i_1<i_2 \in [\ell]$,
\begin{gather*}
\mathbb{M}^{(i_1,i_2)}(n):= \big(M^{(i_1,i_2)}_1(n),M^{(i_1,i_2)}_2(n),M^{(i_1,i_2)}_3(n),M^{(i_1,i_2)}_{11}(n),M^{(i_1,i_2)}_{12}(n),M^{(i_1,i_2)}_{13}(n), \\
M^{(i_1,i_2)}_{22}(n),M^{(i_1,i_2)}_{23}(n),M^{(i_1,i_2)}_{33}(n)\big) \in \R^{9} \ .
\end{gather*}
The covariance matrix of the vectors $\mathbb{W}^{(i_1)}(n)$ and
$\mathbb{M}^{(i_1,i_2)}(n)$ above is computed in the following lemmas.

\begin{Lem}\label{LemCovW}
For every $n \in S_3, \ell \in [3]$ and every $i_1\in [\ell]$, the covariance matrix of $\mathbb{W}^{(i_1)}(n)$ is 
\begin{equation}\label{Wn}
\Sigma_{\mathbb{W}(n)} =  
\begin{pmatrix}
\frac{2}{5}+\eps_n & 0 & 0 &  \frac{2}{15}+\eps_n & 0 & \frac{2}{15}+\eps_n \\[0.1cm]
0 & \frac{2}{15}+ \eps_n & 0 &  0 & 0 & 0 \\[0.1cm]
0 & 0 & \frac{2}{15}+\eps_n &  0 & 0 & 0 \\[0.1cm]
\frac{2}{15}+\eps_n & 0 & 0 & \frac{2}{5}+\eps_n & 0 & \frac{2}{15}+\eps_n \\[0.1cm]
0 & 0 & 0 & 0 & \frac{2}{15}+\eps_n & 0\\[0.1cm]
\frac{2}{15}+\eps_n & 0 & 0 & \frac{2}{15}+\eps_n & 0  & \frac{2}{5} + \eps_n 
\end{pmatrix} \ ,
\end{equation}
where $\eps_n = O(n^{-1/28+o(1)})$, as $n \to \infty, \notcon{n}{0,4,7}{8}$.
\end{Lem}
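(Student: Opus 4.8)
The plan is to reduce each covariance $\Cov{W^{(i_1)}_{jk}(n)}{W^{(i_1)}_{lm}(n)}$ to the elementary lattice sums handled by Lemma~\ref{LemSumsLambda}, and then to read off the $21$ distinct entries of the symmetric matrix $\Sigma_{\mathbb{W}(n)}$ one by one.

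First I would record two elementary facts about the coefficients $\{a_{i_1,\lambda}\}$. Writing $a_{i_1,\lambda}=Y+iZ$ with $Y,Z$ independent real $\mathcal{N}(0,1/2)$ variables, one has $\E{|a_{i_1,\lambda}|^2}=1$ and $\V{|a_{i_1,\lambda}|^2}=\V{Y^2}+\V{Z^2}=1$; moreover, because $a_{i_1,\lambda}=\overline{a_{i_1,-\lambda}}$, the reality constraint couples $|a_{i_1,\lambda}|^2$ only with $|a_{i_1,-\lambda}|^2$, so that the family $\{|a_{i_1,\lambda}|^2-1:\lambda\in\Lambda_n/_{\sim}\}$ (notation as in the proof of Lemma~\ref{LLN}) consists of i.i.d.\ centred random variables of unit variance. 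Since $\lambda_j\lambda_k$ is invariant under $\lambda\mapsto-\lambda$, one may rewrite $W^{(i_1)}_{jk}(n)=\frac{2}{n\sqrt{\Nn}}\sum_{\lambda\in\Lambda_n/_{\sim}}\lambda_j\lambda_k(|a_{i_1,\lambda}|^2-1)$, which is manifestly centred.

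Next I would expand the covariance. By independence over $\Lambda_n/_{\sim}$ only the diagonal terms survive, giving
\[
\Cov{W^{(i_1)}_{jk}(n)}{W^{(i_1)}_{lm}(n)}=\frac{4}{n^2\Nn}\sum_{\lambda\in\Lambda_n/_{\sim}}\lambda_j\lambda_k\lambda_l\lambda_m=\frac{2}{n^2\Nn}\sum_{\lambda\in\Lambda_n}\lambda_j\lambda_k\lambda_l\lambda_m,
\]
the last equality because the summand is even. If one of the four indices $j,k,l,m$ occurs an odd number of times, the corresponding sum vanishes identically, by the sign flip of that coordinate (which preserves $\Lambda_n$); otherwise the multiset $\{j,k,l,m\}$ is either $\{a,a,a,a\}$ or $\{a,a,b,b\}$ with $a\neq b$, and Lemma~\ref{LemSumsLambda}, equation~\eqref{sumkjs}, gives $\frac{1}{n^2\Nn}\sum_\lambda \lambda_j\lambda_k\lambda_l\lambda_m$ equal to $\frac15+\eps_n$ respectively $\frac{1}{15}+\eps_n$. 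Hence the covariance equals $\frac25+\eps_n$, $\frac{2}{15}+\eps_n$, or $0$ according to these three cases.

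Finally I would enumerate. With the ordering $(W^{(i_1)}_{11},W^{(i_1)}_{12},W^{(i_1)}_{13},W^{(i_1)}_{22},W^{(i_1)}_{23},W^{(i_1)}_{33})$, the diagonal entries are $\V{W^{(i_1)}_{11}}=\V{W^{(i_1)}_{22}}=\V{W^{(i_1)}_{33}}=\frac25+\eps_n$ (pattern $\{a,a,a,a\}$) and $\V{W^{(i_1)}_{12}}=\V{W^{(i_1)}_{13}}=\V{W^{(i_1)}_{23}}=\frac{2}{15}+\eps_n$ (pattern $\{a,a,b,b\}$); among the off-diagonal entries only $\Cov{W^{(i_1)}_{aa}(n)}{W^{(i_1)}_{bb}(n)}$ with $a\neq b$ is nonzero (pattern $\{a,a,b,b\}$, value $\frac{2}{15}+\eps_n$), all others having an index of odd multiplicity and hence vanishing exactly. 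This matches \eqref{Wn} entry by entry. The computation involves no real obstacle beyond careful bookkeeping of the index patterns; the only points requiring a little care are the exact vanishing of the odd-multiplicity sums and the correct value $\V{|a_{i_1,\lambda}|^2}=1$ coming from the complex-Gaussian normalisation.
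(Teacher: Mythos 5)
Your argument is correct and follows essentially the same route as the paper: the key identity $\E{(|a_{i_1,\lambda}|^2-1)(|a_{i_1,\lambda'}|^2-1)}=\ind{\lambda=\pm\lambda'}$ (equivalently, your i.i.d.\ decomposition over $\Lambda_n/_{\sim}$ with unit variance) reduces every entry to the lattice sums of Lemma \ref{LemSumsLambda}, which is exactly what the paper invokes, delegating the bookkeeping to Appendix C of \cite{Cam17}. Your explicit remark that sums with a coordinate of odd multiplicity vanish identically (via a sign flip preserving $\Lambda_n$) is the right way to justify the exact zeros in \eqref{Wn}, which the asymptotic formula \eqref{sumkjs} alone would only give up to $\eps_n$.
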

\begin{proof} 
The proof mainly follows from the relations in Lemma \ref{LemSumsLambda}, together with the fact 
\begin{equation*}
\E{(|a_{i_1,\lambda}|^2-1)(|a_{i_1,\lambda'}|^2-1)} = \ind{\lambda=\pm
\lambda'} \ .
\end{equation*} 
The covariances of $W^{(i_1)}_{jk}$ for $j,k \in [3]$ have been computed in \cite{Cam17}, Appendix C. 
\end{proof}

\begin{Lem}\label{LemCovM}
For every $n \in S_3, \ell \in \{2,3\}$ and every $i_1 < i_2 \in [\ell]$, the covariance matrix of $\mathbb{M}^{(i_1,i_2)}(n)$ is 
\begin{equation}\label{Mn}
\Sigma_{\mathbb{M}(n)} = 
\begin{pmatrix}
\frac{1}{3} & 0 & 0 &  0 & 0 & 0 & 0 & 0 & 0\\[0.1cm]
0 & \frac{1}{3} & 0 &  0 & 0 & 0 & 0 & 0 & 0\\[0.1cm]
0 & 0 & \frac{1}{3} &  0 & 0 & 0 & 0 & 0 & 0\\[0.1cm]
0 & 0 & 0 & \frac{1}{5}+\eps_n & 0 & 0 & \frac{1}{15}+\eps_n & 0 & \frac{1}{15}+\eps_n \\[0.1cm]
0 & 0 & 0 & 0 & \frac{1}{15}+\eps_n & 0 & 0 & 0 & 0 \\[0.1cm]
0 & 0 & 0 & 0 & 0 & \frac{1}{15}+\eps_n & 0 & 0 & 0 \\[0.1cm] 
0 & 0 & 0 & \frac{1}{15}+\eps_n & 0 & 0 & \frac{1}{5}+\eps_n & 0 & \frac{1}{15}+\eps_n \\[0.1cm]
0 & 0 & 0 & 0 & 0 & 0 & 0 & \frac{1}{15}+\eps_n & 0 \\[0.1cm]
0 & 0 & 0 & \frac{1}{15}+\eps_n & 0 & 0 & \frac{1}{15}+\eps_n & 0 & \frac{1}{5}+\eps_n 
\end{pmatrix} \ ,
\end{equation}
where $\eps_n = O(n^{-1/28+o(1)})$, as $n \to \infty, \notcon{n}{0,4,7}{8}$.
\end{Lem}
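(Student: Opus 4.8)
The plan is to follow verbatim the strategy used for Lemma \ref{LemCovW}: reduce every entry of the covariance matrix to a single lattice sum over $\Lambda_n$ and then invoke Lemma \ref{LemSumsLambda}. First I would record that, for $i_1<i_2$, each coordinate of $\mathbb{M}^{(i_1,i_2)}(n)$ is a real, centred random variable. Centredness is immediate because the coefficient families $\{a_{i_1,\lambda}\}$ and $\{a_{i_2,\lambda}\}$ are independent and centred, so that $\E{a_{i_1,\lambda}\overline{a_{i_2,\lambda}}}=0$; reality follows from the substitution $\lambda\mapsto-\lambda$ together with the symmetry $a_{i,-\lambda}=\overline{a_{i,\lambda}}$, using that $\lambda_j$ is odd while $\lambda_j\lambda_k$ is even under this change. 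Consequently the covariance matrix has generic entry $\E{U\overline{V}}$ with $U,V$ ranging over $\{M_j^{(i_1,i_2)}(n),\,M_{jk}^{(i_1,i_2)}(n)\}$, and since the structure only depends on the pair $i_1<i_2$, the same matrix serves both $\ell=2$ and $\ell=3$.

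Next I would expand these second moments using the elementary identities $\E{a_{i,\lambda}\overline{a_{i,\lambda'}}}=\ind{\lambda=\lambda'}$ and $\E{a_{i,\lambda}a_{i,\lambda'}}=\ind{\lambda=-\lambda'}$ valid for each fixed $i$, together with the independence of the two families (so that expectations of products factor across $i_1$ and $i_2$). The double sums then collapse to single ones: one gets $\E{M_j^{(i_1,i_2)}(n)\overline{M_k^{(i_1,i_2)}(n)}}=(n\Nn)^{-1}\sum_{\lambda\in\Lambda_n}\lambda_j\lambda_k$, $\E{M_{jk}^{(i_1,i_2)}(n)\overline{M_{lm}^{(i_1,i_2)}(n)}}=(n^2\Nn)^{-1}\sum_{\lambda\in\Lambda_n}\lambda_j\lambda_k\lambda_l\lambda_m$, while the mixed moments reduce to a constant multiple of $(n^{3/2}\Nn)^{-1}\sum_{\lambda\in\Lambda_n}\lambda_j\lambda_k\lambda_l$, which vanishes by the symmetry $\lambda\mapsto-\lambda$ of $\Lambda_n$. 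This already produces the block splitting of $\Sigma_{\mathbb{M}(n)}$ into the $M_j$-block and the $M_{jk}$-block.

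To conclude I would evaluate the two surviving families of sums: the first via \eqref{sumkj}, which is symmetry-exact and gives $\tfrac13\ind{j=k}$, hence the block $\tfrac13\Id_3$ with no error term; the second via \eqref{sumkjs}, reading off the value for each pair drawn from the ordered tuple $(M_{11},M_{12},M_{13},M_{22},M_{23},M_{33})$, which reproduces the lower $6\times6$ block of \eqref{Mn} and explains why $\eps_n$ enters only there. The computation is routine; the hard part, such as it is, will be the bookkeeping of which of the three index patterns in \eqref{sumkjs} is triggered by each pair $(M_{jk},M_{lm})$, and checking that every sum carrying an odd power of some coordinate — $\sum_\lambda\lambda_j^3\lambda_k$, $\sum_\lambda\lambda_j^2\lambda_k\lambda_l$, $\sum_\lambda\lambda_j\lambda_k\lambda_l$ with the indices not suitably paired — vanishes by the $\lambda\mapsto-\lambda$ invariance of $\Lambda_n$.
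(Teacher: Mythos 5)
Your proposal is correct and follows essentially the same route as the paper: factor the mixed second moments by independence of the two coefficient families, collapse the double sums to single lattice sums over $\Lambda_n$ (with the mixed $M_j$--$M_{lm}$ terms vanishing by the $\lambda\mapsto-\lambda$ symmetry), and evaluate them via \eqref{sumkj} and \eqref{sumkjs}. The only cosmetic difference is that you work with $\E{U\overline{V}}$ using $\E{a_{i,\lambda}\overline{a_{i,\lambda'}}}=\ind{\lambda=\lambda'}$ while the paper uses $\E{a_{i_1,\lambda}a_{i_1,\lambda'}}\E{\overline{a_{i_2,\lambda}}\,\overline{a_{i_2,\lambda'}}}=\ind{\lambda=-\lambda'}$; both reduce to the same sums since the coordinates are real.
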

\begin{proof}
Similarly as in the proof of Lemma \ref{LemCovW}, we use Lemma \ref{LemSumsLambda} and the fact that, by independence
\begin{equation*}
\E{a_{i_1,\lambda}\overline{a_{i_2,\lambda}}a_{i_1,\lambda'}\overline{a_{i_2,\lambda'}}} 
= \E{a_{i_1,\lambda}a_{i_1,\lambda'}} \E{\overline{a_{i_2,\lambda}}\ \overline{a_{i_2,\lambda'}}}
= \ind{\lambda =- \lambda'} 
\ .
\end{equation*}
Using this identity, it follows that  
\begin{equation*}
\Cov{M^{(i_1,i_2)}_{j}(n)}{M^{(i_1,i_2)}_{k}(n)} 
=\E{M^{(i_1,i_2)}_{j}(n)M^{(i_1,i_2)}_{k}(n)} 
= \frac{1}{n \Nn}\sum_{\lambda} \lambda_j \lambda_k
= \frac{1}{3}\ind{j=k} \ ,
\end{equation*}
and 
\begin{equation*}
\Cov{M^{(i_1,i_2)}_{j}(n)}{M^{(i_1,i_2)}_{lm}(n)}
=\E{M^{(i_1,i_2)}_{j}(n)M^{(i_1,i_2)}_{lm}(n)} = \frac{i}{n\sqrt{n}\Nn}
\sum_{\lambda} \lambda_j \lambda_l\lambda_m = 0 \ .
\end{equation*}
Moreover,  
\begin{eqnarray*}
&&\Cov{M^{(i_1,i_2)}_{jk}(n)}{M^{(i_1,i_2)}_{lm}(n)} 
=\E{M^{(i_1,i_2)}_{jk}(n)M^{(i_1,i_2)}_{lm}(n)} = \frac{1}{n^2\Nn}\sum_{\lambda}
\lambda_j \lambda_k \lambda_l \lambda_m\\
&&=  \frac{1}{5} \ind{k=l=j=m} + 
\frac{1}{15}\bigg(\ind{\substack{k=l\\j=m\\k\neq j}}
+ \ind{\substack{k=j\\l=m\\k\neq l}} 
+ \ind{\substack{k=m\\l=j\\k\neq l}}\bigg) + \eps_n \ ,
\end{eqnarray*}
which finishes the proof.
\end{proof}
The following proposition plays a central role in the study of the fourth chaotic component of the nodal volume $L_n^{(\ell)}$ in the high-frequency regime. 
We define the limiting matrices obtained from \eqref{Wn} and \eqref{Mn}:
\begin{equation*}
\Sigma_{\mathbb{W}} := \lim_{n\to \infty} \Sigma_{\mathbb{W}(n)} \ , \quad 
\Sigma_{\mathbb{M}} := \lim_{n\to \infty} \Sigma_{\mathbb{M}(n)} \ ,
\end{equation*} 
where for a square matrix $M_n=(m_{ij}(n))$, we set $\lim_n M_n:= (\lim_n m_{ij}(n))$.

\begin{Prop}\label{PropJointLaw3}
As $n \to \infty, \notcon{n}{0,4,7}{8}$, the random vector 
\begin{equation*}
\mathbf{V}_{1,2,3}(n):=\big(\mathbb{W}^{(1)}(n),\mathbb{W}^{(2)}(n), \mathbb{W}^{(3)}(n),\mathbb{M} ^{(1,2)}(n),\mathbb{M} ^{(1,3)}(n),\mathbb{M} ^{(2,3)}(n)\big) \in \R^{45}
\end{equation*}
converges in distribution to 
\begin{equation*} 
\mathbf{G}_{1,2,3}:=\big(\mathbb{G}^{(1)},\mathbb{G}^{(2)},\mathbb{G}^{(3)},\mathbb{G}^{(1,2)},
\mathbb{G}^{(1,3)},\mathbb{G}^{(2,3)}\big) 
\sim \mathcal{N}_{45}(\mathbf{0},\Sigma_{\mathbf{G}_{1,2,3}}) \ , 
\end{equation*}
where  
\begin{equation*}
\Sigma_{\mathbf{G}_{1,2,3}} = 
\Sigma_{\mathbb{W}}  \oplus \Sigma_{\mathbb{W}}  \oplus 
\Sigma_{\mathbb{W}}  \oplus
\Sigma_{\mathbb{M}}  \oplus \Sigma_{\mathbb{M}} 
\oplus \Sigma_{\mathbb{M}} 
\in \mathrm{Mat}_{45,45}(\R) \ .
\end{equation*}
\end{Prop}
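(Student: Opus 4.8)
The plan is to deduce Proposition~\ref{PropJointLaw3} from the Fourth Moment Theorem (Theorem 5.2.7 in \cite{NP12}), exactly along the lines of the corresponding statements in \cite{MPRW16,DNPR16,Cam17}. First I would observe that every coordinate of $\mathbf{V}_{1,2,3}(n)$ lives in the \emph{second} Wiener chaos associated with the Gaussian family $\{a_{i,\lambda}:i\in[3],\lambda\in\Lambda_n\}$: indeed, recalling Definition~\ref{RV1}, the entries $W^{(i_1)}_{jk}(n)$ are linear combinations of the centred variables $|a_{i_1,\lambda}|^2-1$, which are (up to a constant) quadratic Hermite polynomials in the underlying real and imaginary parts; likewise $M^{(i_1,i_2)}_j(n)$ and $M^{(i_1,i_2)}_{jk}(n)$, with $i_1\neq i_2$, are bilinear forms $\sum_\lambda c_\lambda a_{i_1,\lambda}\overline{a_{i_2,\lambda}}$ in independent families, hence again elements of the second chaos. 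Thus the whole vector $\mathbf{V}_{1,2,3}(n)$ is a vector of second-chaos random variables, and the Fourth Moment Theorem applies: componentwise convergence of fourth moments to the Gaussian ones (equivalently, vanishing of the contraction norms) together with convergence of the covariance matrix implies joint convergence to a Gaussian vector with the limiting covariance.

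Next I would record the covariance structure. Lemma~\ref{LemCovW} and Lemma~\ref{LemCovM} already give $\Sigma_{\mathbb{W}(n)}\to\Sigma_{\mathbb{W}}$ and $\Sigma_{\mathbb{M}(n)}\to\Sigma_{\mathbb{M}}$ as $n\to\infty,\notcon{n}{0,4,7}{8}$, since the error terms are $\eps_n=O(n^{-1/28+o(1)})$. It remains to handle the cross-covariances between the different blocks. For two distinct indices among $\{1,2,3\}$, the families $\{a_{1,\lambda}\},\{a_{2,\lambda}\},\{a_{3,\lambda}\}$ are independent; consequently $\mathbb{W}^{(i)}(n)$ and $\mathbb{W}^{(i')}(n)$ are independent for $i\neq i'$, and each $\mathbb{W}^{(i)}(n)$ is independent of any $\mathbb{M}^{(i_1,i_2)}(n)$ with $i\notin\{i_1,i_2\}$. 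The remaining cross terms, e.g. $\Cov{\mathbb{W}^{(1)}(n)}{\mathbb{M}^{(1,2)}(n)}$ or $\Cov{\mathbb{M}^{(1,2)}(n)}{\mathbb{M}^{(1,3)}(n)}$, vanish because they reduce to expectations of products of an \emph{odd} number of the independent complex Gaussians $a_{i,\lambda}$ (for instance $\E{(|a_{1,\lambda}|^2-1)\,a_{1,\mu}\overline{a_{2,\mu}}}=\E{(|a_{1,\lambda}|^2-1)a_{1,\mu}}\E{\overline{a_{2,\mu}}}=0$, using $\E{a_{2,\mu}}=0$; and $\E{a_{1,\lambda}\overline{a_{2,\lambda}}\,\overline{a_{1,\mu}}a_{3,\mu}}=\E{a_{1,\lambda}\overline{a_{1,\mu}}}\E{\overline{a_{2,\lambda}}}\E{a_{3,\mu}}=0$). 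Hence the limiting covariance matrix is block-diagonal, $\Sigma_{\mathbf{G}_{1,2,3}}=\Sigma_{\mathbb{W}}\oplus\Sigma_{\mathbb{W}}\oplus\Sigma_{\mathbb{W}}\oplus\Sigma_{\mathbb{M}}\oplus\Sigma_{\mathbb{M}}\oplus\Sigma_{\mathbb{M}}$, as claimed.

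For the fourth-moment condition, I would invoke the standard criterion for second-chaos vectors: since $\mathbf{V}_{1,2,3}(n)\in\big(\mathbf{C}_2\big)^{45}$ and the covariances converge, it suffices to verify that each individual component has vanishing fourth cumulant in the limit, i.e. that $\E{F_n^4}\to 3\,\big(\lim_n\V{F_n}\big)^2$ for $F_n$ running through $W^{(i_1)}_{jk}(n)$ and $M^{(i_1,i_2)}_{jk}(n)$, $M^{(i_1,i_2)}_j(n)$. Each such $F_n$ is a normalised quadratic form $\Nn^{-1/2}\sum_\lambda(\cdots)$, so a direct computation of $\E{F_n^4}$ produces the "diagonal" pairing term contributing $3\V{F_n}^2$ plus off-diagonal terms of size $O(\Nn^{-1})$, which vanish; this is exactly the computation carried out in \cite{Cam17} for the blocks $\mathbb{W}^{(i)}$ (via the equidistribution estimates of Lemma~\ref{LemSumsLambda}) and extends verbatim to the mixed blocks $\mathbb{M}^{(i_1,i_2)}$ because there the summands $\lambda_j a_{i_1,\lambda}\overline{a_{i_2,\lambda}}$ involve independent Gaussians, making the moment bookkeeping, if anything, simpler. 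I expect the only genuinely delicate point to be the verification of this fourth-moment (contraction) condition uniformly across all $45$ coordinates — in particular checking that no resonant lattice configuration inflates the off-diagonal contributions — but this is precisely where the arithmetic input of Lemma~\ref{LemSumsLambda} and the bookkeeping of \cite{Cam17} do the work; everything else is either independence or a direct limit of the covariance entries.
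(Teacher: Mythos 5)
Your proposal is correct and follows essentially the same route as the paper: block-diagonality of the covariance matrix via independence of the families $\{a_{i,\lambda}\}$, entry-wise convergence $\Sigma_{\mathbf{V}_{1,2,3}(n)}\to\Sigma_{\mathbf{G}_{1,2,3}}$, reduction of joint convergence to component-wise convergence for second-chaos vectors (Theorem 6.2.3 of \cite{NP12}), and the Fourth Moment Theorem applied to each coordinate. The only minor difference is that the paper's fourth-cumulant step needs no arithmetic input at all: after quotienting by $\lambda\sim-\lambda$ each coordinate is a sum of independent summands, so $\kappa_4\ll\Nn^{-1}$ follows from additivity and homogeneity of cumulants with only the trivial bound $\lambda_j^4\lambda_k^4\leq n^4$, whereas the equidistribution estimates of Lemma \ref{LemSumsLambda} are used only for the covariance matrices, exactly as you cite them.
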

\begin{proof}
We start by showing that the covariance matrix of the vector 
$\mathbf{V}_{1,2,3}(n)$ has the block diagonal form 
\begin{equation*}
\Sigma_{\mathbf{V}_{1,2,3}(n)} = 
\Sigma_{\mathbb{W}(n)}  \oplus \Sigma_{\mathbb{W}(n)}  \oplus 
\Sigma_{\mathbb{W}(n)}  \oplus
\Sigma_{\mathbb{M}(n)}  \oplus \Sigma_{\mathbb{M}(n)} 
\oplus \Sigma_{\mathbb{M}(n)}  \ .
\end{equation*}
From Lemmas \ref{LemCovW} and \ref{LemCovM} and by independence, we have
\begin{equation*}
\E{(|a_{i_1,\lambda}|^2-1)a_{i_1,\lambda'} \overline{a_{i_2,\lambda'}}}
= \E{(|a_{i_1,\lambda}|^2-1)a_{i_1,\lambda'}} \E{\overline{a_{i_2,\lambda'}}} 
= 0 \ ,
\end{equation*}
and therefore $\Cov{\big((\mathbb{W}^{(i_1)}(n)\big)_l}{\big(\mathbb{M}^{(i_1,i_2)}(n)\big)_m}=0$ for every $l=1,\ldots,6$ and $m=1,\ldots,9$. Similarly, since for every $i_2 \neq i_3$, 
\begin{eqnarray*}
\E{a_{i_1,\la} \overline{a_{i_2,\la}} a_{i_1,\la'} \overline{a_{i_3,\la'}}}
= \E{a_{i_1,\la} a_{i_1,\la'}} \E{\overline{a_{i_2,\la}}}\E{\overline{a_{i_3,\la'}}} = 0 \ , 
\end{eqnarray*}
we have that $\Cov{\big((\mathbb{M}^{(i_1,i_2)}(n)\big)_l}{\big(\mathbb{M}^{(i_1,i_3)}(n)\big)_m}=0$ for every $l,m=1,\ldots,9$. Thus, $\mathbf{V}_{1,2,3}(n)$ is of the desired form. 
Furthermore, we notice that all the components $\{\big(\mathbf{V}_{1,2,3}(n)\big)_l:l=1,\ldots,45\}$ of $\mathbf{V}_{1,2,3}(n)$ belong to the second Wiener chaos and that 
$\Sigma_{\mathbf{V}_{1,2,3}(n)} \to
\Sigma_{\mathbf{G}_{1,2,3}}$ entry-wise as $n \to \infty, \notcon{n}{0,4,7}{8}$. Thus, Theorem 6.2.3 of \cite{NP12} implies that, in order to prove the joint convergence to the Gaussian vector $\mathbf{G}_{1,2,3}$, it suffices to prove that the convergence holds component-wise, that is 
\begin{equation*}
\big(\mathbf{V}_{1,2,3}(n)\big)_l \Law \mathcal{N}(0, (\Sigma_{\mathbf{G}_{1,2,3}})_{ll}) \ , \qquad n\to \infty, \notcon{n}{0,4,7}{8} \ ,
\end{equation*}
for every $l = 1, \ldots, 45$. Using the Fourth Moment Theorem (Theorem 5.2.7, \cite{NP12}), this can be shown by proving that the fourth cumulant of $\big(\mathbf{V}_{1,2,3}(n)\big)_l$ converges to zero for every $l=1,\ldots,45$. For the sake of completeness, we include the computations for $W^{(i_1)}_{jk}(n)$ with $j\neq k$ and $M^{(i_1,i_2)}(n)$: writing $\Lambda_n/_{\sim}$ for the set of all the representatives of the equivalence class of $\Lambda_n$ under the symmetry $\lambda\mapsto -\lambda$ and using the fact that $j\neq k$, we have 
\begin{equation*}
W^{(i_1)}_{jk}(n) = \frac{1}{n \sqrt{\Nn}} \sum_{\lambda \in \Lambda_n} \lambda_j \lambda_k (|a_{i_1,\lambda}|^2-1)
= \frac{2}{n \sqrt{\Nn}} \sum_{\lambda \in \Lambda_n/_{\sim}} \lambda_j \lambda_k |a_{i_1,\lambda}|^2 \ ,
\end{equation*}
that is, $W^{(i_1)}_{jk}(n)$ is a sum of i.i.d. random variables. Moreover, for $\lambda \in \Lambda_n/_{\sim}$, 
\begin{equation*}
|a_{i_1,\lambda}|^2 \eqLaw 
\frac{u_{\lambda}^2}{2}+\frac{v_{\lambda}^2}{2} \ ,
\end{equation*}
where $u_{\lambda} \eqLaw v_{\lambda}$ are independent real $\mathcal{N}(0,1)$ random variables. Thus, using homogeneity and independence properties of cumulants (see e.g. \cite{PT11}), we have, as $n \to \infty, \notcon{n}{0,4,7}{8}$
\begin{eqnarray*}
\kappa_4\left(W^{(i_1)}_{jk}(n)\right) &=& 
\kappa_4\left(\frac{2}{n\sqrt{\Nn}} 
\sum_{\lambda \in \Lambda_n/\sim} \lambda_j \lambda_k 
\left(\frac{u_{\lambda}^2}{2}+\frac{v_{\lambda}^2}{2}\right)\right) \\
&=& \frac{2^4}{n^4\Nn^2}\sum_{\lambda \in \Lambda_n/_{\sim}}
\lambda_j^4 \lambda_k^4 \left(2^{-4}\kappa_4(u_{\lambda}^2)+2^{-4}\kappa_4(v_{\lambda}^2)\right) \\
&\leq& \frac{1}{\Nn^2}\sum_{\lambda \in \Lambda_n/_{\sim}}
\left( \kappa_4(u_{\lambda}^2)+ \kappa_4(v_{\lambda}^2)\right) 
\ll \frac{1}{\Nn} = o(1) \ ,
\end{eqnarray*}
where we used that
$\lambda_k^2\leq n$ for every $k=1,2,3$, which implies that $\lambda_j^4\lambda_k^4 \leq n^4$.  Concerning $M^{(i_1,i_2)}(n)$, we write 
\begin{eqnarray*}
M^{(i_1,i_2)}(n) =  \frac{1}{\sqrt{\Nn}} \sum_{\lambda \in \Lambda_n} a_{i_1,\lambda} \overline{a_{i_2,\lambda}} =
\frac{2}{\sqrt{\Nn}} \sum_{\lambda \in \Lambda_n/_{\sim}} a_{i_1,\lambda} \overline{a_{i_2,\lambda}} .
\end{eqnarray*} 
Noting that for every $\lambda \in \Lambda_n/_{\sim}$,
\begin{eqnarray*}
a_{i_1,\lambda} \overline{a_{i_2,\lambda}} \eqLaw 
\frac{(a_{i_1,\lambda} +\overline{a_{i_2,\lambda}})(a_{i_1,\lambda} - \overline{a_{i_2,\lambda}})}{2}
= \frac{a_{i_1,\lambda}^2-\overline{a_{i_2,\lambda}}^2}{2}
\end{eqnarray*}
and using independence, we infer
\begin{eqnarray*}
\kappa_4\left(M^{(i_1,i_2)}(n)\right) = 
\frac{2^4}{\Nn^2}\sum_{\lambda \in \Lambda_n/_{\sim}} 
\kappa_4(a_{i_1,\lambda} \overline{a_{i_2,\lambda}})
= \frac{1}{\Nn^2}\sum_{\lambda \in \Lambda_n/_{\sim}}
\kappa_4(a_{i_1,\lambda}^2-\overline{a_{i_2,\lambda}}^2)\\
= \frac{1}{\Nn^2}\sum_{\lambda \in \Lambda_n/_{\sim}}
\big(\kappa_4(a_{i_1,\lambda}^2)+\kappa_4(\overline{a_{i_2,\lambda}}^2)\big)  
\ll \frac{1}{\Nn}  = o(1) \ ,
\end{eqnarray*} 
as $n \to \infty, \notcon{n}{0,4,7}{8}$.
The other computations are done similarly. 
\end{proof}
The following corollary follows immediately: 
\begin{Cor}\label{SubVector}
For $\ell \in \{2,3\}$ and  $i_1 < i_2 \in [\ell]$, as $n \to \infty, \notcon{n}{0,4,7}{8}$, the random vector 
\begin{equation*}
\mathbf{V}_{i_1,i_2}(n):=\big(\mathbb{W}^{(i_1)}(n),\mathbb{W}^{(i_2)}(n), \mathbb{M} ^{(i_1,i_2)}(n)\big) \in \R^{21}
\end{equation*}
converges in distribution to 
\begin{equation*} 
\mathbf{G}_{i_1,i_2}:=\big(\mathbb{G}^{(i_1)},\mathbb{G}^{(i_2)},\mathbb{G}^{(i_1,i_2)}\big) 
\sim \mathcal{N}_{21}(\mathbf{0},\Sigma_{\mathbf{G}_{i_1,i_2}}) \ , 
\end{equation*}
where  
\begin{equation*}
\Sigma_{\mathbf{G}_{i_1,i_2}} = 
\Sigma_{\mathbb{W}}  \oplus \Sigma_{\mathbb{W}}  \oplus 
\Sigma_{\mathbb{M}}  \in \mathrm{Mat}_{21,21}(\R) \ .
\end{equation*}
\end{Cor}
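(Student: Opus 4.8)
The plan is to obtain the statement as an immediate consequence of Proposition \ref{PropJointLaw3} by passing to a subvector. Fix $\ell\in\{2,3\}$ and $i_1<i_2\in[\ell]$. When $\ell=3$, the pair $(i_1,i_2)$ is one of $(1,2),(1,3),(2,3)$, and $\mathbf{V}_{i_1,i_2}(n)$ consists exactly of the $21$ coordinates of the $45$-dimensional vector $\mathbf{V}_{1,2,3}(n)$ corresponding to the three blocks $\mathbb{W}^{(i_1)}(n)$, $\mathbb{W}^{(i_2)}(n)$ and $\mathbb{M}^{(i_1,i_2)}(n)$. Thus $\mathbf{V}_{i_1,i_2}(n)=\pi\big(\mathbf{V}_{1,2,3}(n)\big)$, where $\pi:\R^{45}\to\R^{21}$ is the coordinate projection onto those entries; since $\pi$ is linear, hence continuous, the continuous mapping theorem combined with Proposition \ref{PropJointLaw3} gives
\begin{equation*}
\mathbf{V}_{i_1,i_2}(n)=\pi\big(\mathbf{V}_{1,2,3}(n)\big)\ \Law\ \pi\big(\mathbf{G}_{1,2,3}\big), \qquad n\to\infty,\ \notcon{n}{0,4,7}{8}.
\end{equation*}

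Next I would identify the limit. As a linear image of the centred Gaussian vector $\mathbf{G}_{1,2,3}$, the vector $\pi(\mathbf{G}_{1,2,3})$ is again centred Gaussian, and its covariance matrix is the principal submatrix of $\Sigma_{\mathbf{G}_{1,2,3}}=\Sigma_{\mathbb{W}}\oplus\Sigma_{\mathbb{W}}\oplus\Sigma_{\mathbb{W}}\oplus\Sigma_{\mathbb{M}}\oplus\Sigma_{\mathbb{M}}\oplus\Sigma_{\mathbb{M}}$ obtained by keeping only the rows and columns of the three retained blocks. Because this covariance is block diagonal (the vanishing of the cross-block covariances is precisely what is verified at the start of the proof of Proposition \ref{PropJointLaw3}), the submatrix equals $\Sigma_{\mathbb{W}}\oplus\Sigma_{\mathbb{W}}\oplus\Sigma_{\mathbb{M}}=\Sigma_{\mathbf{G}_{i_1,i_2}}$. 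Hence $\pi(\mathbf{G}_{1,2,3})\sim\mathcal{N}_{21}(\mathbf{0},\Sigma_{\mathbf{G}_{i_1,i_2}})=\mathbf{G}_{i_1,i_2}$, which settles the case $\ell=3$.

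For $\ell=2$ only $T_n^{(1)},T_n^{(2)}$ are present and necessarily $(i_1,i_2)=(1,2)$, so $\mathbf{V}_{1,2,3}(n)$ is not literally defined; here I would either adjoin an auxiliary independent copy $T_n^{(3)}$ of the ARW — which changes nothing about the joint law of functionals of $T_n^{(1)},T_n^{(2)}$ — and then invoke the $\ell=3$ argument for the pair $(1,2)$, or simply note that the proof of Proposition \ref{PropJointLaw3} applies verbatim with only two independent fields, its only ingredients being the block structure of the covariance matrix (Lemmas \ref{LemCovW} and \ref{LemCovM} together with the vanishing of cross-block covariances by independence) and the componentwise application of the Fourth Moment Theorem via Theorem 6.2.3 of \cite{NP12}.

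There is no substantive obstacle — the statement is a genuine corollary — the only point requiring a word of care being the bookkeeping for $\ell=2$, where the three-field vector of Proposition \ref{PropJointLaw3} must either be enlarged or the (identical) argument re-run with two fields.
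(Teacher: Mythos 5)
Your proposal is correct and is essentially the paper's argument: the corollary is obtained from Proposition \ref{PropJointLaw3} by projecting onto the coordinates of the blocks $\mathbb{W}^{(i_1)},\mathbb{W}^{(i_2)},\mathbb{M}^{(i_1,i_2)}$, with the limiting covariance read off from the block-diagonal structure of $\Sigma_{\mathbf{G}_{1,2,3}}$. Your extra care about $\ell=2$ is harmless but unnecessary, since the paper fixes the three i.i.d. copies $T_n^{(1)},T_n^{(2)},T_n^{(3)}$ at the outset, so $\mathbf{V}_{1,2,3}(n)$ is defined regardless of the value of $\ell$.
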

We use the above established CLT in order to derive the limiting distribution of the fourth-order chaotic component of $L_n^{(\ell)}$. From Lemma \ref{LLN}, it follows that, as $n \to \infty, \notcon{n}{0,4,7}{8}$, the sequences in \eqref{mun} and \eqref{etan} satisfy
\begin{equation}\label{mueta}
\mu^{(i_1)}(n)  = \frac{1}{10} + o_{\Prob}(1) \ , \quad
\eta^{(i_1,i_2)}(n) = \frac{2}{5}+o_{\Prob}(1) \ ,
\end{equation} 
where $o_{\Prob}(1)$ denotes a sequence converging to zero in probability.
Now, bearing in mind the expressions \eqref{Amu} and \eqref{Beta}, we define
\begin{eqnarray*}
F(\mathbb{W}^{(i_1)}) :=
-\frac{1}{40} \sum_{k<j} \big( W^{(i_1)}_{kk}(n)-W^{(i_1)}_{jj}(n)\big)^2
-\frac{3}{20}\sum_{k<j} W^{(i_1)}_{kj}(n)^2 \ , \quad i_1 \in [\ell]
\end{eqnarray*}
and 
\begin{eqnarray*}
G(\mathbf{V}_{i_1,i_2}) &:=&
-\frac{1}{10}
\sum_{k<j}\big(W^{(i_1)}_{kk}(n)-W^{(i_1)}_{jj}(n)\big)\big(W^{(i_2)}_{kk}(n)-W^{(i_2)}_{jj}(n)\big) \notag \\
&&  
-\frac{3}{5}\sum_{k<j} W^{(i_1)}_{kj}(n)W^{(i_2)}_{kj}(n) 
+ \frac{1}{10}\sum_{k=1}^3 M^{(i_1,i_2)}_{kk}(n)^2 
-\frac{1}{20}\sum_{k\neq j}M^{(i_1,i_2)}_{kk}(n)M^{(i_1,i_2)}_{jj}(n) \notag \\
&&   
-\frac{1}{2} \sum_{k=1}^3 M^{(i_1,i_2)}_k(n)^2 
+\frac{3}{10}\sum_{k<j} M^{(i_1,i_2)}_{kj}(n)^2 \ , \quad i_1<i_2 \in [\ell].
\end{eqnarray*}
Combining these definitions with \eqref{mueta}, leads to the asymptotic relations 
\begin{eqnarray}
A_{n,\ell}^{(i_1)} &=& \frac{2}{ (2\pi)^{\ell/2}}
\frac{\alpha(\ell,3)}{\Nn}\cdot
\big[f(\mathbb{W}^{(i_1)}(n))+o_{\Prob}(1)\big] , \quad i_1 \in [\ell] \label{Af}\\
B_{n,\ell}^{(i_1,i_2)} &=& \frac{2}{ (2\pi)^{\ell/2}} \frac{\alpha(\ell,3)}{\Nn}\cdot
\big[g(\mathbf{V}_{i_1,i_2}(n))+ o_{\Prob}(1) \big] , \quad i_1<i_2 \in [\ell] \label{Bg}
\end{eqnarray}
where 
\begin{gather}\label{fg}
f(\mathbb{W}^{(i_1)}(n)):=F(\mathbb{W}^{(i_1)}(n))+\frac{1}{10} \ , \quad 
g(\mathbf{V}_{i_1,i_2}(n)):= G(\mathbf{V}_{i_1,i_2}(n))+\frac{2}{5}.
\end{gather}
Plugging  \eqref{Af} and \eqref{Bg} into  \eqref{Ln4S}  and using the CLT in Corollary \ref{SubVector}, we obtain that, as $n \to \infty, \notcon{n}{0,4,7}{8}$,
\begin{equation}\label{Limit}
\big(c_n^{(\ell)}\big)^{-1} \cdot \proj_4(L_n^{(\ell)})  \Law 
\sum_{i_1\in [\ell]} f(\mathbb{G}^{(i_1)}) + \sum_{i_1<i_2\in [\ell]} 
g(\mathbf{G}_{i_1,i_2})
=: L^{(\ell)}\  , 
\end{equation}
where 
\begin{equation}\label{cnl}
c_n^{(\ell)} := \bigg(\frac{E_n}{3}\bigg)^{\ell/2} \frac{2}{ (2\pi)^{\ell/2}}\frac{\alpha(\ell,3)}{\Nn}  \ .
\end{equation}

%\begin{proof}
%From the joint convergence in Proposition 
%\ref{PropJointLaw3}, it follows that the convergence holds component-wise. Choosing the variables appearing in $\mathbf{V}_{i_1,i_2}(n)$ and using the fact that they belong to a fixed chaos, we prove that the joint convergence must also hold. 
%\end{proof}

%%%%%%%%%%%%%%%%%%%%%%%%%%%%%

\subsubsection{Proofs of Proposition \ref{Var4} and \ref{Limit4}}\label{VarDist}
From the convergence in distribution stated in \eqref{Limit}, we conclude that the sequence 
$\{Y_n^{(\ell)}:=(c_n^{(\ell)})^{-1} \proj_4(L_n^{(\ell)}): n \in S_3\} $ living in the fourth Wiener chaos, is tight and thus  bounded in $L^p(\Prob)$ for any $p>0$ by virtue of the hypercontractivity property of Wiener chaoses (see e.g. Lemma 2.1 \cite{NR14}). This implies that the sequence $\{(Y_n^{(\ell)})^2: n\in S_3\}$ is uniformly integrable. By Skorohod's Representation Theorem (see e.g. Theorem 25.6 \cite{B77}), there exist random variables $\{Y_n^{(\ell)*}:n\in S_3\}$ and $L^{(\ell)*}$ defined on some auxiliary probability space $(\Omega^*,\mathscr{F}^*,\Prob^*)$, such that (i) $Y_n^{(\ell)*}\eqLaw Y_n^{(\ell)}$ for every $n\in S_3$ and $L^{(\ell)*}\eqLaw L^{(\ell)}$ and (ii)
$Y_n^{(\ell)*} \to L^{(\ell)*}, \Prob^*$-a.s. as $n\to \infty, \notcon{n}{0,4,7}{8}$.
Therefore we conclude that the sequence $\{(Y_n^{(\ell)*})^2:n\in S_3\}$ is uniformly integrable. In particular, we infer that $\norm{Y_n^{(\ell)}}_{L^2(\Prob)} = \norm{Y_n^{(\ell)*}}_{L^2(\Prob^*)} \to 
\norm{L^{(\ell)*}}_{L^2(\Prob^*)} = \norm{L^{(\ell)}}_{L^2(\Prob)}$, i.e.
\begin{equation*}
\big(c_n^{(\ell)}\big)^{-2} \V{\proj_4(L_n^{(\ell)})} \to \V{L^{(\ell)}} \ , 
\end{equation*}
as $n \to \infty, \notcon{n}{0,4,7}{8}$, or equivalently
\begin{equation}\label{V4asy}
\V{\proj_4(L_n^{(\ell)})} \sim \big(c_n^{(\ell)}\big)^2 \cdot \V{L^{(\ell)}} \ , 
\end{equation}
as $n \to \infty, \notcon{n}{0,4,7}{8}$.
Therefore, the asymptotic variance of $\proj_4(L_n^{(\ell)})$ in Proposition \ref{Var4} and its asymptotic distribution in Proposition \ref{Limit4} follow respectively from the
variance and distribution of $L^{(\ell)}$, given in the following statement.
\begin{Prop}\label{PropL}
For the random variable $L^{(\ell)}$ appearing in \eqref{Limit}, we have
\begin{gather*}
L^{(\ell)} \eqLaw 
-\frac{1}{50}\hat{\xi}_1(5\ell) 
- \frac{1}{25}\hat{\xi}_2\bigg(\frac{5\ell(\ell-1)}{2}\bigg)
+ \frac{1}{25}\hat{\xi}_3\bigg(\frac{5\ell(\ell-1)}{2}\bigg) 
+ \frac{1}{50} \hat{\xi}_4\bigg(\frac{5\ell(\ell-1)}{2}\bigg)
- \frac{1}{6}\hat{\xi}_5\bigg(\frac{3\ell(\ell-1)}{2}\bigg) \ ,
\end{gather*}
where $\{\hat{\xi}(k_i):i=1,\ldots,5\}$ is a family of independent centered chi-squared random variables, and therefore
\begin{gather*}
\V{L^{(\ell)}} = \ell \cdot \frac{1}{250} + \frac{\ell(\ell-1)}{2} \cdot
\frac{76}{375} \ .
\end{gather*}
\end{Prop}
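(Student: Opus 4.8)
\textit{Strategy.} The plan is to use that, by \eqref{Limit} together with Corollary \ref{SubVector}, $L^{(\ell)}=\sum_{i\in[\ell]}f(\mathbb{G}^{(i)})+\sum_{i<j}g(\mathbf{G}_{i,j})$ is a polynomial of degree $\le 2$ in the centred Gaussian vector $\big(\mathbb{G}^{(1)},\dots,\mathbb{G}^{(\ell)},(\mathbb{G}^{(i,j)})_{i<j}\big)$, whose covariance is block-diagonal, and then to diagonalise the associated quadratic form. Inserting the explicit expressions for $f$ and $g$ (namely $F+\tfrac{1}{10}$ and $G+\tfrac{2}{5}$, with $F$, $G$ the quadratic forms read off from \eqref{Amu}--\eqref{etan}), I would first regroup all the quadratic terms according to which of the following mutually independent families of coordinates they involve: (a) the ``diagonal'' variables $W^{(i)}_{kk}$ ($i\in[\ell]$), (b) the ``off-diagonal'' variables $W^{(i)}_{kj}$ ($i\in[\ell]$, $k<j$), and, for each pair $i<j$, (c) $(M^{(i,j)}_k)_k$, (d) $(M^{(i,j)}_{kk})_k$, (e) $(M^{(i,j)}_{kj})_{k<j}$. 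The block structure of $\Sigma_{\mathbb{W}}$ and $\Sigma_{\mathbb{M}}$ in \eqref{Wn}--\eqref{Mn} shows these five families are independent, so $L^{(\ell)}$ splits into a sum of independent quadratic forms plus the constant $\tfrac{\ell}{10}+\binom{\ell}{2}\tfrac{2}{5}$.

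The diagonalisation of each block rests on one algebraic fact: writing $J_m$ for the $m\times m$ all-ones matrix, the quadratic forms $\sum_{k<j}(x_k-x_j)(y_k-y_j)=x^{\mathsf{T}}(3\Id_3-J_3)y$ (with $x,y\in\R^3$) are generated by $A:=3\Id_3-J_3$, while the covariance blocks above are all of the form $c\,\Id_3$ or $c(2\Id_3+J_3)$; as polynomials in $J_3$ these are simultaneously diagonalised by $\R^3=\R\mathbf{1}_3\oplus\mathbf{1}_3^{\perp}$, on which $A$ has eigenvalues $0$ and $3$. For the pure square terms this turns block (c) into a negative multiple of $\chi^2$ with coefficient $-\tfrac{1}{6}$ and blocks (d)--(e) into positive multiples of $\chi^2$; for the genuinely bilinear cross-terms coming from $G$ I would apply the polarisation identity $z\cdot w=\tfrac{1}{4}(\|z+w\|^2-\|z-w\|^2)$ for independent standard Gaussians, together with the ANOVA splitting $\R^\ell=\R\mathbf{1}_\ell\oplus\mathbf{1}_\ell^{\perp}$ applied to the $\ell$ i.i.d. copies $\mathbb{G}^{(i)}$ (respectively to the $\binom{\ell}{2}$ i.i.d. copies $\mathbb{G}^{(i,j)}$). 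This expresses each block as a combination of independent chi-squares with coefficients among $\{-\tfrac{1}{50},-\tfrac{1}{25},\tfrac{1}{25},\tfrac{1}{50},-\tfrac{1}{6}\}$ and degrees of freedom determined by the corresponding multiplicities.

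Collecting, over the five blocks, all independent chi-squares carrying the same coefficient, and using that degrees of freedom add, one obtains a representation $L^{(\ell)}\eqLaw\sum_a c_a\,\xi_a(k_a)+\mathrm{const}$; since a one-line computation against \eqref{Wn}--\eqref{Mn} gives $\E{f}=\E{g}=0$ (compare \eqref{mueta}), the additive constant equals $\sum_a c_a k_a$, i.e. the mean of the quadratic form, and hence $L^{(\ell)}\eqLaw\sum_a c_a\,\hat\xi_a(k_a)$ with exactly the five coefficients and the multiplicities $5\ell$, $\tfrac{5\ell(\ell-1)}{2}$, $\tfrac{3\ell(\ell-1)}{2}$ of the statement. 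The variance is then immediate: by independence and $\V{\hat\xi(k)}=2k$ one gets $\V{L^{(\ell)}}=\sum_a 2c_a^2 k_a$, and a short arithmetic simplification yields $\ell\cdot\tfrac{1}{250}+\tfrac{\ell(\ell-1)}{2}\cdot\tfrac{76}{375}$.

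The step I expect to be delicate is the accounting in the diagonalisation: the single-field contributions from $\sum_i f(\mathbb{G}^{(i)})$ and the cross-field contributions from $\sum_{i<j}g(\mathbf{G}_{i,j})$ act on the same coordinates $W^{(i)}_{kk}$ and $W^{(i)}_{kj}$, so they must be amalgamated into one quadratic form before being diagonalised, and the eigenvalue multiplicities --- obtained by superposing the three-dimensional split $\R\mathbf{1}_3\oplus\mathbf{1}_3^{\perp}$ with the $\ell$-dimensional split $\R\mathbf{1}_\ell\oplus\mathbf{1}_\ell^{\perp}$ --- have to be tracked carefully; everything else is routine Gaussian linear algebra.
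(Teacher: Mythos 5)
Your overall strategy --- assembling $\sum_{i}f(\mathbb{G}^{(i)})+\sum_{i<j}g(\mathbf{G}_{i,j})$ into a \emph{single} quadratic form in the independent Gaussian coordinates and diagonalising it block by block, using that all relevant matrices are polynomials in $J_3$ and are simultaneously diagonalised on $\R\mathbf{1}_3\oplus\mathbf{1}_3^{\perp}$ --- is the same ``standard diagonalization argument'' the paper sketches, and your warning that the $f$- and $g$-contributions live on the same coordinates $W^{(i)}_{kk},W^{(i)}_{kj}$ and must be amalgamated before diagonalising is exactly the delicate point. The gap is that the one step carrying the whole content of the proposition --- the eigenvalue computation for the amalgamated blocks --- is never carried out, only asserted to return coefficients in $\{-\tfrac{1}{50},-\tfrac{1}{25},\tfrac{1}{25},\tfrac{1}{50},-\tfrac{1}{6}\}$ with the stated multiplicities; and that assertion does not survive the computation. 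Take $\ell=2$ and one off-diagonal coordinate $(k,j)$: the amalgamated contribution is $-\tfrac{3}{20}\big((W^{(1)}_{kj})^2+(W^{(2)}_{kj})^2\big)-\tfrac{3}{5}W^{(1)}_{kj}W^{(2)}_{kj}$ with $W^{(1)}_{kj},W^{(2)}_{kj}$ i.i.d.\ $\mathcal{N}(0,2/15)$; in standardised variables its matrix is $\begin{pmatrix}-1/50 & -1/25\\ -1/25 & -1/50\end{pmatrix}$, with eigenvalues $-\tfrac{3}{50}$ (sum direction) and $+\tfrac{1}{50}$ (difference direction). The diagonal $W_{kk}$-block behaves the same way, and for $\ell=3$ the analogous matrix even has the eigenvalue $-\tfrac{1}{10}$. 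So the coefficient list you propose to ``collect'' cannot arise from the procedure you describe.

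The listed coefficients are instead exactly what one gets by diagonalising $\sum_i f$ and $\sum_{i<j}g$ \emph{separately} and then adding the resulting chi-squares as if independent: your polarisation step $z\cdot w=\tfrac14(\|z+w\|^2-\|z-w\|^2)$, applied to the cross terms of $G$ alone, produces chi-squares which are \emph{not} independent of the $\|z\|^2+\|w\|^2$ terms coming from the $f$'s (indeed $\|z+w\|^2+\|z-w\|^2=2\|z\|^2+2\|w\|^2$), so the final ``collecting of independent chi-squares'' is precisely the illegitimate step --- uncorrelated quadratic forms in the same Gaussian vector need not be independent, which is the very caveat you open with. The discrepancy is invisible at the level of the variance (the $f$- and $g$-parts are uncorrelated, so $\V{L^{(\ell)}}=\ell\,\V{f(\mathbb{G}^{(1)})}+\tfrac{\ell(\ell-1)}{2}\V{g(\mathbf{G}_{1,2})}$ holds either way and yields $\ell\cdot\tfrac{1}{250}+\tfrac{\ell(\ell-1)}{2}\cdot\tfrac{76}{375}$), but it changes the law: the third cumulant of the $2\times2$ block above is $8\big((-\tfrac{3}{50})^3+(\tfrac{1}{50})^3\big)$, whereas the corresponding pieces of your claimed representation give $8\cdot 2\,(-\tfrac{1}{50})^3$. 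Consequently, to obtain the representation in the statement you would have to justify treating the $f$- and $g$-components as independent (or otherwise show how the stated coefficient multiset emerges from the joint form), and your proposal does neither; as written, the decisive computation is missing and the announced outcome of your own amalgamation step is inconsistent with it.
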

\begin{proof}
The proof  
is based on lengthy but standard computations involving covariances of Gaussian random variables. We provide a sketch of the proof for the sake of readability. 
From relation \eqref{Limit} and the structure of the covariance matrix of $\mathbf{G}_{i_1,i_2}$ in Corollary \ref{SubVector}, it follows that 
\begin{equation*}
\V{L^{(\ell)}} = \ell \cdot\V{f(\mathbb{G}^{(1)})}+
\frac{\ell(\ell-1)}{2}\cdot\V{g(\mathbf{G}_{1,2})} \ .
\end{equation*}
The variances of $f(\mathbb{G}^{(1)})$ and $g(\mathbf{G}_{1,2})$ are then computed using the explicit expressions of $f$ and $g$ as well as the covariance matrix $\Sigma_{\mathbf{G}_{123}}$ in \eqref{PropJointLaw3}. The probability distribution of $L^{(\ell)}$ is obtained by a standard diagonalization argument in order to express the latter in terms of independent standard Gaussian random variables, implying in particular the formula for its variance.
\end{proof}
The proof of Propositon \ref{Limit4} is concluded, once we note that 
the distribution of $L^{(\ell)}$ in Proposition \ref{PropL} can be written in the form 
$Y^{(\ell)} M^{(\ell)} (Y^{(\ell)})^T$, where
$Y^{(\ell)} \sim \mathcal{N}_{\ell(9\ell-4)}(0,\Id_{\ell(9\ell-4)})$ and $M^{(\ell)} \in \mathrm{Mat}_{\ell(9\ell-4),\ell(9\ell-4)}(\R)$ is the deterministic matrix given by 
\begin{gather*}
M^{(\ell)} = \frac{-1}{50} \Id_{5\ell} 
\oplus \frac{-1}{25}\Id_{\frac{5\ell(\ell-1)}{2}}
\oplus \frac{1}{25}\Id_{\frac{5\ell(\ell-1)}{2}}
\oplus \frac{1}{50}\Id_{\frac{5\ell(\ell-1)}{2}}
\oplus \frac{-1}{6} \Id_{\frac{3\ell(\ell-1)}{2}} 
\ ,
\end{gather*}
with the convention that, $A \oplus 0=A$ for any matrix $A$.

%%%%%%

\appendix
\section{Proof of Theorem \ref{MainThm1} and chaos expansion of level functionals}\label{AbsCanProof}

\subsection{Wiener-It\^{o} chaos expansion of $J(\mathbf{G},W;u^{(\ell)})$} 
We now provide the chaotic decomposition of the random variable $J(\mathbf{G},W;u^{(\ell)})$ introduced in Definition \ref{DefJ}. Informally, the latter is obtained by multiplying the respective chaotic expansions of $\prod_{i=1}^{\ell} \delta_{u_i}$ and $W$ and then integrating the obtained expression over $Z$.

\medskip
\noindent\underline{\textit{Formal chaotic expansion of the Dirac mass.}}
For $u\in \R$, denote by $\{\beta^{(u)}_j:j \geq 0\}$ the Hermite coefficients of the formal expansion in Hermite polynomials of $\delta_u$, that is
\begin{eqnarray*}
\delta_u(x) = \sum_{j\geq 0} \frac{\beta_j^{(u)}}{j!} H_j(x) \ , \quad x \in \R
\end{eqnarray*}
where 
\begin{eqnarray}\label{beta}
\beta^{(u)}_j = \int_{\R} \delta_u(y)H_j(y) \gamma(y)  dy = H_j(u) \gamma(u) \ .
\end{eqnarray} 
Approximating the Dirac mass by indicators $(2\eps)^{-1} \ind{[-\eps,\eps]}(x - u)$ for $\eps>0$ and denoting by
$\{\beta^{(u)}_j(\eps):j \geq 0\}$ their associated Fourier-Hermite coefficients, the following lemma (roughly corresponding to \cite{MPRW16}, Lemma 3.4) shows that the coefficients $\{\beta^{(u)}_j:j \geq 0\}$ in \eqref{beta} are obtained from $\{\beta^{(u)}_j(\eps):j \geq 0\}$ by letting $\eps \to 0$. 

\begin{Lem}\label{WCDirac}
For every $u\in \R$ and $\eps > 0$, the following expansion holds in $L^2(\gamma)$:
\begin{equation*}
\frac{1}{2\eps} \ind{[-\eps,\eps]}(x-u) = \sum_{j \geq 0} \frac{\beta_{j}^{(u)}(\eps)}{j!} H_j(x) \ ,  \quad x \in \R
\end{equation*}
where 
\begin{equation*}
\beta_0^{(u)}(\eps) = \frac{1}{2\eps}\int_{u-\eps}^{u+\eps} \gamma(y) dy \ ,
\end{equation*}
and for $j \geq 1$, 
\begin{equation}\label{betaepsfor}
\beta_j^{(u)}(\eps) = -\frac{1}{2\eps}\big(H_{j-1}(u+\eps)\gamma(u+\eps)-H_{j-1}(u-\eps)\gamma(u-\eps)\big) \ .
\end{equation}
In particular, for every $j \geq 0$, as $\eps \to 0$, 
\begin{eqnarray}\label{limbetaeps}
\beta^{(u)}_j(\eps) \to \beta^{(u)}_j \ .
\end{eqnarray}
\end{Lem}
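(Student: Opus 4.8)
The plan is to recognise $\beta_j^{(u)}(\eps)$ as the $j$-th Fourier--Hermite coefficient of the $L^2(\gamma)$ function $f_\eps(x):=\frac{1}{2\eps}\ind{[-\eps,\eps]}(x-u)$ and then to evaluate the resulting Gaussian integral in closed form. First I would note that $f_\eps$ is bounded with compact support, hence $f_\eps\in L^2(\gamma)$; since $\{H_j/\sqrt{j!}:j\geq0\}$ is a complete orthonormal system of $L^2(\gamma)$, the function $f_\eps$ admits the $L^2(\gamma)$-convergent expansion $f_\eps=\sum_{j\geq0}\frac{\beta_j^{(u)}(\eps)}{j!}H_j$, with
\[
\beta_j^{(u)}(\eps)=\scal{f_\eps}{H_j}_{L^2(\gamma)}=\frac{1}{2\eps}\int_{u-\eps}^{u+\eps}H_j(y)\gamma(y)\,dy .
\]
For $j=0$, since $H_0\equiv1$, this is exactly the claimed formula for $\beta_0^{(u)}(\eps)$.

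The one genuinely computational point is the antiderivative identity $\frac{d}{dy}\big(H_{j-1}(y)\gamma(y)\big)=-H_j(y)\gamma(y)$ for $j\geq1$, which follows immediately from the recursion $H_j(x)=xH_{j-1}(x)-H_{j-1}'(x)$ together with $\gamma'(x)=-x\gamma(x)$. Substituting this into the integral above and applying the fundamental theorem of calculus gives
\[
\beta_j^{(u)}(\eps)=-\frac{1}{2\eps}\big(H_{j-1}(u+\eps)\gamma(u+\eps)-H_{j-1}(u-\eps)\gamma(u-\eps)\big),
\]
which is \eqref{betaepsfor}.

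It then remains to prove \eqref{limbetaeps}, i.e.\ to let $\eps\to0$. For $j=0$, continuity of $\gamma$ (equivalently, the Lebesgue differentiation theorem) yields $\beta_0^{(u)}(\eps)\to\gamma(u)=H_0(u)\gamma(u)=\beta_0^{(u)}$. For $j\geq1$, the right-hand side of \eqref{betaepsfor} is precisely the symmetric difference quotient at the point $u$ of the smooth function $g(y):=H_{j-1}(y)\gamma(y)$, hence converges to $-g'(u)$; by the antiderivative identity $g'(u)=-H_j(u)\gamma(u)$, so the limit equals $H_j(u)\gamma(u)=\beta_j^{(u)}$, as defined in \eqref{beta}. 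I do not anticipate any real obstacle here: the only step requiring a moment's care is the antiderivative identity for $H_{j-1}\gamma$, and once it is in hand the remaining assertions are routine.
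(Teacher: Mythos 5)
Your proof is correct and follows exactly the standard route that the paper relies on (the lemma is stated without proof, with reference to Lemma 3.4 of \cite{MPRW16}): expand $f_\eps$ in the complete orthonormal Hermite system of $L^2(\gamma)$, evaluate the coefficient integrals via the antiderivative identity $\frac{d}{dy}\big(H_{j-1}(y)\gamma(y)\big)=-H_j(y)\gamma(y)$, and pass to the limit as a symmetric difference quotient. Nothing is missing.
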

For the nodal case corresponding to $u=0$, we write $\beta^{(0)}_j=:\beta_j$, and compute
\begin{equation*}
\beta_{2j+1} = 0 \ , \quad \beta_{2j} = \frac{H_{2j}(0)}{\sqrt{2\pi}} \ , \quad j \geq 0 \ , 
\end{equation*}
where the first equality is a consequence of the symmetry relation \eqref{Hsym}. In particular, we have 
\begin{equation}\label{beta024}
\beta_0 = \frac{1}{\sqrt{2\pi}} \ , \quad
\beta_2 = -\frac{1}{\sqrt{2\pi}} \ , \quad
\beta_4 = \frac{3}{\sqrt{2\pi}} \ .
\end{equation} 
The following standard proposition gives the Wiener-It\^{o} chaos expansion of $J(\mathbf{G},W;u^{(\ell)})$ defined in Definition \ref{DefJ}.
Its proof is based on the expansion of  $(2\eps)^{-\ell} \prod_{i=1}^{\ell}  \ind{[-\eps,\eps]}(\cdot-u_i)$ into Hermite polynomials by means of Lemma \ref{WCDirac} and then letting $\eps \to 0$. We omit the details.

\begin{Prop}\label{WCJProp}
Let the above setting prevail. Assume that the random field 
$W=\{W(z):z\in Z\}$ is such that (i) $\sup_{z\in Z} \E{W(z)^2}<\infty$, (ii) $W(z)$ is $\sigma(\mathbf{G})$-measurable for every $z\in Z$, and (iii) $W(z)$ is stochastically independent of $(G^{(1)}(z),\ldots,G^{(\ell)}(z))$ for every $z\in Z$. Then, the random variable
\begin{equation*}
J_{\eps}(\mathbf{G},W;u^{(\ell)}) :=
\int_Z (2\eps)^{-\ell} \prod_{i=1}^{\ell}  \ind{[-\eps,\eps]}(G^{(i)}(z)-u_i) \cdot W(z) \  \mu(dz)  
\end{equation*} 
is an element of $L^2(\Prob)$ for every $\eps>0$. Moreover, if $J(\mathbf{G},W;u^{(\ell)})$ as in \eqref{DefJ} is well-defined in $L^2(\Prob)$, then for every $q\geq 0$, 
\begin{eqnarray}\label{WCJ} 
\proj_q(J(\mathbf{G},W;u^{(\ell)})) 
= \sum_{\substack{j_1,\ldots,j_{\ell},r \geq 0 \\ j_1+\ldots+j_{\ell}+r=q} } \frac{\beta_{j_1}^{(u_1)}\cdots \beta_{j_{\ell}}^{(u_{\ell})}}{j_1! \cdots j_{\ell}!} \int_Z \prod_{i=1}^{\ell} H_{j_i}(G^{(i)}(z))\cdot \proj_r(W(z)) \ \mu(dz) \ ,
\end{eqnarray}
where $\{\beta^{(u)}_{j}: j \geq 0\}$ denote the coefficients of the formal Hermite expansion of  
$ \delta_{u}$ given in \eqref{beta}. 
\end{Prop}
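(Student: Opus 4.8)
The plan is to prove the two assertions of the proposition in turn: first the $L^2(\Prob)$–membership of $J_\eps:=J_\eps(\mathbf{G},W;u^{(\ell)})$, then the chaotic expansion of $J=J(\mathbf{G},W;u^{(\ell)})$, the latter by writing down an explicit Hermite expansion of $J_\eps$ and sending $\eps\to0$.

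For the first assertion I would bound $\E{J_\eps^2}$ by Jensen's inequality (legitimate since $\mu$ is a probability measure), which reduces matters to controlling $\sup_{z}\E{(2\eps)^{-2\ell}\prod_{i=1}^\ell\ind{[-\eps,\eps]}(G^{(i)}(z)-u_i)\,W(z)^2}$. Since the indicator is idempotent and, by (iii), $W(z)^2$ is independent of $\mathbf{G}(z)=(G^{(1)}(z),\dots,G^{(\ell)}(z))$ (whose coordinates are moreover i.i.d.), this expectation factorises as $(2\eps)^{-2\ell}\,\E{W(z)^2}\prod_{i=1}^\ell\Prob[|G^{(i)}(z)-u_i|\le\eps]\le(2\eps)^{-2\ell}\sup_z\E{W(z)^2}$, which is finite for fixed $\eps$ by (i). The same estimate shows that $z\mapsto(2\eps)^{-\ell}\prod_i\ind{[-\eps,\eps]}(G^{(i)}(z)-u_i)W(z)$ is Bochner integrable as an $L^2(\Prob)$–valued map, so that the bounded linear operator $\proj_q$ may later be interchanged with $\int_Z(\cdot)\,\mu(dz)$.

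For the expansion, I would start from Lemma \ref{WCDirac}: in $L^2(\gamma)$, $(2\eps)^{-1}\ind{[-\eps,\eps]}(x-u_i)=\sum_{j\ge0}\frac{\beta_j^{(u_i)}(\eps)}{j!}H_j(x)$. Tensorising over $i\in[\ell]$ and evaluating at the independent Gaussians $x_i=G^{(i)}(z)$ gives, for each fixed $z$ and in $L^2(\Prob)$,
\begin{equation*}
(2\eps)^{-\ell}\prod_{i=1}^\ell\ind{[-\eps,\eps]}(G^{(i)}(z)-u_i)=\sum_{j_1,\dots,j_\ell\ge0}\frac{\beta_{j_1}^{(u_1)}(\eps)\cdots\beta_{j_\ell}^{(u_\ell)}(\eps)}{j_1!\cdots j_\ell!}\prod_{i=1}^\ell H_{j_i}(G^{(i)}(z)).
\end{equation*}
Multiplying by $W(z)$ and inserting its chaos decomposition $W(z)=\sum_{r\ge0}\proj_r(W(z))$, assumptions (ii)–(iii) ensure that $\prod_{i=1}^\ell H_{j_i}(G^{(i)}(z))\cdot\proj_r(W(z))\in\mathbf{C}^{\mathbb{G}}_{j_1+\cdots+j_\ell+r}$ and that the resulting double series converges in $L^2(\Prob)$ (its squared norm equals $\E{[(2\eps)^{-\ell}\prod_i\ind{[-\eps,\eps]}(G^{(i)}(z)-u_i)]^2}\cdot\E{W(z)^2}$), so that collecting the $\binom{q+\ell}{\ell}$ terms of total degree $q$ produces $\proj_q$ of the integrand. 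Integrating over $Z$ then gives
\begin{equation*}
\proj_q(J_\eps)=\sum_{\substack{j_1,\dots,j_\ell,r\ge0\\ j_1+\cdots+j_\ell+r=q}}\frac{\beta_{j_1}^{(u_1)}(\eps)\cdots\beta_{j_\ell}^{(u_\ell)}(\eps)}{j_1!\cdots j_\ell!}\int_Z\prod_{i=1}^\ell H_{j_i}(G^{(i)}(z))\,\proj_r(W(z))\,\mu(dz).
\end{equation*}

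To finish, I would invoke the hypothesis that $J$ is well-defined in $L^2(\Prob)$, i.e. $J_\eps\to J$ in $L^2(\Prob)$; continuity of $\proj_q$ then gives $\proj_q(J_\eps)\to\proj_q(J)$ in $L^2(\Prob)$. On the other hand the displayed formula for $\proj_q(J_\eps)$ is a \emph{finite} sum whose coefficients converge, by \eqref{limbetaeps}, to $\beta_{j_1}^{(u_1)}\cdots\beta_{j_\ell}^{(u_\ell)}$ while the integrals are $\eps$-independent fixed elements of $L^2(\Prob)$; hence the right-hand side converges in $L^2(\Prob)$ to the asserted expression \eqref{WCJ}, and uniqueness of limits concludes. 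The step I expect to require the most care is the degree bookkeeping $\prod_i H_{j_i}(G^{(i)}(z))\cdot\proj_r(W(z))\in\mathbf{C}^{\mathbb{G}}_{j_1+\cdots+j_\ell+r}$: one must use that $W(z)$ is $\sigma(\mathbf{G})$–measurable (so it admits a chaos decomposition) and independent of the finite-dimensional vector $\mathbf{G}(z)$ (so its chaotic components are generated by Gaussian directions orthogonal to $\mathrm{span}(\mathbf{G}(z))$), together with the standard fact that the product of a $q$-th chaos element with a Hermite polynomial evaluated at independent standard Gaussians lies in the expected chaos — and, secondarily, the interchange of $\proj_q$ with integration in $z$, which has been prepared by the Bochner integrability observed above.
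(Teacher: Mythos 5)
Your overall route --- bound $\E{J_\eps^2}$ via Jensen plus pointwise independence, expand the indicator product through Lemma \ref{WCDirac}, multiply by the chaos decomposition of $W(z)$, integrate over $Z$, and let $\eps\to0$ using \eqref{limbetaeps} and the $L^2(\Prob)$-continuity of $\proj_q$ --- is exactly the argument the paper sketches (and omits), and most of your steps are sound. The one step whose justification does not hold water is precisely the one you flag: you claim that assumption (iii), i.e. pointwise stochastic independence of $W(z)$ and $\mathbf{G}(z)$, forces the chaotic components of $W(z)$ to be ``generated by Gaussian directions orthogonal to $\mathrm{span}(\mathbf{G}(z))$'', whence $\prod_{i=1}^{\ell} H_{j_i}(G^{(i)}(z))\cdot\proj_r(W(z))\in\mathbf{C}^{\mathbb{G}}_{j_1+\cdots+j_\ell+r}$. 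Independence of a nonlinear functional from a Gaussian vector does not imply this. A two-point counterexample: take $Z=\{z_1,z_2\}$, $\ell=1$, $G(z_1)=\xi$, $G(z_2)=\eta$ i.i.d. standard Gaussian, and $W(z_1)=\mathrm{sign}(\xi)\,\eta$. Then $W(z_1)$ is $\sigma(\mathbf{G})$-measurable, square-integrable and independent of $G(z_1)=\xi$, yet $\proj_2(W(z_1))=\sqrt{2/\pi}\,\xi\eta$, and $H_1(\xi)\,\proj_2(W(z_1))=\sqrt{2/\pi}\,\xi^2\eta=\sqrt{2/\pi}\,(H_2(\xi)H_1(\eta)+H_1(\eta))$ has nonzero components in both the first and the third chaos. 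So under (i)--(iii) alone your degree bookkeeping fails, and with it both the identification of $\proj_q$ of the integrand and the factorisation $\E{\big(\prod_i H_{j_i}(G^{(i)}(z))\proj_r(W(z))\big)^2}=\E{\prod_i H_{j_i}(G^{(i)}(z))^2}\,\E{\proj_r(W(z))^2}$ that you implicitly use to see that the $\eps$-independent integrals lie in $L^2(\Prob)$.

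The gap is harmless for the use the paper makes of the proposition, but you must close it by invoking the actual structure of the setting \eqref{GW} rather than bare independence: there $W(z)=\Phi_{\ell,k}(\bX_{\star}(z))$, and $\bX_{\star}(z)$ is \emph{jointly Gaussian with} and independent of $\mathbf{G}(z)=(X_0^{(1)}(z),\ldots,X_0^{(\ell)}(z))$, i.e. $W(z)$ is measurable with respect to a Gaussian subspace of $\mathbb{G}$ orthogonal to $\mathrm{span}\{G^{(1)}(z),\ldots,G^{(\ell)}(z)\}$ (this is how condition (iii) is meant to be read, and how it is verified in the applications). With that hypothesis, $\proj_r(W(z))$ is a multiple Wiener--It\^{o} integral over directions orthogonal to $\mathbf{G}(z)$, all contractions in the product formula vanish, and your claim $\prod_i H_{j_i}(G^{(i)}(z))\cdot\proj_r(W(z))\in\mathbf{C}^{\mathbb{G}}_{j_1+\cdots+j_\ell+r}$, together with the orthogonality of terms with different $(j_1,\ldots,j_\ell,r)$ needed for the $L^2(\Prob)$ convergence of your double series, is correct; the remaining ingredients --- Bochner integrability and the interchange of $\proj_q$ with $\int_Z$, the finiteness of the sum at fixed $q$, and the passage $\eps\to0$ via \eqref{limbetaeps} and $\proj_q(J_\eps)\to\proj_q(J)$ --- then go through and coincide with the paper's intended proof.
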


\subsubsection{Some elementary facts}
Let $k\geq 1$ be an integer and $X=(X_1,\ldots,X_k)$  a standard $k$-dimensional Gaussian vector. We write $\norm{\cdot}_k$ 
to indicate the Euclidean norm in $\R^k$.  
We will need the following standard fact, whose proof is omitted.
\begin{Lem}\label{LemRU}
The random variable $\norm{X}_k$ is stochastically independent of $X/\norm{X}_k$.
\end{Lem}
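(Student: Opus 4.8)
The plan is to exploit the rotational invariance of the law of a standard $k$-dimensional Gaussian vector, equivalently the fact that its density $x \mapsto (2\pi)^{-k/2} e^{-\norm{x}_k^2/2}$ depends on $x$ only through $\norm{x}_k$ and therefore factorizes once one passes to polar coordinates. Write $R := \norm{X}_k$ and $U := X/\norm{X}_k \in S^{k-1}$, which is well-defined almost surely since $\Prob[X=0]=0$. Proving that $R$ and $U$ are independent is equivalent to checking the identity $\E{\varphi(R)\,\psi(U)} = \E{\varphi(R)}\,\E{\psi(U)}$ for all bounded Borel functions $\varphi$ on $[0,\infty)$ and $\psi$ on $S^{k-1}$, and this is what I would establish directly.

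First I would compute, using the substitution $x = r u$ with $r \in (0,\infty)$ and $u \in S^{k-1}$, under which $dx = r^{k-1}\, dr\, d\sigma(u)$ with $\sigma$ the surface measure on $S^{k-1}$:
\begin{align*}
\E{\varphi(R)\,\psi(U)}
&= (2\pi)^{-k/2} \int_{\R^k} \varphi(\norm{x}_k)\,\psi\!\left(\tfrac{x}{\norm{x}_k}\right) e^{-\norm{x}_k^2/2}\, dx \\
&= (2\pi)^{-k/2} \left(\int_0^\infty \varphi(r)\, e^{-r^2/2}\, r^{k-1}\, dr\right) \left(\int_{S^{k-1}} \psi(u)\, d\sigma(u)\right).
\end{align*}
Specializing to $\psi \equiv 1$ identifies $\E{\varphi(R)} = (2\pi)^{-k/2}\,\sigma(S^{k-1})\int_0^\infty \varphi(r)\, e^{-r^2/2}\, r^{k-1}\, dr$, and specializing to $\varphi \equiv 1$ identifies $\E{\psi(U)} = \sigma(S^{k-1})^{-1}\int_{S^{k-1}} \psi(u)\, d\sigma(u)$ (so that, as a byproduct, $U$ is uniformly distributed on $S^{k-1}$). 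Multiplying these two expressions and comparing with the display above yields the factorization $\E{\varphi(R)\,\psi(U)} = \E{\varphi(R)}\,\E{\psi(U)}$; since $\varphi,\psi$ were arbitrary bounded Borel functions, $R$ and $U$ are independent.

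There is essentially no hard part: the only thing worth verifying is the factorization of the polar-coordinate integral, which is immediate because the Gaussian weight involves only the radial variable. An equivalent, slightly more conceptual route I could use instead is to note that $OX \eqLaw X$ for every orthogonal matrix $O$, whence $(\norm{X}_k,\, O(X/\norm{X}_k)) \eqLaw (\norm{X}_k,\, X/\norm{X}_k)$; this forces the conditional law of $X/\norm{X}_k$ given $\norm{X}_k$ to be $O(k)$-invariant, hence equal to the normalized surface measure on $S^{k-1}$ independently of the value of $\norm{X}_k$, which is exactly the claimed independence.
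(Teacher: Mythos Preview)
Your proof is correct and is the standard argument for this classical fact. The paper actually omits the proof entirely (it states ``We will need the following standard fact, whose proof is omitted''), so there is nothing to compare against; your polar-coordinates computation, together with the alternative $O(k)$-invariance argument you sketch, are precisely the canonical ways to establish this independence.
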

For integers $1\leq \ell \leq  k $, we recall the notation introduced in \eqref{defalk}
\begin{equation*}  
\alpha(\ell,k):= \frac{(k)_{\ell}\kappa_k}{(2\pi)^{\ell/2}\kappa_{k-\ell}}   \ , 
\end{equation*}
where $(k)_{\ell}:=k!/(k-\ell)!$ and $\kappa_k:=\frac{\pi^{k/2}}{\Gamma(1+k/2)}$ stands for the volume of the unit ball in $\R^k$. 
The following lemma contains an expression of the moments of the Euclidean norm of a standard $k$-dimensional Gaussian vector.
\begin{Lem}\label{LemNorms}
For all integers $k\geq 1$ and $n\geq 1$, we have 
\begin{eqnarray}\label{moment}
\E{\norm{X}_k^n} 
= 2^{n/2} \frac{\Gamma((k+n)/2)}{\Gamma(k/2)} \ .
\end{eqnarray}
In particular, 
\begin{eqnarray} 
&&\E{\norm{X}_k} = \alpha(1,k) \ , \label{M1}\\
&&\E{\norm{X}_k^2} = k\ , \label{M2}\\
&&\E{\norm{X}_k^3} = \alpha(1,k)(k+1) \ ,\label{M3}\\
&&\E{\norm{X}_k^4} = k(k+2) \ , \label{M4}\\
&&\E{\norm{X}_k^5} = \alpha(1,k)(k+1)(k+3) \ ,\label{M5}
\end{eqnarray}
so that  
\begin{eqnarray}\label{Quotient}
\frac{\E{\norm{X}_k^3} }{\E{\norm{X}_k} } = k+1 \ .
\end{eqnarray}
\end{Lem}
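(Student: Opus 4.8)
The plan is to reduce the whole statement to the moments of a chi-squared (equivalently, Gamma) random variable. First I would observe that $\norm{X}_k^2 = X_1^2 + \cdots + X_k^2$ has a $\chi^2$ distribution with $k$ degrees of freedom, i.e.\ density $\tfrac{1}{2^{k/2}\Gamma(k/2)}\, y^{k/2-1}e^{-y/2}\ind{y>0}$. Consequently, for every real $s > -k/2$,
\[
\E{\big(\norm{X}_k^2\big)^{s}} = \frac{1}{2^{k/2}\Gamma(k/2)}\int_0^\infty y^{\,s+k/2-1}e^{-y/2}\,dy = 2^{s}\,\frac{\Gamma(s+k/2)}{\Gamma(k/2)},
\]
the last step being the substitution $t=y/2$ together with the definition of $\Gamma$. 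Taking $s=n/2$ yields \eqref{moment}. (Alternatively, one passes directly to polar coordinates, writing $\E{\norm{X}_k^n}=(2\pi)^{-k/2}\int_{\R^k}\norm{x}^n e^{-\norm{x}^2/2}\,dx$ and using $\Leb(S^{k-1})=2\pi^{k/2}/\Gamma(k/2)$; the radial integral $\int_0^\infty \rho^{\,n+k-1}e^{-\rho^2/2}\,d\rho$ is evaluated again via $t=\rho^2/2$, and the factors of $\pi$ cancel to give the same closed form.)

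Next I would deduce the explicit values \eqref{M1}--\eqref{M5}. The even cases are immediate: $\E{\norm{X}_k^2}=k$ is linearity of expectation, and $\E{\norm{X}_k^4}=k(k+2)$ follows from \eqref{moment} by applying $\Gamma(x+1)=x\Gamma(x)$ twice to $\Gamma((k+4)/2)$. For the odd cases one first records the identity $\alpha(1,k)=\sqrt{2}\,\Gamma((k+1)/2)/\Gamma(k/2)$: from $\kappa_k=\pi^{k/2}/\Gamma(1+k/2)$ one gets $\kappa_k/\kappa_{k-1}=\sqrt{\pi}\,\Gamma((k+1)/2)/\Gamma(1+k/2)=\tfrac{2\sqrt{\pi}}{k}\,\Gamma((k+1)/2)/\Gamma(k/2)$, so that $\alpha(1,k)=\tfrac{k}{\sqrt{2\pi}}\cdot\tfrac{\kappa_k}{\kappa_{k-1}}$ collapses to the claimed expression. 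Then \eqref{M1} is precisely \eqref{moment} with $n=1$, while \eqref{M3} and \eqref{M5} follow by peeling off the factors $\tfrac{k+1}{2}$ from $\Gamma((k+3)/2)$, and $\tfrac{k+3}{2}\cdot\tfrac{k+1}{2}$ from $\Gamma((k+5)/2)$, via $\Gamma(x+1)=x\Gamma(x)$.

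Finally, \eqref{Quotient} is an immediate consequence of dividing \eqref{M3} by \eqref{M1}. I do not anticipate a genuine obstacle: the computation is entirely routine, and the only mildly delicate bookkeeping is the identification of $\alpha(1,k)$ with $\sqrt{2}\,\Gamma((k+1)/2)/\Gamma(k/2)$ through the functional equation of $\Gamma$ and the definition of $\kappa_k$, after which all the stated formulas drop out mechanically.
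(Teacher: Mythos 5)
Your proposal is correct and follows essentially the same route as the paper: both reduce $\E{\norm{X}_k^n}$ to a Gamma integral via the (chi, respectively chi-squared) density of the norm and the substitution $t=x^2/2$, then obtain \eqref{M1}--\eqref{M5} from $\Gamma(z+1)=z\Gamma(z)$ and the identity $\alpha(1,k)=\sqrt{2}\,\Gamma((k+1)/2)/\Gamma(k/2)$, which you verify explicitly. No gaps.
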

\begin{proof} 
The law of the random variable $\norm{X}_k$ is the chi-distribution with $k$ degrees of freedom, whose density is given by (see e.g. \cite{W96} p.43)
\begin{equation*}
f(x) = \frac{1}{2^{k/2-1}\Gamma(k/2)} x^{k-1}e^{-x^2/2} \ , \qquad x > 0 \ .
\end{equation*}
Thus, it follows that, for $n \geq 1$, 
\begin{equation*}
\E{\norm{X}_k^n} = \int_{0}^{\infty} x^n f(x) dx 
= \frac{1}{2^{k/2-1}\Gamma(k/2)} 
\int_{0}^{\infty} x^{k+n-1} e^{-x^2/2}  dx \ .
\end{equation*}
Performing the change of variables $y = x^2/2$ yields 
\begin{equation*}
\E{\norm{X}_k^n} = \frac{1}{2^{k/2-1}\Gamma(k/2)} 
\cdot 2^{(k+n)/2-1}\Gamma((k+n)/2) 
= 2^{n/2} \frac{\Gamma((k+n)/2)}{\Gamma(k/2)} \ ,
\end{equation*}
which proves \eqref{moment}. The identities \eqref{M1}-\eqref{M5} are obtained from \eqref{moment} for $n = 1,\ldots,6$ respectively, together with the relations $\Gamma(z+1)=z\Gamma(z)$ and the definition in \eqref{defalk}.
\end{proof}

\subsubsection{Wiener-It\^{o} chaos expansion of 
$\Phi_{\ell,k}$ }
For integers $1\leq \ell \leq k$, we consider a generic map 
$\Phi_{\ell,k}$ as in Definition \ref{DefPhi} and a matrix $\bX=\big\{X_j^{(i)}:(i,j)\in [\ell]\times[k]\big\} \in \mathrm{Mat}_{\ell,k}(\R)$ with independent standard normal entries. 

\medskip
The next lemma provides a characterization of the second chaotic projection associated with 
$\bX$ and $\Phi_{\ell,k}(\bX)$, where we assume that $\E{\Phi_{\ell,k}(\bX)^2}<\infty$. 
As before, we set 
$\E{\Phi_{\ell,k}(\bX)}=: \alpha_{\ell,k}$.
\begin{Lem}\label{LemWC}
Let the above assumptions and notation prevail. Then, the following properties hold:
\begin{enumerate}[label=\rm{(\roman*)}]
\item for every $m\geq 1, (i_1,j_1),\ldots,(i_m,j_m)\in [\ell] \times [k]$ and $p_1,\ldots,p_m \in \N$ such that $p_1+\ldots+p_m$ is odd, we have 
\begin{equation*}
\E{\Phi_{\ell,k}(\bX)\prod_{a=1}^m H_{p_a}(X^{(i_a)}_{j_a})} = 0 \ ;
\end{equation*}
\item for every $(i_1,j_1) \neq (i_2,j_2) \in [\ell] \times [k]$, we have 
\begin{equation*}
\E{\Phi_{\ell,k}(\bX)X^{(i_1)}_{j_1}X^{(i_2)}_{j_2}} = 0 \ ;
\end{equation*}
\item for every $(i,j) \in [\ell] \times [k]$, we have 
\begin{equation*}
\E{\Phi_{\ell,k}(\bX)H_2(X^{(i)}_j)} = \frac{1}{k}\alpha_{\ell,k}\ .
\end{equation*}
\end{enumerate}
\end{Lem}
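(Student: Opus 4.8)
The plan is to prove the three items by combining the invariance properties (A1)--(A3) of $\Phi_{\ell,k}$ from Definition \ref{DefPhi} with the elementary Gaussian facts recalled in Lemmas \ref{LemRU} and \ref{LemNorms}. The only probabilistic input is that $\bX$ has i.i.d. standard normal entries, so its law is invariant under negating any single row, negating any single column, negating the whole matrix, and permuting columns; we will pair each such symmetry of the law with the corresponding invariance of $\Phi_{\ell,k}$. All three statements then reduce to the principle ``expectation equals minus itself, hence zero'', except (iii), which uses in addition the homogeneity in the rows.

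\textbf{Items (i) and (ii).} For (i), note that applying (A2) with $c=-1$ to each of the $\ell$ rows (equivalently, (A3) to each of the $k$ columns) gives $\Phi_{\ell,k}(-\bX)=\Phi_{\ell,k}(\bX)$. Since $\bX\eqLaw-\bX$ and $H_p(-x)=(-1)^pH_p(x)$, substituting $\bX\leadsto-\bX$ inside the expectation yields
$\E{\Phi_{\ell,k}(\bX)\prod_{a=1}^m H_{p_a}(X^{(i_a)}_{j_a})}=(-1)^{p_1+\cdots+p_m}\,\E{\Phi_{\ell,k}(\bX)\prod_{a=1}^m H_{p_a}(X^{(i_a)}_{j_a})}$,
so the expectation vanishes when $p_1+\cdots+p_m$ is odd. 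For (ii), since $(i_1,j_1)\neq(i_2,j_2)$, either $i_1\neq i_2$ or $j_1\neq j_2$. If $i_1\neq i_2$, negate row $i_1$: by (A2) with $c=-1$ this leaves $\Phi_{\ell,k}$ unchanged, it flips the sign of $X^{(i_1)}_{j_1}$ but not of $X^{(i_2)}_{j_2}$ (which lies in a different row), and it preserves the law of $\bX$; hence $\E{\Phi_{\ell,k}(\bX)X^{(i_1)}_{j_1}X^{(i_2)}_{j_2}}$ equals its own negative and is $0$. If instead $j_1\neq j_2$, argue identically with column $j_1$ and (A3).

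\textbf{Item (iii).} Write $H_2(x)=x^2-1$, so it suffices to evaluate $\E{\Phi_{\ell,k}(\bX)(X^{(i)}_j)^2}$. By column-permutation invariance (A1) together with invariance in law of $\bX$ under column permutations, this quantity does not depend on $j$; summing over $j\in[k]$ gives $k\,\E{\Phi_{\ell,k}(\bX)(X^{(i)}_j)^2}=\E{\Phi_{\ell,k}(\bX)\,\rho_i^2}$, where $\rho_i:=\big(\sum_{j=1}^k(X^{(i)}_j)^2\big)^{1/2}$ is the Euclidean norm of the $i$-th row, a chi-distributed variable with $k$ degrees of freedom. Writing the $i$-th row as $\rho_iU$ with $U$ its normalisation and applying (A2) to row $i$, one gets $\Phi_{\ell,k}(\bX)=\rho_i\,\Psi$, where $\Psi$ is $\Phi_{\ell,k}$ evaluated at the matrix obtained from $\bX$ by replacing its $i$-th row with $U$. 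By Lemma \ref{LemRU} the norm $\rho_i$ is independent of $U$, and it is independent of the remaining rows, hence $\rho_i$ and $\Psi$ are stochastically independent (with $\E{\Psi}<\infty$, since $\rho_i,\Psi\geq 0$ are independent and $\E{\rho_i\Psi}=\alpha_{\ell,k}<\infty$). Taking expectations in $\Phi_{\ell,k}(\bX)=\rho_i\Psi$ gives $\E{\Psi}=\alpha_{\ell,k}/\E{\rho_i}$, whence $\E{\Phi_{\ell,k}(\bX)\rho_i^2}=\E{\rho_i^3}\E{\Psi}=\alpha_{\ell,k}\,\E{\rho_i^3}/\E{\rho_i}=(k+1)\alpha_{\ell,k}$ by \eqref{Quotient}. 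Dividing by $k$ and subtracting $\E{\Phi_{\ell,k}(\bX)}=\alpha_{\ell,k}$ yields $\E{\Phi_{\ell,k}(\bX)H_2(X^{(i)}_j)}=\alpha_{\ell,k}/k$.

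\textbf{Main obstacle.} Items (i) and (ii) are pure symmetry and essentially automatic. The only real care is needed in (iii): one must justify the factorisation $\Phi_{\ell,k}(\bX)=\rho_i\Psi$ with $\rho_i\perp\Psi$ (here Lemma \ref{LemRU} and independence of distinct rows are both used) and check enough integrability to split $\E{\rho_i^3\Psi}=\E{\rho_i^3}\E{\Psi}$ and to guarantee $\E{\Psi}<\infty$ — the latter following, as noted, from $\E{\rho_i\Psi}=\alpha_{\ell,k}<\infty$ with nonnegative independent factors. Once this bookkeeping is in place, the computation closes using only the moment identity \eqref{Quotient} from Lemma \ref{LemNorms}.
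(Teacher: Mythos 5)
Your proof is correct and follows essentially the same route as the paper: items (i) and (ii) by pairing the sign symmetries of the Gaussian law with (A2)/(A3), and item (iii) by reducing to $\E{\Phi_{\ell,k}(\bX)\norm{X^{(i)}}_k^2}$, factoring out the row norm via (A2), and invoking Lemma \ref{LemRU} together with the moment ratio \eqref{Quotient}. The only difference is that you spell out two points the paper leaves implicit (the $j$-independence of $\E{\Phi_{\ell,k}(\bX)(X^{(i)}_j)^2}$ via (A1), and the integrability needed to split $\E{\rho_i^3\Psi}$), which is harmless.
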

\begin{proof} 
Let us prove (i). Writing $p_1+\ldots+p_m=r$ and using the fact that $\bX \eqLaw -\bX$ together with property (A3) and the symmetry relation \eqref{Hsym}, we have
\begin{eqnarray*}
\E{\Phi_{\ell,k}(\bX)\prod_{a=1}^m H_{p_a}(X^{(i_a)}_{j_a})} = 
\E{\Phi_{\ell,k}(-\bX)\prod_{a=1}^m H_{p_a}(-X^{(i_a)}_{j_a})} 
= (-1)^{r}\E{\Phi_{\ell,k}(\bX)\prod_{a=1}^m H_{p_a}(X^{(i_a)}_{j_a})} ,
\end{eqnarray*}
which implies the claim. 
Let us now prove (ii). Assume first that $\ell \geq 2$ and $i_1 \neq i_2$. Let $\bX^*$ be the matrix obtained from $\bX$ by multiplying the $i_1$-th row by $-1$. Then, $\bX \eqLaw \bX^*$ together with (A2) applied with $c=-1$ imply  
\begin{eqnarray*}
J:=\E{\Phi_{\ell,k}(\bX)X^{(i_1)}_{j_1}X^{(i_2)}_{j_2}}  =
\E{\Phi_{\ell,k}(\bX^*)X^{*(i_1)}_{j_1}X^{*(i_2)}_{j_2}}   
= \E{\Phi_{\ell,k}(\bX)(-X^{(i_1)}_{j_1})X^{(i_2)}_{j_2}} = -J ,
\end{eqnarray*}
and therefore $J=0$. Assume now that $i_1=i_2$ (and therefore that $j_1\neq j_2$). Let $\bX^{**}$ be the matrix obtained from 
$\bX$ by multiplying the $j_1$-th column of $\bX$ by $-1$. Then, 
$\bX \eqLaw \bX^{**}$ together with (A3) imply  
\begin{eqnarray*}
J:=\E{\Phi_{\ell,k}(\bX)X^{(i_1)}_{j_1}X^{(i_2)}_{j_2}}  =
\E{\Phi_{\ell,k}(\bX^{**})X^{**(i_1)}_{j_1}X^{**(i_2)}_{j_2}}  
= \E{\Phi_{\ell,k}(\bX)(-X^{(i_1)}_{j_1})X^{(i_2)}_{j_2}} = -J ,
\end{eqnarray*}
which yields the desired conclusion. 
In order to prove (iii), let $\bX^*$ be the matrix obtained from $\bX$ by multiplying the $i$-th row by $c=1/\norm{X^{(i)}}_k$. Then, according to Lemma \ref{LemRU}, the $i$-th row of $\bX^*$ is stochastically independent of $\norm{X^{(i)}}_k$. We have
\begin{equation*}
\E{\Phi_{\ell,k}(\bX)H_2(X^{(i)}_j)}   
= \frac{1}{k}\E{\Phi_{\ell,k}(\bX)\norm{X^{(i)}}_k^2}-\E{\Phi_{\ell,k}(\bX)} \ ,
\end{equation*}
so that, using (A2) and the independence mentioned above, yields
\begin{eqnarray*}
\E{\Phi_{\ell,k}(\bX)H_2(X^{(i)}_j)}&=& \frac{1}{k}\E{\Phi_{\ell,k}(\bX^*)\norm{X^{(i)}}_k^3} -\E{\Phi_{\ell,k}(\bX)} \\
&=& \frac{1}{k}\frac{\E{\Phi_{\ell,k}(\bX)}}{\E{\norm{X^{(i)}}_k}} \E{\norm{X^{(i)}}_k^3} - \E{\Phi_{\ell,k}(\bX)} = \frac{1}{k}\E{\Phi_{\ell,k}(\bX)} = \frac{1}{k}\alpha_{\ell,k}\ ,
\end{eqnarray*}
where the last equality follows from \eqref{Quotient}. 
\end{proof}

The following proposition combines Lemma \ref{LemWC} with the classical general formula for the chaotic projections of all order of $\Phi_{\ell,k}(\bX)$.

\begin{Prop}\label{WCPhi}
Let $\Phi_{\ell,k}:\mathrm{Mat}_{\ell,k}(\R)  \to \R_+$ be as in the previous lemma. Then, for $q\geq 0$, the projection of $\Phi_{\ell,k}(\bX)$ onto the $q$-th Wiener chaos associated with $\bX$ is given by  
\begin{eqnarray*}
\proj_q(\Phi_{\ell,k}(\bX))   
= 
\mathop{\sum_{p^{(1)}_1,\ldots,p^{(1)}_k\geq0}
\ldots \sum_{p^{(\ell)}_1,\ldots,p^{(\ell)}_k\geq0}}_{p^{(1)}_1+\ldots+p^{(1)}_k+\ldots+p^{(\ell)}_1+\ldots+p^{(\ell)}_k=q}
\alpha^{(\ell)}_k\left\{p^{(i)}_j: (i,j) \in [\ell]\times [k] \right\}\cdot
\prod_{i=1}^{\ell} \prod_{j=1}^k  H_{p^{(i)}_j}(X^{(i)}_j)  \ , \end{eqnarray*}
where the coefficients $\alpha^{(\ell)}_k\left\{p^{(i)}_j: (i,j) \in [\ell]\times [k] \right\}$ are given by
\begin{equation}\label{alpha}
\alpha^{(\ell)}_k\left\{p^{(i)}_j: (i,j) \in [\ell]\times [k] \right\} := 
\frac{1}{\prod_{i=1}^{\ell}\prod_{j=1}^{k}(p^{(i)}_j)! } 
\cdot \E{\Phi_{\ell,k}(\bX) \cdot \prod_{i=1}^{\ell} \prod_{j=1}^k  H_{p^{(i)}_j}(X^{(i)}_j)} \ .
\end{equation}
In particular, we have
\begin{eqnarray}
&& \proj_0(\Phi_{\ell,k}(\bX)) = \E{\Phi_{\ell,k}(\bX)} = \alpha_{\ell,k} \ , \label{E1} \\
&& \proj_2(\Phi_{\ell,k}(\bX)) = 
\frac{\alpha_{\ell,k} }{2} \cdot \frac{1}{k} \sum_{i=1}^{\ell} \sum_{j=1}^k 
\big((X^{(i)}_j)^2-1\big) \ , \label{E2} \\ 
&& \proj_{2q+1}(\Phi_{\ell,k}(\bX)) = 0 \ , \quad q\geq0 \ . \label{E3}
\end{eqnarray}
\end{Prop}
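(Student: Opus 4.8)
The plan is to obtain the general expansion from the standard theory of Wiener chaos for a finite family of independent standard Gaussian random variables, and then to extract the low-order projections \eqref{E1}--\eqref{E3} directly from Lemma \ref{LemWC}. First I would note that the entries $\{X^{(i)}_j:(i,j)\in[\ell]\times[k]\}$ form a vector of $\ell k$ independent standard Gaussian random variables, so that the tensorised Hermite system
\[
\left\{\prod_{i=1}^{\ell}\prod_{j=1}^{k}\frac{H_{p^{(i)}_j}(X^{(i)}_j)}{\sqrt{(p^{(i)}_j)!}}:p^{(i)}_j\geq 0\right\}
\]
is a complete orthonormal system of $L^2(\sigma(\bX),\Prob)$, and those elements with $\sum_{i,j}p^{(i)}_j=q$ span the $q$-th Wiener chaos $\mathbf{C}_q^{\mathbb{G}}$ associated with $\bX$. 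Since $\E{\Phi_{\ell,k}(\bX)^2}<\infty$ by hypothesis, $\Phi_{\ell,k}(\bX)\in L^2(\Prob)$ expands (with $L^2$-convergence) along this basis; the projection onto $\mathbf{C}_q^{\mathbb{G}}$ is obtained by retaining the terms of total degree $q$, and the corresponding coefficients are the $L^2(\Prob)$-inner products of $\Phi_{\ell,k}(\bX)$ with the orthonormal basis elements. Rescaling each basis element by $\prod_{i,j}\sqrt{(p^{(i)}_j)!}$ then produces exactly the formula \eqref{alpha} for $\alpha^{(\ell)}_k\{p^{(i)}_j\}$, which establishes the stated general expansion.

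Formula \eqref{E1} is immediate, since $\mathbf{C}_0^{\mathbb{G}}=\R$ and the degree-$0$ term of the expansion is $\E{\Phi_{\ell,k}(\bX)}=\alpha_{\ell,k}$. Formula \eqref{E3} follows at once from Lemma \ref{LemWC}(i): if $q$ is odd, then every multi-index $\{p^{(i)}_j\}$ with $\sum_{i,j}p^{(i)}_j=q$ has odd total degree, so every coefficient $\alpha^{(\ell)}_k\{p^{(i)}_j\}$ contributing to $\proj_q(\Phi_{\ell,k}(\bX))$ vanishes.

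For \eqref{E2}, I would use that the second Wiener chaos is spanned by the two kinds of degree-$2$ tensorised Hermite polynomials: the diagonal terms $H_2(X^{(i)}_j)=(X^{(i)}_j)^2-1$, and the mixed terms $X^{(i_1)}_{j_1}X^{(i_2)}_{j_2}=H_1(X^{(i_1)}_{j_1})H_1(X^{(i_2)}_{j_2})$ with $(i_1,j_1)\neq(i_2,j_2)$. By Lemma \ref{LemWC}(ii) the coefficient attached to each mixed term is $0$, and by Lemma \ref{LemWC}(iii) the coefficient attached to $H_2(X^{(i)}_j)$ equals $\tfrac1{2!}\E{\Phi_{\ell,k}(\bX)H_2(X^{(i)}_j)}=\tfrac{1}{2k}\alpha_{\ell,k}$, independently of $(i,j)$. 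Summing over all $(i,j)\in[\ell]\times[k]$ gives
\[
\proj_2(\Phi_{\ell,k}(\bX))=\sum_{i=1}^{\ell}\sum_{j=1}^{k}\frac{\alpha_{\ell,k}}{2k}\big((X^{(i)}_j)^2-1\big)=\frac{\alpha_{\ell,k}}{2}\cdot\frac{1}{k}\sum_{i=1}^{\ell}\sum_{j=1}^{k}\big((X^{(i)}_j)^2-1\big),
\]
which is \eqref{E2}. I do not expect a genuine obstacle here: the argument is routine once Lemma \ref{LemWC} is in hand, and the only points requiring (mild) care are the correct identification of a spanning set of the degree-$2$ chaos and the bookkeeping of the factorials appearing in \eqref{alpha}.
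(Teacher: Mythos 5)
Your proposal is correct and follows essentially the same route as the paper: expansion of $\Phi_{\ell,k}(\bX)$ along the tensorised Hermite basis of $L^2(\sigma(\bX),\Prob)$, with the low-order projections \eqref{E1}--\eqref{E3} extracted from parts (i)--(iii) of Lemma \ref{LemWC}. Your treatment of the second chaos (explicitly killing the mixed $H_1H_1$ terms via Lemma \ref{LemWC}(ii) and summing the diagonal coefficients from (iii)) is exactly the paper's argument, spelled out in slightly more detail.
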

\begin{proof}
The formula for $\proj_q(\Phi_{\ell,k}(\bX))$ follows from the orthogonal decomposition of $L^2(\Prob)$. For $q=0$, we have $p^{(i)}_j=0$ for every $ (i,j) \in [\ell]\times [k]$, so that $\proj_0(\Phi_{\ell,k}(\bX))=\E{\Phi_{\ell,k}(\bX)}$. For $q=2$, in view of Lemma \ref{LemWC}, only the tuples 
$(p^{(i)}_j: (i,j) \in [\ell]\times [k])$ involving exactly one $2$ contribute to the projection on the second chaos and the conclusion then follows from Lemma \ref{LemWC} (iii). Finally, the projections onto Wiener chaoses of odd order vanish in view of Lemma \ref{LemWC}  (i). 
\end{proof}

\subsection{Proof of Theorem \ref{MainThm1} }\label{genstat}
Part (i) follows from the form of the $q$-th chaotic projection of $J$ provided in \eqref{WCJ} and Proposition \ref{WCPhi} where the random matrix $\mathbf{X}$ is replaced with $\bX_{\star}(z)$. Indeed, by \eqref{E1} and the fact that $\mu(Z)=1$, we have 
\begin{equation*}
\proj_0(J)= \beta_0^{(u_1)}\cdots \beta_0^{(u_{\ell})} \int_Z 
\prod_{i=1}^{\ell} H_0(X_0^{(i)}(z)) \cdot \proj_0(\Phi_{\ell,k}(\bX_{\star}(z))) \ \mu(dz) = \prod_{i=1}^{\ell} \gamma(u_i) \cdot \alpha_{\ell,k} \ .
\end{equation*}
This proves \eqref{P0i}. For \eqref{P1i}, since $\proj_1(\Phi_{\ell,k}(\bX_{\star}(z)))=0$ by \eqref{E3}, we have (recalling the definition of $m^{(i)}$ in \eqref{mi})
\begin{eqnarray*}
\proj_1(J) &=& \sum_{i=1}^{\ell} \beta_1^{(u_i)} \prod_{\substack{j=1 \\ j \neq i}}^{\ell}
\beta_0^{(u_j)} \int_Z H_0(X_0^{(j)}(z))H_1(X_0^{(i)}(z)) \cdot \proj_0(\Phi_{\ell,k}(\bX_{\star}(z))) \ \mu(dz) \\
&=& \sum_{i=1}^{\ell} \prod_{\substack{j=1 \\ j \neq i}}^{\ell}
\gamma(u_j)\gamma(u_i)u_i \int_Z X_0^{(i)}(z) \cdot \alpha_{\ell,k} \ \mu(dz) = \prod_{j=1}^{\ell} \gamma(u_j) \cdot \alpha_{\ell,k} \cdot \sum_{i=1}^{\ell} m^{(i)} u_i \ .
\end{eqnarray*}
Let us now turn to \eqref{P2i}. We have
\begin{eqnarray*}
\proj_2(J)&=& \sum_{i=1}^{\ell} \frac{\beta_2^{(u_i)}}{2!}\prod_{\substack{j=1 \\ j \neq i}}^{\ell}
\beta_0^{(u_j)} \int_Z H_0(X_0^{(j)}(z))H_2(X_0^{(i)}(z)) \cdot \proj_0(\Phi_{\ell,k}(\bX_{\star}(z))) \ \mu(dz) \\
&& \hspace{2cm} \quad + \prod_{i=1}^{\ell} \beta_0^{(u_i)}
\int_Z H_0(X_0^{(i)}(z)) \cdot \proj_2(\Phi_{\ell,k}(\bX_{\star}(z))) \ \mu(dz) \ . 
\end{eqnarray*}
Now, using $\beta_2^{(u_i)}=\gamma(u_i)(u_i^2-1)$ and \eqref{E2} yields
\begin{eqnarray*}
\proj_2(J) &=&\frac{\alpha_{\ell,k}}{2} \cdot \prod_{j=1}^{\ell} \gamma(u_j) \cdot \sum_{i=1}^{\ell} (u_i^2-1) 
\int_Z (X_0^{(i)}(z)^2-1) \ \mu(dz) \\
&& \hspace{2cm} \quad + \frac{\alpha_{\ell,k}}{2} \cdot \prod_{i=1}^{\ell} \gamma(u_i)
\int_Z  \frac{1}{k}\sum_{i=1}^{\ell} \sum_{j=1}^k (X_j^{(i)}(z)^2-1) \ \mu(dz) \\ 
&=& \frac{\alpha_{\ell,k}}{2} \cdot \prod_{i=1}^{\ell} \gamma(u_i) \cdot 
\sum_{i=1}^{\ell} \biggl\{ (u_i^2 -1) \int_Z (X_0^{(i)}(z)^2-1)    
+ \frac{1}{k} \sum_{j=1}^k (X_j^{(i)}(z)^2-1) \ \mu(dz) \biggr\} \\
&=& \frac{\alpha_{\ell,k}}{2} \cdot \prod_{i=1}^{\ell} \gamma(u_i) \cdot 
\sum_{i=1}^{\ell} \biggl\{ u_i^2  \int_Z (X_0^{(i)}(z)^2-1) \ \mu(dz) + D^{(i)} \biggr\} \ ,
\end{eqnarray*}
where we used the definition of $D^{(i)}$ in \eqref{Di}.

\noindent
For part (ii), set $u_i=D^{(i)}=0$ for every $i\in[\ell]$. Then, \eqref{P0ii} follows since $\gamma(0)=1/\sqrt{2\pi}$. By \eqref{P2i}, we have that $\proj_2(J) = 0$. It remains to show that $\proj_{2q+1}(J)=0$ for $q\geq 0$. The fact that $\beta^{(0)}_{2k+1}=0$ for every $k\geq 0$ implies that the expansion in \eqref{WCJ} runs over indices $j_1,\ldots, j_{\ell}$ that are all even. The projection of $J$ onto Wiener chaoses of odd order is therefore of the form 
\begin{equation*} 
\proj_{2q+1}(J)= \sum_{\substack{j_1,\ldots,j_{\ell},r \geq 0 \\ j_1+\ldots+j_{\ell}+r=2q+1} } \frac{\beta_{j_1}^{(0)}\cdots \beta_{j_{\ell}}^{(0)}}{j_1! \cdots j_{\ell}!} \int_Z \prod_{i=1}^{\ell} H_{j_i}(X_0^{(i)}(z))\cdot \proj_r(\Phi_{\ell,k}(\bX_{\star}(z))) \ \mu(dz) \ ,
\end{equation*}
where $j_1,\ldots,j_{\ell}$ are all even and $r$ is odd. The conclusion then follows from \eqref{E3}.

\section{Fourier-Hermite coefficients of Gramian determinants on the fourth Wiener chaos}\label{Constants4}
For integers $1\leq \ell \leq k$ and a $\ell \times k$ matrix $\bX$ with i.i.d. standard normal entries, we consider the function
\begin{equation}\label{detXX} 
\Phi_{\ell,k}^*: \mathrm{Mat}_{\ell,k}(\R) \to \R_+ \ , \quad \bX \mapsto  \det(\bX \bX^T)^{1/2} \ . 
\end{equation} 
The following lemma shows that $\Phi_{\ell,k}^*$  defined in \eqref{detXX} satisfies \textit{Assumption A} of Definition \ref{DefPhi}. In order to prove this, we recall Cauchy-Binet's identity:
\begin{equation}\label{CB}
\Phi_{\ell,k}^*(\bX) =\left[\sum_{j_1<\ldots<j_{\ell} \in [k]} \det (\bX_{j_1,\ldots,j_{\ell}})^2 \right]^{1/2} \ ,
\end{equation}
where, for $j_1<\ldots<j_{\ell} \in [k]$, we denote by $\bX_{j_1,\ldots,j_{\ell}} \in \mathrm{Mat}_{\ell,\ell}(\R)$ the matrix obtained from $\bX$ by only keeping columns labeled $j_1,\ldots,j_{\ell}$. We refer to $\det(\mathbf{X}_{j_1,\ldots,j_{\ell}})$ as the \textit{minors} of $\mathbf{X}$.

\begin{Lem}\label{CauchyBinet}
The function $\Phi_{\ell,k}^*$  in \eqref{detXX} satisfies Assumption A of Definition \ref{DefPhi}.
\end{Lem}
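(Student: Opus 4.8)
The plan is to verify the four requirements (A1)–(A4) of Definition \ref{DefPhi} directly, using the Cauchy-Binet expansion \eqref{CB} of $\Phi_{\ell,k}^*$. Throughout I work with an arbitrary matrix $\bX \in \mathrm{Mat}_{\ell,k}(\R)$ and examine how each elementary operation acts on the minors $\det(\bX_{j_1,\ldots,j_{\ell}})$, since $\Phi_{\ell,k}^*(\bX)$ is the square root of the sum of squares of these minors and hence is unchanged precisely when the multiset of squared minors is unchanged.

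First I would check (A1). A column permutation $j \mapsto \sigma(j)$ of $\bX$ simply reshuffles which sets of columns $\{j_1 < \ldots < j_\ell\}$ index which minor: the multiset $\{\det(\bX_{j_1,\ldots,j_\ell})^2 : j_1 < \ldots < j_\ell\}$ is sent to itself (each $\ell$-subset of columns maps to another $\ell$-subset, possibly with an overall sign from reordering columns inside the minor, which is killed by squaring). A row permutation $i \mapsto \pi(i)$ multiplies every minor by $\mathrm{sgn}(\pi)$, again irrelevant after squaring. Hence the sum in \eqref{CB} is invariant, proving (A1). For (A3), multiplying the $j$-th column by $-1$ negates exactly those minors whose column-index set contains $j$ and fixes the others; squaring removes the sign, so the sum is unchanged. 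For (A2), multiplying the $i$-th row by $c$ multiplies every $\ell\times\ell$ minor (each of which uses row $i$) by $c$, so $\Phi_{\ell,k}^*(\bX^*) = \big[\sum c^2 \det(\bX_{j_1,\ldots,j_\ell})^2\big]^{1/2} = |c|\,\Phi_{\ell,k}^*(\bX)$, which is the asserted positive homogeneity.

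Finally, (A4) is the assertion that replacing row $i_1$ by (row $i_1$ + row $i_2$) leaves $\Phi_{\ell,k}^*$ unchanged. Here I would invoke the standard multilinearity-of-the-determinant fact: for each column set, $\det(\bX^*_{j_1,\ldots,j_\ell}) = \det(\bX_{j_1,\ldots,j_\ell}) + \det(\widetilde{\bX}_{j_1,\ldots,j_\ell})$, where $\widetilde{\bX}$ has row $i_2$ appearing in both slots $i_1$ and $i_2$, so the second determinant vanishes. Thus every minor — and hence $\Phi_{\ell,k}^*$ itself — is literally unchanged by the row addition. I expect no genuine obstacle here; the only point requiring a little care is the bookkeeping of signs under row/column permutations inside a minor in (A1), which is handled cleanly by the observation that all such signs disappear upon squaring. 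One should also note at the outset that $\Phi_{\ell,k}^*$ is indeed $\R_+$-valued, since $\bX\bX^T$ is positive semidefinite and \eqref{CB} exhibits $\det(\bX\bX^T)$ as a sum of squares.
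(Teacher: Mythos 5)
Your argument is correct and follows essentially the same route as the paper: expand $\Phi_{\ell,k}^*$ via Cauchy--Binet and track how each elementary row/column operation acts on the minors, with signs absorbed by squaring for (A1) and (A3), homogeneity factored out for (A2), and multilinearity (repeated-row vanishing) giving (A4). The only cosmetic difference is in (A3), where the paper observes directly that $\bX^*(\bX^*)^T=\bX\bX^T$ instead of negating the minors containing the flipped column; both are immediate.
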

\begin{proof}
\begin{enumerate}[label=\rm{(\roman*)}]
\item[(A1)] Permuting two columns multiplies some of the minors by $-1$, which is absorbed by taking its square. Permuting two rows multiplies each minor by $-1$, which is again absorbed by taking its square.
\item[(A2)] Let $\bX^*$ denote the matrix obtained from $\bX$ by multiplying the $i$-th row by $c\in \R$. Then, for every $j_1<\ldots<j_{\ell}\in [k]$, we have
$\det (\bX_{j_1,\ldots,j_{\ell}}^*)^2 = c^2\det (\bX_{j_1,\ldots,j_{\ell}})^2 $, so that \eqref{CB} implies  $\Phi_{\ell,k}^*(\bX^*) = |c|\Phi_{\ell,k}^*(\bX) $.
\item[(A3)] Let $\bX^*$ denote the matrix obtained from $\bX$ by multiplying its $j$-th column by $-1$. Then, $\bX^*(\bX^*)^T = \bX \bX^T$, so that trivially $\Phi_{\ell,k}^*(\bX) = \Phi_{\ell,k}^*(\bX^*)$. 
\item[(A4)] Let $\bX^*$ denote the matrix obtained from $\bX$ by replacing its $i_1$-th row with the sum of its $i_1$-th and $i_2$-th row for $i_1 \neq i_2$. Then, the invariance of the determinant under this operation implies that for every $j_1<\ldots<j_{\ell}\in [k],\det(\bX_{j_1,\ldots,j_{\ell}}^*) = \det(\bX_{j_1,\ldots,j_{\ell}})$, so that $\Phi_{\ell,k}^*(\bX^*) = \Phi_{\ell,k}^*(\bX)$. 
\end{enumerate}
\end{proof}

\subsection{A representation of the Gramian determinant}
In the forthcoming discussion, our goal is to 
compute the Fourier-Hermite coefficients within the fourth Wiener chaos associated with the function $\Phi_{\ell,k}^*$ in \eqref{detXX}. 

\medskip
We first prove a deterministic result. Let $v^{(1)},\ldots,v^{(\ell)} \in \R^k$ be linearly independent vectors and $\bX$ the $\ell\times k$ matrix whose $i$-th row is $v^{(i)}$. For $s=0,\ldots, \ell-1$, we write $\mathscr{V}_s:=\mathrm{span}\{v^{(1)},\ldots,v^{(s)}\}$ to indicate the $s$-dimensional linear subspace generated by the first $s$ rows of $\bX$ with the convention $\mathscr{V}_0:=\{0\}$ and denote by $p_s$ the projection operator onto $\mathscr{V}_s$. Furthermore, we set
\begin{equation*}
d(k-s) := \norm{v^{(s+1)}-p_s(v^{(s+1)})}_k \ , \qquad 
s=0, \ldots, \ell-1 \ , 
\end{equation*}
that is, $d(k-s)$ is the Euclidean distance in $\R^k$ between $v^{(s+1)}$ and $\mathscr{V}_s$. 
The next lemma yields a useful representation of Gramian determinants.
\begin{Lem}\label{det}
Let the above notation prevail. 
Then, the map $\Phi_{\ell,k}^*$  in \eqref{detXX} admits the representation
\begin{equation}\label{DetProd}
\Phi_{\ell,k}^*(\bX) = \prod_{s=0}^{\ell-1} d(k-s) \ . 
\end{equation}
\end{Lem}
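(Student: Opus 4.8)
The plan is to reduce the statement to the classical fact that a Gram determinant equals the squared volume of the parallelepiped spanned by the vectors, computed through Gram--Schmidt orthogonalisation. Concretely, I would apply the Gram--Schmidt procedure to the rows $v^{(1)},\ldots,v^{(\ell)}$ of $\bX$, producing pairwise orthogonal vectors $w^{(1)},\ldots,w^{(\ell)}\in\R^k$ with $w^{(1)}:=v^{(1)}$ and
\begin{equation*}
w^{(s+1)} := v^{(s+1)} - \sum_{j=1}^{s}\frac{\scal{v^{(s+1)}}{w^{(j)}}}{\norm{w^{(j)}}_k^2}\, w^{(j)} \ , \qquad s = 1,\ldots,\ell-1 \ .
\end{equation*}
A straightforward induction shows that $\{w^{(1)},\ldots,w^{(s)}\}$ is an orthogonal basis of $\mathscr{V}_s$ for each $s$, so the subtracted sum is exactly $p_s(v^{(s+1)})$; hence $w^{(s+1)} = v^{(s+1)} - p_s(v^{(s+1)})$ and $\norm{w^{(s+1)}}_k = d(k-s)$ for $s=0,\ldots,\ell-1$ (with the convention $p_0=0$, so $\norm{w^{(1)}}_k = d(k)$). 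Linear independence of the $v^{(i)}$ guarantees that each $w^{(s)}$ is non-zero.

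Next I would record the change-of-basis relation: inverting the triangular system above gives $v^{(i)} = w^{(i)} + \sum_{j<i} c_{ij} w^{(j)}$ for suitable scalars $c_{ij}$, i.e. $\bX = C W$, where $W\in\mathrm{Mat}_{\ell,k}(\R)$ has rows $w^{(1)},\ldots,w^{(\ell)}$ and $C\in\mathrm{Mat}_{\ell,\ell}(\R)$ is lower unitriangular, so $\det C = 1$. Since the $w^{(s)}$ are pairwise orthogonal, $W W^T$ is the diagonal matrix with entries $\norm{w^{(1)}}_k^2,\ldots,\norm{w^{(\ell)}}_k^2$. Consequently
\begin{equation*}
\det(\bX\bX^T) = \det\big(C\,(WW^T)\,C^T\big) = (\det C)^2 \prod_{s=1}^{\ell}\norm{w^{(s)}}_k^2 = \prod_{s=0}^{\ell-1} d(k-s)^2 \ ,
\end{equation*}
and taking the non-negative square root yields $\Phi_{\ell,k}^*(\bX) = \det(\bX\bX^T)^{1/2} = \prod_{s=0}^{\ell-1} d(k-s)$, which is \eqref{DetProd}.

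There is no genuine obstacle here; the argument is elementary linear algebra, and the only points needing a little care are the inductive identification of the Gram--Schmidt correction term with the orthogonal projection $p_s$ onto $\mathscr{V}_s$, and the fact that the transition matrix $C$ is unitriangular (hence has determinant $1$). As an alternative I could argue by induction on $\ell$: writing $\bX$ in block form with last row $v^{(\ell)}$ and first block $\bX'$ (rows $v^{(1)},\ldots,v^{(\ell-1)}$), the Schur complement identity for $\det(\bX\bX^T)$ together with the observation that $(\bX')^T(\bX'(\bX')^T)^{-1}\bX'$ is the orthogonal projector onto $\mathscr{V}_{\ell-1}$ gives $\det(\bX\bX^T) = \det(\bX'(\bX')^T)\cdot\big(\norm{v^{(\ell)}}_k^2 - \norm{p_{\ell-1}(v^{(\ell)})}_k^2\big) = \det(\bX'(\bX')^T)\, d(k-\ell+1)^2$, and the induction closes; invertibility of $\bX'(\bX')^T$ is again ensured by linear independence.
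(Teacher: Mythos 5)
Your proof is correct and follows essentially the same route as the paper: Gram--Schmidt orthogonalisation of the rows, identification of the norms of the orthogonalised vectors with the distances $d(k-s)$, and reduction to the diagonal Gram matrix $WW^T$. If anything, your bookkeeping of the change of basis is the more careful one: you factor through the unitriangular matrix $C$ with $\det C=1$ (a congruence argument), whereas the paper asserts an orthogonal matrix $P$ with $\mathbf{W}=P\bX$, which is not what Gram--Schmidt actually produces, although the determinant identity it is invoked for remains valid.
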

\begin{proof}
Applying the Gram-Schmidt orthogonalization process to the vectors $\{v^{(1)},\ldots,v^{(\ell)}\}$ gives rise to a family of orthogonal vectors $\{w^{(1)}, \ldots, w^{(\ell)}\}$ such that 
$\mathrm{span}\{w^{(1)},\ldots, w^{(\ell)}\} = \mathrm{span}\{v^{(1)},\ldots,v^{(\ell)}\}$. These are given recursively by $w^{(1)} = v^{(1)}$ and for $s=1,\ldots,\ell-1$,
\begin{equation*}
w^{(s+1)} = v^{(s+1)} - \sum_{i=1}^{s} 
\frac{\scal{v^{(s+1)}}{w^{(i)}}}{\norm{w^{(i)}}_k^2}w^{(i)} 
= v^{(s+1)} -p_s(v^{(s+1)})  \ ,
\end{equation*}
where $\scal{\cdot}{\cdot}$ denotes the canonical inner product in $\R^d$.
Denote by $\mathbf{W}$ the $\ell\times k$ matrix with rows $w^{(1)},\ldots,w^{(\ell)}$. There exists an orthogonal $\ell \times \ell$ matrix $P$ such that $\mathbf{W}=P\bX$, which implies that 
$\mathbf{W}\mathbf{W}^T = P\bX \bX^T P^T$, so that $\Phi_{\ell,k}^*(\mathbf{W}) = \Phi_{\ell,k}^*(\bX)$. 
As the rows of $\mathbf{W}$ are mutually orthogonal, we have that 
\begin{eqnarray*}
\mathbf{W}\mathbf{W}^T = \mathrm{diag}\left(\norm{w^{(1)}}_k^2, \ldots, \norm{w^{(\ell)}}_k^2\right)= 
\mathrm{diag}\left(d(k)^2,\ldots,d(k-(\ell-1))^2\right)  ,
\end{eqnarray*}
and therefore,  
\begin{equation*}
\Phi_{\ell,k}^*(\mathbf{W}) = \prod_{s=0}^{\ell-1} d(k-s)  , 
\end{equation*}
which is formula \eqref{DetProd}. 
\end{proof}
We will now pass to the probabilistic setting and replace each of the deterministic vectors $v^{(1)},\ldots,v^{(\ell)}$ by independent standard Gaussian vectors $X^{(1)},\ldots,X^{(\ell)}$. The following lemma characterizes the probability distribution of the random variables $d(k-s)$.
\begin{Lem}\label{indep}
Let the above setting prevail. For every $s=0,\ldots, \ell-1$, the random variable $d(k-s)$ is chi-distributed with $k-s$ degrees of freedom and stochastically independent of $(X^{(1)},\ldots,X^{(s)})$.  In particular, 
\begin{equation}\label{mean}
\alpha_{\ell,k}:=\E{\Phi_{\ell,k}^*(\bX)} = \prod_{s=0}^{\ell-1} \E{d(k-s)} = \alpha(\ell,k) \ ,
\end{equation}
where $\alpha(\ell,k)$ is  defined in \eqref{defalk}. 
\end{Lem}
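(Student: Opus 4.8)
The plan is to establish Lemma \ref{indep} by exploiting the recursive Gram--Schmidt structure from Lemma \ref{det} together with the rotational invariance of the standard Gaussian distribution on $\R^k$. First I would fix $s \in \{0,\ldots,\ell-1\}$ and condition on the first $s$ rows $X^{(1)},\ldots,X^{(s)}$. Since these are almost surely linearly independent, the subspace $\mathscr{V}_s = \mathrm{span}\{X^{(1)},\ldots,X^{(s)}\}$ is an $s$-dimensional random subspace, and $\mathscr{V}_s^\perp$ is $(k-s)$-dimensional. The key observation is that $X^{(s+1)}$ is independent of $(X^{(1)},\ldots,X^{(s)})$, and therefore, conditionally on $\mathscr{V}_s$, the vector $X^{(s+1)}$ is still a standard $k$-dimensional Gaussian. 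Writing $X^{(s+1)} = p_s(X^{(s+1)}) + (X^{(s+1)} - p_s(X^{(s+1)}))$ as the orthogonal decomposition with respect to $\mathscr{V}_s \oplus \mathscr{V}_s^\perp$, the orthogonal projection onto a \emph{fixed} $(k-s)$-dimensional subspace of a standard Gaussian is itself a standard Gaussian on that subspace, independent of the projection onto the complementary subspace. Hence $d(k-s) = \norm{X^{(s+1)} - p_s(X^{(s+1)})}_k$ is, conditionally on $(X^{(1)},\ldots,X^{(s)})$, the Euclidean norm of a standard Gaussian vector in dimension $k-s$, i.e. chi-distributed with $k-s$ degrees of freedom; since this conditional law does not depend on $(X^{(1)},\ldots,X^{(s)})$, the random variable $d(k-s)$ is unconditionally chi-distributed with $k-s$ degrees of freedom and independent of $(X^{(1)},\ldots,X^{(s)})$.

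Next, with the independence and the explicit distributions in hand, I would compute $\alpha_{\ell,k} = \E{\Phi_{\ell,k}^*(\bX)}$. By the product representation \eqref{DetProd} of Lemma \ref{det}, $\Phi_{\ell,k}^*(\bX) = \prod_{s=0}^{\ell-1} d(k-s)$. A telescoping independence argument then gives $\E{\prod_{s=0}^{\ell-1} d(k-s)} = \prod_{s=0}^{\ell-1} \E{d(k-s)}$: indeed, since $d(k-s)$ is independent of $(X^{(1)},\ldots,X^{(s)})$ and hence of the measurable functions $d(k),\ldots,d(k-s+1)$ of those vectors, one can pull out the factors one at a time starting from $s=\ell-1$. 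Finally, using \eqref{M1} of Lemma \ref{LemNorms}, $\E{d(k-s)} = \E{\norm{X}_{k-s}} = \alpha(1,k-s)$ where $X$ is standard Gaussian in dimension $k-s$. Substituting the definition $\alpha(1,k-s) = (k-s)\,\kappa_{k-s}/((2\pi)^{1/2}\kappa_{k-s-1})$ and forming the product over $s=0,\ldots,\ell-1$, the intermediate $\kappa$-factors telescope, leaving
\begin{equation*}
\prod_{s=0}^{\ell-1} \alpha(1,k-s) = \frac{k(k-1)\cdots(k-\ell+1)\,\kappa_k}{(2\pi)^{\ell/2}\,\kappa_{k-\ell}} = \frac{(k)_\ell\,\kappa_k}{(2\pi)^{\ell/2}\,\kappa_{k-\ell}} = \alpha(\ell,k),
\end{equation*}
which is exactly \eqref{defalk}.

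The main obstacle I anticipate is making rigorous the claim that the projection of a standard Gaussian vector onto a \emph{random} subspace (measurable with respect to a sub-$\sigma$-algebra the vector is independent of) again behaves like a standard Gaussian on that subspace, independently of the complementary projection. The clean way to handle this is to work conditionally: fix a realization of $(X^{(1)},\ldots,X^{(s)})$, choose any orthonormal basis $(e_1,\ldots,e_k)$ of $\R^k$ adapted to $\mathscr{V}_s \oplus \mathscr{V}_s^\perp$ (the first $s$ spanning $\mathscr{V}_s$), and observe that the coordinates of $X^{(s+1)}$ in this basis are i.i.d. standard Gaussians by orthogonal invariance; then $d(k-s)$ is the norm of the last $k-s$ of these coordinates, which is chi with $k-s$ degrees of freedom and independent of the first $s$ coordinates — in particular the conditional law is deterministic, so one may integrate out the conditioning trivially. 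The remaining steps (the telescoping of expectations and the $\kappa$-product identity) are routine bookkeeping.
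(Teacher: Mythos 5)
Your proposal is correct and follows essentially the same route as the paper: conditioning on $(X^{(1)},\ldots,X^{(s)})$, using rotational invariance of the Gaussian law to identify the conditional distribution of $d(k-s)$ as chi with $k-s$ degrees of freedom (hence independence, since the conditional law is non-random), and then combining the product representation of Lemma \ref{det} with \eqref{M1} and the telescoping of the $\kappa$-factors to obtain $\alpha(\ell,k)$. Your explicit telescoping of the conditional expectations is in fact slightly more detailed than the paper's "follows from independence," but the argument is the same.
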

\begin{proof}
Let $\{\mathbf{e}_1,\ldots, \mathbf{e}_k\}$ denote the canonical basis of $\R^k$. Since $d(k)=\norm{X^{(1)}}_k$, the random variable $d(k)$ is clearly chi-distributed with $k$ degrees of freedom. Now fix $s \in \{1,\ldots, \ell-1\}$. By the rotational invariance of the Gaussian distribution, the conditional distribution of $d(k-s)$ given $\{X^{(1)},\ldots,X^{(s)}\}$ is precisely the same as the distribution of the distance from $X^{(s+1)}$ to $\R^s$, that is 
\begin{equation*}
d(k-s)|\{X^{(1)},\ldots,X^{(s)}\} \eqLaw \bigg(\sum_{j=s+1}^k \scal{X^{(s+1)}}{\mathbf{e}_j}^2 \bigg)^{1/2} \ .
\end{equation*}
Since the coefficients $\scal{X^{(s+1)}}{\mathbf{e}_j}=X^{(s+1)}_j$ are i.i.d. standard Gaussian, we infer that $d(k-s)|\{X^{(1)},\ldots,X^{(s)}\}$ is chi-distributed with $k-s$ degrees of freedom. Thus the characteristic function of $d(k-s)^2|\{X^{(1)},\ldots,X^{(s)}\}$ is
\begin{equation*}
\phi_{d(k-s)^2|\{X^{(1)},\ldots,X^{(s)}\}}(t) = \E{e^{itd(k-s)^2}|X^{(1)},\ldots,X^{(s)}} = (1-2it)^{-(k-s)/2} \ , \quad t \in \R.
\end{equation*}
Therefore, taking expectation
\begin{equation*}
\phi_{d(k-s)^2}(t) = \E{e^{itd(k-s)^2}} = \E{\E{e^{itd(k-s)^2}|X^{(1)},\ldots,X^{(s)}}} = (1-2it)^{-(k-s)/2} \ ,
\end{equation*}
from which we conclude that $d(k-s)$ is also chi-distributed with $k-s$ degrees of freedom. Moreover, since $d(k-s)|\{X^{(1)},\ldots,X^{(s)}\} \eqLaw d(k-s)$, we deduce that $d(k-s)$ is independent of $\{X^{(1)},\ldots,X^{(s)}\}$.  The identity in \eqref{mean} follows from 
independence, and the fact that by \eqref{M1}, $\E{d(k-s)} = \alpha(1,k-s)$:
\begin{equation*} 
\alpha_{\ell,k} = \E{\Phi_{\ell,k}^*(\bX)}  
= \prod_{s=0}^{\ell-1}\E{d(k-s)}
= \prod_{s=0}^{\ell-1} \alpha(1,k-s) 
= \prod_{s=0}^{\ell-1} \frac{(k-s)\kappa_{k-s}}{\sqrt{2\pi} \kappa_{k-s-1}} = \alpha(\ell,k) \ ,
\end{equation*}
which finishes the proof.
\end{proof}

%It is a fact that $\Phi(\bX_{\ell})= \mathrm{vol}_k(P_{\ell})$. The following lemma gives a representation of $\Phi(\bX_{\ell})$ by means of the distances $d(k-s)$. 
%\begin{Lem}\label{det}
%Let the above notation prevail. 
%For all $\ell \geq 1$, the following representation holds:
%\begin{equation}\label{DetProd}
%\Phi(\bX_{\ell}) = \prod_{s=0}^{\ell-1} d(k-s) \ . 
%\end{equation}
%\end{Lem}
%\begin{proof} 
%For all $1\leq s \leq \ell$, denote by $P_s$ the $k$-dimensional parallelotope generated by the vectors $X^{(1)},\ldots, X^{(s)}$. We prove this result by induction on $\ell$. For $\ell=1$, $\Phi(\bX_1)= \norm{X^{(1)}}_k = d(k)$. Assume that \eqref{DetProd} holds for the parallelotope $P_{\ell-1}$ spanned by $\ell-1$ vectors $X^{(1)},\ldots, X^{(\ell-1)}$. Then, $P_{\ell}$ is the parralelotope with base $X^{(1)}, \ldots, X^{(\ell-1)}$ and height $\norm{X^{(\ell)}-p_{\ell-1}(X^{(\ell)})}_k$. Thus, 
%\begin{equation*}
%\Phi(\bX_{\ell}) = \mathrm{vol}_k(P_{\ell}) 
%= \mathrm{vol}_{k-1}(P_{\ell-1}) \cdot \norm{X^{(\ell)}-p_{\ell-1}(X^{(\ell)})}_k = \prod_{s=0}^{\ell-1} d(k-s) \ ,
%\end{equation*}
%where we used the induction hypothesis. 
%\end{proof}

\subsection{Technical computations}
The following result entirely characterizes the fourth chaotic component of the function $\Phi_{\ell,k}^*(\bX)$ defined in \eqref{detXX} where $\bX$ is a $\ell \times k$ matrix with i.i.d. standard Gaussian entries.
\begin{Lem}\label{LemMonomials}
Let the above notations prevail. The following properties hold:
\begin{enumerate}[label=\rm{(\roman*)}]
\item for every  $(i,j) \in [\ell]\times [k]$, we have 
\begin{equation*}
\E{\Phi_{\ell,k}^*(\bX) (X^{(i)}_j)^4} = 3\alpha(\ell,k) \frac{(k+1)(k+3)}{k(k+2)} \ ;
\end{equation*}

\item for every $(i_1,j_1) \neq (i_2,j_2)  \in [\ell] \times [k]$, we have 
\begin{equation*}
\E{\Phi_{\ell,k}^*(\bX) (X^{(i_1)}_{j_1})^3 X^{(i_2)}_{j_2} } = 0 \ , 
\end{equation*}

\item for every $(i_1,j_1) \neq (i_2,j_2) \neq (i_3,j_3) \in [\ell] \times [k]$, we have 
\begin{equation*}
\E{\Phi_{\ell,k}^*(\bX) (X^{(i_1)}_{j_1})^2 X^{(i_2)}_{j_2} X^{(i_3)}_{j_3}} = 0 \ , 
\end{equation*}

\item 
for every  $(i_1,j_1)\neq(i_2,j_2)\in [\ell]\times [k]$, we have 
\begin{eqnarray*}
\E{\Phi_{\ell,k}^*(\bX)(X^{(i_1)}_{j_1})^2(X^{(i_2)}_{j_2})^2}  =
\alpha(\ell,k)\frac{(k+1)(k+3)}{k(k+2)} \ind{i_1=i_2} \\
+ \ \alpha(\ell,k) \frac{(k+1)(k+3)}{k(k+2)}\ind{i_1\neq i_2,j_1=j_2}\ind{\ell\geq2} \\
+ \ \alpha(\ell,k) (k+1) \frac{(k+1)(k+2)-(k+3)}{k(k-1)(k+2)}  \ind{i_1\neq i_2,j_1 \neq j_2} \ind{\ell\geq2}\ ; 
\end{eqnarray*}

\item for every collection 
$I = \{(i_1,j_1)\neq(i_2,j_2)\neq(i_3,j_3)\neq(i_4,j_4) \in [\ell]\times [k]\} $, we have 
\begin{equation*}
\E{\Phi_{\ell,k}^*(\bX)X^{(i_1)}_{j_1}X^{(i_2)}_{j_2}X^{(i_3)}_{j_3}X^{(i_4)}_{j_4}} 
=   -\alpha(\ell,k)\frac{k+1}{k(k-1)(k+2)}  \ind{I \in S}\ind{\ell\geq2} \ ,
\end{equation*}
where $
S=\{ \{(i_1,j_1),(i_1,j_2),(i_2,j_1),(i_2,j_2)\}: i_1 \neq i_2, j_1 \neq j_2\}$.

\end{enumerate}
\end{Lem}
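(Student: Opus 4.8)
\textbf{Proof plan for Lemma \ref{LemMonomials}.}
The plan is to exploit the representation of the Gramian determinant given in Lemma \ref{det}, namely $\Phi_{\ell,k}^*(\bX) = \prod_{s=0}^{\ell-1} d(k-s)$, together with the independence properties established in Lemma \ref{indep}. The crucial structural fact is that $d(k)=\norm{X^{(1)}}_k$, and more generally $d(k-s)$ is chi-distributed with $k-s$ degrees of freedom and independent of $(X^{(1)},\dots,X^{(s)})$. The key reduction, which I would state as a preliminary observation, is that to compute $\E{\Phi_{\ell,k}^*(\bX) \cdot P(\bX)}$ for a polynomial $P$ in the entries, one conditions on a suitable collection of rows so as to isolate the row(s) appearing in $P$, then applies the homogeneity property (A2) to pull out a power of $\norm{X^{(i)}}_k$ and invoke Lemma \ref{LemRU} to split the normalized row from its norm. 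This systematically reduces every expectation to a product of a moment of a chi-distribution (computed via Lemma \ref{LemNorms}) and a moment of a uniform vector on the sphere $S^{k-1}$.

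The calculations then split according to how many distinct rows are involved. For (i), only one row $i$ is involved; I would condition on all rows $X^{(i')}$ with $i'\neq i$, write $\Phi_{\ell,k}^*(\bX)$ in terms of $d(k-s)$ factors, and use that $X^{(i)}$ can be rescaled by $1/\norm{X^{(i)}}_k$ to reduce to $\E{\norm{X^{(i)}}_k^5}$ times $\E{U_j^4}$ where $U$ is uniform on $S^{k-1}$; the sphere moment $\E{U_j^4}=3/(k(k+2))$ is standard, and the norm moment is $\alpha(1,k-?)\cdots$ assembled to give $\alpha(\ell,k)(k+1)(k+3)/(k(k+2))$ after using the telescoping product \eqref{mean}. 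For (ii) and (iii), an odd number of factors from a single row or an odd total degree forces vanishing by the sign-change invariance (A3) combined with $\bX \eqLaw -\bX$, exactly as in Lemma \ref{LemWC}(i)--(ii); this is the quickest part. For (iv) I would distinguish the three cases $i_1=i_2$ (reduces to (i) via the single-row analysis), $i_1\neq i_2, j_1=j_2$ (two rows, same coordinate — use independence of the two rows and $\E{U_j^2}=1/k$ twice, with norm moments $\E{\norm{X}_k^3}$ each), and $i_1\neq i_2, j_1\neq j_2$ (two rows, different coordinates — here one needs the joint sphere moment $\E{U_{j_1}^2 U_{j_2}^2} = 1/(k(k+2))$ for $j_1\neq j_2$, again $\E{\norm{X}_k^3}$ for each row). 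For (v), the generic collection $I$ of four distinct positions vanishes unless it forms a $2\times 2$ block $\{(i_1,j_1),(i_1,j_2),(i_2,j_1),(i_2,j_2)\}$ with $i_1\neq i_2, j_1\neq j_2$ (again by parity arguments (A3) one checks all other patterns give an odd degree in some row or an odd total degree); on the surviving block one reduces to $\E{\norm{X^{(i_1)}}_k^3}\E{\norm{X^{(i_2)}}_k^3}\cdot\big(\E{U_{j_1}U_{j_2}}\big)^2$, but since $\E{U_{j_1}U_{j_2}}=0$ for $j_1\neq j_2$ a naive product would vanish — so the nonzero contribution must come from correlations between the two rows induced by the Gram-Schmidt structure, i.e. from expanding $d(k-1)^2 = \norm{X^{(2)}}_k^2 - \scal{X^{(1)}}{X^{(2)}}^2/\norm{X^{(1)}}_k^2$ type terms. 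I expect this last case to be the main obstacle: the factor $d(k-(\ell-1))$ and below genuinely couple the rows, so one cannot simply treat the rows independently, and one must carefully carry out a Gram-Schmidt expansion to extract the off-diagonal contribution, then reassemble using spherical moments of the form $\E{U_{j_1}U_{j_2}V_{j_1}V_{j_2}}$ where $U,V$ are independent uniform on $S^{k-1}$.

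To make case (v) tractable I would set up the computation for $\ell=2$ first (where $\Phi_{2,k}^*(\bX)^2 = \norm{X^{(1)}}_k^2\norm{X^{(2)}}_k^2 - \scal{X^{(1)}}{X^{(2)}}^2$), compute $\E{\Phi_{2,k}^*(\bX) X^{(1)}_{j_1}X^{(1)}_{j_2}X^{(2)}_{j_1}X^{(2)}_{j_2}}$ by passing to polar coordinates $X^{(i)} = \rho_i U^{(i)}$ with $\rho_i = \norm{X^{(i)}}_k$ and $U^{(i)}$ uniform on $S^{k-1}$, independent, and using Lemma \ref{LemRU}. This gives $\E{\rho_1^3}\E{\rho_2^3}$ times a spherical integral $\E{\sqrt{1-\scal{U^{(1)}}{U^{(2)}}^2}\, U^{(1)}_{j_1}U^{(1)}_{j_2}U^{(2)}_{j_1}U^{(2)}_{j_2}}$ which, by symmetry and a dimension count (the only invariant quadratic-in-each-argument tensors available are built from $\delta_{j_1 j_2}$ and from contracting $U^{(1)}$ against $U^{(2)}$), must be a universal constant times $\big(\text{suitable contraction}\big)$; evaluating it by taking traces over $j_1,j_2$ and using the known moments $\E{U_a U_b U_c U_d}$ on the sphere pins down the constant $-\alpha(\ell,k)\frac{k+1}{k(k-1)(k+2)}$. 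For general $\ell$ the extra factors $d(k-2),\dots,d(k-(\ell-1))$ are independent of the two relevant rows after the appropriate conditioning, contributing the telescoping product that upgrades $\alpha(1,k)\alpha(1,k-1)$-type constants to $\alpha(\ell,k)$ via \eqref{mean}. Throughout, I would reuse Lemma \ref{LemWC}, Lemma \ref{LemNorms}, and the moment identities \eqref{M1}--\eqref{M5} rather than recompute anything, and record the needed low-order spherical moments $\E{U_j^2}=\tfrac1k$, $\E{U_j^4}=\tfrac{3}{k(k+2)}$, $\E{U_{j_1}^2U_{j_2}^2}=\tfrac{1}{k(k+2)}$ ($j_1\neq j_2$) as a short preliminary sublemma, since these follow immediately from \eqref{moment} applied to $\E{X_j^2/\norm{X}_k^2}$ and its analogues together with the independence in Lemma \ref{LemRU}.
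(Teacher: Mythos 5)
Parts (i)--(iii), and the same-row case $i_1=i_2$ of (iv), are essentially fine and close in spirit to the paper's own computations (polar decomposition $X^{(i)}=\rho_i U^{(i)}$, Lemma \ref{LemRU}, chi moments, sign-flip symmetries). The genuine gap is in the cross-row cases of (iv). There you propose to ``use independence of the two rows'', i.e.\ to factor the expectation into $\E{\rho_1^3}\E{\rho_2^3}$ times independent spherical moments of $U^{(1)}$ and $U^{(2)}$. But $\Phi_{\ell,k}^*(\bX)$ is \emph{not} independent of the row directions: already for $\ell=2$ one has $\Phi_{2,k}^*(\bX)=\rho_1\rho_2\sqrt{1-\scal{U^{(1)}}{U^{(2)}}^2}$, and the square-root factor is correlated with $(U^{(1)}_{j_1})^2(U^{(2)}_{j_2})^2$. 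A fully decoupled computation necessarily yields the same value whether $j_1=j_2$ or $j_1\neq j_2$ (assembling the constants it gives $\alpha(\ell,k)\,(k+1)^2/k^2$ in both cases), whereas the lemma asserts two \emph{different} values, $\alpha(\ell,k)\frac{(k+1)(k+3)}{k(k+2)}$ and $\alpha(\ell,k)(k+1)\frac{(k+1)(k+2)-(k+3)}{k(k-1)(k+2)}$; so that step fails. Moreover the moment $\E{U_{j_1}^2U_{j_2}^2}=\frac{1}{k(k+2)}$ you invoke for $i_1\neq i_2$, $j_1\neq j_2$ is a moment of a \emph{single} spherical vector and is not the relevant object when the two squared coordinates come from two independent rows. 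Since your plan for (v) builds on the same two-row structure, the error propagates.

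For (v) you do recognize the coupling, and the proposed route (reduce to $\ell=2$ by conditioning away the factors $d(k-2),\dots,d(k-(\ell-1))$, then evaluate $\E{\sqrt{1-\scal{U}{V}^2}\,U_{j_1}U_{j_2}V_{j_1}V_{j_2}}$ by invariance/trace arguments) is in principle viable; but that coupled spherical integral is exactly the hard part, it is not carried out, and the same kind of integral would be needed to repair (iv) -- so as written (iv) is wrong and (v) is a plan, not a proof. The paper avoids these integrals altogether: it computes the aggregate moments $\E{\Phi_{\ell,k}^*(\bX)\norm{X^{(i)}}_k^4}=\alpha(\ell,k)(k+1)(k+3)$ and $\E{\Phi_{\ell,k}^*(\bX)\norm{X^{(i_1)}}_k^2\norm{X^{(i_2)}}_k^2}=\alpha(\ell,k)(k+1)^2$ via (A2) and Lemma \ref{LemRU}, distributes them over coordinates by symmetry to solve linearly for the individual terms, and treats the remaining cross-row cases (the $j_1=j_2$ part of (iv) and all of (v)) with the orthogonal two-row mixing $\bX_{\pm}$, whose rows $(X^{(i_1)}\pm X^{(i_2)})/\sqrt{2}$ preserve both the law of $\bX$ and the value of $\Phi_{\ell,k}^*$, thereby expressing the unknown cross-row moments through already-known single-row ones. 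Importing that device (or actually evaluating your coupled spherical integrals) is what is missing from your argument.
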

\begin{proof}
We prove (i). By (A1), without loss of generality, we can assume that $i=1$. Using the representation in \eqref{DetProd}, the fact that $\norm{X^{(1)}}_k=d(k)$, as well Lemma \ref{LemRU} and \eqref{mean}, we have for every $j \in [k]$, 
\begin{eqnarray*}
\E{\Phi_{\ell,k}^*(\bX) (X^{(1)}_j)^4} &=& \E{d(k)\prod_{s=1}^{\ell-1} d(k-s) \frac{ (X^{(1)}_j)^4}{\norm{X^{(1)}}^4_k} \norm{X^{(1)}}^4_k} 
= \E{d(k)^5 \prod_{s=1}^{\ell-1}d(k-s) \frac{ (X^{(1)}_j)^4}{d(k)^4} } 
\\
&=& \frac{\E{d(k)^5}}{\E{d(k)^4}} \prod_{s=1}^{\ell-1}\E{d(k-s)} \E{ (X^{(1)}_j)^4} 
= 3\alpha(\ell,k) \frac{(k+1)(k+3)}{k(k+2)} \ ,
\end{eqnarray*}
where the last equality follows from Lemma \ref{LemNorms}. 

\noindent
We now prove (ii). Assume $i_1=i_2$ (so that $j_1\neq j_2$). Multiplying column $j_2$ by $-1$ and using (A3) then yields the desired conclusion. If $i_1 \neq i_2$ and $j_1=j_2$, the result follows from (A2). The case $i_1\neq i_2,j_1\neq j_2$ follows either from (A3) or (A2). 

\noindent
The result in (iii) is obtained by arguments similar those  in (ii).

\noindent
For (iv), let us assume that $i_1=i_2$ (so that $j_1\neq j_2$).  Denote by $\bX^*$ the matrix obtained from $\bX$ by multiplying the $i_1$-th row by $1/\norm{X^{(i_1)}}_k$. Then, we first observe that by  (A2)  and Lemma \ref{LemRU},
\begin{eqnarray*}
\E{\Phi_{\ell,k}^*(\bX)\norm{X^{(i_1)}}_k^4} 
&= & \E{\Phi_{\ell,k}^*(\bX^*)\norm{X^{(i_1)}}_k^5 } = \E{\Phi_{\ell,k}^*(\bX^*)} \E{\norm{X^{(i_1)}}_k^5} \\
&=& \frac{\E{\Phi_{\ell,k}^*(\bX)}}{\E{\norm{X^{(i_1)}}_k}} \E{\norm{X^{(i_1)}}_k^5} 
= \alpha(\ell,k) (k+1)(k+3) \ ,
\end{eqnarray*}
where we used Lemma \ref{LemNorms}. 
On the other hand, we can write 
\begin{eqnarray*}
\E{\Phi_{\ell,k}^*(\bX)\norm{X^{(i_1)}}_k^4} &=& 
\E{\Phi_{\ell,k}^*(\bX) \sum_{j,j'=1}^k (X^{(i_1)}_{j})^2 (X^{(i_1)}_{j'})^2 }\\
&=& \sum_{j=1}^k \E{\Phi_{\ell,k}^*(\bX) (X^{(i_1)}_{j })^4} 
+ \sum_{j \neq j' \in [k]} \E{\Phi_{\ell,k}^*(\bX) (X^{(i_1)}_{j})^2 (X^{(i_1)}_{j'})^2}\\
&=& k \ \E{\Phi_{\ell,k}^*(\bX) (X^{(i_1)}_{j_1})^4} + k(k-1) \E{\Phi_{\ell,k}^*(\bX) 
(X^{(i_1)}_{j_1})^2 (X^{(i_1)}_{j_2})^2} \\
&=&  3\alpha(\ell,k) \frac{(k+1)(k+3)}{k+2} 
+ k(k-1) \E{\Phi_{\ell,k}^*(\bX) 
(X^{(i_1)}_{j_1})^2 (X^{(i_1)}_{j_2})^2}\ ,
\end{eqnarray*}
for every $j_1\neq j_2$,
where for the last equality we used the formula proved in (i). Therefore, it follows that for every $j_1\neq j_2$,
\begin{eqnarray*}
\E{\Phi_{\ell,k}^*(\bX) (X^{(i_1)}_{j_1})^2 (X^{(i_1)}_{j_2})^2} &=& 
\frac{1}{k(k-1)}\left( \E{\Phi_{\ell,k}^*(\bX) \norm{X^{(i_1)}}_k^4} 
- 3\alpha(\ell,k)\frac{(k+1)(k+3)}{k+2} \right)\\
&=& \frac{1}{k(k-1)}\left( \alpha(\ell,k) (k+1)(k+3) 
- 3\alpha(\ell,k)\frac{(k+1)(k+3)}{k+2} \right) \\
&=& \alpha(\ell,k)\frac{(k+1)(k+3)}{k(k+2)}  .
\end{eqnarray*}
%%%
\noindent
Let us now deal with the case $i_1 \neq i_2$ and $j_1 = j_2$, for $\ell\geq 2$. Denote by $\bX_{\pm}$ the matrix obtained from $\bX$ as follows:
\begin{gather*}
(X_{\pm})^{(i_1)} = \frac{1}{\sqrt{2}}(X^{(i_1)}+X^{(i_2)})  \ , \\
(X_{\pm})^{(i_2)} = \frac{1}{\sqrt{2}}\left(-2X^{(i_2)}+(X^{(i_1)}+X^{(i_2)})\right) = 
\frac{1}{\sqrt{2}}(X^{(i_1)}-X^{(i_2)}) \ , \\
(X_{\pm})^{(i)} = X^{(i)} \ ,\quad  i \in [\ell] \setminus \{i_1,i_2\}  \ .
\end{gather*}
By construction, the rows $(X_{\pm})^{(i_1)}$ and $(X_{\pm})^{(i_2)}$ are stochastically independent standard Gaussian vectors, so that    $\bX\eqLaw \bX_{\pm}$. Hence, we have on the one hand
\begin{equation*}
\E{\Phi_{\ell,k}^*(\bX_{\pm})\big((X_{\pm})^{(i_1)}_{j_1}\big)^4} 
= \E{\Phi_{\ell,k}^*(\bX) (X^{(i_1)}_{j_1})^4} = 3\alpha(\ell,k)\frac{(k+1)(k+3)}{k(k+2)} \ ,
\end{equation*}
in view of (i), and on the other hand, since
$\Phi_{\ell,k}^*(\bX_{\pm}) = \frac{(\sqrt{2})^2}{2} \Phi_{\ell,k}^*(\bX) = \Phi_{\ell,k}^*(\bX)$, we conclude
\begin{gather*}
\E{\Phi_{\ell,k}^*(\bX_{\pm})\big((X_{\pm})^{(i_1)}_{j_1}\big)^4} 
=
\E{\Phi_{\ell,k}^*(\bX)\bigg( \frac{X^{(i_1)}_{j_1} + X^{(i_2)}_{j_1} }{\sqrt{2}} \bigg)^4} \\
= \frac{1}{4}\left( 
2 \E{\Phi_{\ell,k}^*(\bX)( X^{(i_1)}_{j_1} )^4}
+ 6 \E{\Phi_{\ell,k}^*(\bX)( X^{(i_1)}_{j_1} )^2 ( X^{(i_2)}_{j_1} )^2} 
\right) \ ,
\end{gather*}
where we used that $\E{\Phi_{\ell,k}^*(\bX) X^{(i_1)}_{j_1} (X^{(i_2)}_{j_1})^3} 
= \E{\Phi_{\ell,k}^*(\bX) (X^{(i_1)}_{j_1})^3 X^{(i_2)}_{j_1}}=0$ in view of (ii). Therefore, 
\begin{eqnarray*}
\E{\Phi_{\ell,k}^*(\bX)( X^{(i_1)}_{j_1} )^2 ( X^{(i_2)}_{j_1} )^2}  
&=& \frac{1}{6} \left( 4\E{\Phi_{\ell,k}^*(\bX_{\pm})\big((X_{\pm})^{(i_1)}_{j_1}\big)^4} - 2\E{\Phi_{\ell,k}^*(\bX)( X^{(i_1)}_{j_1} )^4} \right) \\
&=& \alpha(\ell,k)\frac{(k+1)(k+3)}{k(k+2)} \ . 
\end{eqnarray*}

\noindent
Let us now treat the case $i_1 \neq i_2$ and $j_1 \neq j_2$. Let $\bX^*$ be the matrix obtained from $\bX$ by multiplying rows $X^{(i_1)}$ resp. $X^{(i_2)}$ by $1/\norm{X^{(i_1)}}_k$ resp. $1/\norm{X^{(i_2)}}_k$. Then, by independence, we infer  
\begin{eqnarray*}
&&\E{\Phi_{\ell,k}^*(\bX) \norm{X^{(i_1)}}_k^2 \norm{X^{(i_2)}}_k^2} 
= \E{\Phi_{\ell,k}^*(\bX^*)\norm{X^{(i_1)}}_k^3 \norm{X^{(i_2)}}_k^3} 
\\
&=& \E{\Phi_{\ell,k}^*(\bX^*)} \E{\norm{X^{(i_1)}}_k^3} \E{\norm{X^{(i_2)}}_k^3} = 
\frac{\E{\Phi_{\ell,k}^*(\bX)}}{\E{\norm{X^{(i_1)}}_k}^2}\E{\norm{X^{(i_1)}}_k^3}^2 \\
&=& \alpha(\ell,k)(k+1)^2 \ .
\end{eqnarray*}
Expanding the product of the norms, we can write 
\begin{eqnarray*}
&&\E{\Phi_{\ell,k}^*(\bX)\norm{X^{(i_1)}}_k^2 \norm{X^{(i_2)}}_k^2}\\ 
&=& k \ \E{\Phi_{\ell,k}^*(\bX) (X^{(i_1)}_{j_1})^2 (X^{(i_2)}_{j_1})^2} 
+ k(k-1) \E{\Phi_{\ell,k}^*(\bX) (X^{(i_1)}_{j_1})^2 (X^{(i_2)}_{j_2})^2} \\
&=&  \alpha(\ell,k) \frac{(k+1)(k+3)}{k+2} 
+ k(k-1) \E{\Phi_{\ell,k}^*(\bX) (X^{(i_1)}_{j_1})^2 (X^{(i_2)}_{j_2})^2}\ ,
\end{eqnarray*}
where we used the formula proved just before. Hence, we have that for every $j_1 \neq j_2$, 
\begin{eqnarray*}
\E{\Phi_{\ell,k}^*(\bX) (X^{(i_1)}_{j_1})^2 (X^{(i_2)}_{j_2})^2} 
&=& \frac{1}{k(k-1)}\left(\alpha(\ell,k)(k+1)^2 - \alpha(\ell,k) \frac{(k+1)(k+3)}{k+2}\right) \\
&=& \alpha(\ell,k) (k+1) \frac{(k+1)(k+2)-(k+3)}{k(k-1)(k+2)} \ ,
\end{eqnarray*}
which is the desired formula. The other cases do not contribute as one can mutliply a row or column by $-1$.

\noindent
We finally prove (v). First, note that if $I  \notin S$, then the expectation is zero. Indeed, we notice that if $I \notin S$, there is at least one row or column of $\bX$ that contains only one element corresponding to one of the four pairs of indices of $I$. Multiplying this row resp. column by $-1$ and using (A2) gives the desired conclusion. 
Let us now assume $I \in S$ and denote $E(I):=\E{\Phi_{\ell,k}^*(\bX)X^{(i_1)}_{j_1}X^{(i_2)}_{j_2}X^{(i_3)}_{j_3}X^{(i_4)}_{j_4}} 
\ind{I\in S}$. 
Since $I \in S$, we can write 
\begin{equation*}
E(I)= \E{\Phi_{\ell,k}^*(\bX) X^{(i_1)}_{j_1}X^{(i_1)}_{j_2}X^{(i_2)}_{j_1}X^{(i_2)}_{j_2}}\ , \quad i_1 \neq i_2, j_1 \neq j_2 .
\end{equation*} 
Let us again consider the matrix $\bX_{\pm}$ used in part (iv). From  formula (iv) in the case $i_1=i_2$, it follows that 
\begin{equation}\label{eta1}
\E{ \Phi_{\ell,k}^*(\bX_{\pm}) \big( (X_{\pm})^{(i_1)}_{j_1} \big)^2 
\big( (X_{\pm})^{(i_1)}_{j_2} \big)^2 }   =
\alpha(\ell,k)\frac{(k+1)(k+3)}{k(k+2)} \ .
\end{equation}
On the other hand, we can write  
\begin{gather}\label{eta2}
\E{ \Phi_{\ell,k}^*(\bX_{\pm}) \big( (X_{\pm})^{(i_1)}_{j_1} \big)^2 
\big( (X_{\pm})^{(i_1)}_{j_2} \big)^2 }   =\frac{1}{4} \E{\Phi_{\ell,k}^*(\bX)(X^{(i_1)}_{j_1}+X^{(i_2)}_{j_1})^2(X^{(i_1)}_{j_2}+X^{(i_2)}_{j_2})^2}  \notag \\
=\frac{1}{4} \left( 2\E{\Phi_{\ell,k}^*(\bX) (X^{(i_1)}_{j_1})^2 (X^{(i_1)}_{j_2})^2 }
+2 \E{\Phi_{\ell,k}^*(\bX) (X^{(i_1)}_{j_1})^2 (X^{(i_2)}_{j_2})^2 } 
+ 4 E(I)\right) \ .\notag
\end{gather}
Notice that the terms of the form $\E{\Phi_{\ell,k}^*(\bX) (X^{(i_1)}_{j_1})^2 
X^{(i_1)}_{j_2} X^{(i_2)}_{j_2}} $ are zero, by (iii). Hence, combining \eqref{eta1} and \eqref{eta2} together with the results obtained in (iv), we obtain
\begin{eqnarray*}
E(I) &=& \alpha(\ell,k)\left(\frac{(k+1)(k+3)}{k(k+2)}
- \frac{1}{2}\frac{(k+1)(k+3)}{k(k+2)} 
- \frac{1}{2}(k+1)\frac{(k+1)(k+2)-(k+3)}{k(k-1)(k+2)}\right) \\
&=& -\alpha(\ell,k)\frac{k+1}{k(k-1)(k+2)} \ ,
\end{eqnarray*}
which proves the formula. 
\end{proof}
The following proposition follows immediately from Lemma \ref{LemMonomials} and extends the results derived in Lemma 3.3 \cite{DNPR16} (corresponding to $(\ell,k)=(2,2)$ in our notation) to arbitrary integers $1\leq \ell \leq k$.
\begin{Prop}\label{ChaosPhi4}
The following properties hold:
\begin{enumerate}[label=\rm{(\roman*)}]
\item for every $(i,j) \in [\ell]\times [k]$, we have 
\begin{equation*}
\E{\Phi_{\ell,k}^*(\bX) H_4(X^{(i)}_j)} = -\frac{3}{k(k+2)}\alpha(\ell,k) \ ;
\end{equation*}

\item for every $(i_1,j_1) \neq (i_2,j_2)  \in [\ell] \times [k]$, we have 
\begin{equation*}
\E{\Phi_{\ell,k}^*(\bX) H_3(X^{(i_1)}_{j_1}) H_1( X^{(i_2)}_{j_2}) } = 0 \ , 
\end{equation*}

\item for every $(i_1,j_1) \neq (i_2,j_2) \neq (i_3,j_3) \in [\ell] \times [k]$, we have 
\begin{equation*}
\E{\Phi_{\ell,k}^*(\bX) H_2(X^{(i_1)}_{j_1}) H_1(X^{(i_2)}_{j_2})H_1(X^{(i_3)}_{j_3})} = 0 \ , 
\end{equation*}

\item for every  $(i_1,j_1)\neq(i_2,j_2)\in[\ell]\times[k]$, we have
\begin{eqnarray*}
\E{\Phi_{\ell,k}^*(\bX) H_2(X^{(i_1)}_{j_1})H_2(X^{(i_2)}_{j_2})}  &=&
-\frac{1}{k(k+2)}\alpha(\ell,k)\ind{i_1=i_2} \\
&& -  \frac{1}{k(k+2)}\alpha(\ell,k)  \ind{i_1 \neq i_2} \ind{j_1 = j_2}\ind{\ell\geq2} \\
&& +   \frac{k+3}{k(k-1)(k+2)}\alpha(\ell,k) \ind{i_1\neq i_2}\ind{j_1\neq j_2}\ind{\ell\geq2} \ ;  
\end{eqnarray*}

\item  for every collection $I = \{(i_1,j_1)\neq(i_2,j_2)\neq(i_3,j_3)\neq(i_4,j_4) \in [\ell]\times [k]\} $, we have 
\begin{equation*}
\E{\Phi_{\ell,k}^*(\bX)\prod_{a=1}^4H_1(X^{(i_a)}_{j_a})} 
=   -\frac{k+1}{k(k-1)(k+2)}\alpha(\ell,k) \ind{I \in S} \ind{\ell\geq2}\ ,
\end{equation*}
where 
$S=\{ \{(i_1,j_1),(i_1,j_2),(i_2,j_1),(i_2,j_2)\}: i_1 \neq i_2, j_1 \neq j_2\}$.
\end{enumerate}
\end{Prop}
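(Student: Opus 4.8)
The plan is to obtain all five identities directly from Lemma~\ref{LemMonomials} by writing each Hermite polynomial in the monomial basis and using linearity of the expectation, so that every resulting term is either one of the moment formulae of Lemma~\ref{LemMonomials} or a degree-zero / degree-two expectation already at our disposal. As a preliminary step I would record the two quantities used repeatedly: since $\Phi_{\ell,k}^*$ satisfies \textit{Assumption A} by Lemma~\ref{CauchyBinet}, Lemma~\ref{LemWC}(iii) applies to it, which together with $H_2(x)=x^2-1$ and $\E{\Phi_{\ell,k}^*(\bX)}=\alpha(\ell,k)$ from \eqref{mean} gives
\begin{equation*}
\E{\Phi_{\ell,k}^*(\bX)(X^{(i)}_j)^2} = \tfrac{1}{k}\alpha(\ell,k) + \alpha(\ell,k) = \tfrac{k+1}{k}\alpha(\ell,k).
\end{equation*}

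I would then treat the parts in turn. For (i), substitute $H_4(x)=x^4-6x^2+3$ and evaluate the three resulting terms via Lemma~\ref{LemMonomials}(i), the display above, and $\E{\Phi_{\ell,k}^*(\bX)}=\alpha(\ell,k)$ respectively; putting everything over the common denominator $k(k+2)$, the numerator collapses to $-1$, yielding the claimed $-\tfrac{3}{k(k+2)}\alpha(\ell,k)$. For (ii) and (iii), expanding $H_3(x)=x^3-3x$, $H_2(x)=x^2-1$, $H_1(x)=x$ writes the left-hand sides as linear combinations of monomial expectations that all vanish by Lemma~\ref{LemMonomials}(ii),(iii) and Lemma~\ref{LemWC}(ii). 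For (iv), write $H_2(X^{(i_1)}_{j_1})H_2(X^{(i_2)}_{j_2}) = (X^{(i_1)}_{j_1})^2(X^{(i_2)}_{j_2})^2 - (X^{(i_1)}_{j_1})^2 - (X^{(i_2)}_{j_2})^2 + 1$; the last three terms together contribute $-\tfrac{k+2}{k}\alpha(\ell,k)$ by the display, while the quartic term is read off from the three cases of Lemma~\ref{LemMonomials}(iv), after which each case simplifies to the asserted value. Finally, (v) is exactly Lemma~\ref{LemMonomials}(v), since $\prod_{a=1}^4 H_1(X^{(i_a)}_{j_a})=\prod_{a=1}^4 X^{(i_a)}_{j_a}$.

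The genuine content of the argument has already been carried out in Lemma~\ref{LemMonomials} (which rests on the Gram--Schmidt representation \eqref{DetProd}, the conditional independence of Lemma~\ref{indep}, and the Gaussian-norm moments of Lemma~\ref{LemNorms}), so the only thing requiring care here is the elementary algebra of combining ratios of polynomials in $k$ over a common denominator; the least transparent instance is the case $i_1\neq i_2$, $j_1\neq j_2$ of (iv), where one checks $(k+1)\big((k+1)(k+2)-(k+3)\big)-(k-1)(k+2)^2 = k+3$. Beyond this bookkeeping I do not anticipate any genuine obstacle.
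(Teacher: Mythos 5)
Your proposal is correct and follows essentially the same route as the paper's own proof, which likewise expands $H_4, H_3, H_2, H_1$ into monomials and combines Lemma \ref{LemMonomials} with Lemma \ref{LemWC} and \eqref{mean} (the paper spells out only case (i), exactly as in your display, and leaves the rest to the same bookkeeping). Your arithmetic checks, including the identity $(k+1)\big((k+1)(k+2)-(k+3)\big)-(k-1)(k+2)^2=k+3$ in case (iv), are accurate.
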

\begin{proof} 
These formulae follow when writing $H_4(x) =x^4-6x^2+3, H_3(x)=x^3-3x, H_2(x)=x^2-1$ and $H_1(x)=x$ and then combining the formulae for monomials proved in Lemma \ref{LemMonomials} with Lemma \ref{LemWC}. We include the proof of (i): 
\begin{gather*}
\E{\Phi_{\ell,k}^*(\bX) H_4(X^{(i)}_j)} = 
\E{\Phi_{\ell,k}^*(\bX) (X^{(i)}_j)^4} - 6\E{\Phi_{\ell,k}^*(\bX) (X^{(i)}_j)^2} + 3\E{\Phi_{\ell,k}^*(\bX) } \\
= 3\alpha(\ell,k) \frac{(k+1)(k+3)}{k(k+2)} - 6\left(\frac{1}{k}+1\right) \alpha(\ell,k) + 3 \alpha(\ell,k)  
= -\frac{3}{k(k+2)}\alpha(\ell,k) \ ,
\end{gather*}
where we used \eqref{mean}. The remaining formulae are proved in the same spirit.
\end{proof}

\section{On the two-point correlation function}\label{2pointSec} 

\subsection{Covariances}
Fix $\ell \in [3]$ and $i\in [\ell]$. The following lemma gives the joint distribution of the vector $(\nabla T_n^{(i)}(z),\nabla T_n^{(i)}(0)) \in \R^6$ conditioned on $\{T_n^{(i)}(z)=T_n^{(i)}(0)=u_i\}$ for $u_i \in \R$ and $0\neq z \in \T$.
\begin{Lem}\label{JointCovMatrix}
For every $z \in \T$ such that $r_n(z) \neq \pm1$, the distribution of the vector $(\nabla T_n^{(i)}(z),\nabla T_n^{(i)}(0)) \in \R^6$ conditioned on $\{T_n^{(i)}(z)=T_n^{(i)}(0)=u_i\}$ is $\mathcal{N}_6(\mu^{(i)}_n,\Omega_n)$, where 
\begin{equation}\label{mu}
\mu^{(i)}_n = \mu^{(i)}_n(z)= \frac{u_i}{1+r_n(z)}
\begin{pmatrix}
\nabla r_n(z)^T \\ -\nabla r_n(z)^T
\end{pmatrix}
\end{equation}
and 
\begin{equation}\label{Omega}
\Omega_n = \Omega_n(z) =
\begin{pmatrix}
\Omega_{1,n}(z) & \Omega_{2,n}(z) \\ 
\Omega_{2,n}(z)^T & \Omega_{1,n}(z)  
\end{pmatrix} \ ,
\end{equation}
where 
\begin{gather*}
\Omega_{1,n} = \Omega_{1,n}(z) = \frac{E_n}{3} \Id_3 - \frac{\nabla r_n(z) \nabla r_n(z)^T}{1-r_n(z)^2} \ ; \\
\Omega_{2,n} = \Omega_{2,n}(z) = -\mathrm{Hess }(r_n(z)) + \frac{r_n(z)}{1-r_n(z)^2}\nabla r_n(z) \nabla r_n(z)^T  \ ,
\end{gather*}
with $\mathrm{Hess }(r_n(z))$ denoting the Hessian matrix of $r_n(z)$. 
\end{Lem}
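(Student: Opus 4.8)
The plan is to derive both statements from the elementary Gaussian regression (conditioning) formula applied to a single $8$-dimensional centred Gaussian vector. Fix $z\in\T$ with $r_n(z)\neq\pm1$ and set
\[
A:=\big(T_n^{(i)}(z),T_n^{(i)}(0)\big)\in\R^2,\qquad B:=\big(\nabla T_n^{(i)}(z),\nabla T_n^{(i)}(0)\big)\in\R^6 .
\]
Conditioning on the event $\{T_n^{(i)}(z)=T_n^{(i)}(0)=u_i\}$ is exactly conditioning $B$ on $A=(u_i,u_i)^T$, and since $(A,B)$ is jointly Gaussian the conditional law of $B$ is $\mathcal N_6\big(\Sigma_{BA}\Sigma_{AA}^{-1}(u_i,u_i)^T,\ \Sigma_{BB}-\Sigma_{BA}\Sigma_{AA}^{-1}\Sigma_{AB}\big)$, where $\Sigma_{\bullet\bullet}$ denote the relevant sub-blocks of the covariance matrix of $(A,B)$. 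The hypothesis $r_n(z)\neq\pm1$ enters precisely here, to ensure that $\Sigma_{AA}$ is invertible.

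First I would assemble the block covariance matrix of $(A,B)$ by differentiating the (real, even) stationary covariance $r_n$. From $\E{T_n^{(i)}(x)T_n^{(i)}(y)}=r_n(x-y)$ and $r_n(0)=1$ one gets $\Sigma_{AA}=\left(\begin{smallmatrix}1&r_n(z)\\ r_n(z)&1\end{smallmatrix}\right)$. Differentiating once, $\E{T_n^{(i)}(y)\,\partial_jT_n^{(i)}(x)}=-(\partial_jr_n)(x-y)$; evaluating at the pairs $(x,y)\in\{(z,z),(z,0),(0,z),(0,0)\}$ and using that $\nabla r_n$ is odd with $\nabla r_n(0)=0$ yields
\[
\Sigma_{AB}=\begin{pmatrix}0&-\nabla r_n(z)^T\\ \nabla r_n(z)^T&0\end{pmatrix},
\]
each block being $1\times3$. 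Differentiating twice, $\E{\partial_jT_n^{(i)}(x)\,\partial_kT_n^{(i)}(y)}=-(\partial_j\partial_kr_n)(x-y)$, which gives the diagonal blocks $-\mathrm{Hess}(r_n)(0)=\tfrac{E_n}{3}\Id_3$ (this is exactly \eqref{VarT}, equivalently \eqref{sumkj}) and the off-diagonal block $-\mathrm{Hess}(r_n(z))$, so $\Sigma_{BB}=\left(\begin{smallmatrix}\tfrac{E_n}{3}\Id_3&-\mathrm{Hess}(r_n(z))\\ -\mathrm{Hess}(r_n(z))^T&\tfrac{E_n}{3}\Id_3\end{smallmatrix}\right)$.

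Then I would carry out the two linear-algebra computations. Since $\Sigma_{AA}^{-1}=\tfrac{1}{1-r_n(z)^2}\left(\begin{smallmatrix}1&-r_n(z)\\ -r_n(z)&1\end{smallmatrix}\right)$, one has $\Sigma_{AA}^{-1}(u_i,u_i)^T=\tfrac{u_i}{1+r_n(z)}(1,1)^T$, and multiplying by $\Sigma_{BA}=\Sigma_{AB}^T$ produces the conditional mean $\mu_n^{(i)}$ of \eqref{mu}. For the conditional covariance, every entry of the $2\times2$ block product $\Sigma_{BA}\Sigma_{AA}^{-1}\Sigma_{AB}$ is a scalar multiple of the rank-one matrix $\nabla r_n(z)\nabla r_n(z)^T$; collecting the coefficients and subtracting from $\Sigma_{BB}$ leaves $\Omega_{1,n}(z)=\tfrac{E_n}{3}\Id_3-\tfrac{\nabla r_n(z)\nabla r_n(z)^T}{1-r_n(z)^2}$ on the diagonal and the corresponding $\Omega_{2,n}(z)$ on the off-diagonal, i.e. the matrix $\Omega_n$ displayed in \eqref{Omega}.

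There is no deep obstacle here: the single point requiring care is sign-bookkeeping — being consistent with the sign introduced by the chain rule $\partial_{y}r_n(x-y)=-(\partial r_n)(x-y)$ and with the parity of $\nabla r_n$ across all entries of the $8\times8$ covariance matrix, and then tracking the four $3\times3$ blocks when expanding $\Sigma_{BA}\Sigma_{AA}^{-1}\Sigma_{AB}$. The block structure of $\Sigma_{AB}$ (only two nonzero blocks, both proportional to $\nabla r_n(z)$) keeps this manageable and is exactly what forces the final corrections to be of rank-one Gramian type $\nabla r_n(z)\nabla r_n(z)^T$.
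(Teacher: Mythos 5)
Your proposal is correct and is essentially the paper's own argument: the paper likewise forms the $8$-dimensional Gaussian vector $(\nabla T_n^{(i)}(z),\nabla T_n^{(i)}(0),T_n^{(i)}(z),T_n^{(i)}(0))$, computes its covariance in blocks $A_n,B_n,C_n$ by differentiating $r_n$ (these are your $\Sigma_{BB},\Sigma_{BA},\Sigma_{AA}$), and then applies the same Gaussian regression formulae $\mu_n^{(i)}=B_nC_n^{-1}(u_i,u_i)^T$ and $\Omega_n=A_n-B_nC_n^{-1}B_n^T$, the hypothesis $r_n(z)\neq\pm1$ serving precisely to invert $C_n$. The only blemish is your intermediate identity $\E{T_n^{(i)}(y)\,\partial_j T_n^{(i)}(x)}=-(\partial_j r_n)(x-y)$, whose sign is off as written (differentiating $r_n(x-y)$ in $x_j$ gives $+(\partial_j r_n)(x-y)$); the blocks of $\Sigma_{AB}$ you subsequently display are nevertheless the correct ones and agree with the paper's $B_n$.
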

\begin{proof}
We write $\partial_{a}:=\partial/\partial_{z_a}$  and 
$\partial_{ab}:=\partial^2/\partial_{z_a}\partial_{z_b}$ for $a,b=0,1,2,3$ with the convention $\partial_0=\Id$. 
Computing the covariance $\E{\partial_a T_n^{(i)}(z)\cdot\partial_bT_n^{(i)}(0)}$ and relating it to the covariance function $r_n$ given in \eqref{covARW}, we obtain that 
the covariance matrix of the vector $(\nabla T_n^{(i)}(z), \nabla T_n^{(i)}(0), T_n^{(i)}(z), T_n^{(i)}(0) ) \in \R^8$ is given by
\begin{eqnarray*}
\begin{pmatrix} 
A_n & B_n \\ B_n^T & C_n 
\end{pmatrix} \ ,
\end{eqnarray*}
where 
\begin{gather*}
A_n=A_n(z) = \begin{pmatrix}
E_n/3 \Id_3 & -\mathrm{Hess}(r_n(z)) \\
-\mathrm{Hess}(r_n(z)) & E_n/3 \Id_3 
\end{pmatrix} \ , \quad
B_n=B_n(z) = \begin{pmatrix}
\mathbf{0}^T & \nabla r_n(z)^T  \\
-\nabla r_n(z)^T &\mathbf{0}^T  
\end{pmatrix} \ , \\
C_n=C_n(z) = \begin{pmatrix}
1 & r_n(z) \\
r_n(z) & 1
\end{pmatrix} \ , 
\end{gather*}
and $\mathbf{0}:=(0,0,0)$. Thus, the covariance matrix of $(\nabla T_n^{(i)}(z), \nabla T_n^{(i)}(0))$ conditioned on $\{T_n^{(i)}(z)=T_n^{(i)}(0)=u_i\}$ is given by $
\Omega_n = \Omega_n(z) = A_n- B_nC^{-1}_nB_n^T $,
which yields the matrix in \eqref{Omega} after a standard computation. Its mean is given by 
\begin{eqnarray*}
\mu^{(i)}_n = \mu^{(i)}_n(z) = B_nC_n^{-1} 
\begin{pmatrix} 
u_i \\  u_i
\end{pmatrix} = \frac{u_i}{1+r_n(z)}
\begin{pmatrix} 
\nabla r_n(z)^T \\  -\nabla r_n(z)^T 
\end{pmatrix} \ .
\end{eqnarray*}
\end{proof}
\subsection{Two-point correlation function}
For $\ell \in [3]$, we fix  $u^{(\ell)}:=(u_1,\ldots,u_{\ell})\in\R^{\ell}$. The two-point correlation function associated with the random field $\bT_n^{(\ell)}$ is given by
\begin{eqnarray}\label{K}
K^{(\ell)}(x,y;u^{(\ell)}) := 
\E{\Phi_{\ell,3}^*(\Jac_{\bT_n^{(\ell)}}(x)) 
\Phi_{\ell,3}^*(\Jac_{\bT_n^{(\ell)}}(y)) | \bT_n^{(\ell)}(x) = \bT_n^{(\ell)}(y) = u^{(\ell)}} \notag\\
\times p_{(\bT_n^{(\ell)}(x),\bT_n^{(\ell)}(y))}(u^{(\ell)},u^{(\ell)}) ,
\end{eqnarray}
where $p_{(\bT_n^{(\ell)}(x),\bT_n^{(\ell)}(y))}(\cdot, \cdot) $ denotes the density function of the vector $(\bT_n^{(\ell)}(x),\bT_n^{(\ell)}(y)) \in \R^{2\ell}$ and 
$\Phi_{\ell,3}^*(A)=\sqrt{\det(AA^T)}$ for $A\in \mathrm{Mat}_{\ell,3}(\R)$. The function $K^{(\ell)}$ is defined whenever the distribution of $(\bT_n^{(\ell)}(x),\bT_n^{(\ell)}(y))$ is non-degenerate, that is, whenever
$r_n(x-y)\neq\pm 1$.

\medskip
The following lemma gives an upper bound for $K^{(\ell)}(z,0;u^{(\ell)})$ for $ z \in \T$ in terms of the covariance function $r_n$ and the norm of its gradient.
\begin{Lem}
For every $z \in\T$ such that $r_n(z) \neq \pm 1$, we have 
\begin{eqnarray}\label{Kupperbound}
K^{(\ell)}(z,0;u^{(\ell)} )
&\leq&  \big(1-r_n(z)^2\big)^{-\ell/2} \cdot (3)_{\ell}\bigg(\frac{E_n}{3}\bigg)^{\ell-1}
\bigg( \frac{E_n}{3} 
-\frac{\ell}{3} \frac{\norm{\nabla r_n(z)}^2}{1-r_n(z)^2}+ \frac{ \norm{u^{(\ell)}}^2 }{3}\frac{\norm{\nabla r_n(z)}^2}{(1+r_n(z))^2}\bigg) \notag\\
&=:&q^{(\ell)}(z,0;\norm{u^{(\ell)}}) .
\end{eqnarray}
\end{Lem}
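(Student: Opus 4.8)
The plan is to start from the Kac--Rice representation of the two-point correlation function in \eqref{K} and to bound the two factors appearing in it separately. The density factor $p_{(\bT_n^{(\ell)}(z),\bT_n^{(\ell)}(0))}(u^{(\ell)},u^{(\ell)})$ can be computed explicitly: since $T_n^{(1)},\ldots,T_n^{(\ell)}$ are i.i.d., the $2\ell$-dimensional vector $(\bT_n^{(\ell)}(z),\bT_n^{(\ell)}(0))$ is Gaussian with a block structure that is a direct sum of $\ell$ copies of the covariance matrix $C_n=C_n(z)$ from Lemma \ref{JointCovMatrix}, whose determinant is $1-r_n(z)^2$. Hence the joint density at $(u^{(\ell)},u^{(\ell)})$ equals $(2\pi)^{-\ell}(1-r_n(z)^2)^{-\ell/2}\exp\big(-\tfrac{\norm{u^{(\ell)}}^2}{1+r_n(z)}\big)$, and bounding the exponential by $1$ already produces the factor $(1-r_n(z)^2)^{-\ell/2}$ in \eqref{Kupperbound}.

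For the conditional expectation factor, I would first use the geometric meaning of $\Phi_{\ell,3}^*$: by Lemma \ref{det}, for each of the two points, $\Phi_{\ell,3}^*(\Jac_{\bT_n^{(\ell)}}(\cdot))$ is a product of distances $d(3-s)$, $s=0,\ldots,\ell-1$, so in particular it is bounded above by $\prod_{s=0}^{\ell-1}\norm{\nabla T_n^{(s+1)}(\cdot)}$, the product of the gradient norms of the component fields. Then Cauchy--Schwarz in the conditional expectation and the independence of the $\ell$ components (note that conditioning on $\{\bT_n^{(\ell)}(z)=\bT_n^{(\ell)}(0)=u^{(\ell)}\}$ keeps the $\ell$ components independent) reduce the problem to estimating, for a single field $T_n^{(i)}$,
\begin{equation*}
\CE{\norm{\nabla T_n^{(i)}(z)}^2 + \norm{\nabla T_n^{(i)}(0)}^2}{T_n^{(i)}(z)=T_n^{(i)}(0)=u_i},
\end{equation*}
or rather the product form thereof. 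Here I would invoke Lemma \ref{JointCovMatrix}: conditionally, $(\nabla T_n^{(i)}(z),\nabla T_n^{(i)}(0))$ is Gaussian with mean $\mu_n^{(i)}$ and covariance $\Omega_n$, so the conditional second moment of $\norm{\nabla T_n^{(i)}(z)}^2$ is $\operatorname{tr}\Omega_{1,n}(z)+\norm{\mu_n^{(i)}(z)\restriction_{\text{first block}}}^2 = E_n - \tfrac{\norm{\nabla r_n(z)}^2}{1-r_n(z)^2} + \tfrac{u_i^2\norm{\nabla r_n(z)}^2}{(1+r_n(z))^2}$, and symmetrically for the point $0$. Summing the two pieces, using $\operatorname{tr}(\tfrac{E_n}{3}\Id_3)=E_n$, and assembling the product over $s=0,\ldots,\ell-1$ (with at most one ``non-constant'' factor at a time, which is the source of the combinatorial weight $(3)_\ell (E_n/3)^{\ell-1}$ and the single sum $\ell\cdot(\cdots)$) yields exactly the bracketed expression in \eqref{Kupperbound}, after replacing $\sum_i u_i^2$ by $\norm{u^{(\ell)}}^2$.

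The main obstacle is the bookkeeping in passing from $\Phi_{\ell,3}^*=\prod_{s=0}^{\ell-1} d(3-s)$ to the claimed bound: one must organize the product of the $\ell$ conditional gradient-norm factors so that the ``correction'' terms $-\tfrac{\ell}{3}\tfrac{\norm{\nabla r_n}^2}{1-r_n^2}$ and $+\tfrac{\norm{u^{(\ell)}}^2}{3}\tfrac{\norm{\nabla r_n}^2}{(1+r_n)^2}$ appear linearly rather than with cross terms. The cleanest route is to bound $\prod_{s}d(3-s)\le \ell^{-1}\sum_{s}d(3-s)\cdot\prod_{s'\ne s}(\text{something})$ is \emph{not} quite right; instead I would use that each $d(3-s)\le\norm{\nabla T_n^{(s+1)}}$ and that $\norm{\nabla T_n^{(s+1)}}$ has conditional second moment of the stated form, then apply the elementary inequality $\prod_{i=1}^\ell a_i \le \big(\tfrac1\ell\sum_i a_i^{\ell}\big)$ only in the regime where all but one factor are replaced by their unconditional value $(E_n/3)$; keeping careful track of which normalisation ($E_n$ vs.\ $E_n/3$) goes with which factor is where errors are most likely to creep in. A secondary point requiring care is the handling of the conditional \emph{mean} term: since $\mu_n^{(i)}$ is nonzero when $u_i\ne 0$, the conditional second moment of a gradient norm is $\operatorname{tr}\Omega_{1,n}+\norm{\mu}^2$, not just the trace, and one must confirm that the $u$-dependent contribution combines across the two points $z$ and $0$ to give precisely $\tfrac{\norm{u^{(\ell)}}^2}{3}\tfrac{\norm{\nabla r_n(z)}^2}{(1+r_n(z))^2}$ as written.
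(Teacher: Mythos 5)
Your handling of the density factor and the reduction, via Cauchy--Schwarz and symmetry together with the componentwise independence under the conditioning, to the single conditional second moment $\E{\Phi_{\ell,3}^*(X(z,u^{(\ell)}))^2}$ coincides with the paper, and your formula $E_n-\rho+u_i^2\mu$ (with $\rho:=\norm{\nabla r_n(z)}^2/(1-r_n(z)^2)$, $\mu:=\norm{\nabla r_n(z)}^2/(1+r_n(z))^2$) for the conditional second moment of a single gradient norm, including the mean contribution from Lemma \ref{JointCovMatrix}, is correct. The gap is in the step you yourself flag: for $\ell\geq 2$, the Hadamard-type bound $\Phi_{\ell,3}^*\leq\prod_i\norm{\nabla T_n^{(i)}}$ followed by conditional independence only gives $\E{\Phi_{\ell,3}^*(X(z,u^{(\ell)}))^2}\leq\prod_{i=1}^{\ell}\bigl(E_n-\rho+u_i^2\mu\bigr)$, whose leading term is $E_n^{\ell}$, whereas the target in \eqref{Kupperbound} has leading term $(3)_{\ell}(E_n/3)^{\ell}$, i.e.\ $\tfrac{2}{3}E_n^2$ for $\ell=2$ and $\tfrac{2}{9}E_n^3$ for $\ell=3$. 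At any point where $\norm{\nabla r_n(z)}$ is negligible compared with $\sqrt{E_n}$ (such points exist, e.g.\ near critical points of $r_n$ other than the origin with $r_n\neq\pm1$) your bound strictly exceeds $q^{(\ell)}$, so no bookkeeping can close the argument; proving a larger upper bound does not establish the lemma. The patches you propose are not valid either: replacing all but one factor by an ``unconditional value $(E_n/3)$'' confuses the variance of a single partial derivative with $\E{\norm{\nabla T_n^{(i)}}^2}=E_n$, and the inequality $\prod_i a_i\leq\tfrac1\ell\sum_i a_i^{\ell}$ has no role here. The precise constant also matters downstream: it is exactly the factor $(3)_{\ell}(1-\ell/3)$ in Lemma \ref{Taylorq}, vanishing for $\ell=3$, that removes the non-integrable $\norm{z}^{-3}$ singularity; the Hadamard bound would leave that term and break the singular-cube estimates.

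What the paper does instead is to compute $\E{\Phi_{\ell,3}^*(X(z,u^{(\ell)}))^2}$ \emph{exactly}, so that the only inequalities in the proof are the density bound and the Cauchy--Schwarz step you already have. By Cauchy--Binet \eqref{CB}, $\Phi_{\ell,3}^*(X)^2$ is the sum of squared $\ell\times\ell$ minors; expanding each squared determinant and taking expectations reduces everything to the pairwise conditional moments $\mathbf{E}^{(i)}_{\ell,ab}=\tfrac{E_n}{3}\ind{a=b}-\rho_{ab}+u_i^2\mu_{ab}$ supplied by Lemma \ref{JointCovMatrix}. Because $\nabla r_n(z)\nabla r_n(z)^T$ has rank one, the identities $\rho_{ab}^2=\rho_{aa}\rho_{bb}$, $\mu_{ab}^2=\mu_{aa}\mu_{bb}$, $\rho_{aa}\mu_{bb}=\rho_{ab}\mu_{ab}$ in \eqref{Rel2} make all quadratic correction terms cancel in the determinant expansion, leaving exactly $(3)_{\ell}(E_n/3)^{\ell-1}\bigl(\tfrac{E_n}{3}-\tfrac{\ell}{3}\rho+\tfrac{\norm{u^{(\ell)}}^2}{3}\mu\bigr)$, which is the bracketed expression in \eqref{Kupperbound}. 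Your correct observation about the conditional mean must therefore be fed into this determinant computation, not into a product of gradient norms.
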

\begin{proof}
By independence, the density factorizes as follows
\begin{equation*}
p_{(\bT_n^{(\ell)}(z),\bT_n^{(\ell)}(0))}(u^{(\ell)},u^{(\ell)})
= \prod_{i=1}^{\ell} p_{(T_n^{(i)}(z),T_n^{(i)}(0))}(u_i,u_i)   \ ,
\end{equation*}
and moreover satisfies
\begin{equation}\label{density}
p_{(\bT_n^{(\ell)}(z),\bT_n^{(\ell)}(0))}(u^{(\ell)},u^{(\ell)})
\leq 
\prod_{i=1}^{\ell} p_{(T_n^{(i)}(z),T_n^{(i)}(0))}(0,0) 
\leq  \big(1-r_n(z)^2\big)^{-\ell/2} \ .
\end{equation}
We now deal with the conditional expectation in \eqref{K}. 
First, by the Cauchy-Schwarz inequality, we have
\begin{gather*}
\E{\Phi_{\ell,3}^*(\Jac_{\bT_n^{(\ell)}}(z)) \Phi_{\ell,3}^*(\Jac_{\bT_n^{(\ell)}}(0)) | \bT_n^{(\ell)}(z) = \bT_n^{(\ell)}(0) = u^{(\ell)}}  \\
\leq \E{\Phi_{\ell,3}^*(\Jac_{\bT_n^{(\ell)}}(z))^2| \bT_n^{(\ell)}(z) = \bT_n^{(\ell)}(0) = u^{(\ell)}}^{1/2} 
\cdot \E{\Phi_{\ell,3}^*(\Jac_{\bT_n^{(\ell)}}(0))^2 | \bT_n^{(\ell)}(z) = \bT_n^{(\ell)}(0) = u^{(\ell)}}^{1/2}. 
\end{gather*}
By symmetry, we conclude that the two expectations above coincide, yielding 
\begin{gather}\label{condexp}
\E{\Phi_{\ell,3}^*(\Jac_{\bT_n^{(\ell)}}(z)) \Phi_{\ell,3}^*(\Jac_{\bT_n^{(\ell)}}(0)) | \bT_n^{(\ell)}(z) = \bT_n^{(\ell)}(0) = u^{(\ell)}} \notag \\
\leq \E{\Phi_{\ell,3}^*(\Jac_{\bT_n^{(\ell)}}(z))^2| \bT_n^{(\ell)}(z) = \bT_n^{(\ell)}(0) = u^{(\ell)}} =: \E{\Phi_{\ell,3}^*(X(z,u^{(\ell)}))^2},
\end{gather}
where $X(z,u^{(\ell)})=\big\{X^{(i)}_j(z,u^{(\ell)}) :(i,j)\in [\ell]\times[3]\big\} \in \mathrm{Mat}_{\ell,3}(\R)$ is a random matrix having the same distribution as 
$\Jac_{\bT_n^{(\ell)}}(z)$ conditionally on $\{\bT_n^{(\ell)}(z) = \bT_n^{(\ell)}(0) = u^{(\ell)}\}$.
Now, the Cauchy Binet formula \eqref{CB} yields
\begin{equation*}
\Phi_{\ell,3}^*(X(z,u^{(\ell)}))^2    = 
\sum_{j_1<\ldots<j_{\ell}\in[3]}
\det\big(X(z,u^{(\ell)})_{j_1,\ldots,j_{\ell}} \big)^2  \ ,
\end{equation*}  
where, as previously, $X(z,u^{(\ell)})_{j_1,\ldots,j_{\ell}}$ is the matrix obtained from $X(z,u^{(\ell)})$ by only keeping the columns labeled $j_1,\ldots,j_{\ell}$.
By definition of the determinant, we have
\begin{equation*}
\det\big(X(z,u^{(\ell)})_{j_1,\ldots,j_{\ell}}\big) 
= \sum_{\sigma\in \mathfrak{S}_{\ell}}
\eps(\sigma) \prod_{i=1}^{\ell}
X^{(i)}_{j_{\sigma(i)}}(z,u^{(\ell)}) \ ,
\end{equation*}
where $\eps(\sigma)$ denotes the signature of the permutation $\sigma \in \mathfrak{S}_{\ell}$. 
Then, developing the square, taking expectations and using independence,
\begin{eqnarray}\label{Flk}
&&\E{\Phi_{\ell,3}^*(X(z,u^{(\ell)}))^2}
= \sum_{j_1<\ldots<j_{\ell}\in[3]} 
\E{\det\big(X(z,u^{(\ell)})_{j_1,\ldots,j_{\ell}} \big)^2}  \notag \\
&=& \sum_{j_1<\ldots<j_{\ell}\in[3]} 
\sum_{\sigma, \sigma' \in \mathfrak{S}_{\ell}}
\eps(\sigma)\eps(\sigma')
\E{ \prod_{i=1}^{\ell}
X^{(i)}_{j_{\sigma(i)}}(z,u^{(\ell)})
\cdot
\prod_{l=1}^{\ell}
X^{(l)}_{j_{\sigma'(l)}}(z,u^{(\ell)})
}\notag\\
&=& \sum_{j_1<\ldots<j_{\ell}\in[3]} 
\sum_{\sigma, \sigma' \in \mathfrak{S}_{\ell}}
\eps(\sigma)\eps(\sigma') \prod_{i=1}^{\ell}
\E{ X^{(i)}_{j_{\sigma(i)}}(z,u^{(\ell)})  \cdot
X^{(i)}_{j_{\sigma'(i)}}(z,u^{(\ell)})}  .
\end{eqnarray}
For notational ease, we write 
\begin{equation*}
\mathbf{E}^{(i)}_{\ell,ab}=\mathbf{E}^{(i)}_{\ell,ab}(z,u^{(\ell)}):= \E{X^{(i)}_{a}(z,u^{(\ell)})X^{(i)}_{b}(z,u^{(\ell)})} \ , \quad
i \in [\ell], a,b \in [3] \ .
\end{equation*}
Exploiting once more the independence of the fields $T_n^{(1)},\ldots,T_n^{(\ell)}$, we have that
\begin{gather*}
\mathbf{E}^{(i)}_{\ell,ab} = 
\E{\partial_aT_n^{(i)}(z) \partial_bT_n^{(i)}(z) | \bT_n^{(\ell)}(z)=\bT_n^{(\ell)}(0)=u^{(\ell)}}  
= \E{\partial_aT_n^{(i)}(z) \partial_bT_n^{(i)}(z) | T_n^{(i)}(z)=T_n^{(i)}(0)=u_i}  .
\end{gather*}
Writing formula \eqref{Flk} for $\ell=1,2,3$ gives the respective relations
\begin{equation}\label{F1}
\E{\Phi_{1,3}^*(X(z,u^{(1)}))^2}
= \sum_{a \in[3]} \mathbf{E}^{(1)}_{1,aa} \ , 
\end{equation}
\begin{equation}\label{F2}
\E{\Phi_{2,3}^*(X(z,u^{(2)}))^2}
= \sum_{a\neq b\in[3]} \left\{
\mathbf{E}^{(1)}_{2,aa} \mathbf{E}^{(2)}_{2,bb} - 
\mathbf{E}^{(1)}_{2,ab}\mathbf{E}^{(2)}_{2,ab} \right\} 
\end{equation}
and 
\begin{eqnarray}\label{F3}
&&\E{\Phi_{3,3}^*(X(z,u^{(3)}))^2} = \sum_{a\neq b\neq c\neq a\in[3]} \bigg\{
\mathbf{E}^{(1)}_{3,aa}\mathbf{E}^{(2)}_{3,bb}\mathbf{E}^{(3)}_{3,cc}  \\
&& \qquad -\left( \mathbf{E}^{(1)}_{3,cc}\mathbf{E}^{(2)}_{3,ab}\mathbf{E}^{(3)}_{3,ab} 
+ \mathbf{E}^{(1)}_{3,ab}\mathbf{E}^{(2)}_{3,cc}\mathbf{E}^{(3)}_{3,ab}
+ \mathbf{E}^{(1)}_{3,ab}\mathbf{E}^{(2)}_{3,ab}\mathbf{E}^{(3)}_{3,cc}\right)   
+ 2\mathbf{E}^{(1)}_{3,ab}\mathbf{E}^{(2)}_{3,bc}\mathbf{E}^{(3)}_{3,ac} \bigg\} \ . \notag
\end{eqnarray}
We will now provide an explicit expression for the formulae on the right hand side of \eqref{F1}, \eqref{F2} and \eqref{F3}. 
For $z=(z_1,z_2,z_3) \in \T$ and $(a,b) \in [3]\times [3]$, we use the shorthand notations 
\begin{equation*}
\partial_a r_{n}(z) := \frac{\partial}{\partial z_a} r_n(z) \ ; \quad \partial_{ab}r_n(z):=\frac{\partial^2}{\partial z_a \partial z_b}r_n(z) 
\end{equation*}
and
\begin{equation*}
\rho_{ab}=\rho_{n,ab}(z) := 
\frac{\partial_{a} r_n(z)\cdot\partial_{b} r_n(z)}{1-r_n(z)^2}   \ ; \quad
\mu_{ab}=\mu_{n,ab}(z) := \frac{\partial_{a} r_n(z)\cdot\partial_{b} r_n(z)}{(1+r_n(z))^2} \ .
\end{equation*} 
Note that
\begin{equation}\label{Rel2}
\rho_{ab}^2=\rho_{aa}\rho_{bb} \ ,  \quad 
\mu_{ab}^2=\mu_{aa}\mu_{bb} \ , \quad
\rho_{aa}\mu_{bb}=\rho_{ab}\mu_{ab} \ .
\end{equation}
From Lemma \ref{JointCovMatrix}, it follows that for every $i \in [\ell]$ and $(a,b) \in [3]\times[3]$, 
\begin{equation}\label{aa}
\mathbf{E}^{(i)}_{\ell,aa}= 
\V{X^{(i)}_a(z,u^{(\ell)})}+\E{X^{(i)}_a(z,u^{(\ell)})}^2 
= \frac{E_n}{3} - \rho_{aa}+u_i^2\mu_{aa}
\end{equation}
and for $a\neq b$,
\begin{eqnarray}\label{ab}
\mathbf{E}^{(i)}_{\ell,ab} = 
\Cov{X^{(i)}_a(z,u^{(\ell)})}{X^{(i)}_b(z,u^{(\ell)})} + 
\E{X^{(i)}_a(z,u^{(\ell)})}\E{X^{(i)}_b(z,u^{(\ell)})} =-\rho_{ab} + u_i^2 \mu_{ab} \ .
\end{eqnarray}
Then, it is immediate that 
\begin{eqnarray*}
\E{\Phi_{1,3}^*(X(z,u^{(1)}))^2}
= \sum_{a \in[3]} \mathbf{E}^{(1)}_{1,aa}
= \sum_{a \in[3]}\bigg\{\frac{E_n}{3} - \rho_{aa}+u_1^2\mu_{aa} \bigg\} \ . 
\end{eqnarray*}
Similarly, using \eqref{aa} and \eqref{ab} in \eqref{F2} and \eqref{F3} and exploiting the identities in \eqref{Rel2} yields after simplifications
\begin{eqnarray*} 
&& \E{\Phi_{2,3}^*(X(z,u^{(2)}))^2} \\
&=& \sum_{a\neq b\in[3]} \bigg\{
\bigg(\frac{E_n}{3} - \rho_{aa}+u_1^2\mu_{aa}\bigg)
\bigg(\frac{E_n}{3} - \rho_{bb}+u_2^2\mu_{bb}\bigg)
-\big(-\rho_{ab} + u_1^2 \mu_{ab}\big)\big(-\rho_{ab} + u_2^2 \mu_{ab}\big) \bigg\}   \\
&=&\sum_{a\neq b \in [3]} \bigg\{
\bigg(\frac{E_n}{3}\bigg)^2 
- \frac{E_n}{3}(\rho_{aa}+\rho_{bb})
+\frac{E_n}{3}u_1^2\mu_{aa} 
+ \frac{E_n}{3}u_2^2\mu_{bb}
\bigg\} \ .
\end{eqnarray*}
and 
\begin{eqnarray*}
&& \E{\Phi_{3,3}^*(X(z,u^{(3)}))^2}  \notag\\
&=&
\sum_{a\neq b \neq c \neq a\in [3]}\bigg\{
\bigg(\frac{E_n}{3}\bigg)^3
-\bigg(\frac{E_n}{3}\bigg)^2
(\rho_{aa}+\rho_{bb}+\rho_{cc}) +\bigg(\frac{E_n}{3}\bigg)^2u_1^2\mu_{aa}  
+\bigg(\frac{E_n}{3}\bigg)^2u_2^2\mu_{bb} 
+\bigg(\frac{E_n}{3}\bigg)^2u_3^2\mu_{cc} \bigg\}    
\end{eqnarray*}
respectively.
Then, we note that for every $\ell\in [3]$, writing 
$\Delta_{\ell}:= \{i^{(\ell)}=(i_1,\ldots,i_{\ell}) \in [3]^{\ell}: i_a\neq i_b, \forall a\neq b \in [\ell]\}$, the following identities hold
\begin{eqnarray*}
 \sum_{i^{(\ell)} \in \Delta_{\ell} }1&=&(3)_{\ell} \ ; \\
 \sum_{i^{(\ell)} \in \Delta_{\ell} } (\rho_{i_1i_1}+\ldots+\rho_{i_{\ell}i_{\ell}}) 
&=& \ell \sum_{i^{(\ell)} \in \Delta_{\ell} } \rho_{i_1i_1} 
= \ell \frac{(3)_{\ell}}{3} \frac{\norm{\nabla r_n(z)^2}}{1-r_n(z)^2} \ ; \\
 \sum_{i^{(\ell)} \in \Delta_{\ell} } 
(u_1^2\mu_{i_1i_1}+ 
\ldots+ u_{\ell}^2\mu_{i_{\ell}i_{\ell}}) 
&=& u_1^2\left[\sum_{i^{(\ell)} \in \Delta_{\ell}  }  \mu_{i_1i_1} \right]
+ \ldots  +
u_{\ell}^2 \left[\sum_{i^{(\ell)} \in \Delta_{\ell} } \mu_{i_{\ell}i_{\ell}}\right] \\
&=& (u_1^2+\ldots+u_{\ell}^2)   \sum_{i^{(\ell)} \in \Delta_{\ell} }
\mu_{i_1i_1} 
=\norm{u^{(\ell)}}^2 \frac{(3)_{\ell}}{3}\frac{\norm{\nabla r_n(z)}^2}{(1+r_n(z))^2} \ .
\end{eqnarray*}
Using these identities,  \eqref{F1}, \eqref{F2} and \eqref{F3} finally reduce to
\begin{eqnarray}\label{expell}
\E{\Phi_{\ell,3}(X(z,u^{(\ell)}))^2}    
&=& (3)_{\ell}\bigg(\frac{E_n}{3}\bigg)^{\ell}
-\bigg(\frac{E_n}{3}\bigg)^{\ell-1}\ell \frac{(3)_{\ell}}{3}\frac{\norm{\nabla r_n(z)}^2}{1-r_n(z)^2}
+\bigg(\frac{E_n}{3}\bigg)^{\ell-1}
\norm{u^{(\ell)}}^2   \frac{(3)_{\ell}}{3}\frac{\norm{\nabla r_n(z)}^2}{(1+r_n(z))^2} \notag\\
&=& (3)_{\ell}\bigg(\frac{E_n}{3}\bigg)^{\ell-1}
\bigg( \frac{E_n}{3} 
-\frac{\ell}{3} \frac{\norm{\nabla r_n(z)}^2}{1-r_n(z)^2}+ \frac{ \norm{u^{(\ell)}}^2 }{3}\frac{\norm{\nabla r_n(z)}^2}{(1+r_n(z))^2}\bigg)  \ .
\end{eqnarray}
Plugging the bounds obtained in \eqref{density} and \eqref{expell} into \eqref{K} yields the desired upper bound for the two-point correlation function in \eqref{Kupperbound}.
\end{proof}

\begin{Lem}\label{ContK}
For every fixed $(x,y) \in \T\times \T$ such that $r_n(x-y)\neq\pm1$, the function $u^{(\ell)}:=(u_1,\ldots,u_{\ell})\mapsto K^{(\ell)}(x,y;u^{(\ell)})$ is continuous.
\end{Lem}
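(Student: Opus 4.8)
The plan is to establish continuity of $u^{(\ell)} \mapsto K^{(\ell)}(x,y;u^{(\ell)})$ for fixed $(x,y)$ with $r_n(x-y) \neq \pm 1$ by showing that both factors in the definition \eqref{K} depend continuously on $u^{(\ell)}$. First I would address the density factor: since $(\bT_n^{(\ell)}(x),\bT_n^{(\ell)}(y))$ is a non-degenerate centred Gaussian vector in $\R^{2\ell}$ (non-degeneracy being exactly the condition $r_n(x-y) \neq \pm 1$, as its covariance matrix factorizes over $i \in [\ell]$ into $2\times 2$ blocks $\begin{pmatrix} 1 & r_n(x-y) \\ r_n(x-y) & 1\end{pmatrix}$ with determinant $1 - r_n(x-y)^2 > 0$), its density $p_{(\bT_n^{(\ell)}(x),\bT_n^{(\ell)}(y))}(\cdot,\cdot)$ is a smooth (in particular continuous) function on $\R^{2\ell}$, so $u^{(\ell)} \mapsto p_{(\bT_n^{(\ell)}(x),\bT_n^{(\ell)}(y))}(u^{(\ell)},u^{(\ell)})$ is continuous.

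Next I would handle the conditional expectation factor. Conditioning a jointly Gaussian vector on $\{\bT_n^{(\ell)}(x) = \bT_n^{(\ell)}(y) = u^{(\ell)}\}$ yields a Gaussian law whose covariance matrix does not depend on $u^{(\ell)}$ and whose mean is a fixed linear function of $u^{(\ell)}$ — this is made explicit in Lemma \ref{JointCovMatrix} (with $0$ replaced by $y$ via stationarity), where $\mu_n^{(i)}(z)$ depends linearly on $u_i$ and $\Omega_n(z)$ is independent of $u_i$. Concretely, the conditioned Jacobian $X(z,u^{(\ell)})$ can be written as $X^{(i)}_j(z,u^{(\ell)}) = Z^{(i)}_j(z) + u_i\,m^{(i)}_j(z)$ for a fixed centred Gaussian matrix $Z(z) = \{Z^{(i)}_j(z)\}$ and deterministic coefficients $m^{(i)}_j(z)$. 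Hence
\begin{equation*}
\E{\Phi_{\ell,3}^*(\Jac_{\bT_n^{(\ell)}}(x)) \Phi_{\ell,3}^*(\Jac_{\bT_n^{(\ell)}}(y)) \,\big|\, \bT_n^{(\ell)}(x) = \bT_n^{(\ell)}(y) = u^{(\ell)}}
= \E{\Phi_{\ell,3}^*\big(Z(x) + u\cdot m(x)\big)\, \Phi_{\ell,3}^*\big(\widetilde Z(y) + u\cdot \widetilde m(y)\big)}
\end{equation*}
for suitable jointly Gaussian $(Z(x),\widetilde Z(y))$ independent of $u^{(\ell)}$, and the right-hand side is an integral against a fixed Gaussian measure of a function that is jointly continuous in the integration variable and in $u^{(\ell)}$ (since $\Phi_{\ell,3}^*$, being a Gramian square root, is continuous — indeed locally Lipschitz — on $\mathrm{Mat}_{\ell,3}(\R)$).

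To conclude continuity in $u^{(\ell)}$ I would apply dominated convergence: fix a compact neighbourhood $B$ of a given $u^{(\ell)}_0$; for $u^{(\ell)} \in B$ the integrand is bounded by $C(1 + \norm{Z(x)}^2)(1 + \norm{\widetilde Z(y)}^2)$ for a constant $C = C(B)$ (using the polynomial growth of $\Phi_{\ell,3}^*$ in its matrix argument and the boundedness of $u$ and the deterministic mean coefficients over $B$), which is integrable against the Gaussian law of $(Z(x),\widetilde Z(y))$; then for any sequence $u^{(\ell)}_k \to u^{(\ell)}_0$ the integrands converge pointwise, so the integrals converge. Multiplying the two continuous factors gives continuity of $K^{(\ell)}(x,y;\cdot)$. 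The only mild subtlety — and the step I would be most careful about — is writing down the conditional representation $X(z,u^{(\ell)}) = Z(z) + u\cdot m(z)$ correctly and checking that the joint law of $(X(x,u^{(\ell)}), X(y,u^{(\ell)}))$ (not just each marginal) is an affine-in-$u$ shift of a fixed Gaussian; this is immediate from the standard Gaussian conditioning formula applied to the full vector $(\Jac_{\bT_n^{(\ell)}}(x), \Jac_{\bT_n^{(\ell)}}(y), \bT_n^{(\ell)}(x), \bT_n^{(\ell)}(y))$, whose covariance structure is exactly the block matrix appearing in the proof of Lemma \ref{JointCovMatrix}, again valid because $r_n(x-y) \neq \pm 1$ guarantees invertibility of the relevant conditioning block.
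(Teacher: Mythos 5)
Your argument is correct and follows essentially the same route as the paper: factor $K^{(\ell)}$ into the Gaussian density (continuous since $r_n(x-y)\neq\pm1$ gives non-degeneracy) times the conditional expectation, and use the Gaussian conditioning structure of Lemma \ref{JointCovMatrix} — conditional mean affine in $u^{(\ell)}$, conditional covariance independent of $u^{(\ell)}$ — the only difference being that you spell out the dominated-convergence step that the paper leaves implicit. One tiny quantitative slip: since $\Phi_{\ell,3}^*$ is homogeneous of degree $\ell$ in the matrix entries, your dominating function should be of the form $C\bigl(1+\norm{Z(x)}^{\ell}\bigr)\bigl(1+\norm{\widetilde Z(y)}^{\ell}\bigr)$ (so cubic, not quadratic, growth when $\ell=3$), which is still Gaussian-integrable, so the conclusion is unaffected.
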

\begin{proof}
Denoting by $\Sigma=\Sigma(x-y)$ the covariance matrix of the vector $(T_n^{(i)}(x),T_n^{(i)}(y))$ for $i \in [\ell]$, the Gaussian density is given by
\begin{eqnarray*}
p_{(\bT_n^{(\ell)}(x),\bT_n^{(\ell)}(y))}(u^{(\ell)},u^{(\ell)})  
&=& \bigg(\frac{1}{2\pi \sqrt{1-r_n(x-y)^2}}\bigg)^{\ell}\prod_{i=1}^{\ell}  
\exp\bigg\{-\frac{1}{2} (u_i,u_i)^T\Sigma^{-1}(u_i,u_i)\bigg\}  \\
&=& \bigg(\frac{1}{2\pi \sqrt{1-r_n(x-y)^2}}\bigg)^{\ell}\prod_{i=1}^{\ell}  
\exp\bigg\{ -\frac{u_i^2}{2(1+r_n(x-y))} \bigg\}
\ ,
\end{eqnarray*}
which is a continuous function of $u^{(\ell)}$. We will now argue that the conditional expectation appearing in \eqref{K} is a continuous function of $u^{(\ell)}$. It can be rewritten as
\begin{gather*}
\E{\Phi_{\ell,3}^*(\Jac_{\bT_n^{(\ell)}}(x)) \Phi_{\ell,3}^*(\Jac_{\bT_n^{(\ell)}}(y)) | \bT_n^{(\ell)}(x) = \bT_n^{(\ell)}(y) = u^{(\ell)}}  
=\E{\Phi_{\ell,3}^*(X(x,u^{(\ell)})) \Phi_{\ell,3}^*(X(y,u^{(\ell)}))} , 
\end{gather*}
where, for every $x \in \T$, the random $\ell\times 3$ matrix $X(x,u^{(\ell)})=\big\{X^{(i)}_j(x,u^{(\ell)}):(i,j)\in [\ell]\times [3]\big\}$ has the same distribution as 
$\Jac_{\bT_n^{(\ell)}}(x)$ conditionally on $\{\bT_n^{(\ell)}(x) = \bT_n^{(\ell)}(y) = u^{(\ell)}\}$. From Lemma \ref{JointCovMatrix}, it follows that the mean in \eqref{mu} depends linearly on $u^{(\ell)}$. In view of the definition of $\Phi_{\ell,3}^*$, and the structure of the covariance function in \eqref{Omega}, we conclude that the above expected value is also a continuous function of 
$u^{(\ell)}$, showing that $K^{(\ell)}(x,y;u^{(\ell)})$ is a continuous function with variable $u^{(\ell)}$.
\end{proof}

%%Taylor expansions%%
\subsection{Taylor expansions}\label{TaylorSec}
We compute an expansion of $q^{(\ell)}(z,0;\norm{u^{(\ell)}})$ in \eqref{Kupperbound} around $z=0$. 
In order to do so, we start by deriving the Taylor expansions of $r_n$ and its first-order partial derivatives near $z=0$. For $n \in S_3$, let 
\begin{equation}\label{psi}
\Psi_n:=\frac{1}{n^2\Nn}\sum_{\lambda \in \Lambda_n} \lambda_k^4 \ , \qquad k=1,2,3 \ .
\end{equation}
and set $e_n:=E_n/3$. Note that $\Psi_n \leq 1$ since $\lambda_k^4 \leq n^2$.

\begin{Lem}
For $z=(z_1,z_2,z_3)\in \T$ and every $k\in [3]$, the following Taylor expansions hold near $z=0$:
\begin{eqnarray}
r_n(z) &=& 1-\frac{E_n}{6}\norm{z}^2+\frac{E_n^2}{24} \Psi_n\sum_{j=1}^3 z_j^4
+ \frac{E_n^2}{4}\bigg(\frac{1}{6}-\frac{1}{2}\Psi_n\bigg)\sum_{i<j\in [3]} z_i^2z_j^2 + R^{(0)}_n \notag \\
&=:& 1-\frac{e_n}{2}\norm{z}^2 + t_n(z) + R^{(0)}_n 
\label{Taylorr} \\
\partial_kr_n(z) &=& -\frac{E_n}{3}z_k+\frac{E_n^2}{6}\Psi_n\sum_{j=1}^3z_j^3 
+ \frac{E_n^2}{2}\bigg(\frac{1}{6}-\frac{1}{2}\Psi_n\bigg)\sum_{i \neq j\in [3]} z_jz_i^2 
+ R^{(k)}_n \notag \\ 
&=:& -e_nz_k + u_{n,k}(z) + R^{(k)}_n \ , \label{Taylorrk}
\end{eqnarray}
where $R^{(0)}_n=E_n^3O(\norm{z}^6)$ and $R^{(k)}_n = E_n^3O(\norm{z}^5)$, and the constants involved in the big-O notation are independent of $n$.
\end{Lem}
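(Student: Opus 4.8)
The plan is to expand the spectral representation of $r_n$ directly. Since $\Lambda_n=-\Lambda_n$ one has $r_n(z)=\frac{1}{\Nn}\sum_{\lambda\in\Lambda_n}\cos(2\pi\langle\lambda,z\rangle)$, and since $r_n$ is a trigonometric polynomial it is real-analytic, so Taylor's theorem with explicit remainder applies term by term. I would record $\cos t=1-\tfrac{t^2}{2}+\tfrac{t^4}{24}+\rho(t)$ with $|\rho(t)|\le\tfrac{|t|^6}{720}$ and $\sin t=t-\tfrac{t^3}{6}+\sigma(t)$ with $|\sigma(t)|\le\tfrac{|t|^5}{120}$, substitute $t=2\pi\langle\lambda,z\rangle$, and average over $\Lambda_n$; this splits the expansion of $r_n$ (resp.\ of $\partial_k r_n(z)=-\frac{2\pi}{\Nn}\sum_\lambda\lambda_k\sin(2\pi\langle\lambda,z\rangle)$) into a polynomial part plus a remainder. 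Note that, unlike relation \eqref{sumkjs} of Lemma \ref{LemSumsLambda}, this computation uses only exact identities, which is why the stated expansions carry explicit remainders rather than an $\eps_n$ term and require no arithmetic equidistribution input.

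For the polynomial part I would expand $\langle\lambda,z\rangle^2$, $\langle\lambda,z\rangle^4$ and $\lambda_k\langle\lambda,z\rangle$, $\lambda_k\langle\lambda,z\rangle^3$ into monomials in the $z_i$ with coefficients that are monomials in the $\lambda_i$, interchange the finite sums, and evaluate each resulting spectral average. Four facts do the job: (i) $\frac{1}{n\Nn}\sum_\lambda\lambda_i\lambda_j=\tfrac13\ind{i=j}$, which is exactly \eqref{sumkj}; (ii) $\frac{1}{n^2\Nn}\sum_\lambda\lambda_i^4=\Psi_n$ for each $i$, by the definition of $\Psi_n$ in \eqref{psi} together with invariance of $\Lambda_n$ under coordinate permutations; (iii) $\frac{1}{n^2\Nn}\sum_\lambda\lambda_i^2\lambda_j^2=\tfrac16-\tfrac12\Psi_n$ for $i\neq j$, obtained by squaring $\lambda_1^2+\lambda_2^2+\lambda_3^2=n$, averaging, and using coordinate symmetry to get $1=3\Psi_n+6\cdot\frac{1}{n^2\Nn}\sum_\lambda\lambda_1^2\lambda_2^2$; and (iv) the spectral average of any monomial $\prod_i\lambda_i^{a_i}$ with some $a_i$ odd vanishes, since flipping the sign of one coordinate is a bijection of $\Lambda_n$ that negates the summand. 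Fact (iv) kills all cross-terms; collecting the surviving monomials and substituting $E_n=4\pi^2 n$ (so that $(2\pi)^2n=E_n$, $(2\pi)^4n^2=E_n^2$, and $e_n=E_n/3$) produces the coefficients displayed in \eqref{Taylorr} and \eqref{Taylorrk}.

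For the remainder, the key estimate is $|\langle\lambda,z\rangle|\le\norm{\lambda}\,\norm{z}=\sqrt n\,\norm{z}$, valid because $\norm{\lambda}^2=n$ on $\Lambda_n$. Hence $\big|\frac{1}{\Nn}\sum_\lambda\rho(2\pi\langle\lambda,z\rangle)\big|\le\frac{(2\pi)^6}{720}\,n^3\norm{z}^6$ and $\big|\frac{2\pi}{\Nn}\sum_\lambda\lambda_k\sigma(2\pi\langle\lambda,z\rangle)\big|\le\frac{(2\pi)^6}{120}\,n^3\norm{z}^5$; since $n^3$ equals $E_n^3$ up to the absolute constant $(4\pi^2)^{-3}$, these are precisely the claimed bounds $R_n^{(0)}=E_n^3 O(\norm{z}^6)$ and $R_n^{(k)}=E_n^3 O(\norm{z}^5)$ with $n$-independent implied constants.

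I do not anticipate a genuine obstacle: the statement is a direct computation. The only delicate point is bookkeeping --- correctly matching each surviving spectral average to its multinomial coefficient when expanding $\langle\lambda,z\rangle^4$ and $\lambda_k\langle\lambda,z\rangle^3$, and checking that the remainder estimates are uniform in $n$ --- which becomes routine once the symmetry reductions (i)--(iv) are in place.
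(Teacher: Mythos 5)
Your proposal is correct and is essentially the paper's own argument: the polynomial part rests on exactly the same spectral symmetry facts (vanishing of odd moments, $\frac{1}{n\Nn}\sum_{\lambda}\lambda_i\lambda_j=\frac13\ind{i=j}$, and $\frac{1}{n^2\Nn}\sum_{\lambda}\lambda_a^2\lambda_b^2=\frac16-\frac12\Psi_n$ for $a\neq b$), which is precisely what the paper's proof invokes. The only difference is in the remainder bookkeeping: you expand $\cos$ and $\sin$ termwise with scalar Lagrange remainders and use $|\langle\lambda,z\rangle|\le\sqrt{n}\,\norm{z}$, whereas the paper applies the multivariate Taylor remainder and bounds $\sup_z|\partial^6 r_n(z)|\le E_n^3$ via $\lambda_a^2\lambda_b^2\lambda_c^2\le\frac13(\lambda_a^6+\lambda_b^6+\lambda_c^6)$; both routes give the same $E_n^3$ with $n$-independent constants, so this is an equivalent (marginally more elementary) variant rather than a different proof. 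One bookkeeping caveat: your surviving cubic monomials for $\partial_k r_n$ are $\frac{E_n^2}{6}\Psi_n z_k^3+\frac{E_n^2}{2}\big(\frac16-\frac12\Psi_n\big)\sum_{j\neq k}z_k z_j^2$, i.e.\ only terms containing $z_k$, which is indeed the derivative of the quartic part of \eqref{Taylorr}; the display \eqref{Taylorrk} as printed sums over all $j$ and all ordered pairs $i\neq j$ and does not depend on $k$, so it cannot be taken verbatim (an apparent typo in the statement). Since only the order $u_{n,k}(z)=E_n^2\,O(\norm{z}^3)$ is used later (e.g.\ in Lemma \ref{Taylorq}), this discrepancy is harmless, but you should not assert that your collected coefficients match the printed display exactly.
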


\begin{proof}
These expansions follow from direct computations of partial derivatives. Note that all derivatives of odd (resp. even) order of $r_n$ (resp. $\partial_kr_n$) vanish in view of the fact that, by symmetry,
$\sum_{\lambda \in \Lambda_n} \lambda_j^{\alpha}$ is zero whenever $\alpha$ is odd. 
Also, we note that 
\begin{equation*}
\frac{1}{n^2\Nn}\sum_{\lambda \in \Lambda_n} \lambda_a^2 \lambda_b^2 = \frac{1}{6}-\frac{1}{2}\Psi_n 
\end{equation*}
for $a\neq b\in [3]$, where $\Psi_n$ is as in \eqref{psi}.
The remainders are of the form   
$R^{(0)}_n = O( \norm{\partial^6 r_n}_{\infty} \norm{z}^6)$ and $R^{(k)}_n = O(\norm{\partial^6 r_n}_{\infty} \norm{z}^5)$, where   
\begin{gather*}
\partial^6 r_n  := \sup_{i_1,\ldots,i_6 \in [3]} 
\partial_{i_1,\ldots,i_6}r_n  
\end{gather*}
and $\partial_{i_1,\ldots,i_6}r_n(z)$ denotes partial derivatives of $r_n$ of cumulative order equal to $6$. Observe that for every $z \in \T$,
\begin{equation*}
\left|\partial^6r_n(z)\right| \leq \frac{(2\pi)^6}{\Nn}
\sum_{\lambda\in \Lambda_n} \lambda_1^{\alpha} \lambda_2^{\beta} \lambda_3^{\gamma} \ , 
\end{equation*}
where $\alpha, \beta, \gamma$ are non-negative even integers such that $\alpha+\beta+\gamma=6$. Therefore, we can write $ \lambda_1^{\alpha} \lambda_2^{\beta} \lambda_3^{\gamma} = 
\lambda_a^2\lambda_b^2\lambda_c^2$ for $a,b,c \in \{1,2,3\}$ not necessarily distinct. Then it follows that $\lambda_a^2\lambda_b^2\lambda_c^2 \leq \lambda_a^6/3 + \lambda_b^6/3 + \lambda_c^6/3$, so that 
\begin{equation*}
\left|\partial^6 r_n(z)\right| \leq \frac{(2\pi)^6}{\Nn}\sum_{\lambda \in \Lambda_n}\lambda_1^6 
\leq 
\frac{(2\pi)^6}{\Nn}\sum_{\lambda \in \Lambda_n} (\lambda_1^2+\lambda_2^2+\lambda_3^2)^3 = (2\pi)^6n^3 \leq E_n^3 \ , 
\end{equation*}
which concludes the proof.
\end{proof} 
The following result contains the expansion around zero of $q^{(\ell)}(z,0;\norm{u^{(\ell)}})$. In particular, we remark a singularity in the coefficient of $\norm{z}^{-\ell}$ in the case $\ell=3$, which is consistent with the fact that the mapping $z \mapsto \norm{z}^{-3}$ is not integrable on $\T$.
\begin{Lem}\label{Taylorq}
For $\ell\in[3]$, as $\norm{z} \to 0$, we have
\begin{equation}\label{Q}
q^{(\ell)}(z,0;\norm{u^{(\ell)}})
= (3)_{\ell} \left(1-\frac{\ell}{3}\right)
e_n^{\ell/2}\norm{z}^{-\ell} +
(3)_{\ell} \left(1+\norm{u^{(\ell)}}^2\right)E_n^{\ell/2+1}O(\norm{z}^{2-\ell}) ,
\end{equation}
where the constants involved in the big-O notation are independent of $n$.
\end{Lem}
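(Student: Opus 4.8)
The plan is to insert the Taylor expansions \eqref{Taylorr} and \eqref{Taylorrk} of $r_n$ and its first-order partial derivatives into the closed form of $q^{(\ell)}(z,0;\norm{u^{(\ell)}})$ given in \eqref{Kupperbound}, and then extract the leading order in $\norm{z}$ while carefully tracking the power of $E_n$ attached to each term. Throughout I use $e_n=E_n/3$ together with the fact that, for each fixed $n$, $E_n\norm{z}^2\to 0$ as $\norm{z}\to 0$; hence for $\norm{z}$ small enough all the elementary expansions below hold with absolute implied constants.

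First I would assemble three building blocks. From \eqref{Taylorr}, together with $t_n(z)=E_n^2O(\norm{z}^4)$ and $R^{(0)}_n=E_n^3O(\norm{z}^6)$, one gets $1-r_n(z)=\tfrac{e_n}{2}\norm{z}^2\big(1+E_nO(\norm{z}^2)\big)$ and $1+r_n(z)=2\big(1+E_nO(\norm{z}^2)\big)$, hence
\begin{equation*}
1-r_n(z)^2=e_n\norm{z}^2\big(1+E_nO(\norm{z}^2)\big).
\end{equation*}
From \eqref{Taylorrk}, squaring and summing over $k\in[3]$ — the cross terms $e_nz_k\cdot u_{n,k}(z)$ are $E_n^3O(\norm{z}^4)$ and the remaining contributions are of lower order — one obtains
\begin{equation*}
\norm{\nabla r_n(z)}^2=e_n^2\norm{z}^2\big(1+E_nO(\norm{z}^2)\big).
\end{equation*}
Dividing then gives $\norm{\nabla r_n(z)}^2/(1-r_n(z)^2)=e_n\big(1+E_nO(\norm{z}^2)\big)$, as well as $\norm{\nabla r_n(z)}^2/(1+r_n(z))^2=\tfrac{e_n^2}{4}\norm{z}^2\big(1+E_nO(\norm{z}^2)\big)$ and $(1-r_n(z)^2)^{-\ell/2}=e_n^{-\ell/2}\norm{z}^{-\ell}\big(1+E_nO(\norm{z}^2)\big)$.

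Next I would substitute these into \eqref{Kupperbound}. The bracket collapses to $e_n\big(1-\tfrac{\ell}{3}\big)+\big(1+\norm{u^{(\ell)}}^2\big)E_n^2O(\norm{z}^2)$, after absorbing $-\tfrac{\ell}{3}e_n\cdot E_nO(\norm{z}^2)$ and $\tfrac13\norm{u^{(\ell)}}^2\cdot\tfrac{e_n^2}{4}\norm{z}^2$ into the remainder. Multiplying by $(3)_\ell(E_n/3)^{\ell-1}(1-r_n(z)^2)^{-\ell/2}=(3)_\ell e_n^{\ell-1}\cdot e_n^{-\ell/2}\norm{z}^{-\ell}\big(1+E_nO(\norm{z}^2)\big)$ and regrouping the powers of $e_n$ — noting $e_n^{(\ell-1)-\ell/2+1}=e_n^{\ell/2}$ while $e_n^{(\ell-1)-\ell/2}E_n^2=O(E_n^{\ell/2+1})$, and that the multiplicative error $E_nO(\norm{z}^2)$ on the main term also produces a contribution $E_n^{\ell/2+1}O(\norm{z}^{2-\ell})$ — yields exactly \eqref{Q}.

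The computation is essentially bookkeeping; the one point requiring genuine care is to keep the correct power of $E_n$ paired with each $O(\norm{z}^j)$, so that the final remainder is genuinely $(3)_\ell\big(1+\norm{u^{(\ell)}}^2\big)E_n^{\ell/2+1}O(\norm{z}^{2-\ell})$, and to record that all implied constants may be taken independent of $n$ once, say, $E_n\norm{z}^2\le 1$. I expect no real obstacle beyond this: in particular the vanishing of the coefficient $(3)_\ell\big(1-\tfrac{\ell}{3}\big)$ at $\ell=3$ — consistent with the non-integrability of $\norm{z}^{-3}$ on $\T$ — falls out automatically from the algebra.
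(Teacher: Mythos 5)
Your proposal is correct and follows essentially the same route as the paper's proof: substitute the Taylor expansions \eqref{Taylorr}--\eqref{Taylorrk} into \eqref{Kupperbound}, expand the two quotients $\norm{\nabla r_n}^2/(1-r_n^2)$ and $\norm{\nabla r_n}^2/(1+r_n)^2$ together with $(1-r_n^2)^{-\ell/2}$, and track the powers of $E_n$ attached to each remainder. The bookkeeping you describe (including the normalisation $E_n\norm{z}^2\lesssim 1$ ensuring $n$-independent implied constants) matches the paper's computation leading to \eqref{TaylorCondExp} and then to \eqref{Q}.
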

\begin{proof}
From the expansion in \eqref{Taylorr} we obtain that  
\begin{eqnarray}\label{Taylor1}
1-r_n(z)^2 &=&  (1-r_n(z))(1+r_n(z)) 
\notag \\
&=& \left( \frac{e_n}{2}\norm{z}^2 - t_n(z) + E_n^3O(\norm{z}^6) \right) 
\left( 2-\frac{e_n}{2}\norm{z}^2 + t_n(z) + E_n^3O(\norm{z}^6)  \right) \notag \\
&=& e_n\norm{z}^2 - 
\bigg[\big(\frac{e_n}{2}\big)^2\norm{z}^4 + 2t_n(z) \bigg] + E_n^3O(\norm{z}^6)  \notag \\
&=:& e_n\norm{z}^2 - f_n(z) + E_n^3O(\norm{z}^6) \ , 
\end{eqnarray}
and 
\begin{eqnarray}\label{Taylor2}
(1+r_n(z))^2 &=&  
\left( 2-\frac{e_n}{2}\norm{z}^2 + t_n(z) + E_n^3O(\norm{z}^6)  \right)^2 \notag\\
&=& 4-2e_n\norm{z}^2
+\bigg[ \big(\frac{e_n}{2}\big)^2\norm{z}^4 + 4t_n(z) \bigg]+E_n^3O(\norm{z}^6)\notag\\
&=:&4-2e_n\norm{z}^2
+h_n(z)+E_n^3O(\norm{z}^6)  \ ,
\end{eqnarray}
where $t_n(z)$ is as in \eqref{Taylorr}. Note that since $\Psi_n \leq 1$, we have $t_n(z)=E_n^2O(\norm{z}^4)$ where the constant in the big-O notation is independent of $n$. Therefore, we have 
$f_n(z) := (e_n/2)^2 \norm{z}^4+2t_n(z)=E_n^2O(\norm{z}^4)$ and 
$h_n(z):=(e_n/2)^2 \norm{z}^4+4t_n(z)=E_n^2O(\norm{z}^4)$.
From \eqref{Taylorrk}, we have
\begin{equation*}
\partial_kr_n(z)^2 = 
\left(-e_nz_k + u_{n,k}(z) + E_n^3O(\norm{z}^5)\right)^2
=e_n^2z_k^2 
- 2e_nz_ku_{n,k}(z) + E_n^4O(\norm{z}^6) \ ,
\end{equation*}
so that summing over $k=1,2,3$ leads to
\begin{eqnarray}\label{Taylor2}
\norm{\nabla r_n(z)}^2 
= e_n^2\norm{z}^2  
-2e_n\sum_{k=1}^3 z_ku_{n,k}(z) +E_n^4O(\norm{z}^6) 
=: e_n^2\norm{z}^2+g_n(z)+E_n^4O(\norm{z}^6) \ , 
\end{eqnarray}
where $g_n(z) = E_n^3O(\norm{z}^4)$ and again the constant in the big-O notation does not depend on $n$.
Hence we obtain the expansions of the quotients
\begin{eqnarray}\label{Q2}
&&\frac{\norm{\nabla r_n(z)}^2}{1-r_n(z)^2} = \frac{e_n^2\norm{z}^2+g_n(z)+E_n^4O(\norm{z}^6)}{e_n\norm{z}^2 - f_n(z) + E_n^3O(\norm{z}^6)} \notag \\
&=&e_n\frac{1+e_n^{-2}\norm{z}^{-2}g_n(z)+E_n^2O(\norm{z}^4)}{1-e_n^{-1}\norm{z}^{-2}f_n(z)+E_n^2O(\norm{z}^4)} \notag \\
&=&e_n\left(1+\frac{g_n(z)}{e_n^2\norm{z}^2}+E_n^2O(\norm{z}^4)\right)
\left(1+\frac{f_n(z)}{e_n\norm{z}^2}+E_n^2O(\norm{z}^4)\right)\notag\\
&=& e_n\left( 1 
+\frac{g_n(z)}{e_n^2\norm{z}^2}+\frac{f_n(z)}{e_n\norm{z}^2}+E_n^2O(\norm{z}^4)\right) \notag\\
&=&e_n+\frac{g_n(z)}{e_n\norm{z}^2}+\frac{f_n(z)}{\norm{z}^2}+E_n^3O(\norm{z}^4) 
= e_n +E_n^2O(\norm{z}^2),
\end{eqnarray} 
since $e_n^{-1}\norm{z}^{-2}g_n(z)+\norm{z}^{-2}f_n(z) = E_n^2O(\norm{z}^2)$
and 
\begin{eqnarray}\label{Q2}
&&\frac{\norm{\nabla r_n(z)}^2}{(1+r_n(z))^2} = 
\frac{e_n^2\norm{z}^2+g_n(z)+E_n^4O(\norm{z}^6)}
{4-2e_n\norm{z}^2
+h_n(z)+E_n^3O(\norm{z}^6)} \notag \\
&=&\left(\frac{e_n}{2}\right)^2\norm{z}^2\frac{1+e_n^{-2}\norm{z}^{-2}g_n(z)+E_n^2O(\norm{z}^4)}{1-e_n/2 \norm{z}^{2} + h_n(z)/4 + E_n^3O(\norm{z}^6)} \notag \\
&=&\left(\frac{e_n}{2}\right)^2\norm{z}^2
\left(1+\frac{g_n(z)}{e_n^2\norm{z}^2}+E_n^2O(\norm{z}^4)\right)
\left(1+\frac{e_n}{2} \norm{z}^{2} + \frac{h_n(z)}{4} + E_n^3O(\norm{z}^6)\right)\notag\\
&=&\left(\frac{e_n}{2}\right)^2\norm{z}^2
\left(1+\frac{e_n}{2}\norm{z}^2 +\frac{h_n(z)}{4}+\frac{g_n(z)}{e_n^2\norm{z}^2}+E_n^2O(\norm{z}^4)
\right)  \notag\\
&=& 
\left(\frac{e_n}{2}\right)^2 \norm{z}^2
+E_n^4O(\norm{z}^4) \ .
\end{eqnarray} 
Using \eqref{expell}, we obtain the expansion near $z=0$ of $\E{\Phi_{\ell,3}^*(X(z,u^{(\ell)}))^2}$:
\begin{eqnarray}\label{TaylorCondExp}
&&\E{\Phi_{\ell,3}(X(z,u^{(\ell)}))^2} = 
(3)_{\ell}e_n^{\ell-1}
\bigg( e_n 
-\frac{\ell}{3} \frac{\norm{\nabla r_n(z)}^2}{1-r_n(z)^2}+ \frac{\norm{u^{(\ell)}}^2}{3}\frac{\norm{\nabla r_n(z)}^2}{(1+r_n(z))^2}\bigg)\notag\\
&=& (3)_{\ell}e_n^{\ell-1}
\bigg(e_n - \frac{\ell}{3}\left(e_n+E_n^2O(\norm{z}^2)\right)  
+\frac{\norm{u^{(\ell)}}^2}{3}\bigg\{
\left(\frac{e_n}{2}\right)^2 \norm{z}^2
+E_n^4O(\norm{z}^4)\bigg\}
\bigg)\notag\\
&=&(3)_{\ell}e_n^{\ell-1}
\left( e_n\left(1-\frac{\ell}{3}\right) + \left(1+\norm{u^{(\ell)}}^2\right)E_n^2O(\norm{z}^2) \right) \notag\\
&=& (3)_{\ell} \left(1-\frac{\ell}{3}\right)
e_n^{\ell} +
(3)_{\ell} \left(1+\norm{u^{(\ell)}}^2\right)E_n^{\ell+1}O(\norm{z}^2)  .
\end{eqnarray}
Then, using $1-r_n(z)^2 = e_n\norm{z}^2(1+E_nO(\norm{z}^2))$,
\begin{eqnarray*} 
&&q^{(\ell)}(z,0;\norm{u^{(\ell)}}) =
\big(1-r_n(z)^2\big)^{-\ell/2}\cdot 
\E{\Phi_{\ell,3}(X(z,u^{(\ell)}))^2} \notag\\
&=& e_n^{-\ell/2}\norm{z}^{-\ell}\E{\Phi_{\ell,3}(X(z,u^{(\ell)}))^2}(1+E_nO(\norm{z}^2)) \notag\\
&=&(3)_{\ell} \left(1-\frac{\ell}{3}\right)
e_n^{\ell/2}\norm{z}^{-\ell} +
(3)_{\ell} \left(1+\norm{u^{(\ell)}}^2\right)E_n^{\ell/2+1}O(\norm{z}^{2-\ell})  ,
\end{eqnarray*}
which has the desired form.
\end{proof}

The following lemma justifies the use of Kac-Rice formulae in a sufficiently small cube around the origin, $Q_0$.
\begin{Lem}\label{LemKR}
For every $n\in S_3$, there exists a sufficiently small constant $c_0>0$ such that for every $(x,y) \in \T\times \T$ satisfying  $0<\norm{x-y}<c_0/\sqrt{E_n}$, we have $r_n(x-y) \neq \pm 1$. 
\end{Lem}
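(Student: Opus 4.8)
The plan is to combine the uniform-in-$n$ Taylor expansion of $r_n$ recorded in \eqref{Taylorr} with the trivial bound $-1 \le r_n(z) \le 1$, valid for every $z \in \T$ since $r_n(x-y) = \E{T_n^{(i)}(x)T_n^{(i)}(y)}$ is real and, by Cauchy--Schwarz, bounded in absolute value by $r_n(0) = 1$. Because $r_n$ is real-valued, the conclusion $r_n(x-y) \neq \pm 1$ is equivalent to the two-sided inequality $0 < 1 - r_n(x-y) < 2$, and it is this that I would establish.

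Writing $z := x-y$ and $e_n := E_n/3$, expansion \eqref{Taylorr} reads $1 - r_n(z) = \tfrac{e_n}{2}\norm{z}^2 - t_n(z) - R_n^{(0)}$, where $|t_n(z)| \le C_1 E_n^2 \norm{z}^4$ (using $\Psi_n \le 1$ together with $\sum_j z_j^4 \le \norm{z}^4$ and $\sum_{i<j} z_i^2 z_j^2 \le \norm{z}^4$) and $|R_n^{(0)}| \le C_2 E_n^3 \norm{z}^6$, with $C_1, C_2$ absolute constants; the bound on $R_n^{(0)}$ is in fact global, coming from the uniform sixth-derivative estimate $|\partial^6 r_n| \le (2\pi)^6 n^3 \le E_n^3$ used in the proof of \eqref{Taylorr}. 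Substituting $\norm{z} = t/\sqrt{E_n}$ collapses everything to a single scale: the main term becomes $\tfrac{e_n}{2}\norm{z}^2 = t^2/6$, the corrections are bounded by $C_1 t^4$ and $C_2 t^6$, and therefore
\[
\Big|\, 1 - r_n(z) - \tfrac{t^2}{6}\,\Big| \le C_1 t^4 + C_2 t^6 .
\]

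It then suffices to fix an absolute constant $c_0 \in (0,1]$ small enough that $6(C_1 c_0^2 + C_2 c_0^4) < 1$. For $0 < t < c_0$ one obtains at once
\[
1 - r_n(z) \ge \tfrac{t^2}{6}\bigl(1 - 6C_1 t^2 - 6 C_2 t^4\bigr) > 0, \qquad
1 - r_n(z) \le \tfrac{t^2}{6}\bigl(1 + 6C_1 t^2 + 6 C_2 t^4\bigr) < \tfrac{t^2}{3} < \tfrac{1}{3} < 2 ,
\]
so that $r_n(z) \in (-1,1)$; hence $r_n(x-y) \neq \pm 1$ whenever $0 < \norm{x-y} < c_0/\sqrt{E_n}$, which is the claim. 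The computation is routine; the only point needing attention is that the implied constants in \eqref{Taylorr} really are independent of $n$ — which the uniform bound on $\partial^6 r_n$ guarantees — so that a single $c_0$ works for all $n$, and that the scaling $\norm{z} \sim 1/\sqrt{E_n}$ is precisely the one balancing the quadratic leading term against all the higher-order terms.
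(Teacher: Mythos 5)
Your proof is correct and follows essentially the same route as the paper: both rest on the uniform-in-$n$ Taylor expansion of $r_n$ near the origin (the paper via \eqref{Taylor1} for $1-r_n(z)^2$, you via \eqref{Taylorr} for $1-r_n(z)$) evaluated at the scale $\norm{z}\sim E_n^{-1/2}$, where the quadratic term $t^2/6$ dominates the higher-order corrections. Your version is, if anything, slightly more explicit about the absolute constants and the two-sided inequality $0<1-r_n(z)<2$, but the underlying argument is the same.
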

\begin{proof}
We set $z=x-y$ and perform a Taylor expansion of $1-r_n(z)^2$ around $z=0$. From  \eqref{Taylor1}, we have
\begin{equation*}
1-r_n(z)^2 = \frac{E_n}{3}\norm{z}^2 + E_n^2O(\norm{z}^4) = \frac{E_n}{3}\norm{z}^2\big(1+E_nO(\norm{z}^2)\big) \ .
\end{equation*}
Thus, for every  $0<\norm{z}\ll 1/\sqrt{E_n}$, we obtain
\begin{equation*}
1-r_n(z)^2 = \frac{E_n}{3}\frac{C^2}{E_n}(1+O(1)) = \frac{C^2}{3}(1+O(1)) ,
\end{equation*}
for some absolute constant $C>0$, so that there exists a sufficiently small constant $c_0>0$ such that $1-r_n(z)^2>0$ for every $0<\norm{z}<c_0/\sqrt{E_n}$.
\end{proof}

%%%%% 
\section{Continuity of nodal volumes}\label{AppCont}
In this section, we prove a more general version of the continuity theorem proved in Theorem 3 of \cite{AVP18}. Our version applies to vector-valued functions on the torus. For completeness, we give the arguments for the $d$-dimensional torus $\Td, d\geq2$. Recall that $\Td = \R^d/\Z^d \simeq [0,1]^d/_{\sim}$, where $\sim$ denotes the equivalence relation given by $(x_1,\ldots,x_d) \sim (x'_1,\ldots,x'_d)$ if and only if $x_i-x_i' \in \Z$ for every $i=1,\ldots,d$. Let us introduce some notation.
\paragraph{Topology on $\Td$.} (see e.g. \cite{Sh16}) Denote by $\pi_d: [0,1]^d \to \Td$ the quotient map associated with $\sim$. We endow the torus with the quotient topology, that is, the open (closed) subsets of $\Td$ are precisely the subsets $U \subset \Td$ such that $\pi_d^{-1}(U) \subset [0,1]^d$ are open (closed) in $[0,1]^d$ for the Euclidean topology. 
Moreover, we equip the torus with the quotient metric given by 
\begin{equation*}
\mathrm{dist}_d(\pi_d(x),\pi_d(x')) = \inf_{a\in \Z^d}\norm{x-x'+a}_d \ , \quad x,x' \in [0,1]^d \ ,
\end{equation*}
where $\norm{\cdot}_d$ denotes the standard Euclidean norm in $\R^d$. 
From now on, we will write $x$ instead of $\pi_d(x)$ for a point on the torus. Since the equivalence relation $\sim$ is defined coordinate-wise, we will implicitly use the fact that the $\Td$ is a realisation of the cartesian product of $d$ copies of $\tor{1}$.

\paragraph{Banach space of continuous functions on $\Td$.} 
For integers $1 \leq k<d$, let $E=C^1(\Td,\R^k)$  be the set of $C^1$ real vector-valued functions on $\Td$. Then, for a compact space $K \subset \Td$ (note that a compact subset on the torus has the form $\pi_d(\tilde{K})$ for some compact $\tilde{K} \subset [0,1]^d$), and $F=(F^{(1)},\ldots,F^{(k)}) \in E$, we define the norm 
\begin{equation*}
\norm{F}_K := \max_{i=1,\ldots,k} \ \sup_{ x \in K } 
\bigg( |F^{(i)}(x)| + \sum_{j=1}^d |\partial_{j}F^{(i)}(x) | \bigg) \ .
\end{equation*}  
We will use the following version of the Implicit Function Theorem for Banach spaces  (see e.g. \cite{E12} p.417).
\begin{Lem}[Implicit Function Theorem for Banach spaces]\label{LemIFT}
Let $X,Y,Z$ be Banach spaces and $f: X \times Y \to Z$ be a function of class $C^1$. Let $(x_0,y_0) \in X \times Y$ such that 
$f(x_0,y_0) = 0$ and  $(d_yf)_{(x_0,y_0)}:Y \to Z$ is an isomorphism. 
Then there exist neighborhoods $U(x_0) \subset X$ of $x_0$ and $U(x_0,y_0) \subset X\times Y$ of $(x_0,y_0)$ and a function $g: U(x_0) \to Z$ of class $C^1$ such that 
\begin{gather*}
\big( (x,y) \in U(x_0,y_0) , x \in U(x_0) \big)
\Rightarrow
\big( f(x,y)=0 \iff y= g(x)\big).
\end{gather*} 
\end{Lem}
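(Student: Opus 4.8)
The plan is to reduce the statement to the Banach contraction mapping principle, following the classical route. Set $L := (d_yf)_{(x_0,y_0)} \in \mathcal{L}(Y,Z)$, which by hypothesis is a topological isomorphism, and introduce the auxiliary $C^1$ map $\Phi(x,y) := y - L^{-1}f(x,y)$, defined in a neighbourhood of $(x_0,y_0)$. The key point is that, for fixed $x$, solving $f(x,y)=0$ is the same as solving the fixed-point equation $\Phi(x,y)=y$. Since $(d_y\Phi)_{(x,y)}[v] = v - L^{-1}(d_yf)_{(x,y)}[v]$, we get $(d_y\Phi)_{(x_0,y_0)} = 0$, so by continuity of $(x,y)\mapsto (d_y\Phi)_{(x,y)}$ there are $\rho, r > 0$ with $\norm{(d_y\Phi)_{(x,y)}}_{\mathcal{L}(Y)} \le \tfrac12$ on $\bar B(x_0,\rho)\times \bar B(y_0,r)$; by the mean-value inequality for maps between Banach spaces, $\Phi(x,\cdot)$ is then $\tfrac12$-Lipschitz on $\bar B(y_0,r)$ for each such $x$.

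Next I would shrink $\rho$, using $\Phi(x_0,y_0)=y_0$ and continuity of $x\mapsto \Phi(x,y_0)$, so that $\norm{\Phi(x,y_0)-y_0}\le r/2$ for $\norm{x-x_0}\le\rho$; then for such $x$ one has $\norm{\Phi(x,y)-y_0}\le \tfrac12\norm{y-y_0}+\tfrac r2\le r$, so $\Phi(x,\cdot)$ maps the complete metric space $\bar B(y_0,r)$ into itself and is a contraction there, hence has a unique fixed point, which I denote $g(x)$. Taking $U(x_0):=B(x_0,\rho)$ and $U(x_0,y_0):=B(x_0,\rho)\times B(y_0,r)$ (intersected with the domain of $f$), the asserted equivalence $f(x,y)=0 \iff y=g(x)$ on $U(x_0,y_0)$ is precisely the uniqueness of the fixed point. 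Lipschitz continuity of $g$ then follows from $\norm{g(x)-g(x')} = \norm{\Phi(x,g(x))-\Phi(x',g(x'))} \le \norm{\Phi(x,g(x))-\Phi(x',g(x))} + \tfrac12\norm{g(x)-g(x')}$, whence $\norm{g(x)-g(x')}\le 2\norm{\Phi(x,g(x))-\Phi(x',g(x))}$, combined with local Lipschitz continuity of $\Phi$ in its first variable.

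The remaining and genuinely technical step — the one I expect to be the main obstacle — is to promote $g$ from Lipschitz to $C^1$. The candidate is $dg(x) = -\big[(d_yf)_{(x,g(x))}\big]^{-1}\circ(d_xf)_{(x,g(x))}$. To make sense of it, recall that the invertible elements of $\mathcal{L}(Y,Z)$ form an open set and inversion is continuous there; since $(x,y)\mapsto(d_yf)_{(x,y)}$ is continuous and equals $L$ at $(x_0,y_0)$, after a further shrinking of the neighbourhoods $(d_yf)_{(x,g(x))}$ stays invertible with $x\mapsto[(d_yf)_{(x,g(x))}]^{-1}$ continuous. Differentiability of $g$ at a given $x$ is then obtained by writing $0 = f(x+h,g(x+h)) - f(x,g(x))$, Taylor-expanding $f$ to first order at $(x,g(x))$, using the already established bound $\norm{g(x+h)-g(x)}=O(\norm{h})$ to control the remainder, and solving for $g(x+h)-g(x)$ via the invertibility just noted; continuity of $x\mapsto dg(x)$ then follows from continuity of $g$, of $d_xf$, of $d_yf$, and of operator inversion. (Alternatively, one could apply the Banach inverse function theorem to $F(x,y):=(x,f(x,y))$, whose differential at $(x_0,y_0)$ is a topological isomorphism, and extract $g$ from the local inverse; but since that theorem is not available in the present text, the direct contraction-mapping argument above is the self-contained option.)
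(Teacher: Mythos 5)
Your proposal is correct and complete: the reduction of $f(x,y)=0$ to the fixed-point equation for $\Phi(x,y)=y-L^{-1}f(x,y)$, the contraction estimate via $(d_y\Phi)_{(x_0,y_0)}=0$ and the mean-value inequality, the Lipschitz bound for $g$, and the promotion to $C^1$ through the openness of the invertibles in $\mathcal{L}(Y,Z)$ together with a first-order Taylor expansion are exactly the classical argument. The paper itself gives no proof of this lemma -- it is quoted as a standard result with a reference to a textbook -- and your write-up is precisely the standard proof found there, so it is a valid self-contained substitute; the only cosmetic remark is that the implicit function should map into $Y$ rather than $Z$, which is a typo in the paper's statement and not an issue in your argument.
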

Here $(d_yf)_{(x_0,y_0)}$ denotes the partial differential of $f$ with respect to $y \in Y$ computed at $(x_0,y_0)$.

\paragraph{Some notation.}
For $F \in E$, let $Z_K(F)$ be the set of zeros of $F$ lying in the compact $K \subset \Td$, i.e. $Z_K(F) = \{ x \in K: F(x) = 0 \}$. We denote by 
$\textrm{vol}(Z_K(F)):=\mathcal{H}_{d-k}(Z_K(F))$ the $(d-k)$-dimensional Hausdorff measure of $Z_K(F)$. As usual, we write  $\Jac_F(x) \in \mathrm{Mat}_{k,d}(\R)$ to indicate the Jacobian matrix of $F$ computed at $x$. 
We introduce the set $D_k := \{ J \subset [d]: \mathrm{card}(J) = k\} $, that is, the set of all subsets of $[d]$ that have cardinality $k$.
For $J  \in D_k$ and $x  \in \tor{d}$, we denote $x_J:=(x_l: l \in J)$ and $p_J(x):=\hat{x}_J:=(x_l: l \notin J)$. For $x_J$ as just defined, we write 
$\Jac_{F,x_J}$ for the $k \times k$ Jacobian matrix obtained when differentiating with respect to the variable $x_J$.
We say that $F$ is non-degenerate on $K$ if $\Jac_F(x_0)$ has full rank $k$ whenever $x_0 \in Z_F(K)$, that is, whenever there exists $J=J(x_0) \in D_k$ such that $\Jac_{F,x_J}(x_0)$ is invertible.

\medskip
We first prove the following lemma, adapted from \cite{AVP18}.
\begin{Lem}\label{LemEps}
Let $(F_n)_{n \geq 1} \subset E$ and $F \in E$ be such that $F_n \to F$ in the $C^1$ topology on $K\subset \Td$ as $n \to \infty$. Then, for $n$ sufficiently large and for every $\eps>0$, we have that $Z_K(F_n) \subset Z_K^{+\eps}(F)$, where
\begin{equation*}
Z_K^{+\eps}(F) := \{ x \in K: \mathrm{dist}_d(x,Z_K(F)) \leq \eps \} \ .
\end{equation*}
\end{Lem}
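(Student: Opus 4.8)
The plan is to argue by contradiction, exploiting the compactness of $K$. Suppose the statement fails for some $\eps>0$: then along a subsequence (which we do not relabel) there exist points $x_n\in Z_K(F_n)$ with $\mathrm{dist}_d(x_n,Z_K(F))>\eps$ for all $n$. Since $K\subset\Td$ is compact, after passing to a further subsequence we may assume $x_n\to x_\infty$ for some $x_\infty\in K$. I would then pass to the limit in the defining relations to locate a zero of $F$ near $x_\infty$, contradicting the distance lower bound.

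Concretely, first I would record that $F_n\to F$ in the $C^1$-topology on $K$ implies in particular uniform convergence $\sup_{x\in K}|F_n(x)-F(x)|\to 0$. Since $x_n\in Z_K(F_n)$, we have $F_n(x_n)=0$, hence
\begin{equation*}
|F(x_\infty)| \leq |F(x_\infty)-F(x_n)| + |F(x_n)-F_n(x_n)| \leq |F(x_\infty)-F(x_n)| + \sup_{x\in K}|F_n(x)-F(x)| \ .
\end{equation*}
Letting $n\to\infty$ and using continuity of $F$ together with $x_n\to x_\infty$, both terms on the right vanish, so $F(x_\infty)=0$, i.e. $x_\infty\in Z_K(F)$. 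But then $\mathrm{dist}_d(x_n,Z_K(F))\leq \mathrm{dist}_d(x_n,x_\infty)\to 0$, which contradicts $\mathrm{dist}_d(x_n,Z_K(F))>\eps$ for all $n$. This contradiction shows that for every $\eps>0$ there is $N=N(\eps)$ such that $Z_K(F_n)\subset Z_K^{+\eps}(F)$ for all $n\geq N$.

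I do not expect a serious obstacle here: the argument is a routine compactness-plus-uniform-convergence argument, and only the $C^0$-part of the convergence is actually used for this particular lemma (the $C^1$ hypothesis will matter for the subsequent continuity theorem, where one needs the Implicit Function Theorem \ref{LemIFT} and non-degeneracy). The only minor point to be careful about is that $K$, being a compact subset of $\Td$, is metrizable and sequentially compact for the quotient metric $\mathrm{dist}_d$, so extracting a convergent subsequence $x_n\to x_\infty\in K$ is legitimate; this follows since $K=\pi_d(\tilde K)$ for a compact $\tilde K\subset[0,1]^d$ and $\pi_d$ is continuous. With that in hand the estimate above closes the proof.
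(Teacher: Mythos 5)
Your proof is correct and follows essentially the same route as the paper: a contradiction argument that uses compactness of $K$ to extract a convergent subsequence of zeros $x_n \in Z_K(F_n)$ lying at distance $>\eps$ from $Z_K(F)$, and then shows the limit point is a zero of $F$. The only (harmless) difference is in how the estimate is closed: you split $F(x_\infty)-F_n(x_n)$ through $F(x_n)$ and use only uniform convergence plus continuity of $F$, whereas the paper splits through $F_{n}(x_\infty)$ and bounds $|F_{n}(x_\infty)-F_{n}(x_n)|$ via the (uniformly bounded) $C^1$ norms of the $F_n$; both versions work, and your observation that only the $C^0$ part of the hypothesis is actually needed for this lemma is accurate.
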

\begin{proof}
We proceed by contradiction. Assume that there exists 
$\eps>0$ such that $Z_K(F_n)$ is not a subset of $Z_K^{+\eps}(F)$ for $n$ big enough, i.e. such that for every $N\geq 1$, there exists $n \geq N$ and $x_n \in Z_K(F_n)$ with $\mathrm{dist}_d(x_n, Z_K(F)) >\eps$. As 
$(x_n)_{n \geq N} \subset K$ and $K$ is compact, we can extract a converging subsequence $(x_{n_j})_{j \geq 1}$; denote $x_{\infty}:= \lim_{j}x_{n_j} \in K$ and note that $\mathrm{dist}_d(x_{\infty},Z_K(F)) > \eps$ by assumption. Then, using the triangular inequality, we have for every $j\geq1$,
\begin{eqnarray}\label{conv}
\norm{F(x_{\infty})}_k &=& \norm{F(x_{\infty})-F_{n_j}(x_{n_j})}_k \notag \\
&\leq& 
\sum_{i=1}^k |F^{(i)}(x_{\infty})-F_{n_j}^{(i)}(x_{n_j})| \notag \\ 
&\leq& 
\sum_{i=1}^k |F^{(i)}(x_{\infty})-F_{n_j}^{(i)}(x_{\infty})|+
\sum_{i=1}^k |F_{n_j}^{(i)}(x_{\infty})-F_{n_j}^{(i)}(x_{n_j})| \notag \\
&\leq &k \cdot \norm{F-F_{n_j}}_K + \lambda \cdot \mathrm{dist}_d(x_{n_j},x_{\infty}) \ ,
\end{eqnarray}
where 
\begin{equation*}
\lambda := \sum_{i=1}^k  \sum_{l=1}^d \sup_{x \in K} |\partial_{l} F_{n_j}^{(i)}(x)| \leq k\cdot \max_{i=1,\ldots,k} \sum_{l=1}^d \sup_{x \in K} |\partial_{l} F_{n_j}^{(i)}(x)| \leq k \cdot \norm{F_{n_j}}_K < \infty \ ,
\end{equation*}
because $(F_n)_{n\geq1} \subset E$. Letting $j\to \infty$ in \eqref{conv} leads to $F(x_{\infty})=0$, since  $F_{n_j} \to F$ in the $C^1$ topology on $K$ and $x_{n_j} \to x_{\infty}$. Hence $x_{\infty} \in Z_K(F)$, but this contradicts the fact that $\mathrm{dist}_d(x_{\infty},Z_K(F)) \geq \eps>0$. 
\end{proof}

We now prove the continuity result about nodal volumes. The strategy of our proof is inspired by the proof in \cite{AVP18}.
\begin{Thm}[Continuity of the nodal volume]\label{ThmCont}
Let $(F_n)_{n \geq 1} \subset E$ and $F \in E$ be such that $F$ is non-degenerate on a compact $K \subset \Td$ and $F_n \to F$ in the $C^1$ topology on $K$ as $n \to \infty$. Then, as $n \to \infty$, 
\begin{equation*}
\mathrm{vol}(Z_K(F_n)) \to \mathrm{vol}(Z_K(F)) \ .
\end{equation*}
\end{Thm}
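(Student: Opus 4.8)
The plan is to reduce the global statement on the torus to a finite collection of local statements, each of which follows from the Implicit Function Theorem (Lemma \ref{LemIFT}) together with the $C^1$-convergence of the $F_n$, and then to patch these local results together using compactness and Lemma \ref{LemEps}. First I would fix a point $x_0 \in Z_K(F)$. By non-degeneracy of $F$ on $K$, there exists $J = J(x_0) \in D_k$ such that the $k\times k$ minor $\Jac_{F,x_J}(x_0)$ is invertible; writing the torus coordinates as $(x_J, \hat x_J)$ and viewing $F$ as a $C^1$ map in these two groups of variables, the partial differential with respect to $x_J$ at $x_0$ is an isomorphism of $\R^k$. Applying Lemma \ref{LemIFT} with $X = \R^{d-k}$ (the $\hat x_J$-variable), $Y = Z = \R^k$ yields a neighbourhood $U(x_0)$ and a $C^1$ function $g_{x_0}$ on the projection $p_J(U(x_0))$ whose graph coincides with $Z(F)$ inside $U(x_0)$. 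On a possibly smaller neighbourhood, the minor $\Jac_{F_n,x_J}$ is invertible for all large $n$ (by $C^1$-convergence and continuity of the determinant), and applying the Implicit Function Theorem to each $F_n$ produces $C^1$ graph functions $g_{x_0,n}$ defined on the same projection domain, with $g_{x_0,n} \to g_{x_0}$ in the $C^1$-norm — this last convergence is the quantitative heart of the argument, and I would extract it from the explicit formula $\partial g = -(\Jac_{F,x_J})^{-1}\Jac_{F,\hat x_J}$ for implicitly defined functions, together with uniform bounds on the inverses.

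Second, I would translate the $C^1$-convergence $g_{x_0,n}\to g_{x_0}$ into convergence of the $(d-k)$-dimensional Hausdorff measures of the two graphs. On a coordinate patch the nodal set is a $C^1$ graph over an open set $V\subset\R^{d-k}$, so its $(d-k)$-volume is
\begin{equation*}
\mathrm{vol}(Z_{U(x_0)}(F)) = \int_V \sqrt{\det\big(\Id_{d-k} + (Dg_{x_0})(Dg_{x_0})^T\big)}\, d\hat x_J,
\end{equation*}
and similarly for $F_n$ with $g_{x_0,n}$. Since $Dg_{x_0,n}\to Dg_{x_0}$ uniformly on compact subsets of $V$, the integrands converge uniformly and one concludes $\mathrm{vol}(Z_{U(x_0)}(F_n)) \to \mathrm{vol}(Z_{U(x_0)}(F))$ by dominated (or even uniform) convergence, after shrinking to a compact sub-patch to control the domain of integration. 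I would also need the complementary fact that outside a neighbourhood of $Z_K(F)$ the zero sets $Z_K(F_n)$ carry no mass: this is exactly the content of Lemma \ref{LemEps}, which guarantees $Z_K(F_n)\subset Z_K^{+\varepsilon}(F)$ for $n$ large.

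Third comes the patching. By compactness of $Z_K(F)$ (a closed subset of the compact $K$), finitely many of the neighbourhoods $U(x_1),\dots,U(x_m)$ cover $Z_K(F)$; choose $\varepsilon>0$ small enough that $Z_K^{+\varepsilon}(F) \subset \bigcup_i U(x_i)$, and fix a $C^1$ (or just continuous, but a smooth one is cleanest) partition of unity $\{\varphi_i\}$ subordinate to this cover. For $n$ large, Lemma \ref{LemEps} gives $Z_K(F_n)\subset Z_K^{+\varepsilon}(F)$, so
\begin{equation*}
\mathrm{vol}(Z_K(F_n)) = \sum_{i=1}^m \int_{Z_K(F_n)} \varphi_i \, d\mathcal{H}_{d-k}, \qquad
\mathrm{vol}(Z_K(F)) = \sum_{i=1}^m \int_{Z_K(F)} \varphi_i \, d\mathcal{H}_{d-k},
\end{equation*}
and each summand converges by the local graph analysis of the previous step (the weight $\varphi_i$ is harmless, being uniformly continuous and supported in one patch). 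Summing the $m$ limits finishes the proof. The technical care needed is to handle overlaps of the patches consistently — but the partition of unity absorbs this — and to make sure the domains of integration for $F_n$ and $F$ can be taken to be the \emph{same} open set $p_J(U(x_i))$, which is precisely what the Implicit Function Theorem with a common neighbourhood delivers once $n$ is large. I expect the main obstacle to be establishing the $C^1$-convergence $g_{x_0,n}\to g_{x_0}$ of the implicit graph functions with enough uniformity that the Jacobian-factor integrands converge uniformly on a fixed compact domain; everything else is bookkeeping with compactness and partitions of unity. A minor additional subtlety is the degenerate boundary set of lower dimension (where $F$ vanishes together with some minors) — but non-degeneracy of $F$ on $K$ rules this out by hypothesis, so $Z_K(F)$ is a genuine $C^1$ submanifold (with boundary in $\partial K$) and the Hausdorff measure is just the induced Riemannian volume.
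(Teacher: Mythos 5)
Your proposal is correct and follows the same overall strategy as the paper: cover $Z_K(F)$ by finitely many implicit-function-theorem graph patches, confine $Z_K(F_n)$ to this cover via Lemma \ref{LemEps}, and compare the graph-area integrals using the $C^1$-convergence of the implicit graph functions, which in both arguments ultimately comes from the formula $\Jac_{X,\hat x_J}=-[\Jac_{F,x_J}]^{-1}\Jac_{F,\hat x_J}$ (formula \eqref{JacX} in the paper) together with uniform convergence of the partial derivatives of $F_n$. The two genuine (but minor) differences are the following. First, where you apply the finite-dimensional IFT separately to each $F_n$ and then must argue by hand that the graph functions $g_{x_0,n}$ live on a common projection domain with $g_{x_0,n}\to g_{x_0}$ in $C^1$ — the step you correctly identify as the quantitative heart — the paper instead applies the Banach-space IFT (Lemma \ref{LemIFT}) to the evaluation map $\phi(f,x)=f(x)$ on $E\times\Td$, so that a single application produces an implicit function $X_0(f,\hat x_{J_0})$ defined for all $f$ in a $C^1$-neighbourhood $U(F)$ and all $\hat x_{J_0}$ in a fixed spatial neighbourhood; since $F_n\in U(F)$ for $n$ large, the common domain and the uniformity in $n$ come for free, which is exactly what your per-$n$ approach has to supply via a quantitative IFT or a uniform-contraction argument. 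Second, you patch with a partition of unity subordinate to the finite cover, whereas the paper uses an inclusion–exclusion decomposition over intersections of the patch closures; both devices handle the overlaps, and the partition of unity is arguably the cleaner of the two (it also absorbs the boundary effects of patches more gracefully, since each weight is compactly supported in its patch). With the uniformity step made precise as you indicate, your argument is complete and delivers the same conclusion.
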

\begin{proof}
Denote by $\phi: E \times \tor{d} \to \R^k$ the evaluation map 
$\phi(f,x) := f(x)$. Since $F$ is non-degenerate, for all $x_0 \in K$ such that $\phi(F,x_0)=0$, there exists $J_0=J_0(x_0) \in D_k$ such that 
$\Jac_{F,x_{J_0}}(x_0)$ is invertible, that is, the linear map $(d_{x_{J_0}}\phi)_{(F,x_0)}: \tor{k} \to \R^k$
is an isomorphism. Therefore, by the Implicit Function Theorem  stated in Lemma \ref{LemIFT}, there exist open neighborhoods $U(F) \subset E$ of $F$, $U((x_0)_{J_0}) \subset \tor{k}$ of $(x_0)_{J_0}$ and $U((\hat{x_0})_{J_0}) \subset \tor{d-k}$ of $(\hat{x_0})_{J_0}$ 
as well as a function $X_0: E \times \tor{d-k} \to \R^k$
of class $C^1$ such that 
\begin{gather}\label{IFT}
\big( f \in U(F), x_{J_0} \in U((x_0)_{J_0}) , 
\hat{x}_{J_0} \in U((\hat{x_0})_{J_0})   \big)
\Rightarrow
\big( \phi(f,x)=0 \iff  x_{J_0} = X_0(f,\hat{x}_{J_0})\big).
\end{gather} 
Now denote $W_{0}=W_{0}(J_0) \subset \Td$ the set of points of $x \in \Td$ such that 
$x_{J_0} \in U((\hat{x_0})_{J_0})$ and $\hat{x}_{J_0} \in U((\hat{x_0})_{J_0}) $.
Then, choosing $f=F$ in \eqref{IFT}, we obtain that $Z_K(F)$ restricted to $W_{0}$ is the $(d-k)$-dimensional submanifold of $\Td$
\begin{equation*}
Z_K(F)\cap W_{0} = \left\{ x \in W_{0}: x_{J_0} = X_0(F,\hat{x}_{J_0}) 
= (X_0^{(1)}(F,\hat{x}_{J_0}),\ldots, X_0^{(k)}(F,\hat{x}_{J_0})) \right\} 
\end{equation*}
parametrized by 
\begin{equation}\label{par}
g_{0}=g_0(J_0) : \tor{d-k} \to \tor{d-k} \times \R^{k} \ , \quad \hat{x}_{J_0} \mapsto (\hat{x}_{J_0},X_0(F,\hat{x}_{J_0})) \ .
\end{equation}
Exploiting the compactness of $Z_K(F)$ together with the Implicit Function Theorem, there is $m\geq 1$ such that for every $j\in [m]$, there are $x_j \in Z_K(F), J_j=J_j(x_j) \in D_k$ and $W_{j}=W_j(J_j) \subset \Td$, such that 
\begin{equation*}
Z_K(F) \subset \bigcup_{j=1}^m W_{j} ,
\end{equation*}
and moreover, for every $j \in [m]$, the Implicit Function Theorem ensures the existence of an implicit function $X_j$ of class $C^1$ that yields a local parametrization 
\begin{equation*}
g_{j}=g_{j}(J_j):\tor{d-k} \to \tor{d-k} \times \R^{k} \ , \quad \hat{x}_{J_j} \mapsto (\hat{x}_{J_j},X_j(F,\hat{x}_{J_j} ))  
\end{equation*}
of $Z_K(F) \cap W_{j}$. 
Hence, if $T=\{j_1,\ldots,j_r\} \subset [m]$ for $r\leq m$ and $\bigcap_{j\in T}W_{j} \neq \emptyset$, then 
\begin{equation}\label{Gamma}
\Gamma_T(F):=Z_K(F) \cap \bigg( \bigcap_{j\in T} \overline{W_{j}} \bigg)
\end{equation}
describes a $(d-k)$-dimensional surface whose volume is computed when integrating the corresponding volume element $y \mapsto \sqrt{\det(\Jac^T_{g_{j_1}}(y) \Jac_{g_{j_1}}(y))}$ (see e.g. \cite{HJE17} Section 10.4). An application of the chain rule gives
\begin{eqnarray*}
\textrm{vol}(\Gamma_T(F)) = \int_{ Y_T} 
\sqrt{\det(\Jac^T_{g_{j_1}}(y) \Jac_{g_{j_1}}(y))} \ dy  
= \int_{Y_T}\sqrt{1+ \sum_{i\in[k]} \norm{ \nabla X_{j_1}^{(i)}(F,y)}_k^2 } \ dy \ ,
\end{eqnarray*}
where the region of integration is $Y_T=p_{J_1} \big(\bigcap_{j \in T} \overline{W_{j}} \big)$.
The total volume of $Z_K(F)$ is then computed by
\begin{equation}\label{vol}
\textrm{vol}(Z_K(F)) = \sum_{\emptyset \neq T\subset [m]} (-1)^{\mathrm{card}(T)}\textrm{vol}(\Gamma_T(F)) \ .
\end{equation}
Now we can find $\eps >0$ small enough such that 
$Z^{+\eps}_K(F) \subset \bigcup_{j=1}^m W_{j}$ and in view of Lemma \ref{LemEps}, it follows that $Z_K(F_n) \subset \bigcup_{j=1}^m W_{j}$ for $n$ sufficiently large, so that  
\begin{equation*}
Z_K(F_n) = \bigcup_{j=1}^m \big(Z_K(F_n) \cap \overline{W_{j}}\big) \ .
\end{equation*}
Since for $T=\{j_1,\ldots,j_r\} \subset [m]$, $\Gamma_T(F_n)$ as defined in \eqref{Gamma} identifies with a $(d-k)$-dimensional surface of volume $\mathrm{vol}(\Gamma_T(F_n))$, the total nodal volume of $F_n$ in $K$ is given by
\begin{equation*}
\textrm{vol}(Z_K(F_n)) = \sum_{\emptyset \neq T\subset [m]} (-1)^{\mathrm{card}(T)}\textrm{vol}(\Gamma_T(F_n)) \ .
\end{equation*}
Using Lipschitz continuity of $x \mapsto \sqrt{1+x}$ for $x>0$, it follows that
\begin{eqnarray*}
&& \big|\textrm{vol}(Z_K(F_n))  - \textrm{vol}(Z_K(F)) \big| \\
&\leq& \sum_{\emptyset \neq T\subset [m]}
\int_{Y_T} \left| \sqrt{1+ \sum_{i\in[k]} \norm{ \nabla X_{j_1}^{(i)}(F_n,y)}_k^2 }
- \sqrt{1+ \sum_{i\in[k]} \norm{\nabla X_{j_1}^{(i)}(F,y)}_k^2 }\right| \ dy \\
&\leq& \sum_{\emptyset \neq T\subset [m]}
\int_{Y_T}  \sum_{i\in[k]} 
\bigg| \norm{ \nabla X_{j_1}^{(i)}(F_n,y)}_k^2 
- \norm{ \nabla X_{j_1}^{(i)}(F,y)}_k^2 \bigg| \ dy \ .
\end{eqnarray*}
Now, using the reversed triangular inequality $\big|\norm{u}-\norm{v}\big|\leq \norm{u-v}$ yields
\begin{eqnarray*}
&&\left| \norm{ \nabla X_{j_1}^{(i)}(F_n,y)}_k^2 
- \norm{ \nabla X_{j_1}^{(i)}(F,y)}_k^2 \right| \\
&=&
\left| \norm{ \nabla X_{j_1}^{(i)}(F_n,y)}_k 
- \norm{ \nabla X_{j_1}^{(i)}(F,y)}_k\right| \cdot 
\left( \norm{ \nabla X_{j_1}^{(i)}(F_n,y)}_k 
+ \norm{ \nabla X_{j_1}^{(i)}(F,y)}_k\right)\\
&\leq& \norm{ \nabla X_{j_1}^{(i)}(F_n,y) - \nabla X_{j_1}^{(i)}(F,y) }_k \cdot \left( \norm{ \nabla X_{j_1}^{(i)}(F_n,y)}_k 
+ \norm{ \nabla X_{j_1}^{(i)}(F,y)}_k\right) \ .
\end{eqnarray*}
In order to conclude, it suffices to show that the first factor converges to $0$ uniformly on $Y_T$ as $n\to \infty$.
Consider the equation 
\begin{gather}\label{EqF}
F(\hat{y}_{J_1}, y_{J_1}) = F( \hat{y}_{J_1},X_{j_1}(F,\hat{y}_{J_1}) )=0,
\end{gather}
where, for the vector $(\hat{y}_{J_1}, y_{J_1})$ it is implicitly understood that coordinates with indices in $J_1$ are located in the corresponding position. Differentiating \eqref{EqF}
with respect to the coordinates $\hat{y}_{J_1}$, we obtain 
\begin{gather*}
\Jac_{F,\hat{y}_{J_1}}(\hat{y}_{J_1}, y_{J_1}) \cdot \Id_{d-k} + 
\Jac_{F,j_{J_1}}(\hat{y}_{J_1}, y_{J_1} ) \cdot 
\Jac_{X_{j_1},\hat{y}_{J_1}}(F,\hat{y}_{J_1})=0 ,
\end{gather*}
where the zero in the right-hand side denotes the zero $k\times (d-k)$ matrix. Therefore, since $\Jac_{F,y_{J_1}}(\hat{y}_{J_1}, y_{J_1})$ is invertible,
\begin{equation}\label{JacX}
\Jac_{X_{j_1},\hat{y}_{J_1}}(F,\hat{y}_{J_1}) = - \big[\Jac_{F,y_{J_1}}(\hat{y}_{J_1}, y_{J_1})\big]^{-1} \cdot 
\Jac_{F,\hat{y}_{J_1}}(\hat{y}_{J_1}, y_{J_1})  \ .
\end{equation}
Since $F_n$ converges to $F$ in the $C^1$ topology, we have that, for $n$ sufficiently large,  \eqref{JacX} holds true for $F_n$.
Writing out the $i$-th row for $i \in [k]$ of this relation, 
and using the fact that all the partial derivatives of $F_n$ converge uniformly to the corresponding partial derivatives 
of $F$ (as $F_n \to F$), we conclude that $\norm{ \nabla X_{j_1}^{(i)}(F_n,\hat{y}_{J_1}) - \nabla X_{j_1}^{(i)}(F,\hat{y}_{J_1}) }_k $ converges to zero uniformly on $Y_T$ as $n \to \infty$, proving the statement. 
\end{proof}

\section{Singular and non-singular cubes}\label{AppSing}
\subsection{Definitions and ancillary results}
\subsubsection{Singular and non-singular pairs of points and cubes}\label{Partition}
For every $n \in S_3$, we partition the torus into a disjoint union of cubes of length $1/M$, where $M=M_n\geq1$ is an integer proportional to $\sqrt{E_n}$ as follows: Let $Q_0 = [0,1/M)^3$; then we consider the partition of $\T$ obtained by translating $Q_0$ in the directions $k/M,k \in \Z^3$. Denote by $\mathcal{P}(M)$ the partition of $\T$ that is obtained in this way. By construction, $\textrm{card}({\mathcal{P}(M)})=M^3$.
By linearity, we can decompose the random variable $L_n^{(\ell)}$ as
\begin{equation}\label{decomp}
L_n^{(\ell)} = \sum_{Q \in \mathcal{P}(M)} L_n^{(\ell)}(Q) \ , \quad \ell \in [3]
\end{equation}
where $L_n^{(\ell)}(Q)$ denotes the nodal volume restricted to $Q$. 
From now on, we fix a small number $0 < \eta < 10^{- 10}$. In the forthcoming definition, we  define singular pairs of points and cubes. \begin{Def}[Singular pairs of points and cubes]\label{DefSing}
A pair of points $(x, y) \in \T \times \T $ is called a \textit{singular pair of points} if one of the following inequalities is satisfied:
\begin{equation*}
|r_n(x - y)| > \eta \ ,  \quad
|\partial_i r_n(x - y)| > \eta \sqrt{E_n/3} \ ,  \quad
|\partial_{ij} r_n (x - y)| > \eta E_n/3 \end{equation*}
for $(i, j) \in [3]\times[3]$.
A pair of cubes $(Q, Q') \in \mathcal{P}(M)^2 $ is called a \textit{singular pair of cubes} if the product $Q\times Q'$ contains a singular pair of points. We denote by $\mathcal{S}=\mathcal{S}(M) \subset \mathcal{P}(M)^2$ the set of singular pairs of cubes. 
A pair of cubes $(Q,Q') \in \mathcal{S}^c$ is called \textit{non-singular}. By construction, $\mathcal{P}(M)^2 = \mathcal{S} \cup \mathcal{S}^c$. 
\end{Def}
For fixed $Q \in \mathcal{P}(M)$, let us furthermore denote by $\mathcal{B}_Q$ the union  over all cubes $Q' \in \mathcal{P}(M)$ such
that $(Q,Q') \in \mathcal{S}$. In particular, analogously as in Lemma 6.3 of \cite{DNPR16}, we have
\begin{equation}\label{LebBQ}
\Leb(\mathcal{B}_Q) = O(\mathcal{R}_n(6)) ,
\end{equation} 
where $\mathcal{R}_n(6)=\int_{\T} r_n(z)^6 dz$.
We write 
\begin{eqnarray*}
\tilde{r}_{a,b}(x-y):=\E{\tilde{\partial}_a T_n^{(i)}(x) \cdot \tilde{\partial}_b T_n^{(i)}(y)} \ , \quad a,b=0,1,2,3 \ , \quad i \in [\ell] \ ,
\end{eqnarray*}
where, we recall that $\tilde{\partial_{a}}=(E_n/3)^{-1/2}\partial_a$ 
with the convention $\tilde{\partial_0}:=\Id$. Note that $\tilde{r}_{0,0}=r_n$ and that we dropped the dependence on $n$ in order to simplify notations. We need the following lemma:
\begin{Lem}\label{rab}
For every $a,b \in \{0,1,2,3\}$ and every integer $m \geq 1$,   
\begin{eqnarray}
\int_{\T}  \tilde{r}_{a,b}(z)^{2m} \ dz 
\ll \int_{\T}  r_n(z)^{2m} \ dz =\mathcal{R}_n(2m) ,
\end{eqnarray}
where the constant involved in the '$\ll$' notation depends only on $m$.
\end{Lem}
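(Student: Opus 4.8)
\textbf{Proof plan for Lemma \ref{rab}.}
The plan is to express the left-hand side in terms of spectral quantities using the orthogonality relation \eqref{ort}, exactly as was done for $\mathcal{R}_n(m)$ in the body of the paper, and then compare the resulting sum with the corresponding sum for $\mathcal{R}_n(2m)$ termwise. First I would write out the normalised correlations explicitly: for $a \in [3]$ one has $\tilde{\partial_a}T_n^{(i)}(x) = i\sqrt{3/(n\Nn)}\sum_{\lambda\in\Lambda_n}\lambda_a a_{i,\lambda}e_{\lambda}(x)$ (cf.\ \eqref{normalization}), so that $\tilde{r}_{a,b}(z) = \E{\tilde{\partial_a}T_n^{(i)}(0)\tilde{\partial_b}T_n^{(i)}(z)}$ becomes a sum of the form $c_{ab}\,\Nn^{-1}\sum_{\lambda\in\Lambda_n} w_{ab}(\lambda)\,e_{\lambda}(z)$, where $w_{ab}(\lambda)$ is one of $1$, $\tfrac{3}{n}\lambda_a\lambda_b$ (or $\pm i$ times such a monomial), and in all cases $|w_{ab}(\lambda)| \leq 3$ uniformly in $\lambda\in\Lambda_n$ (since $\lambda_a^2\leq n$). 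The case $a=b=0$ recovers $\tilde r_{0,0}=r_n$.

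Next I would raise to the power $2m$, expand, and integrate over $\T$. Using \eqref{ort},
\begin{equation*}
\int_{\T}\tilde r_{a,b}(z)^{2m}\,dz = \frac{|c_{ab}|^{2m}}{\Nn^{2m}}\sum_{(\lambda^{(1)},\ldots,\lambda^{(2m)})\in\mathcal{C}_n(2m)} \prod_{t=1}^{2m} w_{ab}(\lambda^{(t)}) \ ,
\end{equation*}
where $\mathcal{C}_n(2m)$ is the set of $2m$-correlations from \eqref{Cm}. Since $|c_{ab}|\leq 1$ and $|w_{ab}(\lambda^{(t)})|\leq 3$ for every $t$, the absolute value of each summand is bounded by $3^{2m}$, hence
\begin{equation*}
\bigg|\int_{\T}\tilde r_{a,b}(z)^{2m}\,dz\bigg| \leq \frac{3^{2m}\,\mathrm{card}(\mathcal{C}_n(2m))}{\Nn^{2m}} = 3^{2m}\,\mathcal{R}_n(2m) \ ,
\end{equation*}
which is the claimed bound with constant $3^{2m}$ depending only on $m$. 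Since the integrand $\tilde r_{a,b}(z)^{2m}$ is real and the left-hand side equals this integral (the imaginary contributions cancel by the symmetry $\lambda\mapsto-\lambda$, or simply because $\tilde r_{a,b}$ is real-valued as a covariance of real fields), the absolute value on the left may be dropped.

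The argument is essentially bookkeeping, so there is no serious obstacle; the only point requiring a little care is to confirm that the weight $w_{ab}(\lambda)$ attached to each frequency in $\tilde r_{a,b}$ is genuinely bounded by an absolute constant on the support $\Lambda_n$. This follows from $\lambda_1^2+\lambda_2^2+\lambda_3^2=n$, which gives $|\lambda_a\lambda_b|\leq n$ and hence $\tfrac{3}{n}|\lambda_a\lambda_b|\leq 3$; the same relation handles the mixed second-derivative correlations $\tilde r_{a,b}$ with $a,b\in[3]$ that appear when $a$ or $b$ indexes a derivative. The factor $i$ appearing for first-order derivatives only affects the phase and is absorbed into $|w_{ab}|$. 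This completes the plan.
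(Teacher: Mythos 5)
Your proposal is correct and follows essentially the same route as the paper: expand $\tilde r_{a,b}$ as a trigonometric sum over $\Lambda_n$ with weights bounded by an absolute constant (using $\lambda_1^2+\lambda_2^2+\lambda_3^2=n$), raise to the power $2m$, integrate with the orthogonality relation \eqref{ort}, and bound the resulting sum over $\mathcal{C}_n(2m)$ termwise by $3^{2m}\,\mathrm{card}(\mathcal{C}_n(2m))/\Nn^{2m}=3^{2m}\mathcal{R}_n(2m)$. The paper's proof does exactly this (treating the pure second-derivative case explicitly and the mixed case "similarly"), so no further comment is needed.
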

\begin{proof} 
By definition, we have
\begin{eqnarray*}
\tilde{r}_{a,b}(z) = r_n(z) \ind{a=b=0} +  \partial_{a}\partial_{b} \frac{r_n(z)}{\sqrt{E_n/3}} 
\ind{a \neq 0, b=0  \vee a=0,b\neq0}  
+ \partial_{a}\partial_{b} \frac{r_n(z)}{ E_n/3} \ind{a \neq 0 \wedge b\neq0} .  
\end{eqnarray*}
If $a=b=0$, the statement is clearly true. If $a\neq 0$ and $ b\neq 0$, we have 
\begin{eqnarray*}
\tilde{r}_{a,b}(z)  =  \frac{4\pi^2}{\Nn} 
\sum_{\lambda \in \Lambda_n}
\frac{\lambda_a}{\sqrt{E_n/3}} \frac{\lambda_b}{\sqrt{E_n/3}} e_{\lambda}(z) \ ,  
\end{eqnarray*}
so that for $m\geq 1$,  
\begin{eqnarray*}
&& \int_{\T} \tilde{r}_{a,b}(z)^{2m}  dz  = \frac{(4\pi^2)^{2m}}{\Nn^{2m}}\sum_{\lambda^{(1)},\ldots,\lambda^{(2m)} \in \Lambda_n} 
\frac{\lambda^{(1)}_a}{\sqrt{E_n/3}} \frac{\lambda^{(1)}_b}{\sqrt{E_n/3}} \cdots 
\frac{\lambda^{(2m)}_a}{\sqrt{E_n/3}} \frac{\lambda^{(2m)}_b}{\sqrt{E_n/3}} 
\int_{\T} e_{\lambda^{(1)}+\ldots+\lambda^{(2m)}}(z) dz  \\
&=&  \frac{(4\pi^2)^{2m}}{\Nn^{2m}} \frac{3^{2m}}{E_n^{2m}} \sum_{\lambda^{(1)},\ldots,\lambda^{(2m)} \in \Lambda_n} 
\lambda^{(1)}_a  \lambda^{(1)}_b \cdots 
\lambda^{(2m)}_a  \lambda^{(2m)}_b \cdot   \ind{ \lambda^{(1)}+\ldots+\lambda^{(2m)} = 0} \\
&\leq& C_m \frac{\mathrm{card}( \mathcal{C}_n(2m))}{\Nn^{2m}}   = C_m \mathcal{R}_n(2m) \ , \quad  C_m=3^{2m}
\end{eqnarray*}
where the last bound follows since $ \lambda^{(1)}_a  \lambda^{(1)}_b \cdots 
\lambda^{(2m)}_a  \lambda^{(2m)}_b \leq \sqrt{n}^{2m} \sqrt{n}^{2m}=n^{2m}$. The remaining case is shown in a similar way.
\end{proof}

\subsubsection{A diagram formula}\label{Diagram}
The proofs to be presented in the forthcoming sections are based on the following diagram formula. Such a formula is  counterpart to Proposition 8.1 in \cite{DNPR16}, and is based on the Leonov-Shiryaev formulae (see e.g. Proposition 3.2.1 \cite{PT11}). We introduce some notation:
For $i\in [\ell]$, write 
\begin{equation}\label{notX}
\big(X^{(i)}_0(x),X^{(i)}_1(x),X^{(i)}_2(x),X^{(i)}_3(x)\big) 
:= \big(T_n^{(i)}(x), \tilde{\nabla} T_n^{(i)}(x)\big) \ , \quad x \in \T
\end{equation}
and consider families of non-negative integers  
\begin{eqnarray*}
p^{(i)} = \big\{ p^{(i)}_j :j=0,1,2,3\big\} \ , \quad q^{(i)} = \big\{ q^{(i)}_j :j=0,1,2,3\big\}
\end{eqnarray*}
for which we write
\begin{eqnarray}
S(p^{(i)}) := \sum_{j=0}^3 p^{(i)}_j \ , \quad
S(q^{(i)}) := \sum_{j=0}^3 q^{(i)}_j \ .
\end{eqnarray}
For $m \in\{p^{(i)},q^{(i)}\}$, we also define the vector of $\R^{m_0}\times \R^{m_1}\times \R^{m_2} \times \R^{m_3}$ given by
\begin{gather*}
X_{m}^{(i)}(x) := \left([X^{(i)}_0(x)]_{m_0},[X^{(i)}_1(x)]_{m_1},[X^{(i)}_2(x)]_{m_2},[X^{(i)}_3(x)]_{m_3}\right) \ , 
\end{gather*}
where for an integer $n\geq1$ and a real number $N$, we write $[N]_n:=(N,\ldots,N) \in \R^{n}$.
\begin{Prop}\label{Diagram}
For $i \in [\ell]$, consider families of non-negative integers  
$p^{(i)} = \{ p^{(i)}_j :j=0,1,2,3\}$ and $q^{(i)} = \{ q^{(i)}_j :j=0,1,2,3\}$ as above, as well as $x,y \in \T$.  
Then,
\begin{gather*}
\E{ \prod_{i=1}^{\ell}  \prod_{j=0}^3 
H_{p^{(i)}_j}\left(X_j^{(i)}(x)\right) \cdot H_{q^{(i)}_j}\left(X_j^{(i)}(y)\right)}   
=\prod_{i=1}^{\ell} \E{  \prod_{j=0}^3 
H_{p^{(i)}_j}\left(X_j^{(i)}(x)\right) \cdot H_{q^{(i)}_j}\left(X_j^{(i)}(y)\right)}\\
= \prod_{i=1}^{\ell} \ind{S(p^{(i)})=S(q^{(i)})} \sum_{\sigma_i}
\prod_{j=1}^{S(p^{(i)})} \E{\left(X^{(i)}_{p^{(i)}}\right)_j(x) \cdot \left(X^{(i)}_{q^{(i)}}\right)_{\sigma_i(j)}(y)} \ ,
\end{gather*}
where the sum runs over all permutations $\sigma_i$ of  $\{1,\ldots, S(p^{(i)})\}$. 
\end{Prop}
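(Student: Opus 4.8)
The plan is to obtain the formula from two ingredients only: the stochastic independence of the fields $T_n^{(1)},\dots,T_n^{(\ell)}$, and the classical diagram (Leonov--Shiryaev) formula for expectations of products of Hermite polynomials of jointly Gaussian variables, in the form of Proposition 3.2.1 of \cite{PT11}.

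First I would establish the factorisation $\E{\prod_{i=1}^{\ell}(\cdots)}=\prod_{i=1}^{\ell}\E{(\cdots)}$. Since $T_n^{(1)},\dots,T_n^{(\ell)}$ are independent by construction, the $\sigma$-algebras generated by $\big(T_n^{(i)}(x),\tilde{\nabla}T_n^{(i)}(x),T_n^{(i)}(y),\tilde{\nabla}T_n^{(i)}(y)\big)$, $i\in[\ell]$, are mutually independent; as the factor $\prod_{j=0}^{3}H_{p^{(i)}_j}(X^{(i)}_j(x))\,H_{q^{(i)}_j}(X^{(i)}_j(y))$ is measurable with respect to the $i$-th of these, the expectation of the product equals the product of the expectations. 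This reduces matters to computing, for a fixed $i$, the expectation of a product of Hermite polynomials evaluated at the components of the centred Gaussian vector $\big(X^{(i)}_0(x),\dots,X^{(i)}_3(x),X^{(i)}_0(y),\dots,X^{(i)}_3(y)\big)\in\R^8$.

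For fixed $i$ I would then invoke the diagram formula: the expectation above equals the sum, over all perfect matchings of the stubs of the associated multigraph (with $p^{(i)}_j$ stubs attached to the factor $H_{p^{(i)}_j}(X^{(i)}_j(x))$ and $q^{(i)}_j$ stubs attached to $H_{q^{(i)}_j}(X^{(i)}_j(y))$) that contain no flat edge --- i.e. no edge joining two stubs of the same factor --- of the product over the edges of the matching of the covariances of the two incident variables. Here I would use that, at a single point, an ARW and its scaled partial derivatives form a standard Gaussian vector: $\E{X^{(i)}_a(x)X^{(i)}_b(x)}=\ind{a=b}$, and likewise at $y$. Indeed $\E{T_n(x)\partial_jT_n(x)}=\tfrac12\partial_{x_j}\big(\E{T_n(x)^2}\big)=0$ because $\E{T_n(x)^2}=r_n(0)\equiv 1$, while $\E{\partial_jT_n(x)\partial_kT_n(x)}$ is a constant multiple of $\tfrac{1}{n\Nn}\sum_{\lambda\in\Lambda_n}\lambda_j\lambda_k=\tfrac13\ind{j=k}$ by \eqref{sumkj}, and the normalisation in \eqref{normalization} turns these into $\ind{a=b}$. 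Consequently, any diagram containing an edge between two stubs that are both at $x$ (or both at $y$) is either flat --- hence excluded --- or joins two distinct components $X^{(i)}_a(\cdot),X^{(i)}_b(\cdot)$ with $a\neq b$ and therefore contributes a factor $0$. Thus only matchings pairing each of the $S(p^{(i)})$ stubs at $x$ with one of the $S(q^{(i)})$ stubs at $y$ survive. Such matchings exist only when $S(p^{(i)})=S(q^{(i)})$, in which case, labelling the $x$-stubs $1,\dots,S(p^{(i)})$ so that they enumerate the coordinates of the vector $X^{(i)}_{p^{(i)}}(x)$ and similarly the $y$-stubs those of $X^{(i)}_{q^{(i)}}(y)$, they are in bijection with the permutations $\sigma_i$ of $\{1,\dots,S(p^{(i)})\}$, and the value of the diagram indexed by $\sigma_i$ is exactly $\prod_{j=1}^{S(p^{(i)})}\E{(X^{(i)}_{p^{(i)}})_j(x)\,(X^{(i)}_{q^{(i)}})_{\sigma_i(j)}(y)}$. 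Summing over $\sigma_i$ and multiplying over $i$ yields the claim.

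The only genuinely delicate point is the combinatorial bookkeeping in the last step: matching the description of the surviving diagrams (perfect matchings between two ordered families of stubs) with the indexing $(X^{(i)}_{p^{(i)}})_j,\ (X^{(i)}_{q^{(i)}})_{\sigma_i(j)}$ of the statement, and checking that distinct stub-matchings producing the same covariance pattern are all counted --- which is precisely the effect of summing over all permutations $\sigma_i$ without restriction. I would also remark that the argument needs no non-degeneracy assumption on the law of the $\R^8$-vector, so the identity holds for arbitrary $x,y\in\T$, including the cases $x=y$ or $r_n(x-y)=\pm1$.
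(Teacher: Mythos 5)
Your argument is correct and follows exactly the route the paper itself indicates for Proposition \ref{Diagram}, namely factorising over $i$ by independence of the $\ell$ waves and then applying the Leonov--Shiryaev diagram formula (Proposition 3.2.1 of \cite{PT11}, as in Proposition 8.1 of \cite{DNPR16}), with the one-point orthogonality $\E{X^{(i)}_a(x)X^{(i)}_b(x)}=\ind{a=b}$ (via \eqref{sumkj}) killing all same-point edges so that only cross matchings, i.e.\ permutations, survive. Your additional bookkeeping of stubs versus permutations and the remark that no non-degeneracy in $(x,y)$ is needed are both accurate.
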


\subsection{Proof of Lemma \ref{LemASL2}}\label{SecProofASL2}
\noindent\underline{\textit{Proof of the almost sure convergence:}} 
In the case $\ell=3$, one can argue similarly as in the proof of Lemma 3.1 in \cite{DNPR16}.
We present the arguments for $\ell=2$. Since, $\bT_n^{(2)}$ is of class $C^{\infty}$, Sard's Theorem (see e.g. \cite{S42}) implies that its set of critical values has almost surely zero Lebesgue measure. Therefore, applying the Co-Area formula (Proposition 6.13, \cite{AW09}) to the functions $f= \bT_n^{(2)}:\T \to \R^{2}$ and $g:\R^2\to\R, g(x_1,x_2)=(2\eps)^{-2}\prod_{i=1}^{2}
\ind{[-\eps,\eps]}(x_i)$ yields
\begin{eqnarray}\label{con}
L_{n,\eps}^{(2)} 
= (2\eps)^{-2} \int_{[-\eps,\eps]^{2}} 
L_n^{(2)}(\T; (u_1,u_2)) \ du_1du_2 \ ,
\end{eqnarray}
where for $B \subset \T$, we set $L_n^{(2)}(B; (u_1,u_2)) = \mathcal{H}_{1}\{ (\bT_n^{(2)})^{-1}(\{(u_1,u_2)\}) \cap B\}$.
Now, as $(u_1,u_2) \to (0,0)$, the random field $\bT_n^{(2)}-(u_1,u_2)$ converges in the $C^1$ topology on $\T$ to the random field $\bT_n^{(2)}$, which is non-degenerate - as can be seen e.g. by checking the assumptions of Proposition 6.12 in \cite{AW09} - so that by the continuity of the nodal volume proved in Theorem \ref{ThmCont}, 
\begin{equation*}
\lim_{(u_1,u_2) \to (0,0)} 
\mathcal{H}_{1}\left\{ (\bT_n^{(2)}-(u_1,u_2))^{-1}(\{(0,0)\})\right\}
= \mathcal{H}_{1}\left\{ (\bT_n^{(2)})^{-1}(\{(0,0)\})\right\}
= L_n^{(2)}(\T;(u_1,u_2))\ .  
\end{equation*} 
This proves the continuity of $L_n^{(2)}(\T; (u_1,u_2))$ at $(u_1,u_2)=(0,0)$. The almost sure convergence then follows by letting $\eps \to 0$ in \eqref{con}. 

\medskip
\noindent\underline{\textit{Proof of the $L^2(\Prob)$-convergence:}} We now prove that the convergence also takes place in $L^2(\Prob)$. For completeness, we include the three cases corresponding to $\ell=1,2,3$ in our proof.
We start by proving an auxiliary result. Recall that $Q_0$ is the small cube around the origin of side length $1/M$.
\begin{Lem}\label{ContMap}
The map $(u_1,\ldots,u_{\ell}) \mapsto 
\E{L_n^{(\ell)}(Q_0; (u_1,\ldots,u_{\ell}))^2}$
is continuous at $(0, \ldots, 0)$.
\end{Lem}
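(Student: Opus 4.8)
The plan is to realise $L_n^{(\ell)}(Q_0;(u_1,\dots,u_\ell))$ — the $(3-\ell)$-dimensional Hausdorff measure of the intersection of the level sets $\{T_n^{(i)}=u_i\}$ inside $Q_0$ — via a Kac–Rice (Co-Area) representation and then deduce continuity in $u^{(\ell)}$ from the explicit form of the resulting two-point density, using exactly the ingredients already assembled in Appendix \ref{2pointSec}. Concretely, by Lemma \ref{LemKR} there is $c_0>0$ so that on $Q_0$ (for $M$ proportional to $\sqrt{E_n}$ with a large enough constant, so that $\mathrm{diam}(Q_0)<c_0/\sqrt{E_n}$) the Gaussian vector $(\bT_n^{(\ell)}(x),\bT_n^{(\ell)}(y))$ is non-degenerate for all $x\neq y$ in $Q_0$; hence the Kac–Rice formula for the second factorial moment applies, yielding
\begin{equation*}
\E{L_n^{(\ell)}(Q_0;u^{(\ell)})^2} = \int_{Q_0\times Q_0} K^{(\ell)}(x,y;u^{(\ell)})\,dx\,dy + \E{L_n^{(3)}(Q_0;u^{(3)})}\ind{\ell=3},
\end{equation*}
where $K^{(\ell)}$ is the two-point correlation function defined in \eqref{K}. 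So the task reduces to (i) continuity in $u^{(\ell)}$ of the double integral and (ii), when $\ell=3$, continuity of the single Kac–Rice integral $\E{L_n^{(3)}(Q_0;u^{(3)})}$, which is of the same nature but one-point.

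For step (i) I would combine the pointwise continuity of $u^{(\ell)}\mapsto K^{(\ell)}(x,y;u^{(\ell)})$ — this is exactly Lemma \ref{ContK} — with a dominated convergence argument. The dominating function is furnished by the bound \eqref{Kupperbound}: for $u^{(\ell)}$ ranging in a fixed compact set $B\subset\R^\ell$ we have $K^{(\ell)}(z,0;u^{(\ell)})\le q^{(\ell)}(z,0;\norm{u^{(\ell)}})$, and since $\norm{u^{(\ell)}}^2\le C_B$ on $B$, the Taylor expansion in Lemma \ref{Taylorq} shows $q^{(\ell)}(z,0;\norm{u^{(\ell)}}) \ll E_n^{\,\ell/2+1}(1+C_B)\,\norm{z}^{-\ell}$ near the origin. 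For $\ell\in\{1,2\}$ the map $z\mapsto\norm{z}^{-\ell}$ is integrable on the small cube $2Q_0$, so (by translation invariance, passing from $Q_0\times Q_0$ to $\Leb(Q_0)\int_{2Q_0}$ as in \eqref{intQ0}) the integrand is dominated by a fixed integrable function uniformly over $u^{(\ell)}\in B$; dominated convergence then gives continuity of the integral at $0$. For $\ell=3$ the kernel $\norm{z}^{-3}$ is not integrable, so one must instead note that the Kac–Rice integrand away from the diagonal is genuinely bounded: on $Q_0\times Q_0$ minus an arbitrarily small diagonal neighbourhood $\norm{x-y}\ge\delta$, the conditional moment $\E{\Phi_{3,3}^*(X(z,u^{(3)}))^2}$ and the density $p_{(\bT,\bT)}$ are continuous functions of $(z,u^{(3)})$ on a compact set, hence uniformly bounded; and the contribution of the diagonal strip $\norm{x-y}<\delta$ is, again by \eqref{Kupperbound} and Lemma \ref{Taylorq}, $O\!\big((1+C_B)\int_{\norm{z}<\delta}\norm{z}^{-3}dz\big)$ — but this diverges, so instead one uses that $\int_{Q_0\times Q_0}K^{(3)}$ is finite (which is part of the hypothesis that $L_n^{(3)}(Q_0)$ is in $L^2(\Prob)$, Lemma \ref{LemASL2}) together with the monotone/absolute-continuity structure: the family $\{K^{(3)}(\cdot,\cdot;u^{(3)}):u^{(3)}\in B\}$ is dominated by the single integrable function $q^{(3)}(x-y,0;\sqrt{C_B})$ in the ``$1+\norm{u}^2$'' factor, and that function \emph{is} integrable on $Q_0\times Q_0$ because the contribution of $Q_0$ to the first term $(3)_3(1-1)e_n^{3/2}\norm{z}^{-3}$ in \eqref{Q} vanishes (the coefficient $1-\ell/3$ is zero at $\ell=3$!), leaving only the $O(\norm{z}^{-1})$ remainder, which is integrable. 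Thus even for $\ell=3$ a dominating integrable function exists uniformly over $B$, and dominated convergence applies.

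For step (ii), the single-integral term present only when $\ell=3$, I would argue identically: $\E{L_n^{(3)}(Q_0;u^{(3)})}=\int_{Q_0}\rho^{(3)}(x;u^{(3)})\,dx$ where $\rho^{(3)}$ is the Kac–Rice one-point density, a product of a Gaussian density in $u^{(3)}$ and a conditional moment $\E{\Phi_{3,3}^*(\Jac_{\bT_n^{(3)}}(x))\mid \bT_n^{(3)}(x)=u^{(3)}}$; both factors are continuous in $u^{(3)}$ by the same reasoning as in Lemma \ref{ContK} (the conditional mean of the Jacobian is linear in $u^{(3)}$, the conditional covariance does not depend on $u^{(3)}$), and both are bounded uniformly over $x\in Q_0$ and $u^{(3)}\in B$ since everything is continuous on a compact set — so dominated convergence finishes. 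Putting the two steps together gives continuity of $u^{(\ell)}\mapsto\E{L_n^{(\ell)}(Q_0;u^{(\ell)})^2}$ at $(0,\dots,0)$, which is the assertion of Lemma \ref{ContMap}.

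The main obstacle is the $\ell=3$ case of the double integral: the naive domination bound $\norm{z}^{-3}$ is not integrable near the diagonal, so one cannot simply quote \eqref{Kupperbound} and dominated convergence. The resolution hinges on the observation already recorded just before Lemma \ref{Taylorq} and in \eqref{Q} — that the leading singular coefficient $(3)_\ell(1-\ell/3)$ vanishes precisely at $\ell=3$, so the genuine local behaviour of $K^{(3)}$ is $O(\norm{z}^{-1})$, hence integrable, and a uniform (in $u^{(3)}\in B$) integrable dominating function does exist. Making this rigorous requires being a little careful to write the upper bound with the $u$-dependence isolated in a bounded prefactor, so that one really does get domination uniform over the compact parameter set; once that is done the argument is routine.
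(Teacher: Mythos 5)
Your proposal is correct and follows essentially the same route as the paper: Kac--Rice representation on $Q_0$ (justified by Lemma \ref{LemKR}), pointwise continuity from Lemma \ref{ContK}, and dominated convergence with the uniform bound $q^{(\ell)}(z,0;\delta)$ from \eqref{Kupperbound}, whose integrability for $\ell=3$ rests on exactly the vanishing of the coefficient $(3)_\ell(1-\ell/3)$ in Lemma \ref{Taylorq} that you identify. The only cosmetic difference is in the $\ell=3$ one-point term, where the paper uses the independence of $\bT_n^{(3)}(x)$ and $\Jac_{\bT_n^{(3)}}(x)$ to replace the conditional moment by a constant (bounded via Hadamard), whereas you keep it as a continuous bounded function of $u^{(3)}$ -- both suffice.
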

\begin{proof}
Writing $u^{(\ell)}:=(u_1,\ldots,u_{\ell})$, we will prove that 
\begin{equation}\label{showcont}
\lim_{u^{(\ell)}\to(0,\ldots,0)}
\E{L_n^{(\ell)}(Q_0;u^{(\ell)})^2} = 
\E{L_n^{(\ell)}(Q_0;(0,\ldots,0))^2} \ .
\end{equation}
By virtue of Lemma \ref{LemKR} the random field $(\bT_n^{(\ell)}(x),\bT_n^{(\ell)}(y))$ is non-degenerate in $Q_0$ so that we may use Kac-Rice formulae in the cube $Q_0$. For $\ell=1,2$, by Theorem 6.9 \cite{AW09}, 
\begin{equation*}
\E{L_n^{(\ell)}(Q_0; u^{(\ell)})^2}
= \int_{Q_0 \times Q_0} K^{(\ell)}(x,y;u^{(\ell)}) \ dx dy \ ,
\end{equation*} 
where $K^{(\ell)}$ is as in \eqref{K}, whereas for $\ell=3$, we write 
\begin{equation*}
\E{L_n^{(3)}(Q_0; u^{(3)})^2}
=\E{L_n^{(3)}(Q_0; u^{(3)})\big(L_n^{(3)}(Q_0; u^{(3)})-1\big)}
+ \E{L_n^{(3)}(Q_0; u^{(3)})} \ , 
\end{equation*} 
and apply Theorem 6.2 resp. Theorem 6.3 \cite{AW09} to the respective summands, so that 
\begin{eqnarray*}
&&\E{L_n^{(3)}(Q_0; u^{(3)})^2}   \\
&=&\int_{Q_0\times Q_0} K^{(3)}(x,y;u^{(3)}) dx dy 
+ \int_{Q_0} \E{\Phi_{\ell,3}^*(\Jac_{\bT_n^{(3)}}(x))|\bT_n^{(3)}(x)=u^{(3)}} \cdot p_{\bT_n^{(3)}(x)}(u^{(3)})dx \\ 
&=& \int_{Q_0\times Q_0} K^{(3)}(x,y;u^{(3)}) dx dy   + \int_{Q_0} \E{\Phi_{\ell,3}^*(\Jac_{\bT_n^{(3)}}(x))} \cdot p_{\bT_n^{(3)}(x)}(u^{(3)})\ dx \ ,
\end{eqnarray*} 
where the last line follows from the independence of $\bT_n^{(3)}(x)$ and $\Jac_{\bT_n^{(3)}}(x)$. 
Thus, the LHS of \eqref{showcont} reduces to 
\begin{eqnarray}\label{limu}
\lim_{u^{(\ell)}\to(0,\ldots,0)} \E{L_n^{(\ell)}(Q_0; u^{(\ell)})^2} =\lim_{u^{(\ell)}\to(0,\ldots,0)} \biggl(
\int_{Q_0\times Q_0} K^{(\ell)}(x,y;u^{(\ell)}) dx dy \notag\\  
+  \ind{\ell=3}\times 
\int_{Q_0} \E{\Phi_{\ell,3}^*(\Jac_{\bT_n^{(3)}}(x))} \cdot p_{\bT_n^{(3)}(x)}(u^{(3)}) dx  \biggr)  .
\end{eqnarray}
Let us deal with the additional term appearing in the case $\ell=3$: The Hadamard inequality (see e.g. \cite{RWH17}) and independence yield
\begin{eqnarray*}
\E{\Phi_{\ell,3}^*(\Jac_{\bT_n^{(3)}}(x))}
\leq \prod_{i=1}^3 \E{\norm{\nabla T_n^{(i)}(x)}}  
\leq   \E{\norm{\nabla T_n^{(1)}(x)}^2}^{3/2} = E_n^{3/2} .
\end{eqnarray*}
Moreover, the Gaussian probability density 
$u^{(3)} \mapsto p_{\bT_n^{(3)}(x)}(u^{(3)})$ satisfies 
\begin{equation*}
p_{\bT_n^{(3)}(x)}(u^{(3)}) = \prod_{i=1}^3
p_{T_n^{(i)}(x)}(u_i) \leq \big(p_{T_n^{(1)}(x)}(0)\big)^3 = (2\pi)^{-3/2} \ .
\end{equation*}
Therefore, applying dominated convergence yields,
\begin{gather*}
\lim_{u^{(\ell)} \to (0,\ldots,0)} 
\int_{Q_0} \E{\Phi_{\ell,3}^*(\Jac_{\bT_n^{(3)}}(x))} \cdot p_{\bT_n^{(3)}(x)}(u^{(3)}) dx \\
= \int_{Q_0} \E{\Phi_{\ell,3}^*(\Jac_{\bT_n^{(3)}}(x))}p_{\bT_n^{(3)}(x)}(0,0,0) \ dx  
= \E{L_n^{(3)}(Q_0;(0,0,0))} \ .
\end{gather*}
We now deal with the first summand of the RHS of \eqref{limu}. By stationarity, 
\begin{gather*}
\int_{Q_0\times Q_0} K^{(\ell)}(x,y;u^{(\ell)}) dxdy = \int_{Q_0-Q_0} \mathrm{Leb}(Q_0 \cap Q_0-z)K^{(\ell)}(z,0;u^{(\ell)}) dz.
\end{gather*}
Now, for every $u^{(\ell)}$ in a neighbourhood of $(0,\ldots,0)$, say $\norm{u^{(\ell)}} < \delta$, for some $\delta>0$, in view of \eqref{Kupperbound}, we have 
$K^{(\ell)}(z,0;u^{(\ell)})\leq q^{(\ell)}(z,0;\norm{u^{(\ell)}}) < q^{(\ell)}(z,0;\delta)$ for every $z$. Therefore, again by dominated convergence, we infer
\begin{gather*}
\lim_{u^{(\ell)}\to(0,\ldots,0)} \int_{Q_0\times Q_0} K^{(\ell)}(x,y;u^{(\ell)}) dxdy 
= \int_{Q_0\times Q_0} \lim_{u^{(\ell)}\to(0,\ldots,0)} K^{(\ell)}(x,y;u^{(\ell)})  dxdy  \\
=\E{L_n^{(\ell)}(\T;(0,\ldots,0))^2} ,
\end{gather*}
where, in the last line we used the continuity result proved in Lemma \ref{ContK}.
\end{proof}
Now, for a domain $B \subset \T$, we 
set $L_n^{(\ell)}(B):=L_n^{(\ell)}(B;(0,\ldots,0))$ and for $\eps>0$, we write
$L_{n,\eps}^{(\ell)}(B):=L_{n,\eps}^{(\ell)}(B;(0,\ldots,0))$ for the $\eps$-approximation of $L_n^{(\ell)}(B)$ (recall definition \eqref{intnorm}). We define the random variable 
\begin{gather}\label{Aeps}
A_n^{(\ell)}(B;\eps,\eps'):=L_{n,\eps}^{(\ell)}(B)-L_{n,\eps'}^{(\ell)}(B) \ , \quad n \in S_3, \ \eps>0, \ \eps'>0.
\end{gather}
Proving that $L_{n,\eps}^{(\ell)}$ converges to $L_n^{(\ell)}$ in $L^2(\Prob)$ as $\eps\to 0$ is equivalent to showing that for every $n \in S_3$, the random variable $A_n^{(\ell)}(\T;\eps,\eps')$ converges to zero in $L^2(\Prob)$ as $\eps,\eps'\to0$. 
We first show that the latter convergence holds in the small cube $Q_0$ around the origin. 
\begin{Lem}\label{L2Q0}
For every $n \in S_3$, one has that $A_{n}^{(\ell)}(Q_0;\eps,\eps') \to 0$ in $L^2(\Prob)$ as $\eps,\eps'\to 0$. 
\end{Lem}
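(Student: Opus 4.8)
The plan is to deduce the statement from two facts: (i) $L_{n,\eps}^{(\ell)}(Q_0)$ converges $\Prob$-a.s.\ to $L_n^{(\ell)}(Q_0)$ as $\eps\to0$, and (ii) $\E{L_{n,\eps}^{(\ell)}(Q_0)^2}$ converges to $\E{L_n^{(\ell)}(Q_0)^2}<\infty$; together these force convergence in $L^2(\Prob)$, and then a triangle inequality gives $A_n^{(\ell)}(Q_0;\eps,\eps')\to0$ in $L^2(\Prob)$. For (i), I would first apply the Co-Area formula (Proposition 6.13 in \cite{AW09}) to $\bT_n^{(\ell)}$ restricted to $Q_0$, exactly as in \eqref{con}, to obtain that $\Prob$-a.s.
\[
L_{n,\eps}^{(\ell)}(Q_0) = (2\eps)^{-\ell}\int_{[-\eps,\eps]^{\ell}} L_n^{(\ell)}(Q_0;u^{(\ell)})\,du^{(\ell)}\ ,
\]
with $u^{(\ell)}=(u_1,\ldots,u_\ell)$ and $L_n^{(\ell)}(Q_0;u^{(\ell)}) = \mathcal{H}_{3-\ell}\{(\bT_n^{(\ell)})^{-1}(\{u^{(\ell)}\})\cap Q_0\}$. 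Since $\Prob$-a.s.\ the nodal set of $\bT_n^{(\ell)}$ meets the $2$-dimensional faces of $\partial Q_0$ in a set of vanishing $(3-\ell)$-dimensional Hausdorff measure, and since $\bT_n^{(\ell)}$ is non-degenerate on the compact set $\overline{Q_0}$ (checked via Proposition 6.12 in \cite{AW09}, as in the proof of the almost sure convergence part of Lemma \ref{LemASL2}), the continuity theorem \ref{ThmCont} applies with $K=\overline{Q_0}$ and gives $L_n^{(\ell)}(Q_0;u^{(\ell)})\to L_n^{(\ell)}(Q_0)$ as $u^{(\ell)}\to(0,\ldots,0)$, $\Prob$-a.s.; averaging over the shrinking cube $[-\eps,\eps]^\ell$ then yields (i).

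For (ii), I would set $g(u^{(\ell)}):=L_n^{(\ell)}(Q_0;u^{(\ell)})\geq0$ and use that, by Lemma \ref{ContMap}, the map $u^{(\ell)}\mapsto m(u^{(\ell)}):=\E{g(u^{(\ell)})^2}$ is continuous at $(0,\ldots,0)$, hence finite and locally bounded there, with $m(0,\ldots,0)=\E{L_n^{(\ell)}(Q_0)^2}<\infty$. Jensen's inequality for the probability measure $(2\eps)^{-\ell}du^{(\ell)}$ on $[-\eps,\eps]^\ell$, together with Tonelli's theorem, gives
\[
\E{L_{n,\eps}^{(\ell)}(Q_0)^2} \leq (2\eps)^{-\ell}\int_{[-\eps,\eps]^\ell}\E{g(u^{(\ell)})^2}\,du^{(\ell)} \leq \sup_{u^{(\ell)}\in[-\eps,\eps]^\ell} m(u^{(\ell)}) \longrightarrow \E{L_n^{(\ell)}(Q_0)^2}\ ,
\]
so $\limsup_{\eps\to0}\E{L_{n,\eps}^{(\ell)}(Q_0)^2}\leq \E{L_n^{(\ell)}(Q_0)^2}$, while Fatou's lemma combined with (i) yields the matching lower bound for the $\liminf$. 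Hence $\E{L_{n,\eps}^{(\ell)}(Q_0)^2}\to\E{L_n^{(\ell)}(Q_0)^2}$. To conclude, I would combine the Cauchy--Schwarz bound $\E{L_{n,\eps}^{(\ell)}(Q_0)L_n^{(\ell)}(Q_0)}\leq\E{L_{n,\eps}^{(\ell)}(Q_0)^2}^{1/2}\E{L_n^{(\ell)}(Q_0)^2}^{1/2}$ with the Fatou lower bound $\liminf_{\eps\to0}\E{L_{n,\eps}^{(\ell)}(Q_0)L_n^{(\ell)}(Q_0)}\geq\E{L_n^{(\ell)}(Q_0)^2}$ (valid since $L_{n,\eps}^{(\ell)}(Q_0)L_n^{(\ell)}(Q_0)\geq0$ and the product converges $\Prob$-a.s.) to get $\E{L_{n,\eps}^{(\ell)}(Q_0)L_n^{(\ell)}(Q_0)}\to\E{L_n^{(\ell)}(Q_0)^2}$, whence
\[
\E{\big(L_{n,\eps}^{(\ell)}(Q_0)-L_n^{(\ell)}(Q_0)\big)^2} = \E{L_{n,\eps}^{(\ell)}(Q_0)^2}-2\E{L_{n,\eps}^{(\ell)}(Q_0)L_n^{(\ell)}(Q_0)}+\E{L_n^{(\ell)}(Q_0)^2}\longrightarrow0\ .
\]
Thus $L_{n,\eps}^{(\ell)}(Q_0)\to L_n^{(\ell)}(Q_0)$ in $L^2(\Prob)$, and writing $A_n^{(\ell)}(Q_0;\eps,\eps')$ as the difference of $L_{n,\eps}^{(\ell)}(Q_0)-L_n^{(\ell)}(Q_0)$ and $L_{n,\eps'}^{(\ell)}(Q_0)-L_n^{(\ell)}(Q_0)$ and applying the triangle inequality in $L^2(\Prob)$ finishes the argument.

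I expect the bulk of this to be routine once Lemma \ref{ContMap} is available. The genuinely delicate points are the two inputs needed to legitimately invoke Theorem \ref{ThmCont} on the half-open cube $Q_0$: the $\Prob$-a.s.\ negligibility (in $(3-\ell)$-dimensional Hausdorff measure) of the nodal set of $\bT_n^{(\ell)}$ on the faces of $\partial Q_0$, and the non-degeneracy of $\bT_n^{(\ell)}$ on $\overline{Q_0}$. It is worth stressing that no uniform control of higher-order moments of $L_{n,\eps}^{(\ell)}(Q_0)$ is required — and in particular the $\Prob$-a.s.\ bound on $L_n^{(\ell)}(Q_0)$ available for $\ell=1$ (and $\ell=3$) is not needed here — since the combination ``$\Prob$-a.s.\ convergence $+$ convergence of the $L^2(\Prob)$-norms'' already implies convergence in $L^2(\Prob)$, uniformly in $\ell\in[3]$.
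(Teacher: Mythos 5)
Your proposal is correct and follows essentially the same route as the paper: almost sure convergence of $L_{n,\eps}^{(\ell)}(Q_0)$ to $L_n^{(\ell)}(Q_0)$ (via the Co-Area formula and the continuity result of Theorem \ref{ThmCont}) combined with convergence of the second moments, the latter obtained from Fatou's lemma on one side and an averaging bound over $[-\eps,\eps]^{\ell}$ together with the continuity of $u^{(\ell)}\mapsto \E{L_n^{(\ell)}(Q_0;u^{(\ell)})^2}$ from Lemma \ref{ContMap} on the other. The only differences are cosmetic: you use Jensen's inequality where the paper applies Cauchy--Schwarz to the Co-Area representation, and you prove the implication \emph{a.s.\ convergence plus convergence of $L^2(\Prob)$-norms implies $L^2(\Prob)$-convergence} by hand via the cross term, whereas the paper cites it as a standard fact.
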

\begin{proof}
We will show that, for every $n \in S_3$, the sequence $\{L_{n,\eps}^{(\ell)}(Q_0):\eps > 0\}$ converges in $L^2(\Prob)$ to $L_n^{(\ell)}(Q_0)$ as $\eps \to 0$. This implies that $\{L_{n,\eps}^{(\ell)}(Q_0):\eps > 0\}$ is a Cauchy sequence in $L^2(\Prob)$, and therefore $A_{n}^{(\ell)}(Q_0;\eps,\eps') \to 0$ in $L^2(\Prob)$ as $\eps,\eps'\to0$. 
Since almost sure convergence together with convergence of norms implies convergence in $L^2(\Prob)$ (see e.g. \cite{R06} p.73), it suffices to show that
$\E{L_{n,\eps}^{(\ell)}(Q_0)^2} \to \E{L_n^{(\ell)}(Q_0)^2}$ as $\eps \to 0$. We start by proving that $L_{n,\eps}^{(\ell)}(Q_0) \in L^2(\Prob)$ for every $\eps>0$:
Using the definition of $L_{n,\eps}^{(\ell)}(Q_0)$ and the Hadamard inequality, we have 
\begin{equation*}
L_{n,\eps}^{(\ell)}(Q_0)
\leq  (2\eps)^{-\ell}\int_{Q_0} \Phi_{\ell,3}^*(\mathrm{Jac}_{\bT_n^{(2)}}(x)) \ dx 
\leq  (2\eps)^{-\ell}\int_{Q_0} 
\prod_{i=1}^{\ell}\norm{\nabla T_n^{(i)}(x)} dx  
\leq  (2\eps)^{-\ell}\int_{\T} 
\prod_{i=1}^{\ell}\norm{\nabla T_n^{(i)}(x)} dx\ ,
\end{equation*}
and hence, using Jensen's inequality, 
\begin{gather*}
\E{L_{n,\eps}^{(\ell)}(Q_0)^2} 
\leq (2\eps)^{-2\ell}\E{\bigg(\int_{\T} 
\prod_{i=1}^{\ell}\norm{\nabla T_n^{(i)}(x)} dx \bigg)^2}  \\
\leq (2\eps)^{-2\ell}\E{ \int_{\T} 
\prod_{i=1}^{\ell}\norm{\nabla T_n^{(i)}(x)}^2 \ dx }   
=(2\eps)^{-2\ell} \int_{\T} 
\E{\norm{\nabla T_n^{(1)}(x)}^2}^{\ell}    \ dx = (2\eps)^{-2\ell} E_n^{\ell}
< + \infty .
\end{gather*}
In order to prove that $L_n^{(\ell)}(Q_0)$ is in $L^2(\Prob)$, we use Kac-Rice formulae for second moments and proceed as in the proof of Lemma \ref{ContMap}:
For $\ell=3$, we write
\begin{eqnarray*}
\E{L_n^{(3)}(Q_0)^2} =
\E{L_n^{(3)}(Q_0)(L_n^{(3)}(Q_0)-1)}
+ \E{L_n^{(3)}(Q_0)} ,
\end{eqnarray*}
and apply Kac-Rice formula for moments and use stationarity, to write
\begin{eqnarray}\label{intQ0}
\E{L_n^{(\ell)}(Q_0)^2} = \int_{Q_0\times Q_0} 
K^{(\ell)}(x,y;(0,\ldots,0)) \ dxdy 
+ \E{L_n^{(3)}(Q_0)}\ind{\ell=3} \notag \\
\leq \Leb(Q_0) \int_{2Q_0} K^{(\ell)}(z,0;(0,\ldots,0)) \ dz 
+ \frac{E_n^{3/2}}{M^3}\ind{\ell=3}\ ,
\end{eqnarray}
where the last line follows from the fact that $\E{L_n^{(3)}(Q_0)}= \Leb(Q_0)\E{L_n^{(3)}}\ll M^{-3}E_n^{3/2}$. From \eqref{Kupperbound} and the Taylor expansion in Lemma \ref{Taylorq}, we can upper bound \eqref{intQ0} by
\begin{eqnarray}\label{Est2}
&\leq& \frac{1}{M^3} \int_{2Q_0} q^{(\ell)}(z,0;0) \ dz
+ \frac{E_n^{3/2}}{M^3}\ind{\ell=3} \notag \\
&\ll& \frac{1}{M^3} \int_{0}^{1/M} \left[
(3)_{\ell} \left(1-\frac{\ell}{3}\right)
e_n^{\ell/2}r^{2-\ell} +
(3)_{\ell} \left(1+\norm{u^{(\ell)}}^2\right)E_n^{\ell/2+1} r^{4-\ell}  \right]dr + \frac{E_n^{3/2}}{M^3}\ind{\ell=3} \notag\\
&\ll&  E_n^{-2}\ind{\ell=1}+ E_n^{-1} \ind{\ell=2} + \ind{\ell=3}  .
\end{eqnarray}
This proves that $L_n^{(\ell)}(Q_0)$ is an element of $L^2(\Prob)$.
In order to show that the convergence holds in $L^2(\Prob)$, we will prove the inequalities
\begin{gather*}
\E{L_n^{(\ell)}(Q_0)^2} \leq \lim_{\eps\to0} \E{L_{n,\eps}^{(\ell)}(Q_0)^2}\leq \E{L_n^{(\ell)}(Q_0)^2}.
\end{gather*}
For the first inequality, we use the almost sure convergence proved above and Fatou's Lemma to write
\begin{equation*}
\E{L_n^{(\ell)}(Q_0)^2} = \E{\liminf_{\eps\to 0}L_{n,\eps}^{(\ell)}(Q_0)^2} \leq \liminf_{\eps \to 0}\E{L_{n,\eps}^{(\ell)}(Q_0)^2} = \lim_{\eps \to 0}\E{L_{n,\eps}^{(\ell)}(Q_0)^2}  .
\end{equation*}
The second inequality is proved as follows:
Applying the Co-Area formula (Proposition 6.13, \cite{AW09}) and then the Cauchy-Schwarz inequality
\begin{gather*}
\E{L_{n,\eps}^{(\ell)}(Q_0)^2} =
(2\eps)^{-2\ell} \int_{[-\eps,\eps]^{\ell}\times [-\eps,\eps]^{\ell}}
\E{L_n^{(\ell)}(Q_0; u^{(\ell)})\cdot 
L_n^{(\ell)}(Q_0; v^{(\ell)})}  
du^{(\ell)} dv^{(\ell)} \\
\leq\bigg( (2\eps)^{-\ell} \int_{[-\eps,\eps]^{\ell} }
\E{L_n^{(\ell)}(Q_0; u^{(\ell)})^2}^{1/2} \ du^{(\ell)}\bigg)^2 ,
\end{gather*}
where $u^{(\ell)}=(u_1,\ldots,u_{\ell})$ and $v^{(\ell)}=(v_1,\ldots,v_{\ell})$.
By Lemma \ref{ContMap}, the map $u^{(\ell)} \mapsto 
\E{L_n^{(\ell)}(\T; u^{(\ell)})^2}$
is continuous at $(0,\ldots,0)$, so that letting $\eps \to 0$ yields the desired inequality.
\end{proof}
Taking advantage of the partition of the torus introduced in Section \ref{Partition}, we decompose 
\begin{gather*}\label{Part}
\E{A_n^{(\ell)}(\T;\eps,\eps')^2} = \sum_{(Q,Q') \in \mathcal{P}(M)^2} \E{A_n^{(\ell)}(Q;\eps,\eps') A_n^{(\ell)}(Q';\eps,\eps')} \notag\\ 
=\bigg\{\sum_{(Q,Q') \in \mathcal{S}}+\sum_{(Q,Q') \in \mathcal{S}^c}\bigg\}
\E{A_n^{(\ell)}(Q;\eps,\eps') A_n^{(\ell)}(Q';\eps,\eps')}  
=: S_{n,1}^{(\ell)}(\eps,\eps') + S_{n,2}^{(\ell)}(\eps,\eps')
\end{gather*}
and control each term separately. This is the content of the next two lemmas.
\begin{Lem} 
For every $n \in S_3$, one has that $|S_{n,1}^{(\ell)}(\eps,\eps')|\to 0 $ as 
$\eps,\eps'\to 0$.
\end{Lem}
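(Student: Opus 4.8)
The plan is to reduce the sum over singular pairs of cubes to a single scalar quantity via Cauchy--Schwarz and stationarity, and then invoke Lemma~\ref{L2Q0}. First I would apply the Cauchy--Schwarz inequality to each summand, obtaining
\begin{equation*}
|S_{n,1}^{(\ell)}(\eps,\eps')| \leq \sum_{(Q,Q') \in \mathcal{S}} \E{A_n^{(\ell)}(Q;\eps,\eps')^2}^{1/2}\E{A_n^{(\ell)}(Q';\eps,\eps')^2}^{1/2} \ .
\end{equation*}
Since the arithmetic random waves $T_n^{(1)},\ldots,T_n^{(\ell)}$ are stationary and every cube $Q\in\mathcal{P}(M)$ is a translate of $Q_0$, for every fixed translation vector $v$ the field $\{\bT_n^{(\ell)}(x+v):x\in\T\}$ has the same law as $\{\bT_n^{(\ell)}(x):x\in\T\}$, so that $A_n^{(\ell)}(Q_0+v;\eps,\eps') \eqLaw A_n^{(\ell)}(Q_0;\eps,\eps')$; in particular $\E{A_n^{(\ell)}(Q;\eps,\eps')^2} = \E{A_n^{(\ell)}(Q_0;\eps,\eps')^2}$ for every $Q\in\mathcal{P}(M)$. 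This reduces the bound above to
\begin{equation*}
|S_{n,1}^{(\ell)}(\eps,\eps')| \leq \mathrm{card}(\mathcal{S}) \cdot \E{A_n^{(\ell)}(Q_0;\eps,\eps')^2} \ .
\end{equation*}

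Next I would control $\mathrm{card}(\mathcal{S})$. For fixed $n \in S_3$ the partition $\mathcal{P}(M)$ is finite with $\mathrm{card}(\mathcal{P}(M))=M^3$, so already $\mathrm{card}(\mathcal{S}) \leq M^6 < \infty$, which is all that is strictly needed; more precisely, by the definition of $\mathcal{B}_Q$ and \eqref{LebBQ}, for a fixed cube $Q$ the number of cubes $Q'$ with $(Q,Q')\in\mathcal{S}$ equals $M^3\Leb(\mathcal{B}_Q) = O(M^3\mathcal{R}_n(6))$, whence $\mathrm{card}(\mathcal{S}) = O(M^6\mathcal{R}_n(6)) = O(E_n^3\mathcal{R}_n(6))$ because $M$ is proportional to $\sqrt{E_n}$. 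Denote this finite constant by $c_n$. Finally, Lemma~\ref{L2Q0} asserts that $A_n^{(\ell)}(Q_0;\eps,\eps') \to 0$ in $L^2(\Prob)$ as $\eps,\eps' \to 0$, i.e. $\E{A_n^{(\ell)}(Q_0;\eps,\eps')^2} \to 0$; combining this with the previous display gives $|S_{n,1}^{(\ell)}(\eps,\eps')| \leq c_n\,\E{A_n^{(\ell)}(Q_0;\eps,\eps')^2} \to 0$, which proves the lemma.

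There is no genuine obstacle here: the argument is a routine combination of Cauchy--Schwarz, translation invariance, the cube count \eqref{LebBQ}, and Lemma~\ref{L2Q0}. The only points meriting a line of care are the justification of the stationarity reduction (as spelled out above) and the fact that $A_n^{(\ell)}(Q_0;\eps,\eps') \in L^2(\Prob)$, so that the Cauchy--Schwarz step is legitimate — but this integrability is already established in the proof of Lemma~\ref{L2Q0}, where $L_{n,\eps}^{(\ell)}(Q_0)$ is shown to lie in $L^2(\Prob)$ for every $\eps>0$.
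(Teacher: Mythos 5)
Your proposal is correct and follows essentially the same route as the paper: Cauchy--Schwarz on each summand, reduction to the cube $Q_0$ by stationarity, the bound $\mathrm{card}(\mathcal{S}) = O(E_n^3\mathcal{R}_n(6))$ (finiteness alone suffices for fixed $n$, as you observe), and then Lemma~\ref{L2Q0}. Nothing essential is missing.
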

\begin{proof}
Using the triangular inequality and then the Cauchy-Schwarz inequality, we can write 
\begin{gather}\label{Est1}
\left|S_{n,1}^{(\ell)}(\eps,\eps')\right|\leq \sum_{(Q,Q') \in \mathcal{S}} 
\sqrt{\E{A_n^{(\ell)}(Q;\eps,\eps')^2} \E{ A_n^{(\ell)}(Q';\eps,\eps')^2} }  
= \textrm{card}(\mathcal{S}) \cdot\E{ A_n^{(\ell)}(Q_0;\eps,\eps')^2}  \ ,
\end{gather}
where we used translation-invariance of $\bT_n^{(\ell)}$ in order to reduce the arguments over the cube $Q_0$. 
Now, thanks to \eqref{LebBQ} and the fact that we are summing over pairs of cubes yields $\textrm{card}(\mathcal{S}) = M^6 \cdot \Leb(\mathcal{B}_Q) = O(E_n^{3}\mathcal{R}_n(6))$. By Lemma \ref{L2Q0}, $\E{ A_n^{(\ell)}(Q_0;\eps,\eps')^2}$ converges to $0$ as $\eps,\eps'\to 0$, which yields the desired conclusion.
\end{proof}

\begin{Lem}\label{Part2}
For every $n \in S_3$, one has that $|S_{n,2}^{(\ell)}(\eps,\eps')|\to 0$ as $\eps,\eps'\to0$.
\end{Lem}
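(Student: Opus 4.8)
The plan is to reuse the machinery behind the proof of Lemma \ref{NonSingl}: the Wiener--It\^o chaos expansion of the $\eps$-approximations together with the diagram formula of Proposition \ref{Diagram}, the decisive point being that on a non-singular pair of cubes every covariance, and every derivative-covariance, of the field between the two cubes is bounded by $\eta$. First I would expand, via Proposition \ref{WCJProp} (in the form of \eqref{Peven}, but with the Fourier--Hermite coefficients $\beta_j^{(0)}(\eps)$ of $(2\eps)^{-1}\ind{[-\eps,\eps]}$ in place of the limiting $\beta_j^{(0)}$),
\begin{equation*}
A_n^{(\ell)}(Q;\eps,\eps') \;=\; \sum_{q\geq 0}\Delta_q(Q)\ , \qquad \Delta_q(Q):=\proj_q\big(L_{n,\eps}^{(\ell)}(Q)\big)-\proj_q\big(L_{n,\eps'}^{(\ell)}(Q)\big)\ ,
\end{equation*}
where the coefficients entering $\Delta_q(Q)$ are built from the differences $\beta_{2j}^{(0)}(\eps)-\beta_{2j}^{(0)}(\eps')$ and from the coefficients $\alpha_3^{(\ell)}\{\cdot\}$ of $\Phi_{\ell,3}^*$. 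Since $\beta_{2j+1}^{(0)}(\eps)=0$ (symmetry of $H_{2j}$, cf. \eqref{betaepsfor}) and $\Phi_{\ell,3}^*$ has only even chaotic components (Proposition \ref{WCPhi}, \eqref{E3}), the sum runs over even $q$ only, and by \eqref{limbetaeps} each coefficient difference tends to $0$ as $\eps,\eps'\to 0$.

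By chaos orthogonality, $\E{A_n^{(\ell)}(Q;\eps,\eps')\,A_n^{(\ell)}(Q';\eps,\eps')}=\sum_{q\geq 0}\E{\Delta_q(Q)\Delta_q(Q')}$. The $q=0$ term is deterministic: by stationarity $\E{\Delta_0(Q)}=M^{-3}\delta(\eps,\eps')$ with $\delta(\eps,\eps'):=\E{L_{n,\eps}^{(\ell)}}-\E{L_{n,\eps'}^{(\ell)}}$, so its contribution summed over \emph{all} pairs is at most $\delta(\eps,\eps')^2$, and $\delta(\eps,\eps')\to 0$ because $\E{L_{n,\eps}^{(\ell)}}=M^3\,\E{L_{n,\eps}^{(\ell)}(Q_0)}\to M^3\,\E{L_n^{(\ell)}(Q_0)}$ by Lemma \ref{L2Q0}. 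For $q\geq 2$ I would apply the diagram formula to $\E{\Delta_q(Q)\Delta_q(Q')}$: every Hermite polynomial has mean zero, so there are no flat edges, and each diagram contributes a product of exactly $q$ covariance factors $\tilde r_{a,b}(x-y)$ with $x\in Q$, $y\in Q'$, $a,b\in\{0,1,2,3\}$. On a non-singular pair $|\tilde r_{a,b}(x-y)|\leq\eta$ throughout $Q\times Q'$, so pulling out $\eta^{q-2}$ and retaining two factors yields a bound of the shape
\begin{equation*}
\big|\E{\Delta_q(Q)\Delta_q(Q')}\big|\;\leq\;\theta_q(\eps,\eps')\,\eta^{q-2}\int_{Q\times Q'}\Big(\sum_{a,b=0}^{3}|\tilde r_{a,b}(x-y)|\Big)^{2}dx\,dy\ ,
\end{equation*}
where $\theta_q(\eps,\eps')$ gathers the remaining (squared) coefficient and normalisation factors, satisfies $\theta_q(\eps,\eps')\to 0$ as $\eps,\eps'\to 0$ for each $q$, and is such that $\sum_{q\geq 2}\theta_q(\eps,\eps')\,\eta^{q-2}$ stays bounded for $\eps,\eps'$ in a fixed punctured neighbourhood of $0$.

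Summing over non-singular pairs, $\sum_{(Q,Q')\in\mathcal S^c}\int_{Q\times Q'}(\cdots)\leq\int_{\T\times\T}(\cdots)$, and by Cauchy--Schwarz together with Lemma \ref{rab} this double integral is $\ll\mathcal R_n(2)=\Nn^{-1}$, a finite constant for fixed $n$. Collecting the two contributions gives
\begin{equation*}
\big|S_{n,2}^{(\ell)}(\eps,\eps')\big|\;\ll_n\;\delta(\eps,\eps')^{2}+\mathcal R_n(2)\sum_{q\geq 2}\theta_q(\eps,\eps')\,\eta^{q-2}\ ,
\end{equation*}
and letting $\eps,\eps'\to 0$ kills both terms — the series by dominated convergence, since each summand tends to $0$ and is dominated by a summable sequence — which proves the lemma.

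The main obstacle is precisely the uniform summability $\sum_{q\geq 2}\theta_q(\eps,\eps')\,\eta^{q-2}<\infty$ for small $\eps,\eps'$: one must check that $|\beta_{2j}^{(0)}(\eps)|$ stays comparable to $|\beta_{2j}^{(0)}|$ for $\eps$ small, and that the combinatorial growth in $q$ (the number of admissible multi-indices, the size of the $\alpha_3^{(\ell)}\{\cdot\}$, the factorials) is absorbed by $\eta^{q}$ with $\eta<10^{-10}$ — exactly the mechanism producing the $O(E_n^{\ell}\mathcal R_n(6))$ bound in Lemma \ref{NonSingl}. A conceivable shortcut for this lemma alone would be to forgo the chaos series and estimate the four double integrals defining $\E{A_n^{(\ell)}(Q;\eps,\eps')A_n^{(\ell)}(Q';\eps,\eps')}$ directly via Kac--Rice, using that on $\mathcal S^c$ the vector $(\bT_n^{(\ell)}(x),\bT_n^{(\ell)}(y))$ is uniformly non-degenerate (Lemma \ref{LemKR}) and $K^{(\ell)}(x,y;\cdot)$ is uniformly bounded by $q^{(\ell)}$ of \eqref{Kupperbound}; however, obtaining a rate summable over the $\sim M^6$ cubes would again force one to exploit the smallness of $\tilde r_{a,b}$ on non-singular pairs, so the chaos-expansion route appears to be the cleaner one.
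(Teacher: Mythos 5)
Your route is the paper's: expand the $\eps$-approximations into Wiener chaos with the coefficients $\beta_j(\eps)$, reduce covariances of chaotic components to the diagram formula of Proposition \ref{Diagram}, use $|\tilde{r}_{a,b}(x-y)|\le\eta$ on non-singular pairs, and conclude by letting $\eps,\eps'\to0$ through the coefficient differences. Your separate treatment of the zeroth chaos and your retention of two covariance factors (leading to $\mathcal{R}_n(2)$ via Lemma \ref{rab}) are harmless variants; since $n$ is fixed here, the paper keeps no factors at all and simply bounds $\sum_{(Q,Q')\in\mathcal{S}^c}\Leb(Q)\Leb(Q')\le1$.

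The problem is the step you yourself call ``the main obstacle'': as you set it up, it is not a verification but a failure point. For the covariance of two chaos components of order $q$, the diagram formula produces up to $q!$ diagrams, so your $\theta_q(\eps,\eps')$ unavoidably carries a factor of order $q!$, and $q!\,\eta^{q-2}\to\infty$ for any fixed $\eta>0$; the factorial growth is \emph{not} absorbed by the geometric factor $\eta^{q}$, however small $\eta$ is. The entire content of the paper's proof is how to defuse this factorial: one writes the diagram count as $\sqrt{S(\mathbf{p})!}\sqrt{S(\mathbf{q})!}$, applies Cauchy--Schwarz with respect to the symmetric weight $\sqrt{\eta}^{S(\mathbf{p})+S(\mathbf{q})}$, and pairs each $\sqrt{S(\mathbf{p})!}$ against the factorials in the denominators of the normalised Hermite coefficients; the multinomial bound $S(\mathbf{p})!/\prod_{i,j}p^{(i)}_j!\le(4\ell)^{S(\mathbf{p})}$ then turns the factorial into a geometric factor, the normalised squared coefficients $\beta_{p_0}^2/p_0!$ and $\gamma^{(\ell)}_3\{\cdot\}^2/\prod_j p_j!$ are uniformly bounded, the uniform domination $|\beta_j(\eps)|<|\beta_j|$ justifies dominated convergence in $(\eps,\eps')$, and only the numerical choice $\eta<10^{-10}$, giving $4\ell\sqrt{\eta}<1$, makes $\sum_{\mathbf{p},\mathbf{q}}(4\ell)^{S(\mathbf{p})}\sqrt{\eta}^{S(\mathbf{p})+S(\mathbf{q})}$ converge. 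Without this redistribution your series $\sum_{q}\theta_q(\eps,\eps')\eta^{q-2}$ is not finite, so the argument does not close as written; and appealing to ``the mechanism of Lemma \ref{NonSingl}'' cannot serve as a black box, because in the paper that mechanism \emph{is} the present computation (the proof of Lemma \ref{NonSingl} refers back to the end of the proof of Lemma \ref{Part2}, not the other way around).
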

\begin{proof}
Adopting the same notation as in Section \ref{Diagram},
we write $\mathbf{p}$ for multi-indices of the form $\{p_j^{(i)}: (i,j)\in [\ell]\times \{0,1,2,3\}\}$ and set $S(\mathbf{p}) = \sum_{i=1}^{\ell}\sum_{j=0}^3 p^{(i)}_j$. The Wiener-chaos decomposition of 
$A_n^{(\ell)}(Q; \eps,\eps')$ in \eqref{Aeps} is obtained from that of $L_n^{(\ell)}$ in \eqref{Peven} by replacing $\T$ with $Q$ and the coefficients $\beta_{p_0^{(1)}}\cdots\beta_{p_0^{(\ell)}}$ with 
\begin{gather*}
\delta_{p_0^{(1)},\ldots,p_0^{(\ell)}}(\eps,\eps'):=
\prod_{i=1}^{\ell}
\beta_{p_0^{(i)}}(\eps)-
\prod_{i=1}^{\ell}
\beta_{p_0^{(i)}}(\eps'),\end{gather*}
where the coefficients $\beta_j(\eps)$ are as in \eqref{betaepsfor}.
Moreover, using the notation in \eqref{notX} and writing 
\begin{equation}\label{gamma}
\gamma^{(\ell)}_3\left\{p^{(i)}_j\right\}=\gamma^{(\ell)}_3\left\{p^{(i)}_j: (i,j) \in [\ell]\times[3] \right\}
:= \alpha^{(\ell)}_3\left\{p^{(i)}_j: (i,j) \in [\ell]\times[3] \right\} \cdot \prod_{i=1}^{\ell}\prod_{j=1}^3 p_j^{(i)}! 
\end{equation}
for the Fourier-Hermite coefficients of the function $\Phi_{\ell,3}^*$, we infer that 
\begin{eqnarray*}
\left|S_{n,2}^{(\ell)}(\eps,\eps')\right|  
&\leq&
\bigg(\frac{E_n}{3}\bigg)^{\ell} 
\sum_{q\geq 0} \sum_{\mathbf{p},\mathbf{q}}
\left|\frac{\delta_{p_0^{(1)},\ldots,p_0^{(\ell)}}(\eps,\eps')}
{p_0^{(1)}! \ldots p_0^{(\ell)}!}
\frac{\gamma^{(\ell)}_3\left\{p^{(i)}_j\right\}}
{\prod_{i=1}^{\ell}\prod_{j=1}^3 p_j^{(i)}!}
\frac{\delta_{q_0^{(1)},\ldots,q_0^{(\ell)}}(\eps,\eps')}
{q_0^{(1)}! \ldots q_0^{(\ell)}!}
\frac{\gamma^{(\ell)}_3\left\{q^{(i)}_j\right\}}{\prod_{i=1}^{\ell}\prod_{j=1}^3 q_j^{(i)}!}\right| \notag\\
&&  \qquad \times  \ \ind{S(\mathbf{p}) = 2q} 
\ind{S(\mathbf{q}) = 2q}  
|W(\mathbf{p},\mathbf{q})| \\
&=:& \left(\frac{E_n}{3}\right)^{\ell} \times B_n^{(\ell)}(\eps,\eps'),
\end{eqnarray*}
where 
\begin{eqnarray}\label{Wpq}
W(\mathbf{p},\mathbf{q})=
\sum_{(Q,Q')\in \mathcal{S}^c}
\int_{Q}\int_{Q'} 
\E{ \prod_{i=1}^{\ell}  \prod_{j=0}^3 
H_{p^{(i)}_j}\left(X_j^{(i)}(x)\right)  
H_{q^{(i)}_j}\left(X_j^{(i)}(y)\right)  } \ dxdy . 
\end{eqnarray}
Applying Proposition \ref{Diagram}, using that
$\ind{\cdot}\leq1$ and the fact that $S(p^{(1)})!\cdots S(p^{(\ell)})! \leq 
\big(S(p^{(1)})+\ldots+S(p^{(\ell)})\big)! = S(\mathbf{p})! = (2q)!$, we see that $W(\mathbf{p},\mathbf{q})$ is a sum of at most $(2q)!$ terms of the type 
\begin{equation} \label{wterms}
w = \sum_{(Q,Q') \in \mathcal{S}^c} \int_Q \int_{Q'} 
\prod_{j=1}^{2q} \tilde{r}_{a_j,b_j}(x-y) \ dxdy
\end{equation}
for some $a_1,b_1, \ldots, a_{2q},b_{2q} \in \{0,1,2,3\}$.
Now, using the fact that for every $(x,y) \in Q\times Q' \subset \mathcal{S}^c$ and every $a,b \in \{0,1,2,3\}$, we have $|\tilde{r}_{a,b}(x,y)| \leq \eta$, we infer 
that $|W(\mathbf{p},\mathbf{q})|\leq (2q)!\times \eta^{2q}$.
Using  
\begin{eqnarray*}
\sum_{q\geq0} \sum_{\mathbf{p},\mathbf{q}} (2q)! \cdot
\ind{S(\mathbf{p})=2q}\ind{S(\mathbf{q})=2q} 
\leq \sum_{\mathbf{p},\mathbf{q}} \sqrt{S(\mathbf{p})!} \sqrt{S(\mathbf{q})!}\ ,
\end{eqnarray*} 
we obtain  
\begin{eqnarray}\label{final}
\left|B_{n}^{(\ell)}(\eps,\eps')\right| &\leq&
\sum_{\mathbf{p},\mathbf{q}}
\left|\frac{ \delta_{p_0^{(1)},\ldots,p_0^{(\ell)}}(\eps,\eps') }
{p_0^{(1)}! \ldots p_0^{(\ell)}!}
\frac{\gamma^{(\ell)}_3\left\{p^{(i)}_j\right\}}{\prod_{i=1}^{\ell}\prod_{j=1}^3 p_j^{(i)}!}\right| \cdot \sqrt{S(\mathbf{p})!} \sqrt{\eta}^{\frac{S(\mathbf{p})+S(\mathbf{q})}{2}}   \notag \\
&&  \times \left|\frac{\delta_{q_0^{(1)},\ldots,q_0^{(\ell)}}(\eps,\eps')}
{q_0^{(1)}! \ldots q_0^{(\ell)}!}
\frac{\gamma^{(\ell)}_3\left\{q^{(i)}_j\right\}}{\prod_{i=1}^{\ell}\prod_{j=1}^3 q_j^{(i)}!}  \right| 
\cdot \sqrt{S(\mathbf{q})!} \sqrt{\eta}^{\frac{S(\mathbf{p})+S(\mathbf{q})}{2}}  \notag \\
&\leq& 
\sum_{\mathbf{p},\mathbf{q}}
\frac{\left[\delta_{p_0^{(1)},\ldots,p_0^{(\ell)}}(\eps,\eps')\right]^2}
{p_0^{(1)}! \ldots p_0^{(\ell)}!}
\frac{\gamma^{(\ell)}_3\left\{p^{(i)}_j\right\}^2}{\prod_{i=1}^{\ell}\prod_{j=1}^3 p_j^{(i)}!} \cdot \frac{ S(\mathbf{p})! }{\prod_{i=1}^{\ell}\prod_{j=0}^3 p_j^{(i)}!} \sqrt{\eta}^{ S(\mathbf{p})+S(\mathbf{q})}\ , \notag\\
&&
\end{eqnarray}
where the last inequality follows from an application of the Cauchy-Schwarz inequality to the symmetric measure 
$(\mathbf{p},\mathbf{q}) \mapsto \sqrt{\eta}^{S(\mathbf{p})+S(\mathbf{q})}$.
We now argue that $|B_n^{(\ell)}(\eps,\eps')|\to 0$ as $\eps,\eps'\to0$. First, the estimate (see e.g. \cite{AS72}, formula 22.14.16), 
\begin{gather*}
|\beta_{j}(\eps)|\leq \gamma\left(\frac{\eps}{\sqrt{2}}\right)\frac{j!}{2^{j/2}(j/2)!}
< |\beta_j|, \quad \eps>0, \ j\geq 1, 
\end{gather*} 
implies that 
\begin{gather*}\label{dom}
\left|\delta_{p_0^{(1)},\ldots,p_0^{(\ell)}}(\eps,\eps')\right| < 2\times \left|\beta_{p_0^{(1)}}\ldots\beta_{p_0^{(\ell)}}\right|
\end{gather*}
so that we can apply dominated convergence and use the fact that $\delta_{p_0^{(1)},\ldots,p_0^{(\ell)}}(\eps,\eps') \to 0$ as $\eps,\eps' \to 0$ in view of \eqref{limbetaeps}.
We will now prove that the remaining series over $\mathbf{p},\mathbf{q}$ is finite. 
We note that (i) for every $\mathbf{p}$, the quantity 
\begin{eqnarray*}
\frac{\beta_{p_0^{(1)}}^2\cdots \beta_{p_0^{(\ell)}}^2}
{p_0^{(1)}! \ldots p_0^{(\ell)}!}
\frac{\gamma^{(\ell)}_3\left\{p^{(i)}_j\right\}^2}{\prod_{i=1}^{\ell}\prod_{j=1}^3 p_j^{(i)}!} 
\end{eqnarray*}
is bounded, and (ii) using the multinomial theorem 
\begin{eqnarray*}
\frac{ S(\mathbf{p})! }{\prod_{i=1}^{\ell}\prod_{j=0}^3 p_j^{(i)}!} 
\leq \sum_{\substack{\mathbf{m}=(m_j^{(i)}):  \\ 
S(\mathbf{m})=S(\mathbf{p}) }}
\frac{S(\mathbf{p})!}{\prod_{i=1}^{\ell}\prod_{j=0}^3 m_j^{(i)}!}  \cdot \prod_{i=1}^{\ell}\prod_{j=0}^3 1^{m^{(i)}_j}
= (4\ell)^{S(\mathbf{p})}
\ . 
\end{eqnarray*}
Plugging (i) and (ii) into \eqref{final} and using the fact that $4\ell\sqrt{\eta}<1$, gives 
\begin{gather*}
\left|B_{n}^{(\ell)}(\eps,\eps')\right| \ll\sum_{\mathbf{p,q}}(4\ell)^{S(\mathbf{p})} \sqrt{\eta}^{S(\mathbf{p})+S(\mathbf{q})} < +\infty.
\end{gather*}
This finishes the proof.
\end{proof}

\subsection{Proofs of Lemma \ref{Singl} and Lemma \ref{NonSingl}}\label{ProofsLemS}
\begin{proof}[Proof of Lemma \ref{Singl}.]
Arguing as in \eqref{Est1}, we have 
\begin{gather}\label{Est3}
\left|S_{n,1}^{(\ell)}\right|
\leq  \textrm{card}(\mathcal{S}) \cdot\V{\mathrm{proj}_{6+}(L_n^{(\ell)}(Q_0))}  
\ll E_n^3 \mathcal{R}_n(6)
\cdot\V{\mathrm{proj}_{6+}(L_n^{(\ell)}(Q_0))}  , 
\end{gather}
where $Q_0$ is the cube around the origin. Now we notice that  
\begin{equation*}
\V{ \mathrm{proj}_{6+}(L_n^{(\ell)}(Q_0))} \leq \V{L_n^{(\ell)}(Q_0)} 
\leq \E{L_n^{(\ell)}(Q_0)^2} \ .
\end{equation*}
Using Kac-Rice formulae and reasoning as in \eqref{intQ0} and \eqref{Est2}, we obtain that 
\begin{gather*}
\E{L_n^{(\ell)}(Q_0)^2} 
\leq E_n^{-2}\ind{\ell=1}+E_n^{-1} \ind{\ell=2}+\ind{\ell=3}.
\end{gather*}
Combining this with the estimate in \eqref{Est3}, yields the desired conclusion.
\end{proof}
\begin{proof}[Proof of Lemma \ref{NonSingl}]
Using the fact that $\mathrm{proj}_{6+}(L_n^{(\ell)}(Q))$ is centred and  the triangular inequality, we first write  
\begin{eqnarray*}
\left|S_{n,2}^{(\ell)}\right|
\leq 
\sum_{(Q,Q') \in \mathcal{S}^c} 
\E{ \mathrm{proj}_{6+}(L_n^{(\ell)}(Q))\cdot \mathrm{proj}_{6+}(L_n^{(\ell)}(Q')) }   .
\end{eqnarray*}
For a family of non-negative integers 
$\mathbf{p}:=\big\{p^{(i)}_j: (i,j) \in [\ell] \times \{0,1,2,3\}\big\}$, we  write $S(\mathbf{p}):=\sum_{i=1}^{\ell}\sum_{j=0}^3 p^{(i)}_j$. 
Adopting the notation introduced in \eqref{notX},
it follows from the chaotic expansion in Proposition \ref{WC23} that, 
\begin{eqnarray}\label{CovEst}
&&\left|S_{n,2}^{(\ell)}\right|  \leq \bigg(
\frac{E_n}{3}\bigg)^{\ell} 
\sum_{q\geq 3}\sum_{\mathbf{p},\mathbf{q}}
\left|\frac{\beta_{p_0^{(1)}}\ldots \beta_{p_0^{(\ell)}}}
{p_0^{(1)}! \ldots p_0^{(\ell)}!}
\frac{\gamma^{(\ell)}_3\left\{p^{(i)}_j\right\}}{\prod_{i=1}^{\ell}\prod_{j=1}^3 p_j^{(i)}!}
\frac{\beta_{q_0^{(1)}}\ldots \beta_{q_0^{(\ell)}}}
{q_0^{(1)}! \ldots q_0^{(\ell)}!}
\frac{\gamma^{(\ell)}_3\left\{q^{(i)}_j\right\}}{\prod_{i=1}^{\ell}\prod_{j=1}^3 q_j^{(i)}!}\right| \notag\\
&& \hspace{2cm} \times  \ \ind{S(\mathbf{p}) = 2q}
\ind{S(\mathbf{q}) = 2q}  
|W(\mathbf{p},\mathbf{q})| \ ,  
\end{eqnarray}
where $\gamma^{(\ell)}_3\{\cdot\}$ is as in \eqref{gamma} and
$W(\mathbf{p},\mathbf{q})$ as in \eqref{Wpq}.
Arguing as in the proof of Lemma 
\ref{Part2}, we see that $W(\mathbf{p},\mathbf{q})$ is a sum of at most $(2q)!$ terms of the type 
\begin{equation*} 
w = \sum_{(Q,Q') \in \mathcal{S}^c} \int_Q \int_{Q'} 
\prod_{j=1}^{2q} \tilde{r}_{a_j,b_j}(x-y) \ dxdy
\end{equation*}
for some $a_1,b_1, \ldots, a_{2q},b_{2q} \in \{0,1,2,3\}$.
Now, using the fact that for every $(x,y) \in Q\times Q' \subset \mathcal{S}^c$ and every $a,b \in \{0,1,2,3\}$, we have $|\tilde{r}_{a,b}(x,y)| \leq \eta$, we deduce that
\begin{eqnarray*}
|w| \leq  \eta^{2q-6}\sum_{(Q,Q')\in \mathcal{S}^c}
\int_{Q}\int_{Q'} 
\prod_{j=1}^{6} \tilde{r}_{a_j,b_j}(x-y)   dxdy  \leq 
\eta^{2q-6} 
\int_{\T}  \prod_{j=1}^{6} \tilde{r}_{a_j,b_j}(z)  dz \ .
\end{eqnarray*}
Then, by Cauchy-Schwarz inequality 
$|\tilde{r}_{a_j,b_j}(z)|\leq1$ for every $z\in \T$, we have $\tilde{r}_{a_j,b_j} \in L^6(dz)$ for every $j \in [6]$, so that applying the generalised H\"older inequality yields
\begin{eqnarray}\label{boundW}
|w| \leq  \eta^{2q-6} \prod_{j=1}^6 \bigg(
\int_{\T}  \tilde{r}_{a_j,b_j}(z)^6  dz \bigg)^{1/6}
\ll  \eta^{2q-6} \cdot  \mathcal{R}_n(6) 
= \frac{\mathcal{R}_n(6)}{\eta^6} \cdot 
\sqrt{\eta}^{\frac{S(\mathbf{p})+S(\mathbf{q})}{2}}  \sqrt{\eta}^{\frac{S(\mathbf{p})+S(\mathbf{q})}{2}}  \ ,
\end{eqnarray}
where we used Lemma \ref{rab} and the fact that $S(\mathbf{p})=S(\mathbf{q})=2q$. Then, arguing exactly as in \eqref{final}, we write 
\begin{eqnarray*}
|W(\mathbf{p},\mathbf{q})| 
\leq (2q)! \cdot  \frac{\mathcal{R}_n(6)}{\eta^6} \cdot 
\sqrt{\eta}^{\frac{S(\mathbf{p})+S(\mathbf{q})}{2}}  \sqrt{\eta}^{\frac{S(\mathbf{p})+S(\mathbf{q})}{2}} 
= \frac{\mathcal{R}_n(6)}{\eta^6} \cdot 
\sqrt{S(\mathbf{p})!}\sqrt{S(\mathbf{q})!}
\sqrt{\eta}^{\frac{S(\mathbf{p})+S(\mathbf{q})}{2}}  \sqrt{\eta}^{\frac{S(\mathbf{p})+S(\mathbf{q})}{2}},
\end{eqnarray*}
and obtain that  
\begin{gather*}
\left|S_{n,2}^{(\ell)}\right| 
\ll \left(\frac{E_n}{3}\right)^{\ell}  \frac{\mathcal{R}_n(6)}{\eta^6}
\sum_{\mathbf{p},\mathbf{q}}
\frac{\beta^2_{p_0^{(1)}}\ldots \beta^2_{p_0^{(\ell)}}}
{p_0^{(1)}! \ldots p_0^{(\ell)}!}
\frac{\gamma^{(\ell)}_3\left\{p^{(i)}_j\right\}^2}{\prod_{i=1}^{\ell}\prod_{j=1}^3 p_j^{(i)}!} \cdot \frac{ S(\mathbf{p})! }{\prod_{i=1}^{\ell}\prod_{j=0}^3 p_j^{(i)}!} \sqrt{\eta}^{ S(\mathbf{p})+S(\mathbf{q})}. 
\end{gather*}
Proceeding exactly as in the end of the proof of Lemma \ref{Part2}, shows that the series over $\mathbf{p},\mathbf{q}$ converges, which finishes the proof.
\end{proof}

%%%BIBLIO%%%
\bibliographystyle{alpha}
\bibliography{Fluctuations_of_nodal_sets}

\end{document}